\documentclass[11pt,a4paper]{article}
\usepackage[T1]{fontenc}
\usepackage[utf8]{inputenc}
\usepackage[dvipsnames]{xcolor}
\usepackage{amsmath,amssymb,amsbsy,amsthm,calc,enumerate,url,framed,fancyhdr,latexsym,tikz,bbm,enumitem}
\usepackage{mathrsfs,comment}
\usepackage{mathtools}
\usepackage[mathscr]{euscript}
\usepackage{graphicx}
\usepackage{epstopdf}
\usepackage[
  pageanchor,
  colorlinks,
  citecolor=blue,
  urlcolor=blue
]{hyperref}
\newcommand{\safeincludegraphics}[2][]{%
  \IfFileExists{#2}{\includegraphics[#1]{#2}}{%
    \fbox{\parbox[c][2in][c]{0.9\linewidth}{%
      \centering Missing figure file:\\[2mm]
      \texttt{\detokenize{#2}}}}}}
\usepackage[normalem]{ulem}
\usepackage[all,arc,2cell]{xy}
\UseAllTwocells
\usepackage[
  backend=biber,
  bibstyle=mla,
  citestyle=authoryear,
  sorting=nyt,
  doi=false,
  url=false
]{biblatex}
\usepackage[font=small,skip=0pt]{caption}
\usepackage[all]{hypcap}

\input{mathsymbols}
\hypersetup{
  colorlinks,
  linkcolor={red!70!black},
  citecolor={blue!65!black},
  urlcolor={blue!70!black}
}

\DeclareDelimFormat{nameyeardelim}{\addcomma\space}

\DeclareFieldFormat{fullcitehyperref}{%
  \DeclareFieldAlias{bibhyperref}{noformat}%
  \bibhyperref{#1}%
}

\DeclareCiteCommand{\parencite}[\mkbibparens]
  {\usebibmacro{prenote}}
  {%
    \usebibmacro{citeindex}%
    \printtext[fullcitehyperref]{%
      \usebibmacro{cite}%
      \iflastcitekey
        {\usebibmacro{postnote}}
        {}%
    }%
  }
  {\multicitedelim}
  {}

\DeclareCiteCommand{\textcite}
  {\boolfalse{cbx:parens}}
  {%
    \usebibmacro{citeindex}%
    \iffirstcitekey
      {\setcounter{textcitetotal}{1}}
      {\stepcounter{textcitetotal}\textcitedelim}%
    \printtext[fullcitehyperref]{%
      \usebibmacro{textcite}%
      \iflastcitekey
        {%
          \usebibmacro{textcite:postnote}%
          \global\boolfalse{cbx:parens}%
        }
        {}%
    }%
  }
  {%
    \ifbool{cbx:parens}
      {\bibcloseparen\global\boolfalse{cbx:parens}}
      {}%
  }
  {}

\def\makeautorefname#1#2{\expandafter\def\csname#1autorefname\endcsname{#2}}
\def\equationautorefname~#1\null{(#1)\null}
\makeautorefname{footnote}{footnote}%
\makeautorefname{item}{item}%
\makeautorefname{figure}{Figure}%
\makeautorefname{table}{Table}%
\makeautorefname{part}{Part}%
\makeautorefname{appendix}{Appendix}%
\makeautorefname{chapter}{Chapter}%
\makeautorefname{section}{Section}%
\makeautorefname{subsection}{Section}%
\makeautorefname{subsubsection}{Section}%
\makeautorefname{theorem}{Theorem}%
\makeautorefname{thm}{Theorem}%
\makeautorefname{cor}{Corollary}%
\makeautorefname{lem}{Lemma}%
\makeautorefname{prop}{Proposition}%
\makeautorefname{pro}{Property}
\makeautorefname{conj}{Conjecture}%
\makeautorefname{defn}{Definition}%
\makeautorefname{notn}{Notation}
\makeautorefname{notns}{Notations}
\makeautorefname{rem}{Remark}%
\makeautorefname{quest}{Question}%
\makeautorefname{exmp}{Example}%
\makeautorefname{ax}{Axiom}%
\makeautorefname{claim}{Claim}%
\makeautorefname{ass}{Assumption}%
\makeautorefname{asss}{Assumptions}%
\makeautorefname{con}{Construction}%
\makeautorefname{prob}{Problem}%
\makeautorefname{warn}{Warning}%
\makeautorefname{obs}{Observation}%
\makeautorefname{conv}{Convention}%

\usepackage{algorithm}
\usepackage{algpseudocode}
\usepackage{bm}

\newcommand{\bgamma}{\bar{\gamma}}
\newcommand{\pliminf}{\mathop{\mathrm{p\text{-}liminf}}}

\theoremstyle{plain}

\theoremstyle{definition}

\newtheorem{rem}{Remark}[section]

\crefname{thm}{Theorem}{Theorems}
\Crefname{thm}{Theorem}{Theorems}
\crefname{cor}{Corollary}{Corollaries}
\Crefname{cor}{Corollary}{Corollaries}
\crefname{prop}{Proposition}{Propositions}
\Crefname{prop}{Proposition}{Propositions}
\crefname{lem}{Lemma}{Lemmas}
\Crefname{lem}{Lemma}{Lemmas}
\crefname{ass}{Assumption}{Assumptions}
\Crefname{ass}{Assumption}{Assumptions}
\crefname{defn}{Definition}{Definitions}
\Crefname{defn}{Definition}{Definitions}
\crefname{rem}{Remark}{Remarks}
\Crefname{rem}{Remark}{Remarks}

\makeatletter
\let\c@obs=\c@thm
\let\c@cor=\c@thm
\let\c@prop=\c@thm
\let\c@lem=\c@thm
\let\c@prob=\c@thm
\let\c@con=\c@thm
\let\c@conj=\c@thm
\let\c@defn=\c@thm
\let\c@notn=\c@thm
\let\c@notns=\c@thm
\let\c@exmp=\c@thm
\let\c@ax=\c@thm
\let\c@pro=\c@thm
\let\c@ass=\c@thm
\let\c@warn=\c@thm
\let\c@rem=\c@thm
\let\c@sch=\c@thm
\let\c@equation\c@thm
\numberwithin{equation}{section}
\makeatother

\title{Spectral Dependence of Convex Regularization:
Fundamental Limits under Right-Rotationally Invariant Designs}

\date{August 7th, 2026}

\author{
Baichen Tan \qquad Audrey Yang \qquad Cynthia Rush\\[0.5em]
\small Department of Statistics, Columbia University
}

\begin{document}

\maketitle

\begin{abstract}
We study the fundamental limits of convex-regularized estimation in high-dimensional linear regression with right-rotationally invariant design matrices. We show that the asymptotic risks of convex-penalized least squares estimators are lower bounded by the risk of an approximate message passing algorithm known as Bayes VAMP, and we further characterize when the lower bound is attainable. 
As a technical ingredient in the proof of our lower bound theorem, we characterize the asymptotic performance of the \(\ell_2\)-perturbed convex estimator for every fixed perturbation strength \(\lambda>0\). This closes a gap in the literature, in which \(\lambda\) was required to be sufficiently large or restrictions were imposed on the class of convex estimators.

The benefit of our approach is that we can conduct a direct analysis of the spectrum's impact on the lower bound in the high-dimensional limit. In particular, we can show that the lower bound is monotone in an ordering on the limiting spectral distributions of the design matrix. These results isolate how the full singular-value distribution of the design---not merely the aspect ratio or average measurement strength---governs the limitations of convex regularization.
\end{abstract}

\pagestyle{plain}

{
\setlength{\parskip}{0pt}
\tableofcontents
}

\section{Introduction}
\label{sec:introduction}
Convex regularization is a central tool used in statistics and machine learning for constructing stable and computationally tractable estimators.
In linear regression problems, it augments the loss function with a penalty on the estimator. Foundational and commonly used examples include ridge regression, which imposes an \(\ell_2\) penalty to stabilize estimation with correlated covariates \parencite{Hoerl1970ridge}, the Lasso, which uses an \(\ell_1\) penalty to combine shrinkage with variable selection \parencite{Tibshirani1996lasso}, and the elastic net, which combines these two forms of regularization \parencite{Zou2005elastic}. The performance of these methods, however, can depend not only on the choice of the penalty, but also on aspects of the model including the structure of the design matrix or assumptions on the signal elements. 

Indeed, a design matrix does more than determine how many observations are available: through its singular values, it allocates measurement strength across different directions in parameter space. Two designs can have the same aspect ratio and the same total measurement strength while allocating that strength very differently. One may measure all directions approximately equally, whereas another may compensate for many weakly observed directions with a smaller number of very strongly observed directions.
While these designs have equal overall measurement strength (the total squared singular-value mass of the Gram matrix), the same type of convex regularization may perform very differently for the two.

This paper asks how that spectral allocation affects the fundamental limitations of convex estimation in high-dimensional linear regression. Our main conclusion is that the limiting spectral law—not only the aspect ratio or the average squared singular value—governs a family of lower bounds for the risk of convex regularization. This permits comparisons that are invisible in the canonical iid Gaussian model, where the limiting spectral shape is fixed once the aspect ratio is specified.

\textbf{Statistical model.}
Throughout this work, we will consider the data to be drawn from the classical linear model
\begin{equation}
    y=X\beta^\star+\varepsilon,
    \label{eq:linear-model}
\end{equation}
where we have a design matrix $X \in \mathbb R^{n\times p}$, signal vector $\beta^\star \in \mathbb R^p$, and noise vector $\varepsilon \in \mathbb R^n$. We will study estimation performance asymptotically, under the proportional high-dimensional asymptotic regime where both $n, p \rightarrow \infty$ with $n/p \rightarrow \delta \in (0, \infty)$. We observe the outcome vector $y$ and the design matrix $X$, and we seek to estimate the signal vector $\beta^\star$ that is unobservable but has a known prior distribution.

Our assumption on the design matrix is that it is \textit{right-rotationally invariant} (RRI), or in other words, writing \(X=U\Sigma V^T\), the matrix \(V\) is Haar distributed and independent of the remaining model components. We assume that the empirical law of the eigenvalues of \(X^T X\) converges to a compactly supported probability law \(\mu\) on \([0,\infty)\), which we refer to as the limiting Gram spectral law. Unlike in an iid Gaussian model, this law is not determined by the aspect ratio, allowing us to treat the shape of the spectrum as a statistical parameter.

We focus on the widely used and computationally tractable class of separable convex-penalized least-squares estimators defined as
\begin{equation}
    \widehat\beta_{\cvx}^{h}
    \in
    \argmin_{\beta\in\R^p}
    \Big\{
        \frac12\norm{y-X\beta}_2^2
        +\sum_{j=1}^p h(\beta_j)
    \Big\},
    \label{eq: penalized-OLS}
\end{equation}
for scalar penalty functions \(h:\R\to\R\cup\{+\infty\}\) that are proper, lower semicontinuous, and convex. As mentioned above, this class includes many commonly used procedures, including the aforementioned ridge, Lasso, and elastic net. It is natural to then ask:
\emph{within this class of convex regularizers, how does the limiting Gram spectral law affect the best-possible asymptotic performance?} We will call the optimal achievable asymptotic performance by the estimators defined in \eqref{eq: penalized-OLS} the \textit{convex barrier}, later precisely defined in \eqref{eq:cvx-barrier-bayes-vamp-lb}.

\textbf{A motivating example.}
Let us first show a simple example to see how design matrix properties can change the performance of convex estimation. Consider an orthogonal matrix $V \in \mathbb R^{p \times p}$ that is drawn from the uniform distribution on the orthogonal group $O(p)$. 

The first design matrix we consider is $X_{0}=V^T$. Notice that $X_{0}^T X_{0}=\mathbf I_p$, so that all of the squared singular values of $X_0$ are equal to one. We will refer to this as a \emph{flat} design. For the second design, using the same $V$,
fix \(a\in(0,1)\), and set $X_{a}=\Sigma_{a}V^T,$ where 
\[
    \Sigma_{a}^2
    =
    \diag(\underbrace{1-a,\ldots,1-a}_{p/2},
          \underbrace{1+a,\ldots,1+a}_{p/2}).
\]
Now, half of the squared singular values of $X_a$ are equal to \(1-a\) and half are equal to \(1+a\). Thus,
both of the design matrices considered use the same number of observations per parameter and have the
same total squared singular-value mass: $p^{-1}\tr(X_{0}^T X_{0})
    =
    p^{-1}\tr(X_{a}^T X_{a})
    =1$. The only difference is the allocation of the squared singular-value mass:
the flat design measures every direction equally, whereas \(X_{a}\) trades
weaker measurements in half of the directions for stronger measurements in
the other half.

\begin{figure}[t]
    \centering
    \safeincludegraphics[width=0.96\textwidth]{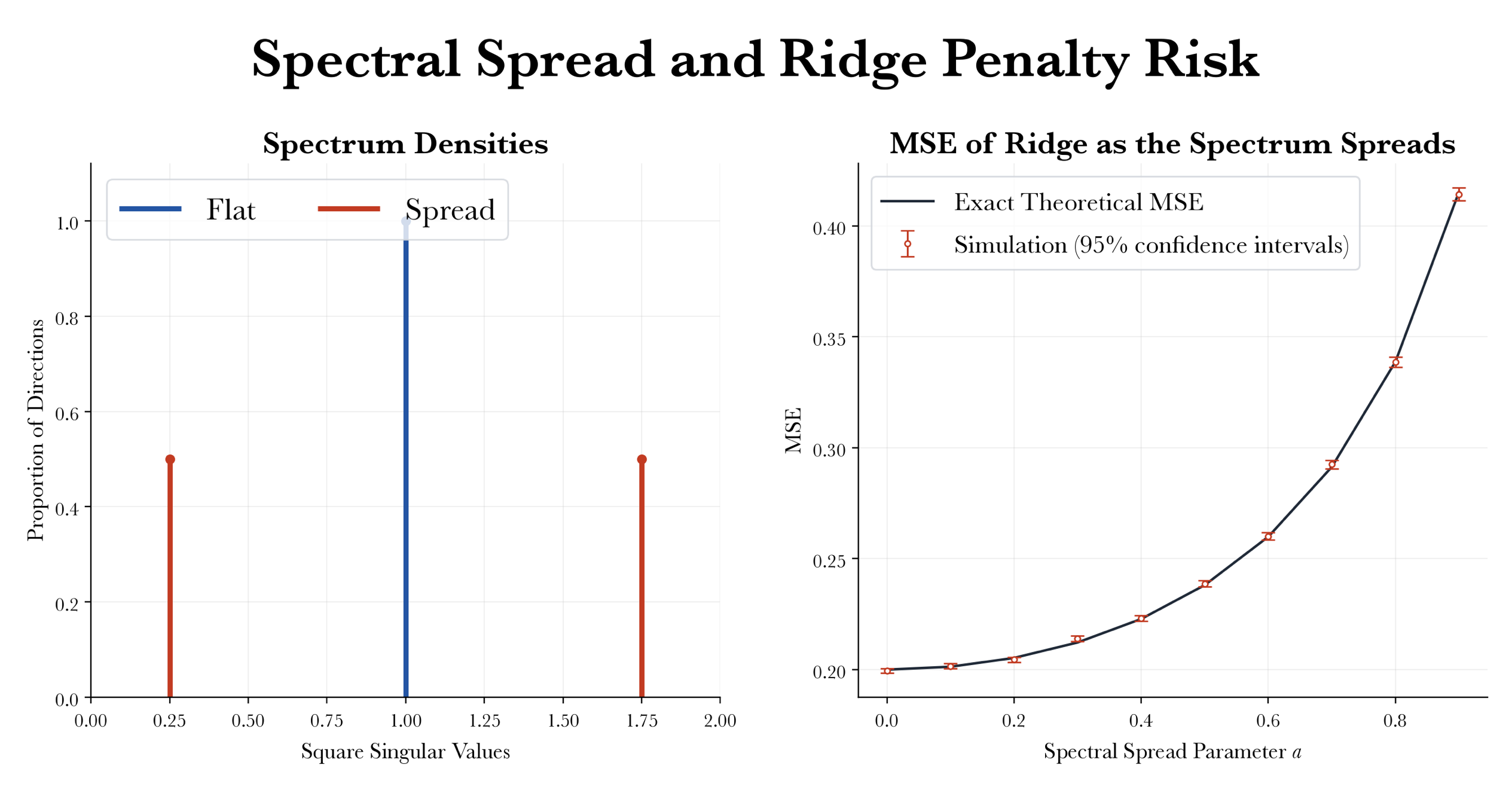}
    \caption{Spectral allocation affects estimation even when \(\tr(X^T X)=p\) is held fixed. We assume the data are drawn from the linear model defined in \eqref{eq:linear-model} using \(p=800\),
    \(\beta^\star\sim N(0,\mathbf I_p)\), and
    \(\varepsilon\sim N(0,0.25\mathbf I_p)\). We use the ridge regression estimator defined in \eqref{eq: penalized-OLS} where the penalty $h(b) = 0.125 b^2$.
    The figure on the left shows
    the spectral distributions for two design matrices with \(a=0\) and \(a=0.75\). The right panel shows the MSE of the ridge estimator as
    \(a\) increases. The 300-repetition averages are plotted along with 95\%
    Monte Carlo error bars, with exact expected MSE in black.}
    \label{fig:intro-spectral-spread}
\end{figure}

Figure~\ref{fig:intro-spectral-spread} shows that this redistribution in measurement allocation alone is sufficient to substantially change the performance of convex estimation. The left plot in 
Figure~\ref{fig:intro-spectral-spread} visualizes the spectral distributions of the flat design (i.e.\ $a=0$) and the design with $a= 0.75$. The right plot shows both the exact asymptotic theoretical performance and empirical performance measured by the mean squared error (MSE) of ridge regression under a linear model with Gaussian noise, a Gaussian prior on the unknown signal, and $p = 800$. We can see that as the spectral spread increases, meaning as $a$ grows, the performance of ridge regression decays. In other words, in terms of ridge performance, it appears that strengthening half of the directions does not compensate for
weakening the other half.
We provide a rigorous result in Section \ref{sec:main-spectral-order} that captures this phenomenon.

While this example concerns one particular penalty and is not itself a statement
about the best convex estimator, its purpose is to expose the statistical
question addressed in this paper: does the same spectral-allocation principle
govern a lower bound over an entire class of convex
procedures? \Cref{prop:spectral-monotonicity,cor:flat-spectrum-best} show that
it does. The aspect ratio and average squared singular value are not sufficient
to determine this lower bound; the shape of the full squared-singular-value law
matters.

\textbf{From an iid Gaussian lower bound to spectrum-dependent lower bounds.} When the design matrix $X$ has entries that are iid $N(0, 1/n)$, the work in \textcite{Celentano2022barrier} established a fundamental
lower bound on the asymptotic risk of convex-penalized regression. Their lower bound
is given by the asymptotic MSE of Bayes AMP, a message-passing procedure whose
denoising step uses the signal prior \parencite{Bayati2011AMP, Donoho2009MessagePassing}. They also characterized when a convex
estimator can attain that lower bound and when a strict gap remains.

The iid Gaussian setting studied in \textcite{Celentano2022barrier} cannot, however, reveal how the shape of the spectrum
affects this barrier. Once the aspect ratio is fixed for iid Gaussian matrices, the limiting Gram
spectrum is the Marchenko--Pastur law, derived in \textcite{MarchenkoPastur}. The aspect ratio records the number of
measurements $n$ relative to the ambient dimension $p$, but it does not separately
describe whether the measurements cover the parameter space evenly, whether
some directions are nearly invisible, or whether a small number of
directions receive a disproportionate share of the measurement strength.
 These aspects of a design's measurement are instead demonstrated via its spectrum.
The motivating example just discussed isolates this feature in ridge regression and demonstrates that the limiting spectral distribution greatly affects the estimator's performance.
This limitation is also practically significant:
in many statistical settings where the linear model in \eqref{eq:linear-model} is used, the design matrix is in fact
ill-conditioned or otherwise far from iid Gaussian, so it is worthwhile to better understand the limits of convex estimation in such settings.

We therefore study this convex-penalized least squares problem for a more general class of design matrices; in particular, we will consider the class of RRI design matrices introduced above. This class, which will be defined concretely in the next section and has often been studied in the AMP literature and related fields, encapsulates a much wider range of covariate matrices: the class allows dependence between the columns, heavier-tailed distributions than the Gaussian, and nearly arbitrary spectra. 
When using the RRI class to model the design, it therefore becomes possible to ask not only where the MSE lower bound for convex-penalized least squares lies for different signal prior distributions but also how the spectrum changes it. Indeed, the central question explored in this paper is:
\begin{quote}
\emph{For an RRI design matrix, how well can we perform estimation of the signal $\beta^\star$ when limited to convex-penalized estimators, and how does the distribution of the design's singular values, along with the signal prior distribution, change this barrier to convex-penalized estimation?}
\end{quote}

For RRI designs, the corresponding spectrum-aware
message-passing procedure is referred to as Bayes VAMP, where VAMP is shorthand for \textit{vector approximate message passing}; see \textcite{Rangan2018vamp, Fletcher2018Plugin, SchniterRF16a}. Its asymptotic MSE depends on the
signal prior, the noise level, and the full limiting spectral law $\mu$. We prove in Theorem~\ref{thm:Bayes VAMP-lower-bound}
that this MSE lower-bounds the asymptotic MSE of every estimator in our admissible convex class. As the spectral law changes, this produces a family of lower bounds on the convex barrier rather than a single extension of the iid Gaussian formula.

\subsection{Main contributions}
\label{sec:intro-contributions}

The paper is organized around three main contributions, which we introduce in what follows:

\begin{enumerate}[label=\textup{(\roman*)},leftmargin=2.4em]

    \item \textbf{A family of convex-risk lower bounds indexed by the spectrum.} Theorem~\ref{thm:Bayes VAMP-lower-bound} proves that, for every limiting spectral law satisfying some assumptions presented in Section~\ref{sec:model}, the Bayes-VAMP asymptotic MSE is a lower bound for the asymptotic MSE of every admissible coordinate-separable convex estimator. The result is therefore not a universality statement asserting that the iid Gaussian lower bound remains unchanged for a larger class of matrices. Different spectral laws generally produce different lower bounds.

    A spectral monotonicity result in Proposition~\ref{prop:spectral-monotonicity} compares these lower bounds across spectral laws. If one spectrum dominates another in Stieltjes-transform order, then its Bayes-VAMP lower bound is larger. Convex order, which describes a mean-preserving spread of spectral mass, implies this ordering.
    At fixed average measurement strength and common bounded support, Corollary~\ref{cor:flat-spectrum-best} identifies the flat spectrum as minimizing the lower bound and an endpoint-supported spectrum as maximizing it. Additional results describe how spectral moments, mass near zero, and product-matrix structure affect the lower bound in different regimes.

    \item \textbf{An exact criterion for attainability by convex regularization.} 
        The Bayes-VAMP MSE is always a lower bound under the assumptions of
   Theorem~\ref{thm:Bayes VAMP-lower-bound}, but it need not equal the smallest risk attainable
   by the admissible convex class. Proposition~\ref{prop: strict-equality}
   gives an exact criterion: equality holds precisely when the signal prior,
   after convolution with a Gaussian whose variance is determined by the
   prior, noise level, and spectral law, is log-concave.

   The spectrum therefore has two roles. It changes the numerical value of
   the Bayes-VAMP lower bound, and it can determine whether that lower bound
   is attainable by convex regularization. Whenever the equality criterion
   holds for the spectra under comparison—e.g., for a log-concave
   signal prior—the spectral ordering also orders the actual infimal convex
   risk. Outside that regime, it orders the rigorous lower bound, while
   the infimal convex risk may be strictly larger.

    \item \textbf{MSE characterization of any $\ell_2$-perturbed penalty.} 
    The proof of our lower bound theorem relies critically on a useful technical result about adding a further $\ell_2$ perturbation of a convex objective to make it strongly convex; see equation \eqref{eq:oracle_perturbed_beta}. This
   oracle perturbation is a proof device rather than an implementable
   estimator. Lemma~\ref{thm:oracle-fixed-point} proves that the associated
   convex-VAMP fixed-point equations admit a solution for every positive
   perturbation strength. Theorem~\ref{thm:oracle-mse} then characterizes the
   asymptotic MSE of the optimizer of the perturbed objective.

   Then, using the result of Theorem~\ref{thm:oracle-mse}, the perturbation can be sent to
   zero, thereby transferring the scalar VAMP comparison back to the original convex
   estimator. Theorem~\ref{thm:oracle-mse} strengthens earlier VAMP analyses that required sufficiently
   large perturbation strength \parencite{Gerbelot2020vamp} or additional regularity assumptions on the
   penalty or proximal map \parencite{Li2026debiasing}, and we believe this technical result will be independently useful for the AMP community. 

\end{enumerate}

\subsection{Relation to prior work}
\label{sec:related-work}

Ridge regression, the Lasso, and the elastic net \parencite{Hoerl1970ridge,Tibshirani1996lasso,Zou2005elastic} are foundational examples of
convex-penalized least squares. Approximate message passing (AMP) and Gaussian comparison methods give
precise high-dimensional risk characterizations for many specified
estimators \parencite{Bayati2012LassoRisk, donoho2016high, bu2020algorithmic, sur2019modern,
Thrampoulidis2018Precise,Miolane2021Lasso}. These results primarily evaluate a fixed estimator or convex
formulation. We instead study a lower bound over an admissible class of
penalties and compare that lower bound across spectral laws.

The closest conceptual predecessor is the work of Celentano and Montanari \parencite{Celentano2022barrier}.
Their oracle-perturbation argument lower-bounds convex risk by the Bayes-AMP risk for iid Gaussian designs and characterizes attainability through
log-concavity. Our design class makes the limiting spectrum an object of
comparison, but our penalty class is narrower: \textcite{Celentano2022barrier} allow
certain nonseparable symmetric penalties, whereas we consider
dimension-independent coordinate-separable penalties.

Our proof also builds on message passing approaches, particularly those designed for rotationally invariant measurement matrices. Orthogonal AMP and VAMP \parencite{Ma2017OAMP,Rangan2018vamp, Fletcher2018Plugin,Takeuchi2020EP,Fan2022RotInvAMP} extend state-evolution ideas, where state evolution is a scalar recursion determining the algorithm's asymptotic performance, beyond iid Gaussian matrices.
Gerbelot and collaborators \parencite{Gerbelot2020vamp} studied the connection between VAMP and convex regression under sufficiently large additional \(\ell_2\) regularization.
Related work in \textcite{Li2026debiasing} gives conditions under which the asymptotic MSE of a convex estimator can be characterized through VAMP. Our fixed-point argument, Lemma~\ref{thm:oracle-fixed-point}, provides the all-positive-strength regularizer result required by the oracle comparison used here. VAMP is consequently the tool used to express the lower bound in terms of the spectrum. 
For a survey on approximate message passing and related algorithms, see \textcite{Feng2021Tutorial}. 

RRI models also arise in spectral-universality
results, which show that asymptotic behavior may depend primarily on the squared singular-value law even for more structured designs
\parencite{Dudeja2022SpectralUniversality,
Wang2024AMPUniversality}. These results suggest that limiting spectra may govern asymptotic behavior beyond exact rotational invariance, but our results assume an exactly Haar right singular-vector matrix, while extending the lower bound on convex-penalized estimation through universality is a separate problem that has not been pursued in this work. Likewise, Bayesian results characterize mutual information and posterior risk for important rotationally invariant models
\parencite{BarbierEtAl2018MutualInformation,LiFanSenWu2024RotInv}.

Finally, random-matrix analyses of ridge and minimum-norm regression already show that estimation risk can depend on the full spectral distribution
\parencite{DobribanWager2018Ridge,HastieEtAl2022Ridgeless}. Our contribution is different: rather than evaluating a penalized high-dimensional linear regression problem with a fixed convex penalty $h$ and spectrum, we compare the lower bound across spectral laws. 
The VAMP state evolution's spectrum-based updates enable spectral-order comparisons, extremal-spectrum results, and detailed analysis of representative design-matrix examples.

\subsection{Organization of the paper}
The paper is organized as follows. \cref{sec:model} introduces our notation, model assumptions, and the VAMP algorithm. 
\cref{sec:main-results} states our main results. Further spectral consequences and representative examples are collected immediately after the main results in \cref{sec:spectral-consequences}. \cref{sec:proof_sketch} gives a proof sketch of our main theorem, while \cref{sec: examples-penalties} addresses an important technicality that appears in the proof of our main theorem. Lastly, \cref{sec:simulations} includes numerical experiments that illustrate and validate our theoretical findings. All technical proofs are relegated to the appendices.

\section{Model and the spectrum-dependent lower bound}
\label{sec:model}

In this section, we will give our assumptions on the high-dimensional model studied in this work and define the scalar lower bound used in the main results. The construction separates the contribution of the signal prior from that of the limiting Gram spectral law.

\textbf{Notation.}
Let \(\mathcal P(E)\) denote the probability laws on a space \(E\), and let
\(\mathcal P_2(\mathbb R)\) denote those on \(\mathbb R\) with finite second moment. Let \(W_2\) denote the Wasserstein-2 distance, and let
\(\delta_x\) denote the point mass at \(x\).
For \(X\sim\nu\), we write \(\mathbb E_\nu\) and \(\mathbb P_\nu\) when the underlying law needs to be emphasized. Unless another limit is specified,
\(p\to\infty\), \(n=n(p)\), and \(n/p\to\delta\in(0,\infty)\). We write \([p]:=\{1,\ldots,p\}\). For a differentiable map $F(x) : \mathbb R^d \rightarrow \mathbb R^m$, we define its Jacobian to be $DF(x)$, and its divergence to be $\dive F(x)$.

\begin{definition}[Convergence in empirical distribution]
Let $\{\Bx_p\}_{p \geq 1} = \{(x_1, \cdots, x_p)\}_{p \geq 1}$ be a vector sequence, which may be deterministic or random. Define the following sequence of empirical measures on $\R$ indexed by $p$: for all $A \in \mathscr{B}(\R)$, we have
$\nu_p(A):= \frac{1}{p} \sum_{j = 1}^p \delta_{x_j}(A)$.
We say $\nu_p$ converges \textit{almost surely in Wasserstein-2 distance} to a measure $\nu$, denoted $\nu_p\xrightarrow{W_2}_{\mathrm{a.s.}}\nu$, if
$\lim_{p \rightarrow \infty} W_2(\nu_p, \nu) = 0$ almost surely. We say $\nu_p$ converges \textit{in probability in Wasserstein-2 distance} to $\nu$, denoted $\nu_p\xrightarrow{W_2}_{\PP}\nu,$ if $\lim_{p \rightarrow \infty} \PP(W_2(\nu_p, \nu) > \epsilon) = 0$ for any $\epsilon > 0$.
\end{definition}

\textbf{Assumptions.} For each \(p\), let \(n=n(p)\) and consider the linear model in \eqref{eq:linear-model}. The
following assumptions are maintained throughout the main results.

\begin{assumption}[RRI design]
\label{ass1}
Let \(X=U\Sigma V^T\) be the SVD of the design matrix $X$. $X$ is said to be RRI if \(V\in\R^{p\times p}\) is Haar distributed---uniform on the orthogonal group $O(p)$---and independent of \((U,\Sigma, \beta^\star)\). $U$ and $\Sigma$ can be deterministic or random, as long as they are jointly independent of $V$. 
Denoting the eigenvalues of the Gram matrix \(X^T X\) by
\(s_{1,p},\ldots,s_{p,p}\), we assume that
$\mu_p:=\frac1p\sum_{i=1}^p\delta_{s_{i,p}}
    \xrightarrow{W_2}_{\mathrm{a.s.}} \mu,$
where \(\mu\) is compactly supported on \([0,\infty)\) and
\(\E_\mu[S]>0\) for \(S\sim\mu\). 
We further assume that
$\limsup_{p \rightarrow \infty} \max_{i \in [p]} s_{i, p} < \infty$ almost surely,
or equivalently that $\limsup_{p \rightarrow \infty} \|X\|_{\rm op} < \infty$ almost surely.
\end{assumption}

We call $\mu$ the limiting spectral law of $X$. We define $q:=\PP_\mu(S>0)=1-\PP_\mu(S=0)$
as the mass of the limiting law on positive squared singular values, and $s_+ := \sup \mathrm{supp}(\mu)$ to mean the supremum of the support of $\mu$.  

\begin{assumption}[Gaussian noise]
\label{ass2}
For a fixed noise variance \(\sigma^2>0\), the noise is independent of \((U, \Sigma, V,\beta^\star)\) and satisfies $\varepsilon\sim N(0,\sigma^2 I_n)$.
\end{assumption}

\begin{assumption}[Empirical signal prior law]
\label{ass3}
For some probability law \(\pi\in\cP_2(\R)\), we assume that the elements of the signal $\beta^\star$ are such that $\frac1p\sum_{j=1}^p\delta_{\beta_j^\star}
    \xrightarrow{W_2}_{\mathrm{a.s.}} \pi.$
We write \(B^\star\sim\pi\).
\end{assumption}

\begin{assumption}[Convex penalty]
\label{ass4}
The scalar penalty \(h:\R\to\R\cup\{+\infty\}\) is proper, lower
semicontinuous (lsc), nonconstant, and convex. We denote the class of such penalties
by \(\cC\). Moreover, we assume that the minimization set of the convex problem \eqref{eq: penalized-OLS} is nonempty almost surely for any $p \in \N$.
\end{assumption}

\begin{definition}\label{defn:proximal-map}
For any \(h \in \cC\) and constant \(a>0\), the scalar proximal map is
\[
    \prox_{a h}(y)
    :=
    \argmin_{x\in\R}
    \Big\{
        \frac12(x-y)^2+a h(x)
    \Big\}.
\]
\end{definition}

The
set of minimizers of \(h\), when used, is denoted by
\(M:=\argmin_{x\in\R}h(x)\). The symbol \(\partial h\) denotes the convex subdifferential and the derivative notation $\prox_{ah}'$ denotes the almost-everywhere derivative.  

\subsection{The scalar Bayes-VAMP lower bound}
\label{sec:vamp-intro}

For RRI designs, the VAMP state evolution reduces the high-dimensional estimation problem to a scalar recursion. Its useful feature
for the present paper is that the signal prior and the limiting spectral law
enter through separate maps. We define those maps directly, while the corresponding
finite-dimensional algorithms are given in Appendix~\ref{sec:Bayes-VAMP-intro}.

In general, the Bayes VAMP algorithm takes the design matrix $X$ and the observation $y$ as input and iteratively updates estimates of the signal $\beta^\star$.
Each iteration of the algorithm is split into two stages, wherein a map for each of the two steps transforms a variance term produced by the other step in turn. The first part is what we will call a nonlinear \textit{denoising stage}, which applies a (nonlinear) conditional distribution ``denoising function'' to the output of the previous iteration, and the second is a \textit{linear stage} that applies a linear function to the output of the denoising stage. Naturally, the structure of the state evolution mirrors that of the algorithm iterates: each step in the recursion is also split into two stages. The first is the \textit{prior stage}, which corresponds to the algorithm's nonlinear stage and depends on the signal prior $\pi$, and the second we will call the \textit{spectrum stage}, which corresponds to the linear stage of the algorithm and depends on the limiting spectral law $\mu$. Each step of the state evolution recursion produces an \textit{effective noise variance} term $\tau_t$ and a spectrum-stage \textit{denoising extrinsic variance} term $\omega_t$, both of which track the variance in the estimate upon exiting that stage. 

We now describe these two components in detail. 

\textbf{The prior stage.}
Let \(B^\star\sim\pi\) and \(Z\sim N(0,1)\) be independent. For an effective
noise variance \(\tau>0\), the posterior-mean risk is defined as
\begin{equation}
\label{eq:mmse}
m_\pi(\tau) :=
    \mmse_\pi(\tau)
:=
\mathbb E\left[
    \left(
        \mathbb E[B^\star\mid B^\star+\sqrt{\tau}Z]
        -
        B^\star
    \right)^2
\right].
\end{equation}
This is also the minimum mean squared error (MMSE) for estimating \(B^\star\) from the
effective Gaussian observation \(B^\star+\sqrt{\tau}Z\).

The uncertainty passed from the Bayes denoising stage to the linear stage is
\begin{equation}
\mathcal E_\pi(\tau)
:=
\frac{m_\pi(\tau)}
     {1-m_\pi(\tau)/\tau}.
    \label{eq:extrinsic}
\end{equation}
Both \(m_\pi\) and \(\mathcal E_\pi\) are continuous and nondecreasing; see
\cref{lem: m-E-nondecreasing}. A
scalar comparison proved later shows that no admissible VAMP denoiser sends less
extrinsic uncertainty to the linear stage than the posterior-mean denoiser (see \cref{lem: omega-D}).

\textbf{The spectrum stage.} Let \(S\sim\mu\) denote a limiting squared singular value. For
\(\omega>0\), define the spectrum-stage map
\begin{equation}
L_\mu(\omega)
:=
\frac{
    \mathbb E_\mu\left[
        \dfrac{\sigma^2\omega}{\sigma^2+\omega S}
    \right]
}{
    \mathbb E_\mu\left[
        \dfrac{\omega S}{\sigma^2+\omega S}
    \right]
}.
    \label{eq:spectrum-mod}
\end{equation}
Its continuous extension at zero is
$
L_\mu(0)
:=
\lim_{\omega\downarrow0}L_\mu(\omega)
=
{\sigma^2}/{\mathbb E_\mu[S]}.
$
This map contains the entire contribution of the limiting spectrum to the
Bayes-VAMP scalar recursion.

\textbf{Greatest fixed point and risk lower bound.}
Combining the prior and spectral contributions in their respective stages gives the one-dimensional
Bayes-VAMP state evolution:
\begin{equation}
    \tau_{t+1}^{B}
    =
    T_{\mu,\pi}(\tau_t^B)
    :=
    L_\mu\bigl(\cE_\pi(\tau_t^B)\bigr).
    \label{eq:Bayes-VAMP-recursion}
\end{equation}
The map \(T_{\mu,\pi}\) is continuous and nondecreasing by \cref{lem: m-E-nondecreasing}. Using \cref{lem: best-linear-risk}, we also have that
$
0
<
L_\mu(0)
\le
T_{\mu,\pi}(\tau)
\le
L_\mu(\operatorname{Var}(B^\star))
<
\infty.
$
Consequently, the fixed-point set of $T_{\mu,\pi}(\cdot)$ is nonempty and compact. We define its
greatest fixed point by
\begin{equation}
\tau_B^+
:=
\max\left\{
    \tau>0:
    T_{\mu,\pi}(\tau)=\tau
\right\}.
    \label{eq:tau-B-definition}
\end{equation}
The associated spectrum-dependent risk lower bound is $m_\pi(\tau_B^+)$; this is given in Theorem~\ref{thm:Bayes VAMP-lower-bound}.
The prior affects this lower bound through \(m_\pi\) and
\(\mathcal E_\pi\), while the design affects it through \(L_\mu\). This
separation is the basis for the spectral comparisons in the main results.

\subsection{The admissible penalty class}

The lower-bound theorem, Theorem~\ref{thm:Bayes VAMP-lower-bound}, applies to all penalties in \(\mathcal C\) defined in ~\Cref{ass4}, except in
one rank-deficient spectral regime, where the present proof requires an
additional width condition. In short, this condition prevents a penalty from being completely flat on a region containing too
much probability under a scalar signal-plus-Gaussian-noise distribution that arises from the use of approximate message passing approaches. To
state this condition precisely, let \(B^\star\sim\pi\) and \(Z\sim N(0,1)\)
be independent. For \(h\in\cC\) and \(\kappa,\nu>0\), define
\begin{equation}
\label{eq:Ah}
    A_h(\kappa,\nu)
    :=
    \frac1{\sqrt\nu} \, 
    \E\left[
        Z\prox_{\kappa h}\left(B^\star+\sqrt\nu Z\right)
    \right] = \E\left[
        (\prox_{\kappa h})'
        (B^\star+\sqrt\nu Z)
    \right],
\end{equation}
where the second equality holds by Stein's lemma.
Let \(S\sim\mu\), and write
$p_0:=\PP(S=0) = 1-q,$ and 
\[I_+:=\int_{(0,\infty)}\frac1s\,\mu(ds) = \E\left[\frac{\mathbf 1_{\{S > 0\}}}{S}\right].\]
In general, \(I_+\ne\E[1/S]\) when \(p_0>0\). Because we have assumed \(\E[S]>0\), we have \(q>0\).

\begin{definition}[\(q\)-bounded width]\label{defn:q-width-bounded-b}
A penalty \(h\in\cC\) satisfies the
\emph{\(q\)-bounded width condition} if there are
\(\epsilon_0\in(0,\tau_B^+)\) and \(\delta_0>0\) such that
\begin{equation}
    \limsup_{\kappa\to\infty}
    \sup_{\nu\in[\tau_B^+ -\epsilon_0,\tau_B^+]}A_h(\kappa,\nu)
    \le q-\delta_0.
    \label{eq:q-width-condition}
\end{equation}
We write \(\cC_{\rm WB}(q;\tau_B^+)\) for the penalties satisfying
\eqref{eq:q-width-condition}.  
\end{definition}
The penalty class that will be used in Theorem \ref{thm:Bayes VAMP-lower-bound} is defined as
\begin{equation}
    \cC_{\rm LB}(\mu,\pi,\sigma^2)
    :=
    \begin{cases}
        \cC,
        & p_0=0\ \text{or}\ I_+=\infty,\\[1mm]
        \cC_{\rm WB}(q;\tau_B^+),
        & p_0>0\ \text{and}\ I_+<\infty.
    \end{cases}
    \label{eq:admissible-penalty-class}
\end{equation}
We discuss the meaning and necessity of this $q$-bounded width condition in \cref{sec: examples-penalties}. We also show that, even in regimes in which $\cC_{\text{WB}}$ does not cover all convex penalties $h \in \cC$, it still includes a rich enough class: for example, it contains the class of all norms on $\R$ and all strongly convex penalties. Moreover, we mention that a similar condition was required even in the iid Gaussian setting; see the $\delta$-bounded width assumption in the AMP case \parencite[Definition 2.1]{Celentano2022barrier}; we discuss the similarities between our condition and theirs in \cref{sec: comparison-celentano-delta}.

\section{Main results}
\label{sec:main-results}
Section~\ref{sec:vamp-intro} separates the Bayes-VAMP state evolution into
the prior map \(\cE_\pi\) in \eqref{eq:extrinsic} and the spectral map \(L_\mu\) in \eqref{eq:spectrum-mod}. We now use that
representation to establish three conclusions. First, an oracle perturbation
makes every admissible convex estimator analyzable by a convex-VAMP algorithm and the associated state
evolution (see Lemma~\ref{thm:oracle-fixed-point} and Theorem~\ref{thm:oracle-mse}). Second, the resulting comparison between this convex VAMP and Bayes VAMP shows that the Bayes-VAMP risk is
a lower bound for convex-penalized regression in Theorem~\ref{thm:Bayes VAMP-lower-bound} and identifies exactly when the
bound is attained in~\Cref{prop: strict-equality}. Third, an order on spectral laws given in~\Cref{prop:spectral-monotonicity} makes the dependence of
this lower bound on the design spectrum explicit.

\subsection{Oracle-perturbed convex VAMP MSE}
\label{sec:main-oracle-fixed-point}

The first step in the proof is to link an arbitrary convex estimator in \eqref{eq: penalized-OLS} to a VAMP algorithm via an oracle perturbation to the penalty. 
For \(\lambda>0\), define the oracle-perturbed estimator
\begin{equation}
\label{eq:oracle_perturbed_beta}
    \widehat\beta_{\cvx}^{h,\lambda}
    \in
    \arg\min_{\beta\in\mathbb R^p}
    \left\{
        \frac12\|y-X\beta\|_2^2
        +h^{(\lambda)}(\beta)
    \right\} \quad \text{ where } \quad h^{(\lambda)}(\beta)
    := \sum_{j=1}^p h(\beta_j) + \frac{\lambda}{2}\|\beta-\beta^\star\|_2^2.
\end{equation}
The ``oracle'' perturbation depends on \(\beta^\star\), so
\eqref{eq:oracle_perturbed_beta} is not an implementable estimator. Its role is to make the objective strongly convex so that it is easy to analyze via VAMP techniques.

We exclude only the degenerate case in which the scalar penalty forces every
coordinate to equal a prescribed constant.

\begin{assumption}\label{ass5}
Let $h$ satisfy \cref{ass4}. We assume that there exist at least two points $x \neq y$ in the domain of $h$ such that $h(x) \neq \infty$ and $h(y) \neq \infty$.
\end{assumption}
\cref{ass5} rules out the degenerate convex penalty that is an indicator of a singleton set. For example, for some $a \in \R$, define $h: \R \rightarrow \R \cup \{+\infty\}$ by
$h(a) = 0$ and $h(x) = \infty$ when $x \neq a$.

For each penalty $h^{(\lambda)}(\beta)$ in \eqref{eq:oracle_perturbed_beta}, we associate a specific VAMP algorithm given in \cref{alg: oracle-convex-vamp}, which we will refer to as the oracle-perturbed convex VAMP. Our main technical result in the paper is the following Lemma~\ref{thm:oracle-fixed-point}, which shows that 
the oracle-perturbed convex VAMP state evolution equations \eqref{eq: fpt-oracle-a}--\eqref{eq: fpt-oracle-d} have at least one fixed point.

\begin{lemma}[Oracle-perturbed convex VAMP fixed point]
\label{thm:oracle-fixed-point}
Let Assumptions~\ref{ass1}--\ref{ass4} and \cref{ass5} hold. Then, for every \(\lambda>0\) with $h^{(\lambda)}(\beta)$ defined as in \eqref{eq:oracle_perturbed_beta}, the
oracle-perturbed convex VAMP algorithm in \cref{alg: oracle-convex-vamp} admits a solution to its associated fixed-point equations.
\end{lemma}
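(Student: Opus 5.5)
The plan is to reduce the fixed-point system \eqref{eq: fpt-oracle-a}--\eqref{eq: fpt-oracle-d} to a scalar problem for the variance parameter, and then to close the argument with a continuity/compactness (intermediate value or Brouwer) argument. The key object is the perturbed denoiser
\[
g_\kappa(r;b):=\argmin_x\Big\{\tfrac{1}{2\kappa}(x-r)^2+h(x)+\tfrac{\lambda}{2}(x-b)^2\Big\}=\prox_{\frac{\kappa}{1+\kappa\lambda}h}\Big(\tfrac{r+\kappa\lambda b}{1+\kappa\lambda}\Big),
\]
with $b=B^\star$ and $r=B^\star+\sqrt{\tau}Z$. The identity rewrites it as an ordinary scalar prox of $h$, so its properties can be read off from Definition~\ref{defn:proximal-map}.

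\textbf{Step 1: reduce to a scalar equation in the precision.} In VAMP, a fixed point requires the denoiser divergence $\alpha_1=\E[g_\kappa'(B^\star+\sqrt\tau Z;B^\star)]$ and the linear-stage divergence $\alpha_2=\E_\mu[\gamma_2/(\gamma_2+S/\sigma^2)]$ to satisfy three relations:
\[
\alpha_1+\alpha_2=1,\qquad \eta=\gamma_1/\alpha_1=\gamma_2/\alpha_2,\qquad \gamma_1+\gamma_2=\eta,
\]
together with two variance (state-evolution) equations for $\tau_1,\tau_2$. I will carry this out in three sub-steps.
\begin{enumerate}
\item Collect the basic facts about the denoiser, using the rewriting above. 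It is $\frac{1}{1+\kappa\lambda}$-Lipschitz, so $\alpha_1\in[0,(1+\kappa\lambda)^{-1}]$. Its output is continuous in $(\kappa,\tau)$ by dominated convergence, since prox maps are jointly continuous and nonexpansive and $\pi\in\cP_2$.
\item Pin down the boundary behaviour in $\kappa=1/\gamma_1$. As $\kappa\downarrow0$ the map tends to the identity, so $\alpha_1\to1$. As $\kappa\to\infty$ the factor $(1+\kappa\lambda)^{-1}$ forces $\alpha_1\to0$. This is the first place where $\lambda>0$ is used: without it $\alpha_1$ could stay bounded away from $0$ (cf.\ the width condition), and this is exactly the obstruction handled in \cite{Gerbelot2020vamp} by taking $\lambda$ large.
\item Use Assumption~\ref{ass5} to ensure the prox is nonconstant on a set of positive Gaussian measure, so that $\alpha_1\in(0,1)$ strictly for $\kappa\in(0,\infty)$. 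This makes $\eta=\gamma_1/\alpha_1$ and $\gamma_2=\gamma_1(1-\alpha_1)/\alpha_1$ well defined and positive.
\end{enumerate}

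\textbf{Step 2: solve the precision equation for fixed $\tau$.} Fix $\tau$ in a compact interval $[\tau_{\min},\tau_{\max}]\subset(0,\infty)$, to be determined. Consider
\[
F_\tau(\gamma_1):=\alpha_1(\gamma_1,\tau)+\E_\mu\Big[\frac{\gamma_2}{\gamma_2+S/\sigma^2}\Big]-1,\qquad \gamma_2=\gamma_1\frac{1-\alpha_1}{\alpha_1}.
\]
I will show $F_\tau$ changes sign on $(0,\infty)$ by using the limits from Step 1.
\begin{itemize}
\item As $\gamma_1\to\infty$: $\alpha_1\to1$ and $\alpha_2\ge0$, so $\liminf F_\tau\ge0$. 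Strict positivity follows by comparing rates: $\gamma_2\approx\gamma_1(1-\alpha_1)$ should not vanish, since $1-\alpha_1\gtrsim\kappa\lambda/(1+\kappa\lambda)$.
\item As $\gamma_1\to0$: $\alpha_1\to0$. The product $\gamma_2\approx\gamma_1/\alpha_1$ stays controlled because $\alpha_1\ge c\,\gamma_1$ by the explicit $\lambda$-term. Hence $\alpha_2<1-c'$, making $F_\tau<0$. When $p_0=\PP_\mu(S=0)>0$, the linear stage contributes mass $p_0$ at $\alpha_2=1$, and the estimate has to handle this atom separately.
\end{itemize}
The intermediate value theorem then gives a root $\gamma_1^*(\tau)$. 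If the root is not unique, I will select it measurably, for instance the largest one, or pass to the set-valued map.

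\textbf{Step 3: close the variance equation.} Substituting $\gamma_1^*(\tau)$ into the two variance equations defines a scalar map $\tau\mapsto\Phi(\tau)$. The argument has three parts.
\begin{itemize}
\item \emph{Bounds.} The linear-stage variance is bounded above using $\sigma^2>0$ and the compact support of $\mu$ (Assumption~\ref{ass1}). It is bounded below by a positive constant because of the noise. The denoiser-stage error is bounded by $\Var(B^\star)+\tau$ via the $\lambda$-contraction toward $B^\star$. Together these show $\Phi$ maps $[\tau_{\min},\tau_{\max}]$ into itself.
\item \emph{Continuity.} If $\gamma_1^*$ is continuous, the IVT or Brouwer's theorem applied to $\Phi$ gives $\tau^*$. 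If not, I will instead work on the two-dimensional compact set of pairs $(\gamma_1,\tau)$ with $\gamma_1$ confined by the bounds of Step 2. Kakutani's theorem (the solution sets are closed, and intervals under suitable monotonicity) or Brouwer's theorem on a suitably continuous reformulation then gives a joint fixed point.
\end{itemize}

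\textbf{Main obstacle.} I expect the hardest step to be the uniform control in Step 2: keeping $\gamma_1^*(\tau)$ bounded away from $0$ and $\infty$ uniformly in $\tau$, so that compactness is preserved. I will try first to derive explicit two-sided bounds on $\alpha_1$ from the representation of the denoiser as $\prox_{\frac{\kappa}{1+\kappa\lambda}h}$ applied to a rescaled input. Because $\lambda>0$, its derivative is at most $1/(1+\kappa\lambda)$ and at least of order $\min\{1,(\kappa\lambda)^{-1}\}$ times the probability that the input lands where $h$ is non-flat, a probability bounded below via Assumption~\ref{ass5}.
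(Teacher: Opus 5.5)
Your overall scheme (solve the precision equation, then run a sign or fixed-point argument in the variance) is the same as the paper's reduction to $(\rho,\xi)$. However, the estimate that actually carries the proof is missing.

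\textbf{The large-variance contraction.} In Step~3 you assert that $\Phi$ maps $[\tau_{\min},\tau_{\max}]$ into itself, but nothing you list yields $\Phi(\tau)<\tau$ for large $\tau$. Use the paper's variables $\rho=1/\eta$, $c=\eta-\gamma_1$, $J=\mathbb{E}[(S+c)^{-2}]$, $M=\mathbb{E}[S(S+c)^{-2}]$, $K=M/J$, and $\xi=\theta/\sqrt{\tau_1}$ with $\theta=1/\gamma_1$. At a root of the precision equation the linear stage returns
\[
\Phi(\tau_1)=\frac{1}{\alpha_1^{2}}\Big[\sigma^2M+\frac{J-\rho^2}{\rho^2}\big(r-\alpha_1^2\tau_1\big)\Big],
\]
and $\Phi(\tau_1)<\tau_1$ is equivalent to $\xi^2\big(R'(\rho)r+\sigma^2K(\rho)\big)<1$.
\begin{itemize}
\item Your Lipschitz-type bound on the denoiser risk only gives $\xi^2r\le(w+\xi\sqrt{C})^2$ with $w=1/(\gamma_1+\lambda)$. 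On the precision curve $\rho=wA$, so $\limsup_{\xi\downarrow0}\xi^2R'r\le R'w^2=R'w\rho/A$, where $A$ is the average slope of $\operatorname{prox}_{wh}$.
\item This is not below one when $A$ is small. The factor $\alpha_1^{-2}$ is exactly what ``the linear-stage variance is bounded'' overlooks.
\item The missing idea is that the risk is controlled by the \emph{average} slope, not the Lipschitz constant. The vanishing-at-origin Gaussian Poincar\'e inequality (\cref{lem:mod-poincare}) gives $\xi^2r\le w^2A+\varepsilon_h(\xi)$ (\cref{lem:uniform-risk-estimate}).
\item With that bound the relevant quantity is $R'w\rho=(w/\theta)(-\rho R'/R)<w/\theta=(1+\lambda\theta)^{-1}\le(1+\lambda/s_+)^{-1}<1$, by \cref{lem:spectral-identities}(iii). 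This contraction is where $\lambda>0$ is really needed.
\end{itemize}
By contrast, the ``main obstacle'' you name is easy. The precision equation reads $\gamma_1=-R(1/\eta)$. Since $-R\le s_+$ and $\eta\ge\gamma_1+\lambda$, all roots lie in a $\tau$-independent interval, and the endpoint signs of $F_\tau$ follow directly.

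\textbf{The closing argument.} Even granting the sign conditions, the closing step does not go through as stated.
\begin{itemize}
\item Roots $\gamma_1^*(\tau)$ need not be unique, and the largest root is only upper semicontinuous. So neither the IVT nor Brouwer applies to $\Phi$.
\item Kakutani needs convex values. That would require monotonicity of $\alpha_1$ in $\gamma_1$, which a general convex $h$ does not have, since both the prox scale and the effective noise move with $\gamma_1$.
\end{itemize}
The paper avoids any selection by working jointly in two variables and applying Poincar\'e--Miranda to $(F_1,\widetilde F_2)$, where $\widetilde F_2=F_2+C_0H(\xi)|F_1|$. This needs the sign of the variance equation only on the zero set of the precision equation along the top and bottom edges, which is exactly what the Poincar\'e estimate supplies.

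\textbf{Smaller errors in Steps~1--2.} Two of your justifications are false, although their conclusions survive.
\begin{itemize}
\item $\alpha_1\to1$ as $\kappa\downarrow0$ fails for extended-valued $h$. For the indicator of $[0,\infty)$, $\alpha_1\to\mathbb{P}(B^\star+\sqrt\tau Z>0)$.
\item $\alpha_1\ge c\gamma_1$ fails. Soft thresholding with a prior supported inside the dead zone of $\operatorname{prox}_{h/\lambda}$ makes $\alpha_1$ decay faster than any power of $\gamma_1$, so $\alpha_2\to1$.
\item For the objective $\frac12\|y-X\beta\|_2^2$, the linear-stage divergence is $\mathbb{E}[\gamma_2/(\gamma_2+S)]$, without the $\sigma^{-2}$.
\end{itemize}
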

The state evolution fixed-point equations and the proof of Lemma~\ref{thm:oracle-fixed-point} appear in
\cref{sec:oracle-fp-existence}. The proof argument reduces the four fixed-point equations to only two
variables and then applies the Poincar\'e--Miranda theorem \parencite[Eq.~(1)]{Frankowska2018PM}.

\begin{rem}[An independent contribution to the VAMP literature]
In \cref{thm:oracle-fixed-point}, the quantifier \(\lambda>0\) is essential in that it addresses an unresolved issue in the earlier VAMP literature. \textcite[p.~4, Eq.~(6)]{Gerbelot2020vamp} proved the distributional characterization for sufficiently strong \(\ell_2\)-regularization strengths $\lambda$ and conjectured that it should remain valid for every positive strength \(\lambda>0\). Subsequent work in \textcite{Gerbelot2023glm} sought to remove the restriction on \(\lambda\) through an analytic-continuation argument, but as pointed out in \textcite[Remark C.1]{Li2026debiasing}, that argument does not justify a required exchange of limits, so it does not provide a fully rigorous resolution. Although Li and Sur established fixed-point existence under additional structural conditions on the penalty, their result does not cover the oracle-centered perturbation in \eqref{eq:oracle_perturbed_beta} for an arbitrary convex \(h \in \cC\) and every \(\lambda>0\).
\end{rem}

A consequence of Lemma~\ref{thm:oracle-fixed-point} is that the asymptotic MSE of the corresponding
oracle-perturbed VAMP estimator is characterized by the oracle-perturbed VAMP state evolution. In particular:
\begin{theorem}[Oracle-perturbed convex VAMP MSE]
\label{thm:oracle-mse}
Let Assumptions~\ref{ass1}--\ref{ass4} and \cref{ass5} hold. For each $\lambda>0$, let
$(\bar\gamma_{1,\lambda},\eta_\lambda,
      \tau_{1,\lambda},\tau_{2,\lambda})$
be a solution of the oracle-perturbed fixed-point equations, whose existence is guaranteed by Lemma~\ref{thm:oracle-fixed-point}. Then, when $\widehat\beta_{\cvx}^{h,\lambda}$ is a minimizer of \eqref{eq:oracle_perturbed_beta}, we have
\begin{equation*}
   \frac1p
    \|\widehat\beta_{\cvx}^{h,\lambda}-\beta^\star\|_2^2 \xlongrightarrow{\PP}
    \mathbb E
    \left[
        \left(
            \prox_{\alpha_\lambda h}
            \left(
                B^\star+\sqrt{\nu_\lambda}Z
            \right)
            -
            B^\star
        \right)^2
    \right] =: r^{h, \lambda},
\end{equation*}
where
\begin{equation}
\label{eq:alpha_and_nu}
    \alpha_\lambda
    =
    \frac{1}
         {\bar\gamma_{1,\lambda}+\lambda}
\qquad \text { and } \qquad 
    \nu_\lambda
    =
    \left(
        \frac{\bar\gamma_{1,\lambda}}
             {\bar\gamma_{1,\lambda}+\lambda}
    \right)^2
    \tau_{1,\lambda}.
\end{equation}
\end{theorem}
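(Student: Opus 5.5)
The plan is the standard "VAMP iterates converge to the convex minimizer" argument, run at a fixed point of the state evolution.

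\textbf{Step 1: rewrite the objective and start at the fixed point.} Write the penalty $h^{(\lambda)}$ coordinatewise as $\tilde h_j(b)=h(b)+\frac{\lambda}{2}(b-\beta_j^\star)^2$. Its proximal map is explicit:
\[
\prox_{\gamma^{-1}\tilde h_j}(r)=\prox_{(\gamma+\lambda)^{-1}h}\Big(\tfrac{\gamma r+\lambda\beta^\star_j}{\gamma+\lambda}\Big).
\]
If we plug in $r=\beta^\star_j+\sqrt{\tau_1}Z$ with $\gamma=\bar\gamma_{1,\lambda}$, the argument becomes $\beta_j^\star+\frac{\bar\gamma_{1,\lambda}}{\bar\gamma_{1,\lambda}+\lambda}\sqrt{\tau_{1,\lambda}}Z$. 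This is exactly where $\alpha_\lambda$ and $\nu_\lambda$ in \eqref{eq:alpha_and_nu} come from.

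Next, take the fixed point $(\bar\gamma_{1,\lambda},\eta_\lambda,\tau_{1,\lambda},\tau_{2,\lambda})$ supplied by Lemma~\ref{thm:oracle-fixed-point}. Run \cref{alg: oracle-convex-vamp} with its precision parameters frozen at these fixed-point values, starting from an initialization whose state evolution sits exactly at the fixed point. The denoiser $\prox$ is Lipschitz and $\beta^\star$ enters as side information, so the standard VAMP state evolution for RRI designs applies under Assumptions~\ref{ass1}--\ref{ass3} (as in \textcite{Rangan2018vamp,Fan2022RotInvAMP}). It gives, for every fixed iteration $t$,
\[
\frac1p\|\widehat\beta^t-\beta^\star\|_2^2\to r^{h,\lambda}.
\]
Because we start at the fixed point, the limit does not depend on $t$.

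\textbf{Step 2: identify the fixed point of the algorithm with the minimizer.} At any algorithmic fixed point of convex VAMP, the KKT conditions show that the common estimate $\widehat\beta$ satisfies
\[
0\in X^T(X\widehat\beta-y)+\partial h^{(\lambda)}(\widehat\beta).
\]
This is proved by combining the linear-stage identity with the prox optimality condition, using the Onsager coefficient $\eta_\lambda$ to cancel the extrinsic terms. Since $h^{(\lambda)}$ is $\lambda$-strongly convex, $\widehat\beta_{\cvx}^{h,\lambda}$ is unique. For a general iterate we only have approximate stationarity,
\[
X^T(X\widehat\beta^t-y)+g^t=e^t,\qquad g^t\in\partial h^{(\lambda)}(\widehat\beta^t).
\]
Strong convexity then yields the deterministic bound
\[
\|\widehat\beta^t-\widehat\beta_{\cvx}^{h,\lambda}\|_2\le\lambda^{-1}\|e^t\|_2 .
\]
So it suffices to show
\[
\lim_{t\to\infty}\plim_{p\to\infty}\frac1p\|e^t\|_2^2=0 .
\]

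\textbf{Step 3 (main obstacle): the residual vanishes.} The residual $e^t$ is a linear combination of the differences $r_1^{t+1}-r_1^t$ and $r_2^{t+1}-r_2^t$ between consecutive VAMP inputs, with coefficients depending only on $\bar\gamma_{1,\lambda},\eta_\lambda,\lambda,\|X\|_{\rm op}$. Its normalized norm is therefore controlled by the state evolution of consecutive-iterate covariances, e.g.
\[
\frac1p\|r_1^{t+1}-r_1^t\|^2\to 2\big(\tau_{1,\lambda}-\mathrm{Cov}_{t,t+1}\big).
\]
One must show these covariances converge to $\tau_{1,\lambda}$. The covariance recursion is a map $\rho\mapsto F(\rho)$ on correlations, built by composing the prior-stage and spectrum-stage covariance maps. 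I would prove it is a contraction with factor below $1$ whenever $\lambda>0$, along these lines:

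\begin{itemize}[leftmargin=2.4em]
\item The prior-stage covariance map is bounded by the derivative of the prox. That derivative is at most $\frac{\bar\gamma}{\bar\gamma+\lambda}<1$, which follows from the explicit prox formula in Step 1.
\item The linear stage is nonexpansive in the appropriate sense, by Jensen/Cauchy--Schwarz applied over $S\sim\mu$.
\end{itemize}

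This is the step I expect to be hardest. The difficulty is precisely why \textcite{Gerbelot2020vamp} needed large $\lambda$: it requires checking that the product of the stage contraction factors is strictly below one using the fixed-point equations themselves. If the direct bound fails, the fallback is a Lyapunov/energy argument in the style of \textcite{Li2026debiasing}. The idea is to show that $h^{(\lambda)}$ evaluated along the iterates approaches its minimum, exploiting $\lambda$-strong convexity.

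\textbf{Step 4: conclude.} Given Step 3, combine the triangle inequality with Steps 1--2 and take $p\to\infty$ and then $t\to\infty$. This yields $\frac1p\|\widehat\beta^{h,\lambda}_{\cvx}-\beta^\star\|_2^2\to r^{h,\lambda}$ in probability.
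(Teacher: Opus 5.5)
Your proposal is correct and follows essentially the same route as the paper: oracle VAMP with parameters frozen at the fixed point, stationary fixed-$t$ state evolution giving $r^{h,\lambda}$ through the completed-square prox identity, contraction of the cross-iteration overlap recursion, the exact residual identity $u_t=\frac{c}{\eta}(X^TX-\gamma I)(r_{2,t}-r_{2,t-1})$ for a subgradient of the oracle objective, and the $\lambda^{-1}$ strong-convexity bound with $p\to\infty$ taken before $t\to\infty$. In Step~3 no Lyapunov fallback is needed, because your per-stage heuristic works once normalized correctly. Write $D_\lambda\in[0,s_\lambda]$ for the oracle prox derivative, $a=\mathbb{E}[D_\lambda]=\gamma/\eta$, $c=\eta-\gamma$ and $J=\mathbb{E}[(S+c)^{-2}]$; then the paper's factor $\kappa(\eta/c)^2\operatorname{Var}(D_\lambda)$ equals $\bigl[\operatorname{Var}(D_\lambda)/(a(1-a))\bigr]\cdot\bigl[(c/\gamma)(\eta^2J-1)\bigr]$. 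The first bracket is at most $(s_\lambda-a)/(1-a)<1$, the second is $<1$ since $1/\eta-cJ=\mathbb{E}[S/(S+c)^2]>0$, and convexity of the overlap map (via its Hermite expansion) turns this endpoint bound into a global contraction.
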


The proof of Theorem~\ref{thm:oracle-mse} can be found in Section~\ref{sec:main_result1_proof}. In particular, results in \cref{sec: oracle-convex-vamp-1} characterize the performance of the oracle-perturbed convex VAMP algorithm via its associated state evolution and show that this algorithm
converges to the (unique) solution of \eqref{eq:oracle_perturbed_beta}. With this, one can characterize the performance of $\widehat\beta_{\cvx}^{h,\lambda}$ as an estimator of $\beta^\star$ via the state evolution as well.

\begin{rem}[Nonuniqueness of the fixed points]
\Cref{thm:oracle-fixed-point} asserts existence, not uniqueness, and we do
not claim global convergence from an arbitrary initialization. 
However, this does not create an ambiguity in the risk characterization of
Theorem~\ref{thm:oracle-mse}.
Indeed, the objective in
\eqref{eq:oracle_perturbed_beta} is \(\lambda\)-strongly convex and
therefore has a unique minimizer
\(\widehat\beta_{\cvx}^{h,\lambda}\). In the proof of \Cref{thm:oracle-fixed-point} we show that an oracle convex-VAMP iteration
initialized at any fixed point converges, in
the relevant iterated limit, to this unique minimizer. Consequently, every fixed point covered by \cref{thm:oracle-mse} gives the
same limiting risk \(r^{h,\lambda}\); see also
\textcite[Remark~C.2]{Li2026debiasing}.
 Moreover, the proof of the convex barrier result, Theorem~\ref{thm:Bayes VAMP-lower-bound}, requires
only the particular fixed point constructed in
\cref{prop:perturbed-fp-var}, not uniqueness of the full fixed-point set.
\end{rem}

\subsection{Bayes VAMP forms the lower bound on convex estimation}
\label{sec:main-convex-barrier}

The next step of the proof relies on the fact that the oracle-perturbed penalty in \eqref{eq:oracle_perturbed_beta} is centered at \(\beta^\star\), which implies that its estimator outperforms
$\widehat\beta_{\mathrm{cvx}}^{h}$ in that $\|\widehat\beta_{\mathrm{cvx}}^{h,\lambda}-\beta^\star\|_2^2
\le
\|\widehat\beta_{\mathrm{cvx}}^{h}-\beta^\star\|_2^2$ for every fixed $\beta^\star$. Hence, we will be able to 
lower bound the original convex risk $\|\widehat\beta_{\mathrm{cvx}}^{h}-\beta^\star\|_2^2$ asymptotically by \(r^{h,\lambda}\) defined in Theorem~\ref{thm:oracle-mse}, for a
carefully selected, sufficiently small but fixed \(\lambda>0\). In particular, the fact that Theorem~\ref{thm:oracle-mse} holds for every \(\lambda>0\) is essential for its application as a proof device in our lower-bound result Theorem~\ref{thm:Bayes VAMP-lower-bound}.

Before we present the main result in Theorem~\ref{thm:Bayes VAMP-lower-bound}, we define one more notion of convergence used in this result.

\begin{definition}
For any sequence of real-valued random variables $\{X_p\}_{p \in \N}$, not necessarily defined on the same probability space, we denote
\[
\overset{\PP}{\liminf_{p \rightarrow \infty}} X_p = \sup \left\{t \in \R \mid \lim_{p \rightarrow \infty} \PP(X_p < t) = 0\right\},
\]
which is called the probability-liminf of $\{X_p\}_{p \in \N}$. For inline equations, we also use the notation $\mathrm{p}\liminf_{p \rightarrow \infty} X_p$.
\end{definition}

\begin{theorem}[Bayes VAMP MSE lower bound for convex penalties]
\label{thm:Bayes VAMP-lower-bound}
Suppose that Assumptions~\ref{ass1}--\ref{ass4} hold and let
\(\tau_B^+\) be defined as in
\eqref{eq:tau-B-definition}. Let \(\widehat\beta_{\cvx}^{h}\) be any measurable minimizer of the convex problem \eqref{eq: penalized-OLS} for \(h\in\cC_{\rm LB}(\mu,\pi,\sigma^2)\) defined in \eqref{eq:admissible-penalty-class}. Then, for $m_{\pi}$ defined in \eqref{eq:mmse},
\begin{equation}
   \mathrm{convex \ barrier } := \inf_{h\in\cC_{\rm LB}(\mu,\pi,\sigma^2)}
    \overset{\PP}{\liminf_{p \rightarrow \infty}} \frac1p
        \left\|
            \widehat\beta_{\cvx}^{h}-\beta^\star
        \right\|_2^2
    \ge m_{\pi}(\tau_B^+) =: \mathrm{Bayes-VAMP \ lower \ bound}.
    \label{eq:cvx-barrier-bayes-vamp-lb}
\end{equation}
\end{theorem}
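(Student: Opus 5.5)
We follow the oracle-perturbation strategy of \textcite{Celentano2022barrier}, adapted to the VAMP state evolution. Fix $h\in\cC_{\rm LB}(\mu,\pi,\sigma^2)$. If $h$ violates \cref{ass5}, so that it is the indicator of a singleton $\{a\}$, then $\widehat\beta^h_{\cvx}=a\mathbf 1$. Its risk tends to $\E[(B^\star-a)^2]\ge \operatorname{Var}(B^\star)\ge m_\pi(\tau_B^+)$, so we may assume \cref{ass5}.

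\textbf{Step 1: pass to the oracle-perturbed risk.} Since the perturbation is centered at $\beta^\star$, the comparison $\|\widehat\beta^{h,\lambda}_{\cvx}-\beta^\star\|_2\le\|\widehat\beta^{h}_{\cvx}-\beta^\star\|_2$ holds deterministically for every $\lambda>0$. To see this, add the first-order optimality conditions of the two problems and use monotonicity of $\partial h$ together with positive semidefiniteness of $X^TX$. Combining this with \cref{thm:oracle-mse}, valid for every $\lambda>0$ by \cref{thm:oracle-fixed-point}, gives
\[
\mathrm{p}\liminf_{p \rightarrow \infty}\tfrac1p\|\widehat\beta^h_{\cvx}-\beta^\star\|_2^2\ \ge\ \sup_{\lambda>0} r^{h,\lambda}.
\]
It therefore suffices to show $\limsup_{\lambda\downarrow0} r^{h,\lambda}\ge m_\pi(\tau_B^+)$, or that $r^{h,\lambda}\ge m_\pi(\tau_B^+)-\epsilon$ for some small $\lambda$.

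\textbf{Step 2: scalar comparison at the fixed point.} At a fixed point $(\bar\gamma_{1,\lambda},\eta_\lambda,\tau_{1,\lambda},\tau_{2,\lambda})$, the denoising stage applies the scalar denoiser $x\mapsto \prox_{\alpha_\lambda h}$ of a rescaled input, with effective noise $\nu_\lambda$. Its output MSE is $r^{h,\lambda}\ge m_\pi(\nu_\lambda)$, by optimality of the posterior mean. Its extrinsic variance passed to the linear stage, $\omega_\lambda$, is bounded below by $\cE_\pi(\nu_\lambda)$ by \cref{lem: omega-D}. The linear stage then returns $\nu$ through the spectrum map. In the limit $\lambda\to0$, the oracle term enters the linear stage only as an additive $\lambda$ in the precision, so the returned variance is $L_\mu(\omega_\lambda)+o(1)$.

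Because $L_\mu$ is nondecreasing, this yields $\nu_\lambda\ge T_{\mu,\pi}(\nu_\lambda)-o(1)$. Take a limit point $\nu_0$ of $\nu_\lambda$ along $\lambda\downarrow 0$; it is bounded since $T\le L_\mu(\operatorname{Var}B^\star)$. Then $\nu_0\ge T_{\mu,\pi}(\nu_0)$. Since $T$ is continuous and nondecreasing with $T(\tau)\le L_\mu(\operatorname{Var}B^\star)$, every $\tau$ with $T(\tau)\le\tau$ satisfies $\tau\ge\tau_B^+$. Indeed, for $\tau>\tau_B^+$ we have $T(\tau)<\tau$ by maximality of the greatest fixed point, and iterating $T$ downward from such $\tau$ stays above $\tau_B^+$ by monotonicity.

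This last claim needs care. The correct statement is that the set $\{\tau: T(\tau)\le \tau\}$ could also contain points below $\tau_B^+$. I will instead use the specific fixed point built in \cref{prop:perturbed-fp-var}, which is constructed so that $\nu_\lambda$ lies in $[\tau_B^+-\epsilon_0,\infty)$. Equivalently, I can run the comparison as an iteration: Bayes VAMP initialized at $\tau=\infty$ decreases monotonically to $\tau_B^+$ and stays above any convex-VAMP trajectory started from the same point. With $\nu_0\ge\tau_B^+$, monotonicity of $m_\pi$ and continuity give $\liminf r^{h,\lambda}\ge m_\pi(\nu_0)\ge m_\pi(\tau_B^+)$.

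\textbf{Main obstacle: the limit $\lambda\downarrow0$.} I must show that $\bar\gamma_{1,\lambda}$ stays bounded away from $0$ and $\infty$, so that $\nu_\lambda$ and $\alpha_\lambda$ have nondegenerate limits and $\lambda/\bar\gamma_{1,\lambda}\to0$. The dangerous scenario is $\alpha_\lambda\to\infty$, where the proximal map collapses onto the flat set of $h$.

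When $p_0=0$ or $I_+=\infty$, the spectrum stage forces the divergence $A_h$ to stay away from its ceiling. In the rank-deficient case with $I_+<\infty$, the linear stage's extrinsic divergence can reach $q$. There I invoke the $q$-bounded width condition \eqref{eq:q-width-condition}: it bounds $A_h(\kappa,\nu)\le q-\delta_0$ uniformly for large $\kappa$ and $\nu$ near $\tau_B^+$. This keeps the fixed-point divergence balance $A_h=$ (linear-stage divergence) $<q$ solvable only with bounded $\kappa=\alpha_\lambda$.

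I would first establish these a priori bounds on $(\bar\gamma_{1,\lambda},\nu_\lambda)$ by contradiction along subsequences. Then I take limits in the fixed-point equations, using continuity of $\prox$ in $(\kappa,x)$ and dominated convergence. Finally, taking the infimum over $h$ gives \eqref{eq:cvx-barrier-bayes-vamp-lb}.
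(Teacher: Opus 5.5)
Your Step 1 is fine. Your subgradient-monotonicity proof of the oracle comparison is a valid alternative to the objective-value argument of \cref{lem:oraclemonotone}. Step 2, however, has a genuine gap exactly where you flag it, and your patch does not close it.

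\textbf{The gap in Step 2.} At an arbitrary oracle fixed point the scalar comparisons give only $\nu_\lambda=L_{\lambda,c}(\omega_\lambda)$, together with $\omega_\lambda\ge\mathcal E_\pi(\nu_\lambda)$ and $L_{0,c}\ge L_\mu$. This is an inequality, not $L_\mu(\omega_\lambda)+o(1)$: the convex linear stage is mismatched ($c\neq\sigma^2/\omega_\lambda$), and the $o(1)$ is not uniform as $c\to0$. So a limit point satisfies only $T_{\mu,\pi}(\nu_0)\le\nu_0$, which still allows $\nu_0<\tau_B^+$. The a priori bounds in your last paragraph, even if proved, lead back to this same dead end.

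\textbf{The fallback misstates \cref{prop:perturbed-fp-var}.} That result does not place $\nu_\lambda$ in $[\tau_B^+-\epsilon_0,\infty)$. Even if it did, you would get only $m_\pi(\tau_B^+-\epsilon_0)$, and $\epsilon_0$ does not exist outside the regime $p_0>0$, $I_+<\infty$. What it gives is $\nu_\lambda\ge b$, where $(b,u_b)$ is an $h$-admissible lower-bound pair as in \cref{def:lb-pair}.

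\textbf{The missing idea} is how to produce such pairs with $b=\tau_B^+$ or $b\uparrow\tau_B^+$. The greatest fixed point can be tangential, with $T_{\mu,\pi}(\tau)\le\tau$ on a left neighbourhood of $\tau_B^+$. In that case pairs $(b,\mathcal E_\pi(b))$ with $T_{\mu,\pi}(b)>b$ need not exist near $\tau_B^+$. The paper therefore splits on whether $f_{\tau_B^+}$ is proximal:
\begin{itemize}
\item If it is not, the extrinsic Pythagorean identity (\cref{lem:pythag}) gives a gap $\Delta_{\tau_B^+}>0$ that is uniform over all proximal denoisers. Strict monotonicity of $L_\mu$ for non-flat spectra then yields the pair $(\tau_B^+,\mathcal E_\pi(\tau_B^+)+\Delta_{\tau_B^+})$.
\item If it is, $0\le f'\le 1$ forces $T_{\mu,\pi}'(\tau_B^+)<1$, so $T_{\mu,\pi}(b)>b$ just below $\tau_B^+$ (\cref{lem:lb-pair-existence}).
\end{itemize}
You then still need two further steps. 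First, the sign statement that $F_{2,\lambda}(\rho,b)>0$ on $\{F_{1,\lambda}(\cdot,b)=0\}$ for all small $\lambda$ (\cref{lem:lb-pair-sign}). This, not bounding arbitrary fixed points, is where your $c\to0$ worry and the $q$-width condition actually enter. Second, a fresh Poincar\'e--Miranda argument on $[\rho_-,\rho_+]\times[b,M]$ to produce a fixed point with $\nu_\lambda\ge b$. None of this appears in your proposal. The flat-spectrum case (no strict monotonicity of $L_\mu$) and the affine-$h$ case (where $a=1$ and $\omega$ is undefined) also need separate treatment, which you omit.

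\textbf{The iteration route does not rescue this.} As written its direction is reversed: the comparison $\tau_{t+1}\ge T_{\mu,\pi}(\tau_t)$ (up to $\lambda$-corrections) keeps the convex trajectory above the Bayes one, not the other way round. More importantly, a state-evolution comparison along iterates started away from a fixed point says nothing about $\widehat\beta^{h,\lambda}_{\mathrm{cvx}}$. \cref{thm:oracle-mse} ties the optimizer only to the stationary, fixed-point-initialized recursion. You would additionally need the mismatched, two-parameter convex state evolution to converge to a fixed point, and that is not available.
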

The main idea is to compare the greatest Bayes VAMP fixed point with a specifically selected sequence of oracle-perturbed convex-VAMP fixed points whose perturbation strengths vanish.
We give a proof sketch in \cref{sec:proof_sketch} and the full proof in
\cref{sec:main_result2_proof}.

\begin{rem}
The lower bound $m_\pi(\tau_B^+)$ in \cref{thm:Bayes VAMP-lower-bound} is a meaningful and interpretable quantity. $m_\pi(\tau_B^+)$ represents the scalar Bayes MMSE of Bayes VAMP \cref{alg: Bayes VAMP}. See \cref{prop:bayes-vamp-risk}.
\end{rem}

For iid Gaussian designs, the result reduces to the known AMP lower bound.
\begin{corollary}
\label{cor:gaussian-specialization}
If \(X_{ij}\stackrel{\mathrm{iid}}{\sim}N(0,1/n)\), then \(\mu\) is the corresponding
Marchenko--Pastur law. Therefore, the VAMP and AMP scalar fixed points agree
\parencite[Theorem 1]{Zhang2020identical}, and \cref{thm:Bayes VAMP-lower-bound} matches the
Gaussian lower bound result given in \textcite[Corollary 2.3]
{Celentano2022barrier}.
\end{corollary}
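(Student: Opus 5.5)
The plan is to check that the iid Gaussian design satisfies the hypotheses of \cref{thm:Bayes VAMP-lower-bound}. Then I identify each ingredient of the resulting bound, namely the spectral law, the fixed point $\tau_B^+$, the value $m_\pi(\tau_B^+)$ and the admissible penalty class $\cC_{\rm LB}$, with its counterpart in \textcite{Celentano2022barrier}.

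\textbf{Step 1: \cref{ass1} holds.} Since $X$ has iid $N(0,1/n)$ entries, its law is invariant under $X\mapsto XO$ for every fixed $O\in O(p)$. Take the SVD $X=U\Sigma V^T$ and multiply on the right by an independent Haar matrix $O$. The product $V^TO$ is again Haar and independent of $(U,\Sigma)$. Hence $X$ admits a representation with $V$ Haar and independent of $(U,\Sigma,\beta^\star)$, so $X$ is RRI.

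Next I verify the spectral conditions.
\begin{itemize}
\item The Marchenko--Pastur theorem \parencite{MarchenkoPastur} gives almost sure weak convergence of the empirical spectral law of $X^TX$ to the Marchenko--Pastur law $\mu$ with aspect ratio $p/n\to 1/\delta$.
\item The Bai--Yin theorem gives $\|X\|_{\rm op}\to 1+\delta^{-1/2}$ almost surely, which yields the uniform bound on $\max_i s_{i,p}$.
\item Weak convergence together with uniformly bounded support upgrades to $W_2$ convergence.
\item $\E_\mu[S]=1>0$.
\end{itemize}
\cref{ass2,ass3,ass4} are hypotheses shared with the Gaussian setting, so \cref{thm:Bayes VAMP-lower-bound} applies and gives the bound $m_\pi(\tau_B^+)$ over $\cC_{\rm LB}(\mu,\pi,\sigma^2)$.

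\textbf{Step 2: match the fixed points.} With $\mu$ the Marchenko--Pastur law, I would cite \textcite[Theorem 1]{Zhang2020identical}: the VAMP state-evolution fixed points coincide with those of the AMP state evolution. Concretely, I would check that $T_{\mu,\pi}(\tau)=L_\mu(\cE_\pi(\tau))$ has the same fixed-point set as the AMP map $\tau\mapsto\sigma^2+m_\pi(\tau)/\delta$, after matching the normalization of \textcite{Celentano2022barrier}.

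A direct verification would use the Stieltjes transform of the Marchenko--Pastur law. At a fixed point, the quadratic equation for that transform should collapse $L_\mu(\cE_\pi(\tau))=\tau$ into the AMP equation. Since the two maps have the same fixed-point set, their greatest fixed points agree. Because both bounds are $m_\pi$ evaluated at the greatest fixed point, $m_\pi(\tau_B^+)$ equals the Bayes-AMP lower bound of \textcite[Corollary 2.3]{Celentano2022barrier}.

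\textbf{Step 3: match the penalty classes (main obstacle).} If $\delta\ge1$, then $p_0=0$ and $\cC_{\rm LB}=\cC$, so no width condition arises.

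If $\delta<1$, then $p_0=1-\delta>0$ and $q=\delta$. The positive part of the Marchenko--Pastur law has a density vanishing like a square root at the edge $(1-\delta^{-1/2})^2>0$, so $I_+<\infty$. We are therefore in the case $\cC_{\rm LB}=\cC_{\rm WB}(\delta;\tau_B^+)$. I would compare \eqref{eq:q-width-condition} with the $\delta$-bounded width condition in \textcite[Definition 2.1]{Celentano2022barrier}. Both bound the asymptotic average of $(\prox_{\kappa h})'$ along the Gaussian channel near the greatest fixed point by $\delta$ minus a margin.

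What needs checking is the following:
\begin{itemize}
\item the effective noise levels are the same after the identification in Step 2;
\item the quantifiers (a $\limsup$ in $\kappa$, and a neighborhood of the fixed point) are compatible.
\end{itemize}
This is the content deferred to \cref{sec: comparison-celentano-delta}, and with it \eqref{eq:cvx-barrier-bayes-vamp-lb} reproduces Corollary~2.3 of \textcite{Celentano2022barrier}.
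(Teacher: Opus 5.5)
Your proposal is correct and follows the same route as the paper. You identify $\mu$ as the Marchenko--Pastur law, use \textcite[Theorem~1]{Zhang2020identical} to equate the greatest fixed points, and defer the width comparison to \cref{sec: comparison-celentano-delta}. Your direct check of the fixed points also goes through: the Marchenko--Pastur quadratic gives $L_\mu(\omega)=\sigma^2+\sigma^2\delta^{-1}G_\mu(\sigma^2/\omega)$, and at a fixed point $\sigma^2 G_\mu(\sigma^2/\omega)=m_\pi(\tau)$, which yields $\tau=\sigma^2+m_\pi(\tau)/\delta$.

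One correction to Step~3 concerns the width condition. The $\delta$-bounded width of \textcite{Celentano2022barrier} is global over compact sets of noise levels, not localized at the fixed point, so all you need is the inclusion $\mathcal{C}_{\rm GB}(q)\subseteq\mathcal{C}_{\rm WB}(q;\tau_B^+)$, with no matching of noise levels. Also, the agreement is only with the separable specialization of their Corollary~2.3, since they allow nonseparable penalties as well.
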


The result of~\Cref{thm:Bayes VAMP-lower-bound} also locates the convex barrier within the broader hierarchy of
estimation risks:
$$
\text{true Bayes MSE}
\ \le\ 
\text{Bayes-VAMP MSE}
\ \le\ 
\text{convex barrier}.
$$
The second inequality follows from~\Cref{thm:Bayes VAMP-lower-bound} and, in the next section, we will present an exact characterization of when the inequality is strict. The first inequality follows from Bayes optimality. Indeed, we recall that the true Bayes estimator $\hat\beta_{\mathrm{Bayes}}(X, y) := \E[\beta^\star | (X, y)]$ is not generally computable in polynomial time for high-dimensional problems, and while the true Bayes risk is not understood in full generality for all RRI matrices, it is known for some specific spectral regimes \parencite{LiFanSenWu2024RotInv}.

We visualize the three MSE curves in \cref{fig:three-curve-gaps} for three aspect-ratio regimes: in the left plot, the convex barrier curve is strictly above the Bayes VAMP curve, which equals the Bayes curve; in the right plot, both the convex barrier curve and the Bayes VAMP curve are strictly above the true Bayes curve; and in the center plot, both gaps are visible. However, we note that the gaps between the Bayes VAMP MSE and the true Bayes MSE are based on predictions from the replica method and have not been rigorously proved.

\begin{figure}[t]
    \centering
    \safeincludegraphics[width=\textwidth]{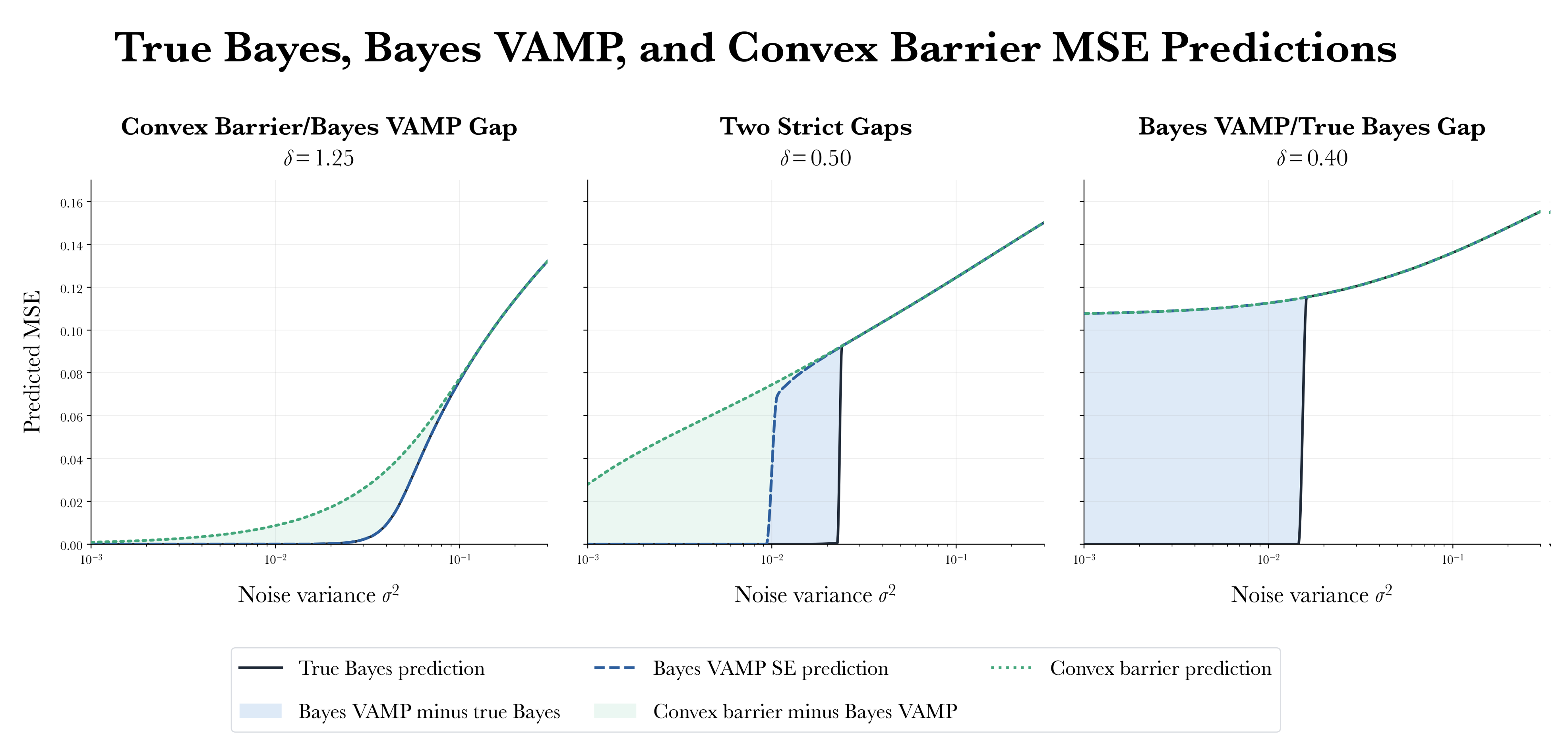}
    \caption{
    We assume a sparse Rademacher prior with $\mathbb P(\beta_j = 0) = 0.8$ and a deformed Marchenko--Pastur law generated by $X = DG$, where $G$ is iid Gaussian with entries $G_{ij} \sim N(0, 1/p)$ and $D$ is diagonal with $\mathbb P(D^2_{ii} = (1 - 0.4)/\delta) = \mathbb P(D^2_{ii} = (1 + 0.4)/\delta) = \frac{1}{2}$. We note that $X^T X$ has a worse condition number $\kappa^+ = \lambda_{\max}(X^TX)/\lambda_{\min}^+(X^TX)$, where $\lambda_{\min}^+$ is the smallest positive eigenvalue, than that of the Gram of the iid Gaussian matrix. We plot the predicted MSE curves for the true Bayes estimator \parencite{LiFanSenWu2024RotInv}, the Bayes VAMP estimator $m_{\pi}(\tau_B^+)$, and the infimum over the convex-penalized estimators (via numerical approximation). Over three aspect-ratio regimes $\delta \in \{1.25, 0.5, 0.4\}$, we see that the curves demonstrate different behaviors: sometimes having strict gaps and sometimes coinciding.
    }
    \label{fig:three-curve-gaps}
\end{figure}

\begin{rem}[Bayes VAMP optimality]
We mention that Bayes VAMP is moreover conjectured to be optimal among all
polynomial-time estimators (not just amongst convex regularizers) in the linear regression model \parencite{Zhang2026optimality}. Specifically, it is believed that, in the high-dimensional limit, no polynomial-time estimator can have an asymptotic MSE smaller than that of the Bayes VAMP algorithm.
Under that conjecture, its risk serves more generally
as a computational barrier. 
\end{rem}

\subsection{Strict gaps between the convex barrier and the Bayes-VAMP lower bound}
\label{sec:main-strict-gap}

\Cref{thm:Bayes VAMP-lower-bound} shows that the Bayes VAMP risk \(m_\pi(\tau_B^+)\) is a lower bound on the probability-liminf of the normalized MSE of every convex-penalized least-squares estimator with a penalty in the class
\(\cC_{\rm LB}(\mu, \pi, \sigma^2)\) defined in \eqref{eq:admissible-penalty-class}. The next result gives an exact scalar criterion for equality in \Cref{thm:Bayes VAMP-lower-bound}.

\begin{proposition}[Equality and strict convex gap]
\label{prop: strict-equality}
Let \(\tau_B^+\) be defined by \eqref{eq:tau-B-definition}. Then
\begin{equation}
    \inf_{h\in\cC_{\rm LB}(\mu,\pi,\sigma^2)}
\overset{\PP}{\liminf_{p \rightarrow \infty}} \frac1p
        \left\|
            \widehat\beta_{\cvx}^{h}-\beta^\star
        \right\|_2^2
= m_{\pi}(\tau_B^+)
    \label{eq: strict-equality}
\end{equation}
if and only if $\pi*N(0,\tau_B^+)$ is log-concave.
If $\pi*N(0,\tau_B^+)$ is not log-concave, then there exists a constant
\(\Delta_{\rm cvx}>0\), independent of \(h\), such that the left-hand side of
\eqref{eq: strict-equality} is at least
\(m_\pi(\tau_B^+)+\Delta_{\rm cvx}\).
\end{proposition}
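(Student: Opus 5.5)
The plan follows the Celentano--Montanari strategy, with the AMP fixed point replaced by the oracle-perturbed convex-VAMP fixed point of \cref{thm:oracle-mse} and the vanishing-$\lambda$ limit from the proof of \cref{thm:Bayes VAMP-lower-bound}. The point is that the scalar problems in both directions live entirely at the noise level $\tau_B^+$.

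\textbf{Sufficiency.} Assume $\pi*N(0,\tau_B^+)$ is log-concave. I will take the penalty $h_\star$ whose scaled proximal map is exactly the posterior mean,
\[
\prox_{\alpha h_\star}(y)=\eta_B(y;\tau_B^+):=\mathbb E[B^\star\mid B^\star+\sqrt{\tau_B^+}Z=y].
\]
Tweedie's formula gives $\eta_B(y)=y+\tau_B^+\,(\log p_{\tau_B^+})'(y)$. Log-concavity means $\eta_B'=1+\tau_B^+(\log p)''\le 1$, and $\eta_B'=\tau^{-1}\mathrm{Var}(B^\star\mid Y)\ge 0$. So $\eta_B$ is nondecreasing and $1$-Lipschitz, hence the proximal map of a proper lsc convex function. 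Here $\alpha>0$ is chosen to match the convex-VAMP scaling. Next I must check $h_\star\in\cC_{\rm LB}$; in the case $p_0>0$, $I_+<\infty$ this means the $q$-width condition, which I expect to follow from $\mathbb E[\eta_B']=1-m_\pi(\tau_B^+)/\tau_B^+$ and the fixed-point identity relating this quantity to $q$. I then plug the Bayes-VAMP fixed point $(\tau_B^+,\mathcal E_\pi(\tau_B^+))$ into the unperturbed ($\lambda=0$) convex-VAMP fixed-point equations, with $\bar\gamma_1$ determined by $\alpha$. Because the Onsager/divergence term of $\eta_B$ is exactly what produces $\mathcal E_\pi$, this tuple solves those equations. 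Finally I pass $\lambda\downarrow0$ using the stability argument of \cref{thm:Bayes VAMP-lower-bound}, which gives $\mathrm{p}\liminf\frac1p\|\widehat\beta^{h_\star}_{\cvx}-\beta^\star\|^2\le m_\pi(\tau_B^+)$. Combined with \cref{thm:Bayes VAMP-lower-bound}, this yields equality.

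\textbf{Necessity and gap.} Assume $\pi*N(0,\tau_B^+)$ is not log-concave. The lower-bound proof shows that any admissible $h$ has asymptotic risk at least $\liminf_{\lambda\to0}r^{h,\lambda}$. Moreover, the relevant fixed points have effective noise $\nu_\lambda$ with $\liminf\nu_\lambda\ge\tau_B^+$: otherwise monotonicity of $T_{\mu,\pi}$ (\cref{lem: m-E-nondecreasing}) and \cref{lem: omega-D} would produce a fixed point above $\tau_B^+$. So it suffices to prove a scalar gap. I claim there is $\Delta>0$ such that for every $\nu\ge\tau_B^+$ and every $1$-Lipschitz nondecreasing $f$ (every proximal map),
\[
\mathbb E\big[(f(B^\star+\sqrt\nu Z)-B^\star)^2\big]\ \ge\ m_\pi(\nu)+\Delta'(\nu),
\]
with the dependence on $\nu$ controlled. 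Actually it is cleaner to compare everything at $\tau_B^+$ through the VAMP fixed-point inequality: a non-Bayes denoiser sends strictly more extrinsic variance, which by monotonicity forces a strictly larger $\nu$ and hence a strictly larger risk.

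\textbf{Main obstacle.} The scalar strict-gap lemma is the hardest step. The plan is a compactness argument. The set of $1$-Lipschitz nondecreasing functions is compact under local uniform convergence (after controlling growth by the second moment of $\pi$), and the risk is lower semicontinuous. If the infimum equaled $m_\pi(\tau)$, the minimizer would be a.e.\ equal to $\eta_B$, since the MMSE estimator is unique. But $\eta_B$ is not $1$-Lipschitz, because non-log-concavity makes $\eta_B'>1$ on an open set. This is a contradiction. Uniformity in $\nu$ near $\tau_B^+$, and the translation from the denoiser gap to a gap in the extrinsic variance and then in the fixed point, will give a $\Delta_{\rm cvx}$ independent of $h$.
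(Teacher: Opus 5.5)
\textbf{The attainability direction has a genuine gap.} It is true that the Bayes fixed point solves the $\lambda=0$ convex-VAMP equations for $h_\star$, but this says nothing rigorous about $\widehat\beta^{h_\star}_{\mathrm{cvx}}$. The step in \cref{thm:oracle-mse} that identifies the VAMP limit with the optimizer uses strong convexity, which $h_\star$ lacks. Passing $\lambda\downarrow0$ ``using the stability argument of the lower-bound theorem'' also runs in the wrong direction. That argument rests on oracle monotonicity, $\|\widehat\beta^{h}_{\mathrm{cvx}}-\beta^\star\|_2\ge\|\widehat\beta^{h,\lambda}_{\mathrm{cvx}}-\beta^\star\|_2$, so $r^{h,\lambda}$ only ever lower-bounds the unperturbed risk. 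Turning it into an upper bound would require interchanging $\lambda\downarrow0$ with $p\to\infty$; moreover, the oracle estimator is not in the class over which you take the infimum.

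The missing idea is to approximate by a genuine, non-oracle, strongly convex penalty $h_{B,\epsilon}=h_B^\star+\frac{\epsilon}{2}x^2$, where $\operatorname{prox}_{h_B^\star/\gamma_B}=f_{\tau_B^+}$. This penalty has a unique minimizer, so it lies automatically in $\mathcal C_{\rm LB}$, and its risk is characterized by the strongly convex VAMP machinery. You must still show that its convex-VAMP fixed-point equations have a solution converging to the Bayes point as $\epsilon\downarrow0$, and this is not automatic. The paper shows that $(\rho_B,\tau_B^+)$ is a nondegenerate zero of the reduced map $(F_{1,0},F_{2,0})$: $\partial_\rho F_{1,0}>0$, and along the zero curve of $F_{1,0}$ the derivative of $F_{2,0}$ is a positive multiple of $T'(\tau_B^+)-1$. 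That quantity is negative exactly because $f_{\tau_B^+}$ is proximal. The paper then applies homotopy invariance of the Brouwer degree. Two smaller points: $\mathbb E[\eta_B']=m_\pi(\tau_B^+)/\tau_B^+$, not $1-m_\pi(\tau_B^+)/\tau_B^+$. Also, the $q$-width condition concerns $\kappa\to\infty$, i.e.\ the interior of the minimizer set of $h_\star$. It does hold here, since that set is the single mode of $p_{\tau_B^+}$, but not for the reason you give.

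\textbf{The strict-gap direction has two unjustified steps.} First, monotonicity of $T_{\mu,\pi}$ and the extrinsic-variance comparison only give, at an oracle fixed point with small $\lambda$, $\nu_\lambda\gtrsim L_\mu(\omega)\ge T_{\mu,\pi}(\nu_\lambda)$. When $T_{\mu,\pi}$ has several fixed points, this can hold on an interval strictly below $\tau_B^+$. No contradiction results, and certainly no fixed point above $\tau_B^+$. The paper obtains a fixed point with $\nu_\lambda\ge\tau_B^+$ by construction. The strict extrinsic gap of \cref{lem:pythag} and strict monotonicity of $L_\mu$ make $(\tau_B^+,\mathcal E_\pi(\tau_B^+)+\Delta_{\tau_B^+})$ an $h$-admissible lower-bound pair. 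This pair fixes the sign of $F_{2,\lambda}$ on the zero set of $F_{1,\lambda}$ along the bottom edge $t=\tau_B^+$ of a Poincar\'e--Miranda rectangle. Since the oracle objective is strongly convex, all fixed points carry the same risk, so this one suffices.

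Second, your alternative of ``comparing at $\tau_B^+$'' does not by itself give a gap. The extra extrinsic variance only buys $\nu_\lambda\ge\tau_B^+$, with no uniform margin above it. A scalar proximal gap uniform in $\nu\ge\tau_B^+$ is false: it vanishes for $\nu\ge\tau_{\mathrm{lc}}(\pi)$ when this threshold is finite. The paper therefore splits into two regimes:
\begin{itemize}
\item On $[\tau_B^+-\epsilon,\tau_B^++\epsilon]$ it uses a local gap. Your compactness argument is a valid substitute for the paper's explicit construction here.
\item When $\nu_\lambda\ge\tau_B^++\epsilon$, it uses simply $m_\pi(\nu_\lambda)\ge m_\pi(\tau_B^++\epsilon)>m_\pi(\tau_B^+)$.
\end{itemize}
The window must be two-sided, because for a flat spectrum $\nu_\lambda=\sigma^2s/(s+\lambda)^2<\tau_B^+$. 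Singleton-domain and affine penalties also need separate treatment.
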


The proof is in \cref{sec: proof-strict-equality}. The main idea is 
that the posterior mean
$\eta_\tau(y)
    :=
    \E[B^\star\mid B^\star+\sqrt\tau Z=y]$, which serves as the denoiser for Bayes VAMP,
is also the proximal map of a proper, lsc, closed convex function if and only if \(\pi*N(0,\tau)\) is log-concave \parencite[p.\ 14567]{Bean2013OptimalMEstimation}. Therefore, in the log-concave case, one can construct admissible convex penalties whose VAMP fixed points approach the Bayes fixed point.
Otherwise, every scalar convex proximal map incurs a positive risk gap, uniformly over the penalty class.

This criterion can be expressed as a threshold in the effective noise level.
\begin{corollary}\label{cor:log-concavity-gaussian}
Define
$\tau_{\mathrm{lc}}(\pi)
    :=
    \inf\left\{
        \tau\ge0:
        \pi*N(0,\tau)\text{ is log-concave}
    \right\}$, the Gaussian log-concavity threshold.
Let $\tau_B^+$ be defined in \eqref{eq:tau-B-definition}. Then, if \(\tau_B^+<\tau_{\mathrm{lc}}(\pi)\), the uniform strict gap of~\Cref{prop: strict-equality} holds. If \(\tau_{\mathrm{lc}}(\pi)<\infty\) and
   \(\tau_B^+\ge\tau_{\mathrm{lc}}(\pi)\), then \eqref{eq: strict-equality} holds.
\end{corollary}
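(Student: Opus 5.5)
This corollary should follow directly from \Cref{prop: strict-equality}. The only real work is to show that the set
\[
    \Lambda(\pi):=\{\tau\ge0:\ \pi*N(0,\tau)\text{ is log-concave}\}
\]
is an up-set, i.e.\ an interval of the form $[\tau_{\mathrm{lc}}(\pi),\infty)$ or $(\tau_{\mathrm{lc}}(\pi),\infty)$. Once this is established, both claims reduce to checking whether $\tau_B^+\in\Lambda(\pi)$.

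\textbf{Upward closure.} Fix $\tau\in\Lambda(\pi)$ and $\tau'>\tau$. Then
\[
    \pi*N(0,\tau')=\bigl(\pi*N(0,\tau)\bigr)*N(0,\tau'-\tau).
\]
The Gaussian $N(0,\tau'-\tau)$ is log-concave, and convolution of log-concave densities is log-concave by the Prékopa--Leindler inequality. Hence $\tau'\in\Lambda(\pi)$, and by the definition of the infimum $\Lambda(\pi)\supseteq(\tau_{\mathrm{lc}}(\pi),\infty)$.

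\textbf{Closedness at the endpoint.} To cover the boundary case $\tau_B^+=\tau_{\mathrm{lc}}(\pi)$, I need $\tau_{\mathrm{lc}}(\pi)\in\Lambda(\pi)$ whenever $\tau_{\mathrm{lc}}(\pi)<\infty$. First suppose $\tau_{\mathrm{lc}}>0$. Take $\tau_k\downarrow\tau_{\mathrm{lc}}$. The densities of $\pi*N(0,\tau_k)$ are smooth and strictly positive, and they converge pointwise to the density of $\pi*N(0,\tau_{\mathrm{lc}})$ by dominated convergence, since the Gaussian kernel is bounded and $\pi$ is a probability measure. Pointwise limits of concave functions are concave, so the logarithms of the limit density are concave. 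If $\tau_{\mathrm{lc}}=0$, the same argument shows that the law $\pi$ is a weak limit of log-concave measures. Weak limits of log-concave measures are log-concave in the generalized sense, which is the sense the \textcite{Bean2013OptimalMEstimation} characterization uses (possibly degenerate, e.g.\ a point mass). I expect this degenerate $\tau=0$ endpoint to be the only delicate point. The convention must match the one used in \Cref{prop: strict-equality}; in any case $\tau_B^+\ge L_\mu(0)>0$, so the case $\tau_B^+=0$ never arises and only the positive-$\tau$ argument is actually needed.

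\textbf{Conclusion.} If $\tau_B^+<\tau_{\mathrm{lc}}(\pi)$, then $\tau_B^+\notin\Lambda(\pi)$ by the definition of the infimum. So $\pi*N(0,\tau_B^+)$ is not log-concave, and \Cref{prop: strict-equality} yields the uniform strict gap $\Delta_{\rm cvx}>0$. If instead $\tau_{\mathrm{lc}}(\pi)<\infty$ and $\tau_B^+\ge\tau_{\mathrm{lc}}(\pi)$, the two previous steps give $\tau_B^+\in\Lambda(\pi)$. The equality \eqref{eq: strict-equality} then follows from the "if" direction of \Cref{prop: strict-equality}.
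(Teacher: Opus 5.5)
Your proof is correct and follows the paper's argument. You show the log-concavity set is upward closed via Prékopa's convolution theorem, then get closedness at $\tau_{\mathrm{lc}}(\pi)$ by a limiting argument, and read both claims off \Cref{prop: strict-equality}. The paper's limiting step invokes stability of log-concavity under weak convergence, including at $a=0$. Yours is a more elementary variant: you pass to the limit pointwise in the strictly positive smoothed densities, and you correctly note that the $\tau_{\mathrm{lc}}(\pi)=0$ endpoint is never needed because $\tau_B^+\ge L_\mu(0)>0$.
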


\begin{proof}
No \(\tau<\tau_{\mathrm{lc}}(\pi)\) belongs to the defining set,
so the first claim follows from~\Cref{prop: strict-equality}. Now for the second claim, suppose that
\(\tau_{\rm lc}(\pi)<\infty\), and define
$A_\pi
    :=
    \{
        \tau\geq 0:
        \pi*N(0,\tau)\text{ is log-concave}
    \}.$
We first observe that \(A_\pi\) is upward closed: if
\(s\in A_\pi\) and \(t>s\), then
$\pi*N(0,t)
    =
    (\pi*N(0,s))*N(0,t-s).$
The first factor $\pi*N(0,s)$ is log-concave by the definition of \(A_\pi\), and $N(0,t-s)$ is Gaussian and therefore also log-concave. Since convolution preserves log-concavity
\parencite[Theorem~7]{Prekopa1973}, it follows that \(t\in A_\pi\).

Write \(a:=\tau_{\rm lc}(\pi)=\inf A_\pi\). For every \(t>a\), the
definition of the infimum gives some \(s\in A_\pi\) with \(s<t\);
otherwise \(t\) would be a lower bound for \(A_\pi\) strictly larger
than \(a\). Since \(A_\pi\) is upward closed, this implies \(t\in A_\pi\).
In particular, the sequence $\tau_m:=a+\frac1m$ satisfies
$\tau_m\downarrow a$ and $\pi*N(0,\tau_m)$ is log-concave for every $m$.

Let \(B^\star\sim\pi\) and \(Z\sim N(0,1)\) be independent. Then
$B^\star+\sqrt{\tau_m}Z
    \xrightarrow{\mathrm{a.s.}}
    B^\star+\sqrt a Z,$
and hence $\pi*N(0,\tau_m)
    \xrightarrow{w}
    \pi*N(0,a).$
Because log-concavity of probability measures is preserved under weak
convergence
\parencite[Proposition~3.6]{SaumardWellner2014}, we have that
\(\pi*N(0,a)\) is log-concave. Therefore
$A_\pi=[\tau_{\rm lc}(\pi),\infty).$
If \(\tau_B^+\geq\tau_{\rm lc}(\pi)\), we consequently have that \(\pi*N(0,\tau_B^+)\) is log-concave, and
\cref{prop: strict-equality} yields the result.
\end{proof}

\cref{cor:log-concavity-gaussian} tells us that the spectrum affects strictness only through \(\tau_B^+\). This interaction
has a subtle consequence: spectral spreading raises the absolute Bayes-VAMP lower bound,
but by raising \(\tau_B^+\) it can also smooth the prior enough to eliminate the gap in~\Cref{thm:Bayes VAMP-lower-bound}. The following example demonstrates this transition for the Rademacher
prior.

\textbf{Example: the Rademacher prior.}
Let \(B^\star\) be uniform on \(\{-1,+1\}\). The density of
\(Y_\tau=B^\star+\sqrt\tau Z\) is proportional to

$$
f_\tau(y)
\propto
\exp\left(-\frac{y^2+1}{2\tau}\right)
\cosh(y/\tau),
 \qquad \text{ and } \qquad 
\frac{\mathrm d^2}{\mathrm d y^2}\log f_\tau(y)
=
-\frac1\tau
+\frac1{\tau^2}\operatorname{sech}^2(y/\tau).
$$

Because \(\operatorname{sech}^2(y/\tau)\le1\), with equality at \(y=0\),
the density is log-concave exactly when \(\tau\ge1\). Thus
\(\tau_{\mathrm{lc}}(\pi)=1\), where $\tau_{\mathrm{lc}}$ is defined in~\Cref{cor:log-concavity-gaussian}. Moreover, from~\Cref{cor:log-concavity-gaussian},
$$
\tau_B^+<1
\Longleftrightarrow
\text{the gap in \Cref{thm:Bayes VAMP-lower-bound} is strict},
\qquad
\tau_B^+\ge1
\Longleftrightarrow
 \text{equality holds in \Cref{thm:Bayes VAMP-lower-bound}}.
$$
Thus, the Rademacher prior creates the scalar nonconvexity threshold at \(1\), and this does not depend on the spectral law.

On the other hand, the spectral law determines \(\tau_B^+\) and therefore decides on which side of that
threshold the regression problem lies. As an example, consider the flat spectral law, meaning \(\mu=\delta_{s}\) for some $s > 0$. One can show that for $L_{\mu}$ defined in \eqref{eq:spectrum-mod}, we have 
\(L_\mu(\omega)=\sigma^2/s\) for every \(\omega>0\), and therefore
\(\tau_B^+ = L_{\mu}(\cE_{\pi}(\tau_B^+))=\sigma^2/s\). In this case, the convex gap in~\Cref{thm:Bayes VAMP-lower-bound} is strict exactly
when \(s>\sigma^2\). 

This example isolates the interaction between the two parts of the theory. The signal prior $\pi$ fixes the scalar threshold $\tau_{\mathrm{lc}}$, while the spectrum of $X$ determines which side of it the regression problem occupies. A more spread spectrum raises
\(\tau_B^+\): it worsens the Bayes-VAMP MSE lower bound, but it can move the problem
from the strict-gap phase into the equality phase.

For contrast, since convolution preserves log-concavity, if \(\pi\) itself is log-concave, then
\(\pi*N(0,\tau)\) is log-concave for every \(\tau\ge0\). Hence the
Bayes-VAMP lower bound $m_\pi(\tau_B^+)$ is attainable within
\(\mathcal C_{\mathrm{LB}}(\mu,\pi,\sigma^2)\) for every spectral law $\mu$.

\subsection{How the spectrum determines the Bayes-VAMP lower bound} \label{sec:main-spectral-order}

As discussed previously in Section \ref{sec:vamp-intro}, the dependence on the design spectrum $\mu$ in the lower bound from~\Cref{thm:Bayes VAMP-lower-bound} is entirely through \(L_\mu\) defined in \eqref{eq:spectrum-mod}. Before getting to the lower bound directly, we consider a few ways to compare spectra.

\textbf{Stieltjes transform order.} Now, let us examine spectra that can be directly ordered using the \textit{Stieltjes transform order}, or simply transform order, named so by \textcite{Alenazi2026Stieltjes}. First, for
\(c>0\), define the Stieltjes transform as 
\begin{equation}
    G_\mu(c)
    :=
    \E_\mu\left[\frac1{S+c}\right].
    \label{eq:stieltjes-transform}
\end{equation}
Equivalently, \(G_\mu(c)\) is the standard Stieltjes transform
\(\int(s-z)^{-1}\mu(ds)\) evaluated at the negative real argument
\(z=-c\). Setting the noise-to-extrinsic-uncertainty ratio in our problem as \(c=\sigma^2/\omega\), a direct calculation gives a new representation for the spectrum-stage map defined in \eqref{eq:spectrum-mod}:
\begin{equation}
    L_\mu(\omega)
    =
    \frac{\sigma^2G_\mu(c)}{1-cG_\mu(c)}.
    \label{eq:stieltjes-L-preview}
\end{equation}
The denominator is positive because
$1-cG_\mu(c)
    =
    \E_\mu[{S}/{(S+c)}]>0$ by~\Cref{ass1}.
Specifically, formulating the spectrum-stage map $L_\mu(\omega)$ using the Stieltjes representation as in \eqref{eq:stieltjes-L-preview} allows us to apply many existing results about the Stieltjes transform to our setting. 

The parameter \(c=\sigma^2/\omega\) is inversely proportional to the
uncertainty $\omega$ entering the spectrum stage of denoising, as described in~\Cref{sec:vamp-intro}. We call this noise, $\omega$, the \emph{denoising extrinsic variance} exiting the prior stage. Hence, $c$ represents the inverse of the uncertainty received by the spectrum stage. If $c$ is large, the uncertainty is small, and $G_\mu(c)= \int(s+c)^{-1}\mu(ds)$ is given a ``coarser'' view of the spectrum; it is more lenient toward near-zero values of $S \sim \mu$. On the other hand, if $c$ is small, whether $\mu$ has many very weakly-measured directions matters more. With this intuition, we can analyze how the spectrum $\mu$ affects the map $L_\mu$ differently for differing values of $c$, and hence $\omega$.

Accordingly, the Stieltjes
representation in \eqref{eq:stieltjes-L-preview} exposes two spectral regimes for $L_\mu(\omega)$: as \(\omega\downarrow0\)
(equivalently, \(c\uparrow\infty\)), the behavior of \(L_\mu\) is governed by
the first two moments of \(\mu\); as \(\omega\uparrow\infty\)
(equivalently, \(c\downarrow0\)), it is governed by the mass or density of
\(\mu\) near zero. Precise statements of these results and their proofs can be found in \cref{prop:moment-expansion} and \cref{prop:hard-edge} in Appendix \ref{sec:spectral_proofs}.

Following \textcite{Alenazi2026Stieltjes}, we now define the transform ordering on the spectral laws. For probability laws \(\mu_1,\mu_2\) on
\([0,\infty)\), we say that $\mu_2$ dominates $\mu_1$ in Stieltjes transform order, denoted $\mu_2\succeq_{\mathrm{St}}\mu_1,$ if
$G_{\mu_2}(c)\ge G_{\mu_1}(c)$ for every $c>0.$
Thus, the Stieltjes order compares the mass the two laws place in weak
spectral directions across all scales \(c\).

\textbf{Convex order.} A stronger but more familiar comparison is convex order. For laws with finite
first moments, we say that \(\mu_2\) dominates \(\mu_1\) in convex order, and write \(\mu_2\succeq_{\mathrm{cx}}\mu_1\), if
$\E_{\mu_2}[\varphi(S)]
\ge
\mathbb E_{\mu_1}[\varphi(S)]
$
for every integrable convex function
\(\varphi:[0,\infty)\to\mathbb R\). This is the usual
mean-preserving-spread comparison. Since \(s\mapsto(s+c)^{-1}\) is convex
for every \(c>0\), convex order implies Stieltjes order; for a reference, see \textcite{convex_order}.

We now present a result on the spectral law ordering and the consequences for the Bayes VAMP lower bound found in~\Cref{thm:Bayes VAMP-lower-bound}.
\begin{proposition}[Spectral monotonicity and comparison]
\label{prop:spectral-monotonicity}
For the spectrum-stage map defined in \eqref{eq:spectrum-mod}, the following hold:
\begin{enumerate}[label=\textup{(\roman*)},leftmargin=2.2em]
\item For every spectral law \(\mu\), the map \(\omega\mapsto L_\mu(\omega)\)
is nondecreasing.

\item For two spectral laws $\mu_1$ and $\mu_2$, we have 
$\mu_2\succeq_{\mathrm{St}}\mu_1
    \Longleftrightarrow
    L_{\mu_2}(\omega)\ge L_{\mu_1}(\omega)$
    for every $\omega>0.$

\item Consequently, we also have
$\mu_2\succeq_{\mathrm{cx}}\mu_1 \implies L_{\mu_2}(\omega)\ge L_{\mu_1}(\omega)$ for every $\omega>0.$

\item Fix a prior \(\pi\) and noise level \(\sigma^2\). Initialize Bayes VAMP
with $\tau_{0,+}^B(\mu)
    :=
    L_\mu(\VAR(B^\star))$. Then, if \(\mu_2\succeq_{\mathrm{St}}\mu_1\), we have that
$\tau_B^+(\mu_2)\ge\tau_B^+(\mu_1),$ and
    $m_\pi(\tau_B^+(\mu_2))
    \ge
    m_\pi(\tau_B^+(\mu_1)).$
\end{enumerate}
\end{proposition}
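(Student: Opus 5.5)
The whole argument runs through the Stieltjes representation \eqref{eq:stieltjes-L-preview}. Write $c=\sigma^2/\omega$ and $\Phi(g;c):=\sigma^2 g/(1-cg)$, so that $L_\mu(\omega)=\Phi(G_\mu(c);c)$. For fixed $c>0$, the map $g\mapsto\Phi(g;c)$ is strictly increasing on $[0,1/c)$, since $\partial_g\Phi=\sigma^2/(1-cg)^2>0$. The admissible values also lie in this interval: $cG_\mu(c)=\E_\mu[c/(S+c)]<1$ because $\E_\mu[S]>0$.

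\emph{Part (ii).} Fix $\omega$, hence $c$. By strict monotonicity of $\Phi(\cdot;c)$,
\[
L_{\mu_2}(\omega)\ge L_{\mu_1}(\omega)\iff G_{\mu_2}(c)\ge G_{\mu_1}(c).
\]
As $\omega$ ranges over $(0,\infty)$, the parameter $c$ ranges over all of $(0,\infty)$, so the pointwise equivalence gives (ii).

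\emph{Part (iii).} For each $c>0$ the function $s\mapsto(s+c)^{-1}$ is convex and bounded on $[0,\infty)$, hence integrable. Convex order therefore gives $G_{\mu_2}\ge G_{\mu_1}$, i.e.\ $\mu_2\succeq_{\mathrm{St}}\mu_1$, and (ii) applies.

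\emph{Part (i).} I would avoid the Stieltjes form and write
\[
L_\mu(\omega)=\sigma^2\,\frac{\E[a]}{\E[Sa]},\qquad a(S):=\frac{\omega}{\sigma^2+\omega S}.
\]
Equivalently, $1/L_\mu(\omega)=\E[S w_\omega(S)]/\sigma^2$, where $w_\omega(S):=a(S)/\E[a(S)]$ is a probability reweighting of $\mu$. It suffices to show that $\omega\mapsto\E_{w_\omega}[S]$ is nonincreasing. Since $\partial_\omega\log a=1/\omega-S/(\sigma^2+\omega S)=\sigma^2/\bigl(\omega(\sigma^2+\omega S)\bigr)$, the derivative of the reweighted mean is a covariance:
\[
\frac{d}{d\omega}\E_{w_\omega}[S]=\mathrm{Cov}_{w_\omega}\!\left(S,\ \frac{\sigma^2}{\omega(\sigma^2+\omega S)}\right).
\]
The second argument is decreasing in $S$, so by Chebyshev's association inequality this covariance is $\le0$. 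Differentiation under the expectation is justified by compact support of $\mu$. The case of an atom at $0$ is harmless, since $a(0)=\omega/\sigma^2$ is finite.

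\emph{Part (iv).} Parts (i) and (ii), together with the monotonicity of $\cE_\pi$ from \cref{lem: m-E-nondecreasing}, show that $T_1:=T_{\mu_1,\pi}$ and $T_2:=T_{\mu_2,\pi}$ are nondecreasing and satisfy $T_2\ge T_1$ pointwise. I would argue directly through greatest fixed points rather than through the iteration. Set $\tau^\star:=\tau_B^+(\mu_1)$. Then
\[
T_2(\tau^\star)\ge T_1(\tau^\star)=\tau^\star.
\]
By \cref{lem: best-linear-risk}, $T_2\le L_{\mu_2}(\VAR(B^\star))=:\bar\tau$, so $T_2(\bar\tau)\le\bar\tau$. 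If $\tau^\star\le\bar\tau$, continuity of $T_2$ and the intermediate value theorem applied to $T_2(\tau)-\tau$ on $[\tau^\star,\bar\tau]$ give a fixed point of $T_2$ at least $\tau^\star$. If $\tau^\star>\bar\tau$, then $T_2(\tau^\star)\le\bar\tau<\tau^\star$, contradicting $T_2(\tau^\star)\ge\tau^\star$; so this case cannot occur. Either way $\tau_B^+(\mu_2)\ge\tau_B^+(\mu_1)$.

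The same conclusion also follows from the iteration started at $\tau^B_{0,+}(\mu)=L_\mu(\VAR(B^\star))$. This starting point is an upper bound for $T_\mu$, so the iterates decrease monotonically to the greatest fixed point. Moreover, $T_2\ge T_1$ and monotonicity preserve the ordering of iterates at every step. Finally, $m_\pi$ is nondecreasing by \cref{lem: m-E-nondecreasing}, which gives the MMSE comparison.

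\emph{Main obstacle.} The only delicate point is part (i): checking the sign of the covariance and handling $\mu$ with an atom at zero. Everything else is monotone composition.
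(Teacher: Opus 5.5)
Your proposal is correct. Parts (ii) and (iii) follow the paper's argument exactly: strict monotonicity of $g\mapsto\sigma^2g/(1-cg)$ on $(0,1/c)$, and convexity of $s\mapsto(s+c)^{-1}$.

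\emph{Part (i).} The paper differentiates $L_\mu(\omega)=\omega\E[R_\omega(S)]/(1-\E[R_\omega(S)])$, with $R_\omega(S)=\sigma^2/(\sigma^2+\omega S)$, and obtains the closed form $L_\mu'(\omega)=\VAR(R_\omega(S))/(1-\E[R_\omega(S)])^2$. You instead write $\sigma^2/L_\mu(\omega)$ as a tilted mean of $S$ and sign its derivative with Chebyshev's covariance inequality. Both are valid. The exact variance formula has one advantage: it gives strict increase for non-flat spectra at no extra cost. That same formula, in Lemma~\ref{lem:Lcalculus}, is what Lemma~\ref{lem:lb-pair-existence} later relies on. Your route would need the strict form of Chebyshev's inequality to recover this.

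\emph{Part (iv).} The paper runs both Bayes-VAMP recursions from $L_{\mu_i}(\VAR(B^\star))$, propagates the ordering of the iterates by induction, and identifies the limits as the greatest fixed points. Your main argument applies the intermediate value theorem to $T_{\mu_2,\pi}(\tau)-\tau$ on $[\tau_B^+(\mu_1),L_{\mu_2}(\VAR(B^\star))]$. This is more economical. It needs only three facts: continuity of $T_{\mu_2,\pi}$, the global bound $T_{\mu_2,\pi}\le L_{\mu_2}(\VAR(B^\star))$, and the inequality $T_{\mu_2,\pi}\ge T_{\mu_1,\pi}$ at the single point $\tau_B^+(\mu_1)$. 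It also makes clear that the prescribed initialization is needed only to interpret $\tau_B^+$ as the algorithm's limit, not for the comparison itself. Your secondary remark reproduces the paper's route.

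One small point: if $\VAR(B^\star)=0$, that single comparison takes place at $\omega=\cE_\pi(\tau)=0$. There, (ii) must be extended by letting $\omega\downarrow0$. The paper's proof contains the same implicit step.
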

The proof of the proposition is in Appendix \ref{sec:spectral_proofs}. Part (i) is an intuitive statement: the spectrum-stage effective noise $L_\mu(\omega)$ can only increase with higher incoming prior-stage uncertainty $\omega$. Results (ii), (iii), and (iv) give the central comparison: additional mass in weak spectral
directions increases the spectrum-stage effective noise $L_\mu(\omega)$, the greatest
Bayes-VAMP fixed point, and the resulting Bayes-VAMP lower bound on convex
risk from~\Cref{thm:Bayes VAMP-lower-bound}.

A simple corollary is the following: the extremal cases are very clear for any class of spectra with common finite-interval support and fixed mean.

\begin{corollary}
\label{cor:flat-spectrum-best}
Let \(\bar s := \E_\mu[S]\). Fix \(0\le a<\bar s<b<\infty\) and let \(\mu\) be supported on \([a,b]\). Define
\[
    \mu_{\bar s}:=\delta_{\bar s},
    \qquad \text{ and } \qquad 
    \mu_{a,b,\bar s}
    :=
    \frac{b-\bar s}{b-a}\delta_a
    +
    \frac{\bar s-a}{b-a}\delta_b.
\]
Then, for every \(\omega>0\), we have $L_{\mu_{\bar s}}(\omega)
    \le
    L_\mu(\omega)
    \le
    L_{\mu_{a,b,\bar s}}(\omega).$
Hence, the same ordering holds for the greatest Bayes-VAMP fixed points and their
associated lower bounds.
\end{corollary}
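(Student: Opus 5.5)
The plan is to reduce everything to convex order and then invoke Proposition~\ref{prop:spectral-monotonicity}. Concretely, I will show that
\[
\mu_{\bar s}\preceq_{\mathrm{cx}}\mu\preceq_{\mathrm{cx}}\mu_{a,b,\bar s}.
\]
Part (iii) of Proposition~\ref{prop:spectral-monotonicity} then gives $L_{\mu_{\bar s}}(\omega)\le L_\mu(\omega)\le L_{\mu_{a,b,\bar s}}(\omega)$ for every $\omega>0$. Part (iv) transfers the ordering to the greatest fixed points $\tau_B^+$. Since $m_\pi$ is nondecreasing (\cref{lem: m-E-nondecreasing}), it also transfers to the lower bounds $m_\pi(\tau_B^+)$.

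First I check that all three laws are admissible spectral laws in the sense of \cref{ass1}. Each is compactly supported on $[a,b]\subset[0,\infty)$ and has mean $\bar s>0$. The two-point weights $\frac{b-\bar s}{b-a}$ and $\frac{\bar s-a}{b-a}$ are nonnegative and sum to one, and the mean of $\mu_{a,b,\bar s}$ is
\[
\frac{a(b-\bar s)+b(\bar s-a)}{b-a}=\bar s .
\]
So $L_\mu$ is well defined for all three laws, and all three share the same mean.

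\textbf{Lower inequality.} For any convex $\varphi$ on $[0,\infty)$, Jensen's inequality gives $\varphi(\bar s)\le\E_\mu[\varphi(S)]$, which is exactly $\mu_{\bar s}\preceq_{\mathrm{cx}}\mu$.

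\textbf{Upper inequality.} Fix a convex $\varphi$ and let $\ell$ be the chord of $\varphi$ through $(a,\varphi(a))$ and $(b,\varphi(b))$. Convexity gives $\varphi(s)\le\ell(s)$ for every $s\in[a,b]$. Since $\mu$ is supported on $[a,b]$ and $\ell$ is affine,
\[
\E_\mu[\varphi(S)]\le \ell(\E_\mu S)=\ell(\bar s).
\]
Because $\mu_{a,b,\bar s}$ is supported on the two endpoints, where $\ell$ and $\varphi$ coincide, and has mean $\bar s$, we get
\[
\ell(\bar s)=\E_{\mu_{a,b,\bar s}}[\varphi(S)].
\]
This proves $\mu\preceq_{\mathrm{cx}}\mu_{a,b,\bar s}$.

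Only one point needs care. The convex-order definition allows $\varphi$ defined on $[0,\infty)$, but the chord argument only uses its values on $[a,b]$, so this is harmless. The test functions actually needed, $s\mapsto(s+c)^{-1}$, are bounded and convex on $[0,\infty)$. Alternatively, one can bypass convex order and apply the Jensen and chord arguments directly to $s\mapsto(s+c)^{-1}$. This gives the Stieltjes ordering $G_{\mu_{\bar s}}\le G_\mu\le G_{\mu_{a,b,\bar s}}$ for every $c>0$, and part (ii) then applies.

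The main obstacle is the final transfer to the fixed points. Pointwise ordering of $L$ orders the maps $T_{\mu,\pi}=L_\mu\circ\cE_\pi$, since the prior stage is the same for all three laws. But ordering the maps orders their \emph{greatest} fixed points only because each $T$ is nondecreasing and bounded. This is exactly the content of Proposition~\ref{prop:spectral-monotonicity}(iv), which I take as given, applied twice (with $\mu_1=\mu_{\bar s},\mu_2=\mu$ and with $\mu_1=\mu,\mu_2=\mu_{a,b,\bar s}$). The conclusion for the lower bounds then follows from monotonicity of $m_\pi$.
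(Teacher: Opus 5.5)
Your proof is correct and follows the same route the paper intends: by Jensen's inequality and the chord bound, the flat law $\delta_{\bar s}$ is the minimal element and the endpoint two-point law the maximal element in convex order among laws on $[a,b]$ with mean $\bar s$, and Proposition~\ref{prop:spectral-monotonicity}(iii)--(iv) then transfers this ordering to $L_\mu$, to $\tau_B^+$, and to $m_\pi(\tau_B^+)$. Your alternative of applying Jensen and the chord bound directly to $s\mapsto(s+c)^{-1}$ and then invoking part (ii) is equally valid.
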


The proof of \cref{cor:flat-spectrum-best} is in Appendix \ref{sec:spectral_proofs}. In \cref{cor:flat-spectrum-best} the law \(\mu_{\bar s}\) is the flat spectral law: all
squared singular values are asymptotically equal to their common mean
\(\bar s\). By contrast, \(\mu_{a,b,\bar s}\) is a two-point
spectral law: all of its mass is placed at the smallest and largest allowed
spectral values, \(a\) and \(b\), with the weights chosen so that its mean is
also \(\bar s\). Thus, among spectral laws with fixed support and mean, the flat law distributes
measurement strength uniformly and minimizes the lower bound. On the other hand, the endpoint
two-point law is the largest admissible mean-preserving spread and maximizes
the lower bound. In this precise sense, the damage caused by weak spectral
directions is not offset by compensating mass in strong directions.

\section{Design spectra and the lower bound: examples and extremal regimes}
\label{sec:spectral-consequences}

\Cref{prop:spectral-monotonicity}
orders the Bayes-VAMP lower bound through the limiting Gram spectral law \(\mu\) of the design matrix. Recall
that \(\mu_2\succeq_{\mathrm{St}}\mu_1\) means
\(G_{\mu_2}(c)\ge G_{\mu_1}(c)\) for every \(c>0\). This section translates
that ordering into concrete design geometries. The examples progress from uniform measurement
strength to increasingly uneven spectra: orthogonal and tight-frame designs,
iid Gaussian designs, two-tier spectra, products of Gaussian matrices, and an
extreme energy-concentration regime. Numerical illustrations are given in \cref{sec:simulations}. 

\textbf{From Gram eigenvalues to measurement strength.}
Recall that if \(s_{1,p},\ldots,s_{p,p}\) are the eigenvalues of
\(X_p^T X_p\), then by~\Cref{ass1} we have
$
\mu_p
:=
\frac1p\sum_{j=1}^p\delta_{s_{j,p}}
\xrightarrow{W_2}_{\mathrm{a.s.}}
\mu.
$
If \(v_{j,p}\) is a corresponding unit eigenvector, then
\(\|X_pv_{j,p}\|_2^2=s_{j,p}\). Thus small spectral values represent weakly
measured latent directions, while an atom of \(\mu\) at zero represents a
nonvanishing asymptotic fraction of unmeasured directions. Such an atom may
come either from exact rank deficiency or from a positive fraction of
eigenvalues that vanish asymptotically.

For a finite observed design, the empirical spectrum-stage map
\(L_{\mu_p}\) is obtained by replacing the expectations under \(\mu\) with
averages over the squared singular values \(s_{j,p}\):
\[
    L_{\mu_p}(\omega)
    =
    \left(\dfrac1p\sum_{j=1}^p
        \dfrac{\sigma^2\omega}{\sigma^2+\omega s_{j,p}}\right) \left(\dfrac1p\sum_{j=1}^p
        \dfrac{\omega s_{j,p}}{\sigma^2+\omega s_{j,p}}\right)^{-1}.
\]
Hence the same scalar
map connects both the finite design spectrum and its high-dimensional limit
to the lower bound.

\subsection{Uniform measurement strength: orthogonal and tight-frame designs} \label{sec:tightframe}

The full-rank flat law is \(\mu=\delta_{\bar s}\), where
\(\bar s=\mathbb E_\mu[S]>0\) by~\Cref{ass1}. Direct substitution gives 
$
L_\mu(\omega)={\sigma^2}/{\bar s},$ for every $\omega \geq 0$, and
$\tau_B^+={\sigma^2}/{\bar s}.
$
Therefore, for every admissible convex penalty \(h\), \Cref{thm:Bayes VAMP-lower-bound} yields
$
\mathop{\mathbb P\text{-}\liminf}_{p\to\infty}
\frac1p
\|\widehat\beta_{\mathrm{cvx}}^{h}-\beta^\star\|_2^2
\ge
m_\pi(\frac{\sigma^2}{\bar s})
$. By \cref{cor:flat-spectrum-best}, this is the best-case lower bound for the full-rank design at mean measurement strength \(\bar s\). When $n \geq p$, it is realized by $
X_p^T X_p=\bar s I_p,$
so the columns of the design matrix \(X_p\) are orthogonal after scaling.

Recall that
\(q=\mathbb P_\mu(S>0)\). When the limiting positive rank fraction is \(q<1\), the corresponding flat-as-possible law at mean
\(\bar s\) is $\mu_{q,\mathrm{flat}}
:=
(1-q)\delta_0+q\delta_{\bar s/q}.$
In this case
$$
L_{\mu_{q,\mathrm{flat}}}(\omega)
=
\frac{\sigma^2}{\bar s}
+\frac{1-q}{q}\,\omega.
$$
This law is optimal once both \(q\) and \(\bar s\) are fixed. Indeed, any
other such law can be written as
\(\mu=(1-q)\delta_0+q\nu_+\), where
\(\mathbb E_{\nu_+}[S]=\bar s/q\). Since the point mass at the mean is
minimal in convex order,
\(\mu\succeq_{\mathrm{cx}}\mu_{q,\mathrm{flat}}\). Part (iii) of ~\Cref{prop:spectral-monotonicity} then gives $
L_\mu(\omega)
\ge
L_{\mu_{q,\mathrm{flat}}}(\omega)$ for $\omega>0$.
Thus the flat-as-possible law isolates the unavoidable cost of rank deficiency
from the additional cost of unequal nonzero singular values.

The flat-as-possible law is realized by
$
X_p^T X_p
=
\frac{\bar s}{q}P_p,
$
where \(P_p\) is a rank-\(r_p\) projection with \(r_p/p\to q\). The design
measures a \(qp\)-dimensional subspace uniformly and is blind to its
orthogonal complement. When \(q=\delta=n/p<1\), this includes scaled
row-orthogonal, or tight-frame, sensing designs \parencite{tightframe}. This design measures every direction of an \(n\)-dimensional random subspace with equal strength, but is completely blind
to the orthogonal complement of that subspace.

\subsection{Canonical iid Gaussian design: the Marchenko--Pastur spectrum}
For the canonical Gaussian design
\((X_p)_{ij}\stackrel{\mathrm{iid}}{\sim}N(0,1/n)\), the limiting Gram law is
the mean-one Marchenko--Pastur law. Its continuous part is supported on $
[s_-,s_+],$ where $n/p \rightarrow \delta \in (0, \infty)$ and $
s_\pm=(1\pm\delta^{-1/2})^2,$
and it has an atom of mass \((1-\delta)_+\) at zero. The law is defined as
\begin{equation}
    \mu_{\mathrm{MP},\delta}
    :=
    (1-\delta)_+\delta_0
    +f_\delta(s)\,\rd s,
    \qquad
    f_\delta(s)
    =
    \frac{\delta}{2\pi s}
    \sqrt{(s_+-s)(s-s_-)},
    \quad s\in[s_-,s_+].
    \label{eq:mp-law-density}
\end{equation}
Consequently,
\(\mathbb E[S]=1\) and \(q=\mathbb P_\mu(S>0)=\min\{1,\delta\}\).

The three aspect-ratio regimes have a direct geometric interpretation. 
If \(\delta<1\), a fraction \(1-\delta\) of parameter
directions lies in the exact null space. Thus even an isotropic population
design acquires weak directions through high-dimensional sampling. 
If \(\delta=1\), there is no intrinsic null space, but the lower spectral edge reaches zero
without an atom there. The
design contains many nearly unidentifiable directions even though it is
formally full rank.
If
\(\delta>1\), the design is full column rank and the spectrum is bounded away from zero but remains spread over
a nontrivial interval.

As \(\delta\to\infty\), the Marchenko--Pastur law concentrates at one and
approaches the ideal flat-spectrum benchmark. At fixed \(\delta\), its spread is intrinsic.
As stated in \Cref{cor:gaussian-specialization}, substituting this law into the VAMP fixed-point
equations recovers the corresponding AMP lower bound.

\subsection{Two-tier spectra: latent directions measured at two strengths}

We consider two-tier spectra $\mu=\theta\delta_{s_-}+(1-\theta)\delta_{s_+},$ where $0\leq s_-<s_+$ and $\theta\in(0,1)$.
Fixing \(\theta\) and the mean
\(\bar s=\theta s_-+(1-\theta)s_+\), increasing the separation between
\(s_-\) and \(s_+\) is said to be a mean-preserving spread. \cref{prop:spectral-monotonicity} shows that
it increases \(L_\mu\), the greatest Bayes-VAMP fixed point,
and the associated lower bound in \cref{thm:Bayes VAMP-lower-bound}. This is the simplest explicit example in
which stronger directions do not compensate for weaker ones.

A finite realization can be obtained by taking $X_p^T X_p
=
V_p
\operatorname{diag}\bigl(s_-I_{r_p},s_+I_{p-r_p}\bigr)
V_p^T$ with
$\frac{r_p}{p}\to\theta$, where \(V_p\) is Haar distributed. Hence, the two masses describe latent
orthogonal directions, not two predetermined groups of observed columns: a
fraction \(\theta\) of directions is measured with strength \(s_-\), and the
rest with strength \(s_+\).

The flat-as-possible spectrum discussed in \Cref{sec:tightframe} is a special case of the two-tier spectral class.

\subsection{Products of Gaussian matrices: multilayer random features}
\label{sec:lnn-model}

Consider the linear neural network (\textcite{Li2026debiasing}) constructed via the depth-\(k\) product of Gaussian matrices $
X_p^{(k)}
:=
G_kG_{k-1}\cdots G_1,
$
where \(G_\ell\in\mathbb R^{n_\ell\times n_{\ell-1}}\) with
\((G_\ell)_{ij}\stackrel{\mathrm{iid}}{\sim}N(0,1/n_\ell)\) and
\(n_0=p\). Assume \(n_\ell/p\to\gamma_\ell\ge1\), so no intermediate
layer creates a rank bottleneck relative to the original parameter
dimension. This matrix is a rudimentary model for a deep linear network or for a chain of random measurements.
The normalization
\(\operatorname{Var}((G_\ell)_{ij})=1/n_\ell\) also gives
\(\mathbb E[G_\ell^T G_\ell]=I_{n_{\ell-1}}\), so each layer preserves
squared norm on average. Although every factor is isotropic and preserves squared norm on
average, we will see that multiplication creates increasingly uneven gains across latent
directions.

For \(\gamma>0\), let
\(\mu_{\mathrm{MP},\gamma}\) denote the mean-one Marchenko--Pastur law
associated with the Gram matrix of an \(n\times p\) Gaussian matrix having
entry variance \(1/n\), when \(n/p\to\gamma\).
In the regime \(\gamma\ge1\) used here, the atom at zero is absent and the
law has mean one.
For each fixed \(k\), let \(\widehat\mu_{p,k}\) be the empirical Gram law of
\(X_p^{(k)}\). \textcite[Theorem~8.5 and Equation~(8.15)]
{GoetzeKoestersTikhomirov2015}
show that \(\widehat\mu_{p,k}\) converges weakly to the limiting law
\(
\mu_k
=
\boxtimes_{\ell=1}^{k}
\mu_{\mathrm{MP},\gamma_\ell},
\)
where \(\boxtimes\) denotes multiplicative free convolution. In particular, each additional layer contributes one more Marchenko--Pastur factor, so even while preserving squared norm on average and fixing the mean, repeated multiplication accumulates variability in measurement strength across directions. The following proposition formalizes this notion.

\begin{proposition}[Products of iid Gaussian matrices]
\label{prop: product-gaussian}
For every \(k\ge1\), we have $ \mu_{k+1}\succeq_{\mathrm{St}}\mu_k;$ hence, $L_{\mu_{k+1}}(\omega)
    \ge
    L_{\mu_k}(\omega)$ for every $\omega>0.$
Consequently, at the same mean measurement strength, adding an independent Gaussian layer cannot decrease the
Bayes-VAMP lower bound.
\end{proposition}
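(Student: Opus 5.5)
We will establish the Stieltjes order $\mu_{k+1}\succeq_{\mathrm{St}}\mu_k$ directly from the finite-$p$ product structure, and then pass to the limit. The remaining conclusions then follow from \Cref{prop:spectral-monotonicity}: part (ii) turns the Stieltjes order into $L_{\mu_{k+1}}(\omega)\ge L_{\mu_k}(\omega)$ for all $\omega>0$, and part (iv) orders the greatest Bayes-VAMP fixed points and the lower bounds $m_\pi(\tau_B^+)$. This requires that $\mu_k$ and $\mu_{k+1}$ have the same mean. Each Marchenko--Pastur factor has mean one and multiplicative free convolution multiplies means, so both means equal one.

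\textbf{Finite-dimensional Jensen step.} Write $A_p:=(X_p^{(k)})^TX_p^{(k)}\in\R^{p\times p}$. Then
\[
(X_p^{(k+1)})^TX_p^{(k+1)}=(X_p^{(k)})^T G_{k+1}^TG_{k+1}X_p^{(k)},
\]
where $G_{k+1}$ is independent of $X_p^{(k)}$ and satisfies $\E[G_{k+1}^TG_{k+1}]=I_{n_k}$. Conditionally on $X_p^{(k)}$, the matrix $B_p:=(X_p^{(k+1)})^TX_p^{(k+1)}$ therefore has conditional mean exactly $A_p$. Fix $c>0$ and set $f_c(s)=(s+c)^{-1}$, which is convex on $[0,\infty)$. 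The map $M\mapsto \frac1p\tr f_c(M)=\frac1p\tr (M+cI)^{-1}$ is convex on positive semidefinite matrices, since matrix inversion is operator convex and the trace is linear. Conditional Jensen's inequality then gives
\[
\E\Big[\tfrac1p\tr(B_p+cI)^{-1}\,\Big|\,X_p^{(k)}\Big]\ \ge\ \tfrac1p\tr(A_p+cI)^{-1}=G_{\widehat\mu_{p,k}}(c).
\]
Taking expectations yields $\E\, G_{\widehat\mu_{p,k+1}}(c)\ge \E\, G_{\widehat\mu_{p,k}}(c)$ for every $p$.

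\textbf{Passage to the limit.} The function $f_c$ is bounded by $1/c$ and continuous on $[0,\infty)$. By the weak convergence $\widehat\mu_{p,j}\to\mu_j$ for $j=k,k+1$ from \textcite[Theorem~8.5]{GoetzeKoestersTikhomirov2015}, we have $G_{\widehat\mu_{p,j}}(c)\to G_{\mu_j}(c)$ in probability. Bounded convergence then lets us take $p\to\infty$ in the expectation inequality, giving $G_{\mu_{k+1}}(c)\ge G_{\mu_k}(c)$ for every $c>0$, which is the claimed Stieltjes order.

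\textbf{Main obstacle.} The delicate point is the mode of convergence in the cited product-matrix result: we need it to hold at least in probability, and in the same normalization ($n_\ell/p\to\gamma_\ell\ge1$, entry variance $1/n_\ell$) that defines $\mu_k$. If only convergence in expectation of the empirical law is available, the argument still works unchanged, because we only use expectations of the bounded statistic $G_{\widehat\mu_{p,j}}(c)$. As a cross-check, or as a fully free-probabilistic alternative, we could argue at the operator level. Freeness gives $\E[a^{1/2}ba^{1/2}\mid W^*(a)]=\tau(b)\,a=a$. The trace Jensen inequality for conditional expectations then shows that $\mu_k\boxtimes\mu_{\mathrm{MP},\gamma_{k+1}}\succeq_{\mathrm{cx}}\mu_k$, which is even stronger than the Stieltjes order, and \Cref{prop:spectral-monotonicity}(iii) applies directly.
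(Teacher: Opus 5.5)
Your argument is correct and is essentially the paper's proof. The paper also uses $\E[A_{p,k+1}\mid\sigma(G_1,\ldots,G_k)]=A_{p,k}$, conditional Jensen for the operator-convex resolvent $(cI+\cdot)^{-1}$, a normalized trace and full expectation, and then the G\"otze--K\"osters--Tikhomirov weak convergence in probability with bounded convergence to obtain $G_{\mu_{k+1}}(c)\ge G_{\mu_k}(c)$. Your explicit appeal to part (iv) of Proposition~\ref{prop:spectral-monotonicity} for the lower-bound conclusion is a useful addition, though that part needs only the Stieltjes order and not equality of the means.
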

The proof is in
\cref{sec: proof-gaussian-product}. 
Its key step is conditional
operator Jensen's inequality for the resolvent
\((cI+X^T X)^{-1}\), which yields the Stieltjes ordering. From a regression viewpoint, the result
says that depth is not harmless even when every layer is individually isotropic
and correctly normalized. The extra random layer preserves the squared norm but worsens the distribution of measurement across singular directions.

If some
intermediate dimension instead has \(n_\ell<p\), the product acquires
an atom at zero: the bottleneck destroys a positive fraction of parameter
directions, and later expansion cannot recover them.


\subsection{Extreme energy concentration: many weak directions and a few dominant ones}
\label{sec:extreme}

The final example shows that fixed average measurement strength alone gives
essentially no protection. Consider a family of full-rank spectra whose mean stays fixed while the
measurements become increasingly concentrated on a subset of directions.

Let \(T>0\) be a random variable satisfying
\(\E[T]=\infty\). Fix $M > 0$ and define the random variable $S_M$ as
\(
    S_M
    :=
    (T\wedge M)/(\E[T\wedge M]),
\)
with its law denoted by $\mu_M$.
Each \(\mu_M\) is compactly supported and has mean one, but
\(S_M\to0\) almost surely as \(M\to\infty\). Hence, for every fixed \(\omega>0\), one can show that $L_{\mu_M}(\omega)\xlongrightarrow{M \rightarrow \infty} \infty.$ Indeed, by bounded convergence, the numerator in \eqref{eq:spectrum-mod} converges to \(\omega\), while the denominator converges to zero.
For a nondegenerate prior, \(L_{\mu_M}(0)=\sigma^2\) and
\(\mathcal E_\pi(\sigma^2)>0\). Monotonicity and the fixed-point identity
therefore imply \(\tau_B^+(\mu_M)\to\infty\), and consequently
$
m_\pi(\tau_B^+(\mu_M))
\rightarrow
\operatorname{Var}(B^\star).
$
Thus the lower bound approaches the risk of ignoring the data even though
the average Gram eigenvalue remains one. The limit \(M\to\infty\) is taken
after the high-dimensional limit; for every fixed \(M\), the compact-support
assumption remains satisfied.

Assuming that $\delta = n/p \geq 1$, the law can be realized by
choosing \(s_{1,p}^{(M)},\ldots,s_{p,p}^{(M)}\) as deterministic
quantiles of \(\mu_M\) and constructing $(X_p^{(M)})^T X_p^{(M)}
    =
    V_p\diag(
        s_{1,p}^{(M)},\ldots,s_{p,p}^{(M)}
    )V_p^T.$ 
For large \(M\), most latent directions are
therefore measured very weakly, while a vanishing fraction carries enough
energy to keep \(p^{-1}\|X_p^{(M)}\|_{\mathrm F}^2\) near one. The heavy tail
here is in the Gram spectrum, not necessarily in the matrix entries. This is a spectral model of extreme multicollinearity or latent-factor-dominated measurement: most coefficient perturbations will barely change the response, and a few latent combinations dominate the total variation.

Together, the examples from \Cref{sec:tightframe} to \Cref{sec:extreme} sharpen \Cref{prop:spectral-monotonicity}: average measurement
strength alone does not determine the Bayes-VAMP lower bound. What also matters is
how that strength is distributed across latent directions, with weak and
nearly null directions exerting a disproportionate effect through \(L_\mu\).

\section{Proof Sketch of Theorem~\ref{thm:Bayes VAMP-lower-bound}}\label{sec:proof_sketch}
In this section, we provide a proof sketch of Theorem~\ref{thm:Bayes VAMP-lower-bound}. The complete argument is in \cref{sec:main_result2_proof}.

We denote $ f_{\tau_B^+}(y) := \E[B^\star | \sqrt{\tau_B^+}Z + B^\star = y]$. An \(h\)-admissible lower-bound pair is a pair
\((b,u_b)\) such that (i) \(L_\mu(u_b)>b\), and (ii) every scalar proximal denoiser for a convex penalty \(h\), when applied to the effective Gaussian observation \(Y_b\), has
extrinsic variance at least \(u_b\); see
\cref{def:lb-pair}. We construct the required lower-bound pair separately in the following two cases:
\begin{itemize}
\item If \(f_{\tau_B^+}(y)\) is not proximal, the strict gap defined in \cref{lem:pythag}, together with the strict monotonicity of \(L_\mu\) for a non-flat spectrum, shows that
\(
(b,u_b)
=
(\tau_B^+,
\cE_\pi(\tau_B^+)+\Delta_{\tau_B^+})
\)
is an \(h\)-admissible lower-bound pair.

\item If \(f_{\tau_B^+}\) is proximal,
\cref{lem:lb-pair-existence} gives
\(
T_{\mu,\pi}(b)>b
\)
for every \(b<\tau_B^+\) sufficiently close to \(\tau_B^+\).
Consequently,
\(
(b,u_b)=\bigl(b,\cE_\pi(b)\bigr)
\)
is an \(h\)-admissible lower-bound pair for every such \(b\).
\end{itemize}

In either of the above cases, fix one of the \(h\)-admissible lower-bound pairs \((b,u_b)\)
constructed above. In the regime \(p_0>0\) and \(I_+<\infty\), its
first coordinate is chosen in
\(
[\tau_B^+-\epsilon_0,\tau_B^+];
\)
this holds automatically in the nonproximal case and is obtained by
taking \(b\) sufficiently close to \(\tau_B^+\) in the proximal case.

By \cref{prop:perturbed-fp-var}, we may choose one sufficiently small fixed
\(\lambda_b>0\) and a corresponding oracle-perturbed fixed
point satisfying
\(
\nu_{\lambda_b}\geq b.
\)
The fixed point is explicitly reconstructed in the proof of
\cref{prop:perturbed-fp-var}. Scalar Bayes optimality and monotonicity give
\(
r_{\lambda_b}
\geq
m_\pi(\nu_{\lambda_b})
\geq
m_\pi(b).
\)

Since
\(
\|\widehat\beta_{\cvx}^{h}-\beta^\star\|_2^2
\geq
\|\widehat\beta_{\cvx}^{h,\lambda_b}-\beta^\star\|_2^2,
\)
for every \(\epsilon>0\),
\begin{align*}
\PP\left(
\frac1p
\|\widehat\beta_{\cvx}^{h}-\beta^\star\|_2^2
<m_\pi(b)-\epsilon
\right) &\leq
\PP\left(
\frac1p
\|\widehat\beta_{\cvx}^{h,\lambda_b}-\beta^\star\|_2^2
<m_\pi(b)-\epsilon
\right)\\
&\leq
\PP\left(
\frac1p
\|\widehat\beta_{\cvx}^{h,\lambda_b}-\beta^\star\|_2^2
<r_{\lambda_b}-\epsilon
\right)
\longrightarrow0,
\end{align*}
where the convergence follows from \cref{thm:oracle-mse}. Hence
\(
\pliminf_{p\to\infty}
\frac1p
\|\widehat\beta_{\cvx}^{h}-\beta^\star\|_2^2
\geq m_\pi(b).
\)

In the nonproximal case, \(b=\tau_B^+\), so this is already the desired
bound. In the proximal case, the bound holds for every fixed
\(b<\tau_B^+\) sufficiently close to \(\tau_B^+\). Letting
\(b\uparrow\tau_B^+\) and using the continuity of \(m_\pi\) gives
\(
\pliminf_{p\to\infty}
\frac1p
\|\widehat\beta_{\cvx}^{h}-\beta^\star\|_2^2
\geq m_\pi(\tau_B^+).
\)

Singleton-domain $h$, affine $h$, and flat-spectrum cases are treated directly
in \cref{lem:singleton,lem:affine,lem:flat}.

\section{Necessity of the \texorpdfstring{$q$}{q}-bounded width and when it automatically holds}
\label{sec: examples-penalties}

In this section, we discuss the role of the $q$-bounded width condition in \cref{defn:q-width-bounded-b}, which is used in the proof of \cref{thm:Bayes VAMP-lower-bound}. The condition enters
only one part of the proof of \cref{thm:Bayes VAMP-lower-bound}: it rules out sequences of fixed points for which a precision parameter in the state evolution tends to zero when the limiting spectral law has mass at zero. This section explains that role and gives a direct criterion for checking the condition.

Recall that $ p_0:=\PP_\mu(S=0)=1-q$ and 
    $I_+
    :=
    \E_\mu[{\mathbf 1_{\{S>0\}}}/{S}].$
The admissible penalty class in \eqref{eq:admissible-penalty-class} is
\[
    \cC_{\rm LB}(\mu,\pi,\sigma^2)
    =
    \begin{cases}
        \cC,
        & p_0=0\ \text{or}\ I_+=\infty,\\[1mm]
        \cC_{\rm WB}(q;\tau_B^+),
        & p_0>0\ \text{and}\ I_+<\infty.
    \end{cases}
\]
If \(p_0=0\) or \(I_+=\infty\), then the lower bound result in \cref{thm:Bayes VAMP-lower-bound} applies to every
\(h\in\cC\) whose finite-dimensional argmin is nonempty (see \Cref{ass4}), with no width condition. The $q$-bounded width condition is needed only when \(p_0>0\) and \(I_+<\infty\). Thus the width condition is imposed only when the limiting spectrum has an
atom at zero ($p_0>0$) but the reciprocal moment of its positive part remains finite ($I_+<\infty$).

\textbf{The degenerate scenario excluded by the condition.}
Recall \cref{defn:q-width-bounded-b}: a penalty \(h\in\cC\) satisfies the
\(q\)-bounded width condition if there are
\(\epsilon_0\in(0,\tau_B^+)\) and \(\delta_0>0\) such that
\begin{equation}
    \limsup_{\kappa\to\infty}
    \sup_{\nu\in[\tau_B^+ -\epsilon_0,\tau_B^+]}A_h(\kappa,\nu)
    \le q-\delta_0 \quad \text{ where } \quad A_h(\kappa,\nu) =
    \E\left[
        (\prox_{\kappa h})'
        (B^\star+\sqrt\nu Z)
    \right]
    \in[0,1].
\end{equation}
Notice that $A_h(\kappa,\nu)$ is the mean slope of the scalar proximal denoiser at effective noise
variance \(\nu\). Moreover, a large value of \(\kappa\) corresponds to a small
effective denoising precision. This may arise from exact zero eigenvalues,
corresponding to rank deficiency, or from positive eigenvalues that
converge to zero, corresponding to increasingly weakly measured directions.

 When the perturbed $\ell_2$ oracle regularization is sent to zero, the spectrum stage can therefore become increasingly sensitive to
these zero or vanishing spectral values. At the same time,
\(
I_+
=
\E[\frac{1}{S}\mathbf 1\{S>0\}]
<\infty
\)
means that the contribution from the positive part of the spectrum remains
finite, so the spectral equations alone do not rule out this limiting
behavior. The fixed-point equations show that any such sequence would
have to satisfy
\(
\liminf_{\kappa\to\infty}A_h(\kappa,\nu)\geq q
\)
for an effective noise level \(\nu\) near \(\tau_B^+\). The \(q\)-bounded width condition excludes this possibility by requiring, uniformly
over that window,
\(
\limsup_{\kappa\to\infty}A_h(\kappa,\nu)\leq q-\delta_0
\)
for some \(\delta_0>0\). Thus, the condition provides the separation needed to rule out fixed-point sequences along which the precision parameter tends to zero. If \(p_0=0\) or
\(I_+=\infty\), the spectral equations themselves exclude this behavior,
so no additional condition on \(h\) is needed.

\begin{rem}[A proof condition, not a necessity claim]
Failure of \(q\)-bounded width means only that the endpoint contradiction
above is unavailable. It does not show that a degenerate fixed-point scenario
exists or that the statistical lower bound fails. Accordingly, we use
\(q\)-bounded width as a sufficient condition for the present proof and do not
claim that it is statistically necessary.
\end{rem}

\subsection{A minimizer-set characterization}

\cref{defn:q-width-bounded-b}, given in \eqref{eq:q-width-condition} above, is stated in terms of the
large-\(\kappa\) behavior of proximal derivatives through $A_h(\kappa,\nu)$. When \(h\) attains its
minimum, that behavior has a simple geometric interpretation.

\begin{proposition}[Minimizer-set characterization of \(q\)-bounded width]
\label{prop:q-bounded-minimizer-set}
Let \(h\in\cC\), and suppose that
\(M:=\argmin_{x\in\R}h(x)\) is nonempty. For every nonempty compact
\(K\subset(0,\infty)\),
\begin{equation}
    \sup_{\nu\in K}
    \left|
        A_h(\kappa,\nu)
        -
        \PP\left(
            B^\star+\sqrt\nu Z\in\operatorname{int}(M)
        \right)
    \right|
    \longrightarrow0, \qquad \text{as }
    \kappa\to\infty.
    \label{eq:Ah-minimizer-limit}
\end{equation}
Consequently, \(h\in\cC_{\rm WB}(q;\tau_B^+)\) if and only if $\PP(
        B^\star+\sqrt{\tau_B^+}Z
        \in\operatorname{int}(M)
    )<q.$
\end{proposition}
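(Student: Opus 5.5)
Write $Y_\nu := B^\star+\sqrt\nu Z$. Since $\nu>0$, $Y_\nu$ has a continuous, strictly positive density $f_\nu$ on $\R$, and $f_\nu$ varies continuously in $\nu$ uniformly on compacts. The plan is to describe the pointwise limit of $(\prox_{\kappa h})'$ as $\kappa\to\infty$, integrate it against $f_\nu$, and control the convergence uniformly in $\nu\in K$.

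\textbf{Step 1: limits of the proximal map.} Because $M=\argmin h$ is a nonempty closed interval, the family $\prox_{\kappa h}$ is nonexpansive and monotone in $y$. For each fixed $y$,
\[
\prox_{\kappa h}(y)\to \Pi_M(y)\quad\text{as }\kappa\to\infty,
\]
where $\Pi_M$ is the Euclidean projection onto $M$. This is the standard fact that the Moreau envelope $e_{\kappa h}$, with $\kappa h$ having minimizer set $M$, forces the minimizer toward the projection: any minimizer $x$ of $\tfrac12(x-y)^2+\kappa h(x)$ must satisfy $h(x)\le \min h + (y-\Pi_M y)^2/(2\kappa)$.

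For $y\in\operatorname{int}(M)$ one has $\prox_{\kappa h}(y)=y$ for every $\kappa$, since $0\in\partial h(y)$. Hence $(\prox_{\kappa h})'(y)=1$ on $\operatorname{int}(M)$. For $y$ outside $M$, the derivative should tend to $0$, and Step 2 handles this in averaged form.

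\textbf{Step 2: averaged slopes via integration.} Rather than working with derivatives pointwise, I use the first expression in \eqref{eq:Ah} together with Stein's identity, writing
\[
A_h(\kappa,\nu)=\int (\prox_{\kappa h})'(y)\,f_\nu(y)\,dy.
\]
Fix a compact window $[-R,R]$ and split it into $\operatorname{int}(M)$ and its complement.

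On the complement, consider a component interval $J=(a,b)$ lying to the right of $\sup M$ (the left side is symmetric). The function $g_\kappa=\prox_{\kappa h}$ is nondecreasing and $1$-Lipschitz, so $g_\kappa'\in[0,1]$. Moreover $g_\kappa\to \Pi_M$, which is constant on $J$. Monotonicity gives $\int_J g_\kappa' \le g_\kappa(b)-g_\kappa(a)$, and this tends to $0$ uniformly on compacts, by Dini-type arguments applied to monotone functions converging to a continuous limit.

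Since $f_\nu$ is bounded on $[-R,R]$ uniformly over $\nu\in K$, it follows that $\int_{[-R,R]\setminus \operatorname{int}M} g_\kappa' f_\nu\to 0$ uniformly in $\nu$. The boundary points of $M$ have Lebesgue measure zero, so they contribute nothing. The tails outside $[-R,R]$ are bounded by $\sup_{\nu\in K}\PP(|Y_\nu|>R)$, which is small uniformly because $K$ is compact and $\pi\in\mathcal P_2$.

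\textbf{Main obstacle.} The step I expect to be hardest is the uniform bound $g_\kappa(b)-g_\kappa(a)\to0$ on bounded sets where $\Pi_M$ is constant, particularly when $M$ is unbounded on one side or is a single point. I would handle it through the explicit convergence $\sup_{|y|\le R}|\prox_{\kappa h}(y)-\Pi_M(y)|\to0$. This uniform convergence follows from pointwise convergence, equicontinuity (every $\prox_{\kappa h}$ is $1$-Lipschitz), and Arzelà–Ascoli. With it, the averaged slope on any region where $\Pi_M$ is constant is at most $2\sup|g_\kappa-\Pi_M|/\text{(length)}$.

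To avoid dividing by length, I would instead integrate by parts against the smooth density: $\int g_\kappa' f_\nu = -\int g_\kappa f_\nu'$ plus boundary terms. This converges uniformly in $\nu$ to $\int \Pi_M' f_\nu = \PP(Y_\nu\in\operatorname{int}M)$, which proves \eqref{eq:Ah-minimizer-limit}.

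\textbf{Step 3: the equivalence.} Set $\phi(\nu):=\PP(Y_\nu\in\operatorname{int}M)$, which is continuous in $\nu$.

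If $\phi(\tau_B^+)<q$, continuity gives some $\epsilon_0$ with $\sup_{[\tau_B^+-\epsilon_0,\tau_B^+]}\phi\le q-2\delta_0$. The uniform limit \eqref{eq:Ah-minimizer-limit} then yields \eqref{eq:q-width-condition}.

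Conversely, suppose \eqref{eq:q-width-condition} holds. Evaluating at $\nu=\tau_B^+$ gives $\phi(\tau_B^+)=\lim_{\kappa\to\infty}A_h(\kappa,\tau_B^+)\le q-\delta_0<q$.
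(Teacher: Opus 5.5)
Your proposal is correct, and the route you settle on is essentially the paper's. Integrating by parts against the marginal density, $-\int \prox_{\kappa h}\,f_\nu'\,dy$, produces exactly the paper's conditional Stein form $\nu^{-1/2}\E[Z\prox_{\kappa h}(B^\star+\sqrt\nu Z)]$. From there the paper uses pointwise convergence $\prox_{\kappa h}\to\Pi_M$ (citing resolvent convergence rather than your direct comparison). It dominates by $|Z|(|m|+|Y_\nu-m|)$ for a fixed $m\in M$ and applies Stein once more.

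The two arguments differ in how they get uniformity in $\nu$. The paper argues sequentially: it takes $\nu_k\to\nu$, couples through the same $(B^\star,Z)$, and uses nonexpansiveness. You instead use uniform convergence on compacts together with tail bounds. That works, but you should state the needed domination $\sup_{\nu\in K}\int(1+|y|)\,|f_\nu'(y)|\,dy<\infty$, with uniformly vanishing tails, explicitly. In Step 1 you should also add that the same comparison gives $|x_\kappa-y|\le\operatorname{dist}(y,M)$. Then any limit point lies in $M$ by lower semicontinuity and is a nearest point, hence equals $\Pi_M(y)$.

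Your ``main obstacle'' is not actually one: the first argument in Step 2 already closes the proof without integration by parts. For $R$ large, $[-R,R]\setminus\operatorname{int}(M)$ is a union of at most two intervals $[a,b]$ on which $\Pi_M$ is constant. On each of them, $\int_a^b (\prox_{\kappa h})'\,f_\nu \le (2\pi\min K)^{-1/2}\bigl(\prox_{\kappa h}(b)-\prox_{\kappa h}(a)\bigr)$. This tends to $0$ using only pointwise convergence at the two endpoints. You need the integral of the slope, not its average, so no division by length ever arises.

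Combine this with $(\prox_{\kappa h})'=1$ on $\operatorname{int}(M)$ and the uniform tail bound $\sup_{\nu\in K}\PP(|Y_\nu|>R)\to0$. The result is a Stein-free proof that is somewhat more elementary than the paper's. The paper's version has the advantage of packaging the uniform-in-$\nu$ step neatly through the coupling.
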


The proposition says that only the flat part of the penalty (through $M$) matters at large
proximal scale $\kappa$. The condition fails exactly when the interior of the
minimizer set captures at least a fraction \(q\) of the effective scalar
observations at variance \(\tau_B^+\). If \(h\) does not attain its minimum,
the proposition does not apply; in that case the direct condition
\eqref{eq:q-width-condition} remains the relevant assumption.

\begin{proof}[Proof of \Cref{prop:q-bounded-minimizer-set}]
The uniform convergence \eqref{eq:Ah-minimizer-limit} is
\cref{cor:prox-large-scale-derivative-app}. Since \(M\) is a closed
convex subset of \(\R\), its interior is an interval, possibly empty or
unbounded. The law of \(B^\star+\sqrt\nu Z\) has a continuous density
for every \(\nu>0\), so it assigns zero mass to the boundary of this
interval. Because the effective Gaussian observation model $Y_\nu = B^\star + \sqrt{\nu}Z$ is weakly continuous in \(\nu\), we have that
$\phi_M(\nu)
    :=
    \PP(B^\star+\sqrt\nu Z\in\operatorname{int}(M))$
is continuous on \((0,\infty)\).

For a compact \(K\), uniform convergence gives
\(
    \lim_{\kappa\to\infty}
    \sup_{\nu\in K}A_h(\kappa,\nu)
    =
    \max_{\nu\in K}\phi_M(\nu).
\)
Thus, the $q$-bounded width condition is equivalent to \(\phi_M(\tau_B^+)<q\), and continuity gives a one-sided interval
\([\tau_B^+-\epsilon_0,\tau_B^+]\) on which
\(\phi_M\le q-\delta_0\). Uniform convergence gives
\eqref{eq:q-width-condition}. The converse follows by evaluating that condition at \(\nu=\tau_B^+\). This proves the desired result.
\end{proof}

\subsection{Relevant penalty classes}

\textbf{Penalties with a unique scalar minimizer.}
If \(M=\{x_0\}\), then \(\operatorname{int}(M)=\varnothing\), and $\PP(
        B^\star+\sqrt{\tau_B^+}Z
        \in\operatorname{int}(M)
    ) = 0$. Hence every penalty
with a unique scalar minimizer belongs to
\(\cC_{\rm WB}(q;\tau_B^+)\) for every \(q>0\). This includes every norm on
\(\R\), shifted absolute-value penalties \(h(x)=c|x-x_0|\), ridge and
elastic-net penalties, and every proper, lsc, strongly convex scalar penalty.

These penalties also satisfy the minimizer-existence requirement in
\cref{ass4}. Indeed, a proper, lsc, convex function on \(\R\) with a unique
minimizer is coercive. On any unbounded side of its effective domain, a finite
point beyond the minimizer gives a strictly signed secant slope; monotonicity
of convex secant slopes then yields at least linear growth along that tail. On
a bounded side of the effective domain, the extended-valued function is
eventually \(+\infty\). Thus \(h(x)\to+\infty\) as \(|x|\to\infty\), so
\(\sum_{j=1}^p h(\beta_j)\) is coercive on \(\R^p\) and the penalized
least-squares objective attains its minimum.

\textbf{Flat-bottom penalties.}
We consider two common penalties when the minimization set \(M\) is a nontrivial interval. For such penalties, $\PP(
        B^\star+\sqrt{\tau_B^+}Z
        \in\operatorname{int}(M)
    )$
quantifies how wide their flat regions may be. We consider the dead-zone penalty \(h(x)=(|x|-a)_+\) for which \(M=[-a,a]\). The
criterion in \Cref{prop:q-bounded-minimizer-set} is exactly
$\PP(
        |B^\star+\sqrt{\tau_B^+}Z|<a
    )<q$.
For the hinge penalty \(h(x)=(x-b)_+\), we have
\(M=(-\infty,b]\) and now the condition becomes
\(\PP(B^\star+\sqrt{\tau_B^+} Z<b) < q\).
Thus a flat-bottom penalty is permitted whenever its flat region contains
strictly less than the positive-spectrum fraction of the effective scalar
observations.

\subsection{Relation to the AMP \texorpdfstring{\(\delta\)}{delta}-bounded-width condition}\label{sec: comparison-celentano-delta}

The localized $q$-bounded width condition used in this work is the VAMP analogue of the
\(\delta\)-bounded-width condition in
\textcite[Definition~2.1]{Celentano2022barrier}. To make the comparison
explicit, define the global \(q\)-bounded-width class \(\cC_{\rm GB}(q)\) by
\begin{equation}
    h\in\cC_{\rm GB}(q)
    \quad\Longleftrightarrow\quad
    \text{for every compact \(K\subset(0,\infty)\),}\quad
    \limsup_{\kappa\to\infty}
    \sup_{\nu\in K}A_h(\kappa,\nu)<q.
    \label{eq:global-q-width}
\end{equation}
Because \(\cC_{\rm WB}(q;\tau_B^+)\) requires this strict gap only on a
one-sided neighborhood of \(\tau_B^+\), clearly $\cC_{\rm GB}(q)
    \subseteq
    \cC_{\rm WB}(q;\tau_B^+)$; hence, the lower bound in \cref{thm:Bayes VAMP-lower-bound} still holds if we instead take the infimum over all $h \in \cC_{\rm GB}$.

    The setting in \textcite{Celentano2022barrier} considers the iid \(N(0,1/n)\) design with \(n/p\to\delta\). Its limiting spectral law $\mu$ is the Marchenko--Pastur law such that
\(
q = \PP(S > 0) = \min\{\delta, 1\}\), where \(S \sim \mu\). The global version of the \eqref{eq:global-q-width} condition therefore becomes
\[
    \text{for every compact \(K\subset(0,\infty)\),}\qquad
    \limsup_{\kappa\to\infty}
    \sup_{\nu\in K}A_h(\kappa,\nu)
    <\min\{\delta,1\},
\]
which is the scalar separable specialization of the condition in
\textcite[Definition~2.1]{Celentano2022barrier}. Their formulation permits a
sequence of nonseparable penalties and therefore includes an additional
high-dimensional limsup. In our case, the penalty is generated by one fixed scalar
function \(h\), so no such outer limit is needed.

In summary, \(q\)-bounded width is automatic for the standard penalties with
a unique scalar minimizer, is exactly checkable for flat-bottom penalties,
and is not imposed at all unless \(p_0>0\) and \(I_+<\infty\). Its purpose is
to exclude a degenerate scenario in the proof, not to characterize a
fundamental limitation of convex estimation.

\section{Numerical Experiments}
\label{sec:simulations}

In this section, we present the numerical experiments used to illustrate and validate our theoretical results. For several spectrum-prior combinations, we implement the Bayes VAMP algorithm and a variety of convex VAMP algorithms. Selected subsets are used for each experiment.

\textbf{Bayes VAMP algorithmic and theoretical MSE.} In this section, we examine whether the empirical MSE values of finite-dimensional estimates produced by the Bayes and convex VAMP algorithms match the MSE values predicted by the state evolution recursions. We ran twenty independent replicates with $n = 400$ and $p = 800$ for each algorithm-prior combination.

In \cref{fig:vamp-validation}, the solid lines represent
state-evolution-predicted MSEs, and dotted lines with markers are the empirical MSEs. As expected, the black Bayes VAMP curve lies below the plotted convex state-evolution curves. We see that, in most cases, the implementable algorithm achieved an empirical MSE that matches the state evolution predictions. Remaining discrepancies, as well as the exact experimental setup, numerical-stability techniques, and run-exclusion principles used---methods such as damping, slightly relaxing the convergence criterion, and taking trimmed means---are presented in \cref{sec:empirical-mse-setup}.

\begin{figure}[t]
    \centering
    \safeincludegraphics[width=0.9\textwidth]
        {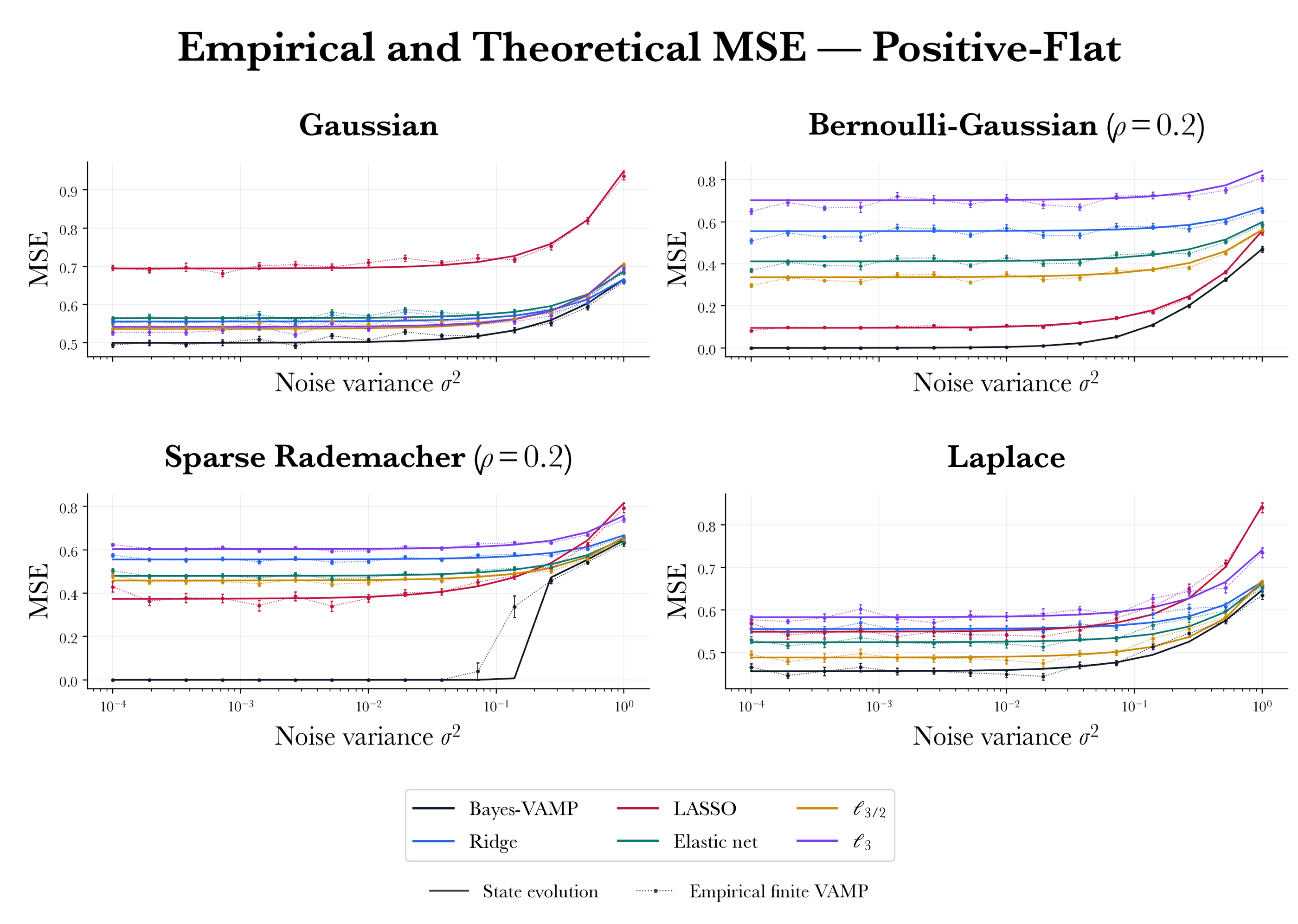}
    \caption{
    Empirical and state-evolution MSEs for the positive-flat spectrum. Each color represents one algorithm, with black denoting Bayes VAMP and the remaining colors convex VAMPs. Dotted lines represent the empirical MSEs of the estimates obtained by running the algorithm, and solid lines represent the theoretical predictions. Each panel corresponds to one of the four signal priors implemented. Twenty replicates were attempted for each algorithm-prior combination; a 10\% two-sided trimmed mean was taken of the successful runs, and error bars were computed as the standard deviation of the successful runs divided by the square root of the number of successful runs. Details are given in \cref{sec:vamp-empirical-theoretical-setup}.
    } 
    \label{fig:vamp-validation}
\end{figure}
 
\textbf{Spectral consequences.} We isolate the effect of the spectral distribution on the Bayes VAMP lower bound.

First, we compare the spectra of a select number of designs with their convex barriers. Four examples---``MatrixNormal,'' ``LNN,'' ``Multi-t,'' and ``Spiked''---are from \textcite{Li2026debiasing}. ``MatrixNormal'' is Gaussian with correlated rows and columns defined by Toeplitz and inverse-Wishart covariances, respectively; ``LNN'' is the linear neural network model defined in \cref{sec:lnn-model} with three layers; ``Multi-t'' has rows that are independently drawn from a $t$-distribution with identity scale and three degrees of freedom; and ``Spiked'' is an additive Gaussian model defined by the sum of an iid Gaussian noise matrix and a latent-structure matrix. Details can be found in \cref{sec:li-sur-matrices}.

In \cref{fig:quantiles_barriers}, we plot the quantiles of the squared singular values on the left and the corresponding Bayes VAMP lower bounds on the right (which match the convex barrier in the regime displayed). We use a Rademacher prior on the signal. The average measurement strength $\bar s = \mathbb E_\mu[S]$ is normalized to be the same for all depicted spectra for ease of comparison.

This plot illustrates a few key theoretical spectral results from \cref{sec:main-spectral-order} and \ref{sec:spectral-consequences}. We see that the flat spectrum lower bounds the MSE curves for the other spectra despite having no strong measurements in any direction. On the other hand, the highest MSE curve is that of ``Spiked,'' which has a larger proportion of very small eigenvalues than any other design. We also see that the ``MatrixNormal'' design is outperformed by the ``LNN'' and ``Multi-t'' designs despite having a much larger range of eigenvalues: its strong measurements are \textit{very} strong, yet its MSE curve is still larger than that of the other two designs. For the three two-point spectra, the lower bound is clearly increasing as the distance between the two points increases. This matches our intuition that having even very large eigenvalues cannot compensate for having many weak measurements. 


\begin{figure}[t]
    \centering
    \safeincludegraphics[width=\textwidth]{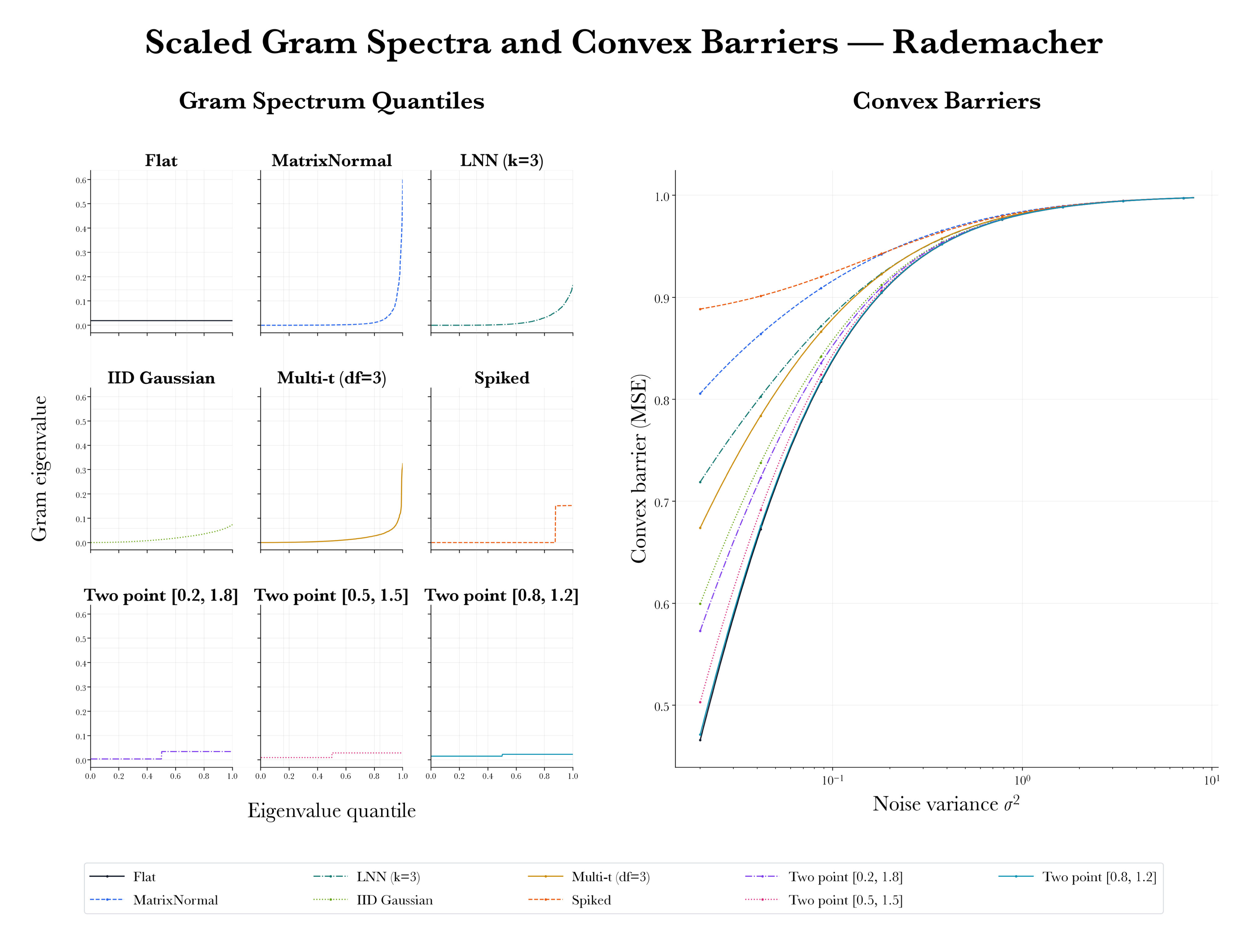}
    \caption{
    The Gram eigenvalue quantiles are plotted on the left for the labeled spectra. The convex barriers are plotted on the right. Note that the displayed quantiles are the eigenvalues after scaling, but the labeling displays the original design matrix name and parameters.
    }
    \label{fig:quantiles_barriers}
\end{figure}

The convex-order result given in \cref{prop:spectral-monotonicity} and \cref{cor:flat-spectrum-best} is visualized in Figure~\ref{fig:common-support-spectra}, which uses examples of spectral laws with common support and mean.
We use a Gaussian prior and varying spectra supported on the interval $[0.2, 1.8]$, including the best-case flat spectrum at the mean one, the worst-case two-point endpoint spectrum at $\{0.2, 1.8\}$, and a few examples from the $\text{Beta}(\alpha, \alpha)$ family---a simple model with clear convex ordering in $\alpha$: for $\alpha_1 > \alpha_2$, $\text{Beta}(\alpha_1, \alpha_1) \preceq_{\text{cx}} \text{Beta}(\alpha_2, \alpha_2)$. The left panel in Figure~\ref{fig:common-support-spectra} displays the density of each spectral law, and the right panel plots the convex barrier for each spectrum. We clearly see that moving spectral mass away from the mean increases the spread and raises the lower bound pointwise.

\begin{figure}[t]
    \centering
    \safeincludegraphics[width=\textwidth]{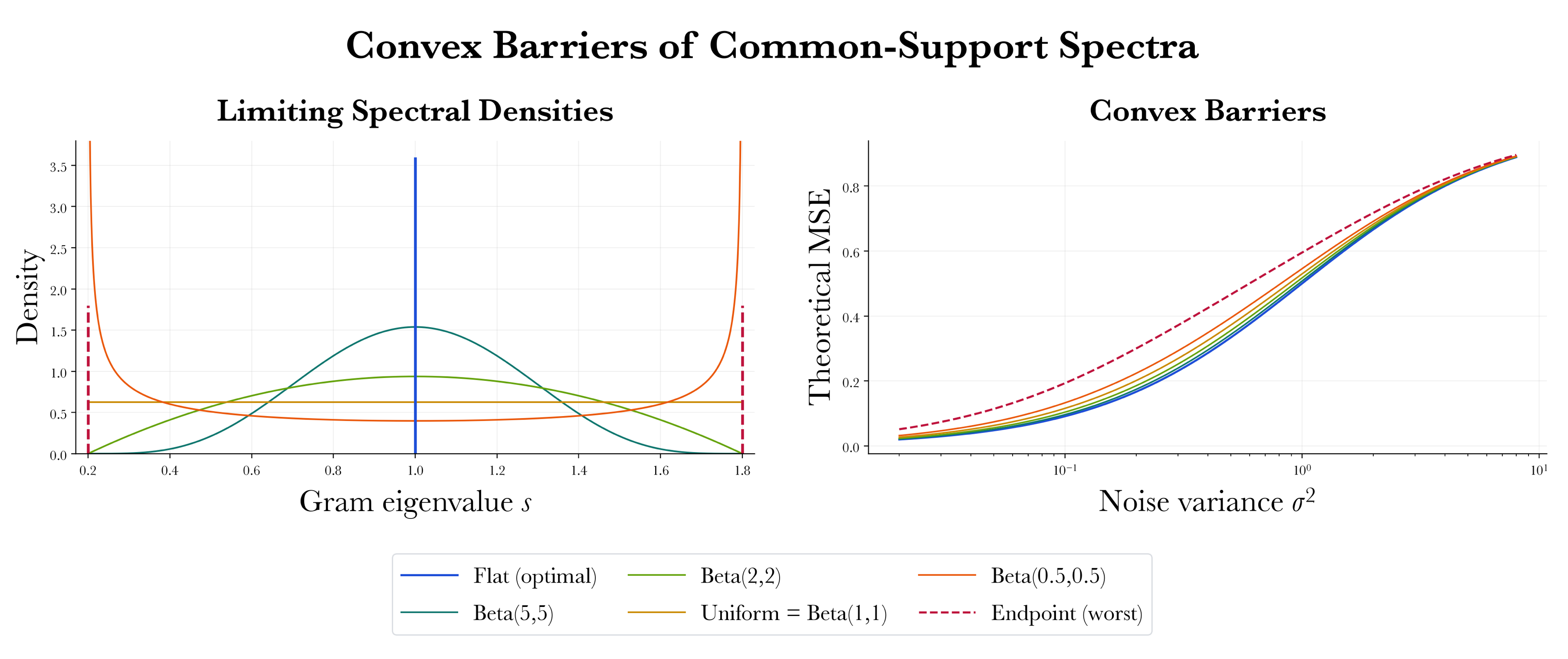}
    \caption{
    Varying the spectral spread while keeping the support and mean fixed. On the left: flat, endpoint, and several shifted, scaled Beta laws supported on \([0.2,1.8]\), all centered at
    one. On the right: the corresponding convex barriers for each spectrum, using a Gaussian prior for all. 
    }
    \label{fig:common-support-spectra}
\end{figure}

Next, we visualize in \cref{fig:lnn-gaps} the effect of adding more Gaussian layers in the linear neural network model described in Proposition \ref{prop: product-gaussian}. We again use a Gaussian prior on the signal. We clearly see that the more layers there are in the linear neural network, the larger the Bayes-VAMP lower bound. This matches our theoretical result, which states that even if each layer of the neural network does not form a bottleneck for convex-penalized estimation, the instability of the layers builds up, and the Stieltjes transform order increases, worsening the lower bound. 

\begin{figure}[t]
    \centering
    \safeincludegraphics[width=\textwidth]{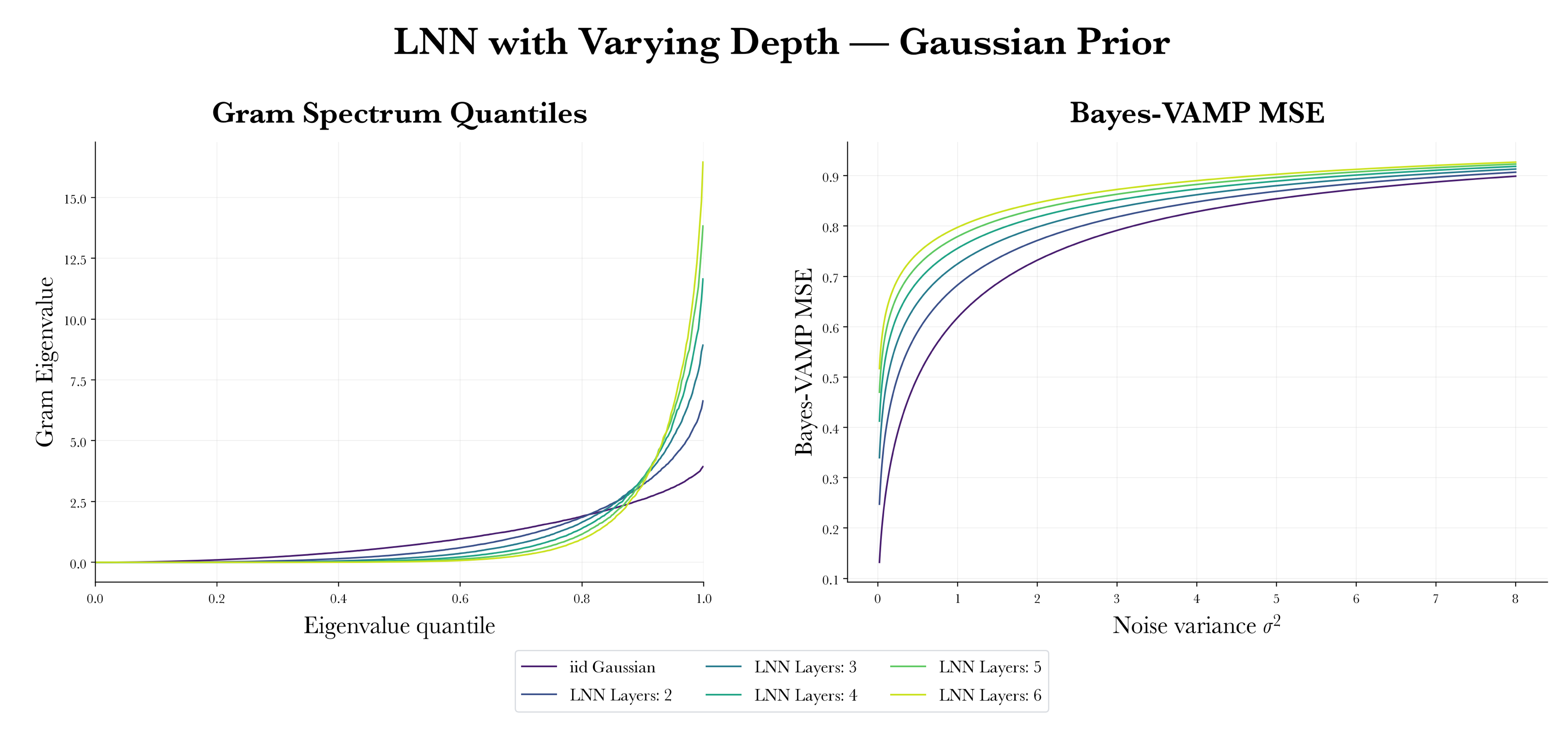}
    \caption{
    Plotted are the MSE curves for a linear neural network for varying numbers of matrices. We see that the deeper the linear neural network is, the larger the convex barrier.
    }
    \label{fig:lnn-gaps}
\end{figure}

\section{Future directions}
\label{sec:future-directions}

The present work isolates two scalar mechanisms governing the performance of convex-penalized regression: the prior enters through the denoising map \(\mathcal E_\pi\), while the design enters through the spectral map \(L_\mu\).
This separation suggests several extensions in which one or both stages are generalized. We describe four directions that appear especially natural.

\textbf{Universality beyond exact RRI.}
Our results are proved under exact RRI of the design matrix, which is the randomness structure needed for VAMP state evolution. A natural question to ask is whether the convex barrier depends only on the limiting spectral law and suitable delocalization properties, rather than on exact Haar-distributed singular vectors.

A target extension would cover matrix sequences that are only approximately or
semi-randomly rotationally invariant \parencite{Dudeja2023SemiRandom, Dudeja2024UniversalityDeterministic, Wang2024AMPUniversality}. Examples include randomly signed or
permuted orthogonal transforms, subsampled Fourier or Hadamard operators, and
deterministic singular-value profiles combined with sufficiently delocalized
singular vectors. One would like to prove that, whenever such a matrix sequence has a limiting squared-singular-value law \(\mu\), the same asymptotic Bayes-VAMP lower bound $m_{\pi}(\tau_B^+(\mu))$ holds.
Such a result would show that \(\mu\), rather than exact Haar invariance, is the
essential design parameter.

\textbf{Generalized linear models and convex losses.}
Another possible extension is to generalized linear observations
$y_i
    \sim
    P_{\rm out}(\,\cdot\,\mid x_i^T\beta^\star),$
and convex estimators of the form
\[
    \widehat\beta
    \in
    \arg\min_{\beta\in\mathbb R^p}
    \Big\{
        \sum_{i=1}^n
        \ell(y_i,x_i^T\beta)
        +
        \sum_{j=1}^p h(\beta_j)
    \Big\}.
\]
Examples include logistic regression, Poisson regression, robust
\(M\)-estimation \parencite{Huber1964MEstimation}, and one-bit inverse problems.
The appropriate algorithmic framework is generalized VAMP \parencite{SchniterRF16a}. In addition to the
prior denoising stage, the algorithm contains a nonlinear output stage determined
 by the convex loss \(\ell\). A generalized lower bound
theorem would compare the proximal output update associated with \(\ell\) to the
Bayes output-channel estimator, while retaining \(L_\mu\) as the
spectrum-dependent linear stage.

\textbf{Block-separable and structured convex penalties.}
Our result requires the penalty to be fully separable: $ h(\beta) = \sum_{j=1}^p h(\beta_j).$
Many important convex procedures fall outside of this class, instead acting on groups of coordinates. For a fixed
block size \(k\), consider
\[
    \widehat\beta
    \in
    \arg\min_{\beta\in\mathbb R^p}
    \Big\{
        \frac12\|y-X\beta\|_2^2
        +
        \sum_{j=1}^{p/k} h_k(\beta_{G_j})
    \Big\},
\]
where \(h_k:\mathbb R^k\to\mathbb R\cup\{+\infty\}\) is convex and
\(\{G_j\}\) is a partition into blocks of size \(k\). This class includes group
Lasso \parencite{Yuan2006GroupLASSO}, block sparsity \parencite{Eldar2010BlockSparsity, joseph2012least, rush2017capacity, barbier2019universal}, and fixed-dimensional multi-task regularization.

The scalar denoising channel would be replaced by the vector Gaussian channel
$ Y_\tau
    =
    B^\star+\sqrt{\tau}Z,$ where
    $B^\star$ is independent of $Z\in\mathbb R^k,$
with normalized Bayes risk
$m_{\pi_k}(\tau)
    :=
    \frac1k
    \mathbb E
    [
    \|
            \mathbb E[B^\star\mid Y_\tau]-B^\star
        \|_2^2
    ].$
A block-VAMP lower-bound result would require a vector analogue of the scalar
extrinsic-variance comparison, together with a characterization of the block
posterior means that can be represented as proximal maps of convex functions and would likely use non-separable AMP variants like \cite{Berthier2020NonseparableAMP}.

In dimensions greater than one, nonexpansiveness alone is no longer sufficient for
a map to be proximal. One must also account for firm nonexpansiveness and cyclic
monotonicity. Thus the scalar log-concavity criterion may be replaced by a richer
geometric condition involving the Jacobian and integrability of the posterior-mean
map. Establishing this characterization would extend the lower bound theory from
coordinatewise regularization to a substantially broader class of structured
estimators.

\textbf{Finite-sample lower bounds for convex-penalized estimators.} The results of this paper are asymptotic. A complementary direction is to obtain
finite-dimensional lower bounds with explicit error terms. A representative target
would be an inequality of the following form: for \(\operatorname{err}_p\to0\) at a quantified rate,
\[
    \mathbb P
    \left(
        \frac1p
        \|\widehat\beta_{\rm cvx}^{h}-\beta^\star\|_2^2
        <
        m_\pi(\tau_B^+)-\varepsilon-\operatorname{err}_p
    \right)
    \le
    \exp(-c p\varepsilon^2).
\]
Such a theorem would require three uniform estimates. First, one needs
nonasymptotic concentration of VAMP around its state evolution for a fixed number
of iterations \parencite{cademartori2024non}. Second, one needs a stability or contraction argument allowing the
number of iterations to grow with \(p\). Third, one must quantitatively track the VAMP iterate to the convex optimizer. The oracle limit \(\lambda\downarrow0\)
would have to be controlled jointly with the dimensional limit.

\section{Acknowledgements}

This material is based upon work supported by the National Science Foundation under Award No.\ DMS-2413828.

\clearpage
\printbibliography[title={References}]

@article{Rangan2018vamp,
  author  = {Rangan, Sundeep and Schniter, Philip and Fletcher, Alyson K.},
  title   = {Vector Approximate Message Passing},
  journal = {IEEE Transactions on Information Theory},
  year    = {2019},
  volume  = {65},
  number  = {10},
  pages   = {6664--6684}
}

@article{Li2026debiasing,
  author  = {Li, Yufan and Sur, Pragya},
  title   = {Spectrum-Aware Debiasing: A Modern Inference Framework with Applications to Principal Components Regression},
  journal = {The Annals of Statistics},
  year    = {2026},
  volume  = {54},
  number  = {2},
  pages   = {745--770}
}

@inproceedings{Gerbelot2020vamp,
  author    = {Gerbelot, C{\'e}dric and Abbara, Alia and Krzakala, Florent},
  title     = {Asymptotic Errors for High-Dimensional Convex Penalized Linear Regression beyond {Gaussian} Matrices},
  booktitle = {Proceedings of the Thirty-Third Conference on Learning Theory},
  series    = {Proceedings of Machine Learning Research},
  year      = {2020},
  volume    = {125},
  pages     = {1682--1713},
  publisher = {PMLR}
}

@article{Gerbelot2023glm,
  author  = {Gerbelot, C{\'e}dric and Abbara, Alia and Krzakala, Florent},
  title   = {Asymptotic Errors for Teacher--Student Convex Generalized Linear Models (or: How to Prove {Kabashima's} Replica Formula)},
  journal = {IEEE Transactions on Information Theory},
  year    = {2023},
  volume  = {69},
  number  = {3},
  pages   = {1824--1852}
}

@article{Celentano2022barrier,
  author  = {Celentano, Michael and Montanari, Andrea},
  title   = {Fundamental Barriers to High-Dimensional Regression with Convex Penalties},
  journal = {The Annals of Statistics},
  year    = {2022},
  volume  = {50},
  number  = {1},
  pages   = {170--196}
}

@article{Zhang2020identical,
  author  = {Zhang, Haochuan},
  title   = {Identical Fixed Points in State Evolutions of {AMP} and {VAMP}},
  journal = {Signal Processing},
  year    = {2020},
  volume  = {173},
  pages   = {107601}
}

@article{Bean2013OptimalMEstimation,
  author  = {Bean, Derek and Bickel, Peter J. and El Karoui, Noureddine and Yu, Bin},
  title   = {Optimal {$M$}-Estimation in High-Dimensional Regression},
  journal = {Proceedings of the National Academy of Sciences},
  year    = {2013},
  volume  = {110},
  number  = {36},
  pages   = {14563--14568}
}

@article{Frankowska2018PM,
  author  = {Frankowska, H{\'e}l{\`e}ne},
  title   = {The {Poincar\'e--Miranda} Theorem and Viability Condition},
  journal = {Journal of Mathematical Analysis and Applications},
  year    = {2018},
  volume  = {463},
  number  = {2},
  pages   = {832--837}
}

@article{Stein1981,
  author  = {Stein, Charles M.},
  title   = {Estimation of the Mean of a Multivariate Normal Distribution},
  journal = {The Annals of Statistics},
  year    = {1981},
  volume  = {9},
  number  = {6},
  pages   = {1135--1151}
}

@article{Efron2011Tweedie,
  author  = {Efron, Bradley},
  title   = {Tweedie's Formula and Selection Bias},
  journal = {Journal of the American Statistical Association},
  year    = {2011},
  volume  = {106},
  number  = {496},
  pages   = {1602--1614}
}

@article{Alenazi2026Stieltjes,
  author  = {Alenazi, Abdulaziz and Mehrez, Khaled},
  title   = {The Stieltjes Transform Order and Related Ratio Order},
  journal = {Statistics \& Probability Letters},
  year    = {2026},
  volume  = {233},
  pages   = {110663}
}

@article{Bayati2011AMP,
  author  = {Bayati, Mohsen and Montanari, Andrea},
  title   = {The Dynamics of Message Passing on Dense Graphs, with Applications to Compressed Sensing},
  journal = {IEEE Transactions on Information Theory},
  year    = {2011},
  volume  = {57},
  number  = {2},
  pages   = {764--785}
}

@article{MarchenkoPastur,
  author  = {Marchenko, Vladimir A. and Pastur, Leonid A.},
  title   = {Distribution of Eigenvalues for Some Sets of Random Matrices},
  journal = {Mathematics of the USSR-Sbornik},
  year    = {1967},
  volume  = {1},
  number  = {4},
  pages   = {457--483}
}

@incollection{tightframe,
  author    = {Casazza, Peter G. and Kutyniok, Gitta and Philipp, Friedrich},
  title     = {Introduction to Finite Frame Theory},
  booktitle = {Finite Frames: Theory and Applications},
  editor    = {Casazza, Peter G. and Kutyniok, Gitta},
  series    = {Applied and Numerical Harmonic Analysis},
  year      = {2013},
  pages     = {1--53},
  publisher = {Birkh{\"a}user},
  location  = {Boston}
}

@book{convex_order,
  author    = {Shaked, Moshe and Shanthikumar, J. George},
  title     = {Stochastic Orders},
  year      = {2007},
  publisher = {Springer},
  location  = {New York}
}

@article{Hoerl1970ridge,
  author  = {Hoerl, Arthur E. and Kennard, Robert W.},
  title   = {Ridge Regression: Biased Estimation for Nonorthogonal Problems},
  journal = {Technometrics},
  year    = {1970},
  volume  = {12},
  number  = {1},
  pages   = {55--67}
}

@article{Tibshirani1996lasso,
  author  = {Tibshirani, Robert},
  title   = {Regression Shrinkage and Selection via the {Lasso}},
  journal = {Journal of the Royal Statistical Society: Series B (Methodological)},
  year    = {1996},
  volume  = {58},
  number  = {1},
  pages   = {267--288}
}

@article{Zou2005elastic,
  author  = {Zou, Hui and Hastie, Trevor},
  title   = {Regularization and Variable Selection via the Elastic Net},
  journal = {Journal of the Royal Statistical Society: Series B (Statistical Methodology)},
  year    = {2005},
  volume  = {67},
  number  = {2},
  pages   = {301--320}
}

@article{Donoho2009MessagePassing,
  author  = {Donoho, David L. and Maleki, Arian and Montanari, Andrea},
  title   = {Message-Passing Algorithms for Compressed Sensing},
  journal = {Proceedings of the National Academy of Sciences},
  year    = {2009},
  volume  = {106},
  number  = {45},
  pages   = {18914--18919}
}

@article{Bayati2012LassoRisk,
  author  = {Bayati, Mohsen and Montanari, Andrea},
  title   = {The LASSO Risk for Gaussian Matrices},
  journal = {IEEE Transactions on Information Theory},
  year    = {2012},
  volume  = {58},
  number  = {4},
  pages   = {1997--2017}
}

@article{Berthier2020NonseparableAMP,
  author  = {Berthier, Rapha{\"e}l and Montanari, Andrea and Nguyen, Phan-Minh},
  title   = {State Evolution for Approximate Message Passing with Non-Separable Functions},
  journal = {Information and Inference: A Journal of the IMA},
  year    = {2020},
  volume  = {9},
  number  = {1},
  pages   = {33--79}
}

@article{Thrampoulidis2018Precise,
  author  = {Thrampoulidis, Christos and Abbasi, Ehsan and Hassibi, Babak},
  title   = {Precise Error Analysis of Regularized {$M$}-Estimators in High Dimensions},
  journal = {IEEE Transactions on Information Theory},
  year    = {2018},
  volume  = {64},
  number  = {8},
  pages   = {5592--5628}
}

@article{Miolane2021Lasso,
  author  = {Miolane, L{\'e}o and Montanari, Andrea},
  title   = {The Distribution of the {Lasso}: Uniform Control over Sparse Balls and Adaptive Parameter Tuning},
  journal = {The Annals of Statistics},
  year    = {2021},
  volume  = {49},
  number  = {4},
  pages   = {2313--2335}
}

@article{Ma2017OAMP,
  author  = {Ma, Junjie and Ping, Li},
  title   = {Orthogonal {AMP}},
  journal = {IEEE Access},
  year    = {2017},
  volume  = {5},
  pages   = {2020--2033}
}

@article{Takeuchi2020EP,
  author  = {Takeuchi, Keigo},
  title   = {Rigorous Dynamics of Expectation-Propagation-Based Signal Recovery from Unitarily Invariant Measurements},
  journal = {IEEE Transactions on Information Theory},
  year    = {2020},
  volume  = {66},
  number  = {1},
  pages   = {368--386}
}

@inproceedings{Fletcher2018Plugin,
  author    = {Fletcher, Alyson K. and Pandit, Parthe and Rangan, Sundeep and Sarkar, Subrata and Schniter, Philip},
  title     = {Plug-In Estimation in High-Dimensional Linear Inverse Problems: A Rigorous Analysis},
  booktitle = {Advances in Neural Information Processing Systems},
  year      = {2018},
  volume    = {31},
  pages     = {7451--7460}
}

@article{Fan2022RotInvAMP,
  author  = {Fan, Zhou},
  title   = {Approximate Message Passing Algorithms for Rotationally Invariant Matrices},
  journal = {The Annals of Statistics},
  year    = {2022},
  volume  = {50},
  number  = {1},
  pages   = {197--224}
}

@article{Dudeja2022SpectralUniversality,
  author  = {Dudeja, Rishabh and Sen, Subhabrata and Lu, Yue M.},
  title   = {Spectral Universality of Regularized Linear Regression with Nearly Deterministic Sensing Matrices},
  journal = {arXiv preprint arXiv:2208.02753},
  year    = {2022}
}

@article{Wang2024AMPUniversality,
  author  = {Wang, Tianhao and Zhong, Xinyi and Fan, Zhou},
  title   = {Universality of Approximate Message Passing Algorithms and Tensor Networks},
  journal = {The Annals of Applied Probability},
  year    = {2024},
  volume  = {34},
  number  = {4},
  pages   = {3943--3994}
}

@inproceedings{BarbierEtAl2018MutualInformation,
  author    = {Barbier, Jean and Macris, Nicolas and Maillard, Antoine and Krzakala, Florent},
  title     = {The Mutual Information in Random Linear Estimation beyond {i.i.d.} Matrices},
  booktitle = {2018 IEEE International Symposium on Information Theory},
  year      = {2018},
  pages     = {1390--1394},
  publisher = {IEEE}
}

@article{LiFanSenWu2024RotInv,
  author  = {Li, Yufan and Fan, Zhou and Sen, Subhabrata and Wu, Yihong},
  title   = {Random Linear Estimation with Rotationally Invariant Designs: Asymptotics at High Temperature},
  journal = {IEEE Transactions on Information Theory},
  year    = {2024},
  volume  = {70},
  number  = {3},
  pages   = {2118--2153}
}

@article{DobribanWager2018Ridge,
  author  = {Dobriban, Edgar and Wager, Stefan},
  title   = {High-Dimensional Asymptotics of Prediction: Ridge Regression and Classification},
  journal = {The Annals of Statistics},
  year    = {2018},
  volume  = {46},
  number  = {1},
  pages   = {247--279}
}

@article{HastieEtAl2022Ridgeless,
  author  = {Hastie, Trevor and Montanari, Andrea and Rosset, Saharon and Tibshirani, Ryan J.},
  title   = {Surprises in High-Dimensional Ridgeless Least Squares Interpolation},
  journal = {The Annals of Statistics},
  year    = {2022},
  volume  = {50},
  number  = {2},
  pages   = {949--986}
}

@article{barbier2019universal,
  author={Barbier, Jean and Dia, Mohamad and Macris, Nicolas},
  title={Universal sparse superposition codes with spatial coupling and GAMP decoding},
  journal={IEEE Transactions on Information Theory},
  volume={65},
  number={9},
  pages={5618--5642},
  year={2019},
  publisher={IEEE}
}

@article{cademartori2024non,
  author={Cademartori, Collin and Rush, Cynthia},
  title={A non-asymptotic analysis of generalized vector approximate message passing algorithms with rotationally invariant designs},
  journal={IEEE Transactions on Information Theory},
  volume={70},
  number={8},
  pages={5811--5856},
  year={2024},
  publisher={IEEE}
}

@article{joseph2012least,
  author={Joseph, Antony and Barron, Andrew R},
  title={Least squares superposition codes of moderate dictionary size are reliable at rates up to capacity},
  journal={IEEE Transactions on Information Theory},
  volume={58},
  number={5},
  pages={2541--2557},
  year={2012},
  publisher={IEEE}
}

@article{rush2017capacity,
  author={Rush, Cynthia and Greig, Adam and Venkataramanan, Ramji},
  title={Capacity-achieving sparse superposition codes via approximate message passing decoding},
  journal={IEEE Transactions on Information Theory},
  volume={63},
  number={3},
  pages={1476--1500},
  year={2017},
  publisher={IEEE}
}

@article{Prekopa1973,
  author  = {Pr{\'e}kopa, Andr{\'a}s},
  title   = {On Logarithmic Concave Measures and Functions},
  journal = {Acta Scientiarum Mathematicarum},
  year    = {1973},
  volume  = {34},
  number  = {1--4},
  pages   = {335--343}
}

@article{GoetzeKoestersTikhomirov2015,
  author       = {G{\"o}tze, Friedrich and K{\"o}sters, Holger
                  and Tikhomirov, Alexander N.},
  title        = {Asymptotic Spectra of Matrix-Valued                           Functions of Independent Random Matrices                    and Free Probability},
  journaltitle = {Random Matrices: Theory and Applications},
  volume       = {4},
  number       = {2},
  pages        = {1550005},
  date         = {2015},
}

@article{LangerWoracek2024Karamata,
   author       = {Langer, Matthias and Woracek, Harald},
   title        = {Karamata’s theorem for regularized Cauchy                   transforms},
   journal      = {Proceedings of the Royal Society of                         Edinburgh: Section A Mathematics},
   volume       = {155},
   number       = {4},
   publisher    = {Cambridge University Press (CUP)},
   year         = {2024},
   month        = Jan, 
   pages        = {1431–1491} 
}

@article{Feng2021Tutorial,
    author = {Feng, Oliver Y. and Venkataramanan, Ramji and Rush, Cynthia and Samworth, Richard J.},
    title = {A Unifying Tutorial on Approximate Message Passing},
    journal = {Foundations and Trends in Machine Learning},
    volume = {15},
    number = {4},
    pages = {335-536},
    year = {2022},
    month = {05}
}

@INPROCEEDINGS{SchniterRF16a,
  author={Schniter, Philip and Rangan, Sundeep and Fletcher, Alyson K.},
  title={Vector approximate message passing for the generalized linear model}, 
  booktitle={2016 50th Asilomar Conference on Signals, Systems and Computers}, 
  year={2016},
  volume={},
  number={},
  pages={1525-1529}
}

@article{Guo2011MMSE,
  author  = {Guo, Dongning and Wu, Yihong and Shamai, Shlomo and Verd{\'u}, Sergio},
  title   = {Estimation in Gaussian Noise: Properties of the Minimum Mean-Square Error},
  journal = {IEEE Transactions on Information Theory},
  volume  = {57},
  number  = {4},
  pages   = {2371--2385},
  year    = {2011},
  month   = apr
}

@article{CombettesPesquet2007,
  author  = {Combettes, Patrick L. and Pesquet, Jean-Christophe},
  title   = {Proximal Thresholding Algorithm for Minimization over
             Orthonormal Bases},
  journal = {SIAM Journal on Optimization},
  volume  = {18},
  number  = {4},
  pages   = {1351--1376},
  year    = {2008}
}

@article{GribonvalNikolova2021,
  author  = {Gribonval, R{\'e}mi and Nikolova, Mila},
  title   = {On Bayesian Estimation and Proximity Operators},
  journal = {Applied and Computational Harmonic Analysis},
  volume  = {50},
  pages   = {49--72},
  year    = {2021}
}

@misc{Vandervorst2008,
  author = {R. C. A. M. Vandervorst},
  title  = {Topological Methods for Nonlinear Differential Equations:
            From Degree Theory to Floer Homology},
  year   = {2008},
  note   = {Lecture notes, Version 2.0, April 25, 2008}
}

@misc{Zhang2026optimality,
      author={Yihan Zhang and Hong Chang Ji and Ramji Venkataramanan and Marco Mondelli},
      title={Optimal Estimation in Orthogonally Invariant Generalized Linear Models: Spectral Initialization and Approximate Message Passing}, 
      year={2026},
      archivePrefix={arXiv},
      primaryClass={math.ST} 
}

@ARTICLE{Dudeja2024UniversalityDeterministic,
  author={Dudeja, Rishabh and Sen, Subhabrata and Lu, Yue M.},
  title={Spectral Universality in Regularized Linear Regression With Nearly Deterministic Sensing Matrices}, 
  journal={IEEE Transactions on Information Theory}, 
  year={2024},
  volume={70},
  number={11},
  pages={7923-7951}
}

@article{Dudeja2023SemiRandom,
author = {Dudeja, Rishabh and Lu, Yue M. and Sen, Subhabrata},
title = {Universality of approximate message passing with semirandom matrices},
volume = {51},
journal = {The Annals of Probability},
number = {5},
publisher = {Institute of Mathematical Statistics},
pages = {1616 -- 1683},
year = {2023}
}

@article{Yuan2006GroupLASSO,
    author = {Yuan, Ming and Lin, Yi},
    title = {Model Selection and Estimation in Regression with Grouped Variables},
    journal = {Journal of the Royal Statistical Society Series B: Statistical Methodology},
    volume = {68},
    number = {1},
    pages = {49-67},
    year = {2006},
    month = {02},
}

@ARTICLE{Eldar2010BlockSparsity,
  author={Eldar, Yonina C. and Kuppinger, Patrick and Bolcskei, Helmut},
  title={Block-Sparse Signals: Uncertainty Relations and Efficient Recovery}, 
  journal={IEEE Transactions on Signal Processing}, 
  year={2010},
  volume={58},
  number={6},
  pages={3042-3054}
}

@article{Huber1964MEstimation,
    author = {Huber, Peter J.},
    title = {Robust Estimation of a Location Parameter},
    volume = {35},
    journal = {The Annals of Mathematical Statistics},
    number = {1},
    publisher = {Institute of Mathematical Statistics},
    pages = {73 -- 101},
    year = {1964}
}

@article{SaumardWellner2014,
  author  = {Saumard, Adrien and Wellner, Jon A.},
  title   = {Log-Concavity and Strong Log-Concavity: A Review},
  journal = {Statistics Surveys},
  year    = {2014},
  volume  = {8},
  pages   = {45--114}
}

@article{Voigtlaender2021Price,
  author  = {Voigtlaender, Felix},
  title   = {A General Version of {Price's} Theorem: A Tool for Bounding
             the Expectation of Nonlinear Functions of Gaussian Random
             Vectors},
  journal = {Journal of Theoretical Probability},
  year    = {2021},
  volume  = {34},
  pages   = {1474--1485}
}

@article{sur2019modern,
  author={Sur, Pragya and Cand{\`e}s, Emmanuel J},
  title={A modern maximum-likelihood theory for high-dimensional logistic regression},
  journal={Proceedings of the National Academy of Sciences},
  volume={116},
  number={29},
  pages={14516--14525},
  year={2019},
  publisher={National Academy of Sciences}
}

@article{bu2020algorithmic,
  author={Bu, Zhiqi and Klusowski, Jason M and Rush, Cynthia and Su, Weijie J},
  title={Algorithmic analysis and statistical estimation of SLOPE via approximate message passing},
  journal={IEEE Transactions on Information Theory},
  volume={67},
  number={1},
  pages={506--537},
  year={2020},
  publisher={IEEE}
  }

@article{donoho2016high,
  author={Donoho, David and Montanari, Andrea},
  title={High dimensional robust m-estimation: Asymptotic variance via approximate message passing},
  journal={Probability Theory and Related Fields},
  volume={166},
  pages={935--969},
  year={2016},
  publisher={Springer}
}

@book{NourdinPeccati2012,
  author    = {Nourdin, Ivan and Peccati, Giovanni},
  title     = {Normal Approximations with Malliavin Calculus:
               From Stein's Method to Universality},
  series    = {Cambridge Tracts in Mathematics},
  volume    = {192},
  publisher = {Cambridge University Press},
  year      = {2012}
}

@book{RockafellarWets1998,
  author    = {Rockafellar, R. Tyrrell and Wets, Roger J.-B.},
  title     = {Variational Analysis},
  series    = {Grundlehren der Mathematischen Wissenschaften},
  volume    = {317},
  publisher = {Springer},
  location  = {Berlin},
  year      = {1998}
}

@book{Rockafellar1970,
  author    = {Rockafellar, R. Tyrrell},
  title     = {Convex Analysis},
  series    = {Princeton Mathematical Series},
  volume    = {28},
  publisher = {Princeton University Press},
  location  = {Princeton, NJ},
  year      = {1970},
  pages     = {1--451}
}

@book{BauschkeCombettes2017,
  author    = {Bauschke, Heinz H. and Combettes, Patrick L.},
  title     = {Convex Analysis and Monotone Operator Theory in
               {Hilbert} Spaces},
  edition   = {2},
  series    = {CMS Books in Mathematics},
  publisher = {Springer},
  address   = {Cham},
  year      = {2017}
}

\newpage
\appendix
\section{Bayes VAMP algorithms and their fixed-point equations}\label{sec:Bayes-VAMP-intro}

In this section, we give an introduction to the Bayes VAMP algorithms and their fixed-point equations.

\subsection{Bayes VAMP}
The ordinary two-stage Bayes VAMP algorithm was introduced in
\textcite{Rangan2018vamp} and
consists of a one-time spectrum-based initialization (lines 2–6), followed by updates that start with the denoising stage (lines 7–22). We present it below as \cref{alg: Bayes VAMP}.

\begin{algorithm}
\caption{Bayes VAMP with spectrum-based initialization}
\label{alg: Bayes VAMP}
\begin{algorithmic}[1]

\State \textbf{Input:} \(X,y,\sigma^2\), iteration count \(T\), and the known prior \(\pi\), with \(b_\pi=\mathbb E_\pi[B^\star]\) and \(V_\pi=\operatorname{Var}_\pi(B^\star)>0\)

\State \(\Br_{2,-1}=b_\pi\vecone_p,\quad \gamma_{2,-1}=V_\pi^{-1}\)

\State \(\widehat\beta_{2,-1}=(\sigma^{-2}X^T X+\gamma_{2,-1}I_p)^{-1}(\sigma^{-2}X^T y+\gamma_{2,-1}\Br_{2,-1})\)

\State \(c_{-1}=p^{-1}\operatorname{tr}\!\left[\gamma_{2,-1}(\sigma^{-2}X^T X+\gamma_{2,-1}I_p)^{-1}\right]\)

\State \textbf{if} \(c_{-1}\notin(0,1)\), \textbf{return} \(b_\pi\vecone_p\) and stop

\State \(\Br_1^0=(\widehat\beta_{2,-1}-c_{-1}\Br_{2,-1})/(1-c_{-1}),\quad
\gamma_{10}=\gamma_{2,-1}(c_{-1}^{-1}-1)\)

\For{\(t=0,\ldots,T\)}

\State \(\widehat\beta_{1,t}=\tilde f_t(\Br_1^t,\gamma_{1t})\)

\If{\(t=T\)}
\State \textbf{return} \(\widehat\beta_{1,T}\) and stop
\EndIf

\State \(b_t=p^{-1}\sum_{j=1}^p\tilde f_t'(r_{1j}^t,\gamma_{1t})\)

\State \textbf{if} \(b_t\notin(0,1)\) or \(\gamma_{1t}\leq0\), \textbf{return} \(b_\pi\vecone_p\) and stop

\State \(\eta_1^t=\gamma_{1t}/b_t\)

\State \(\gamma_{2t}=\eta_1^t-\gamma_{1t}\)

\State \(\Br_2^t=\gamma_{2t}^{-1}(\eta_1^t\widehat\beta_{1,t}-\gamma_{1t}\Br_1^t)\)

\State \(\widehat\beta_{2,t}=(\sigma^{-2}X^T X+\gamma_{2t}I_p)^{-1}(\sigma^{-2}X^T y+\gamma_{2t}\Br_2^t)\)

\State \(c_t=p^{-1}\operatorname{tr}\!\left[\gamma_{2t}(\sigma^{-2}X^T X+\gamma_{2t}I_p)^{-1}\right]\)

\State \textbf{if} \(c_t\notin(0,1)\) or \(\gamma_{2t}\leq0\), \textbf{return} \(b_\pi\vecone_p\) and stop

\State \(\gamma_{1,t+1}=\gamma_{2t}(c_t^{-1}-1)\)

\State \(\Br_1^{t+1}=(\widehat\beta_{2,t}-c_t \Br_2^t)/(1-c_t)\)

\EndFor

\end{algorithmic}
\end{algorithm}

The denoiser-stage denoising function $\tilde{f}_t$ acts componentwise on the vector $\mathbf r_1^t$. Specifically, 
we can define it through a posterior-mean scalar denoiser:
\[
\tilde{f}_t(\Br_1^t, \gamma_{1t})
=
\left(\eta_{\gamma_{1t}}^{\rm B}(r_{1j}^t)\right)_{j\in[p]},
\qquad
\eta_\gamma^{\rm B}(r)
:=
\E[B^\star \mid B^\star + \gamma^{-1/2}Z=r],
\]
where \(B^\star\sim\pi\), \(Z\sim N(0,1)\), and \(B^\star \perp Z\).

The spectrum-stage denoiser $\tilde g_t$ 
(which we will also call a denoiser here to match the terminology in \textcite{Rangan2018vamp}) is defined as
\[
\tilde{g}_t(\Br_2^t,\gamma_{2t}) = \left(\sigma^{-2} X^TX + \gamma_{2t}I_p\right)^{-1}
\left(\sigma^{-2} X^T y + \gamma_{2t} \Br_2^t\right).
\]

The asymptotic behavior of the iterates in \cref{alg: Bayes VAMP} is characterized by the VAMP state evolution. We will derive the state evolution parameters as in \textcite[Eq.\ (84)]{Rangan2018vamp}. The first step is to rewrite \cref{alg: Bayes VAMP} into an ``abstract VAMP'' form that is easier to analyze. To rewrite the ordinary iteration in abstract VAMP form, we shall use the change of variables
$\Bp^t := \Br^t_1 - \beta^\star \in \R^p$ and $\Bv^t := \Br^t_2 - \beta^\star \in \R^p$.

Write \(d_{j,p}=\sqrt{s_{j,p}}\) for the Gram eigenvalue $s_{j, p}$ and $u_{j, p}$ for the corresponding left singular vector, and recall that $\varepsilon$ is the noise in the regression model.
We define the \textit{disturbance parameters} as follows: for each nonzero singular value $d_{j, p}$, add \(\xi_{j,p}=u_{j,p}^{T}\varepsilon\) to the disturbance vector $\bm\xi$, and for each
\(d_{j,p}=0\), append a fresh independent \(N(0,\sigma^2)\) variable.
Then
$X^T\varepsilon
    =
    V\diag(d_{1,p},\ldots,d_{p,p})\bm\xi,$
where the entries of \(\bm\xi \in \mathbb R^p\) are iid \(N(0,\sigma^2)\). Note that the artificially
appended coordinates are multiplied by zero. Let
\[
\Bw := (\bm \xi, \Bd)^T \in \R^{2 \times p}, \qquad
\Bd = (d_{1,p}, \cdots, d_{p,p}) \in \R^p.
\]
We further define the following denoising functions
\begin{equation}\label{eq:denoising-functions}
\begin{aligned}
f(p, \beta, \gamma_1) &:= \tilde{f}(p+\beta, \gamma_1) - \beta \\
\tilde{f}(z, \gamma_1) &:= \E\left[\beta \mid \beta + N(0, \frac{1}{\gamma_1}) = z\right] \\
g(q, w, \gamma_2) = g(q, (\zeta, d), \gamma_2) &:= \frac{\sigma^{-2} d \zeta + \gamma_2 q}{\sigma^{-2} d^2 + \gamma_2}.
\end{aligned}
\end{equation}
Finally, we choose auxiliary functions
\(
C(b) := \frac{1}{1 - b}\), and \(\Gamma(\gamma, b) := \gamma \Big(\frac{1}{b} - 1\Big).
\)
Let $X = U\Sigma V^T$ be the SVD of $X$. After the initialization of the variables $(\Br_1^0, \gamma_{10})$ in line 6 of \cref{alg: Bayes VAMP}, lines 7-22 are equivalent to the following recursion, initialized by $\Bu^0 = V^T\Bp^0$:
\begin{equation}\label{eq:alternative-formulation-vamp}
\begin{aligned}
\Bp^t &= V \Bu^t\\
b_t &= \langle \tilde f'(\Bp^t + \beta^\star, \gamma_{1t})\rangle, \qquad \gamma_{2t} = \Gamma(\gamma_{1t}, b_t)\\
\Bv^t &= C(b_t)\left[ f(\Bp^t, \beta^\star, \gamma_{1t}) - b_t \Bp^t \right]\\
\Bq^t &= V^T \Bv^t\\
c_t &= \langle g'(\Bq^t, \Bw, \gamma_{2t}) \rangle, \qquad \gamma_{1, t+1} = \Gamma(\gamma_{2t}, c_t)\\
\Bu^{t+1} &= C(c_t)\left[ g(\Bq^t, \Bw, \gamma_{2t}) - c_t \Bq^t \right].
\end{aligned}
\end{equation}
The full derivation of the equivalence between \cref{alg: Bayes VAMP} and \eqref{eq:alternative-formulation-vamp} can be found in \textcite[Appendix G]{Rangan2018vamp}. 

Now, let $\nu_p(\cdot)$ denote the empirical measure of its input. A
conditional Gaussian law of large numbers gives
\[
\nu_p(\Bw) = \nu_p((\bm \xi, \Bd))
\xrightarrow{W_2}_{\rm a.s.} \mathrm{Law}((\zeta, D)) =: \mathrm{Law}(W) \in \cP_2(\R^2),
\]
where $\zeta \sim N(0, \sigma^2)$, the law of $D^2$ is the spectral law defined in \cref{ass1}, and $\zeta \perp D$.

\paragraph{State evolution recursion.} We now derive the state evolution recursion of \cref{alg: Bayes VAMP}. The first step is to calculate the initializing state evolution parameters. Recall \(b_\pi := \mathbb E_\pi[B^\star]\) and \(V_\pi := \operatorname{Var}_\pi(B^\star)>0\). We set $\Bv^{-1} = \Br_{2, -1} - \beta^\star = b_\pi \vecone_p - \beta^\star$ and $\Bq^{-1} = V^T \Bv^{-1}$. Then by \cref{ass3}, we have that
\(
\nu_p(\Bv^{-1},\beta^\star)
\xrightarrow{W_2}_{\rm a.s.}
\operatorname{Law}(b_\pi-B^\star,B^\star),
\)
and in particular, $p^{-1}\|\Bv^{-1}\|_2^2 \rightarrow V_\pi$. Moreover, $V$ is independent of $(\Bv^{-1}, \beta^\star, \Bw)$. Let $Q_{-1} \sim N(0, V_{\pi})$ independent of $(\zeta, D)$, and set 
\[
G_{-1} := g(Q_{-1}, (\zeta, D), \frac{1}{V_{\pi}}) = \frac{V_\pi D \zeta + \sigma^2 Q_{-1}}{\sigma^2 + V_\pi D^2}.
\]
Then we have \(\mathbb E[G_{-1}^2\mid D^2] = \frac{V_\pi\sigma^2}{\sigma^2+V_\pi D^2}\) and \(\mathbb E[G_{-1}Q_{-1}\mid D^2]
=
\frac{V_\pi\sigma^2}{\sigma^2+V_\pi D^2}\). We set $\bar{c}_{-1} = \E[\sigma^2 / (\sigma^2 + V_\pi D^2)] \in (0,1)$, where strict inequality follows from $\E[D^2] > 0$. For $U_0 := (G_{-1} - \bar c_{-1} Q_{-1}) / (1 - \bar c_{-1})$, we define 
\[
    \tau_{10}   := \mathbb E[U_0^2]
                = \frac{V_\pi\bar c_{-1}}{1-\bar c_{-1}}
                = L_\mu(V_\pi),
\]
where $L_\mu$ is defined as in \eqref{eq:spectrum-mod}.

By \textcite[Theorem~2]{Fletcher2018Plugin}, if we initialize $\tau_{10} = L_{\mu}(V_\pi) > 0$ and $ \bgamma_{10} := 1/\tau_{10} > 0$, the state evolution parameters of \cref{alg: Bayes VAMP} can be obtained by the following recursion:
\begin{equation}\label{eq:Bayes VAMP-SE}
\begin{aligned}
\bar{b}_t &= \E[\tilde f'(P_t + B^\star, \bar{\gamma}_{1t})]\\
\tau_{2t} &= C^2(\bar{b}_t) \left(\E[f(P_t, B^\star, \bar{\gamma}_{1t})^2] - \bar{b}_t^2 \tau_{1t}\right)\\
\bar{\gamma}_{2t} &= \Gamma(\bar{\gamma}_{1t}, \bar{b}_t)\\
\bar{c}_t &= \E[g'(Q_t, W, \bar{\gamma}_{2t})]\\
\tau_{1, t+1} &= C^2(\bar c_t)\left(\E[g(Q_t, W, \bgamma_{2t})^2] - \bar{c}_t^2 \tau_{2t}\right)\\
\bgamma_{1, t+1} &= \Gamma(\bar{\gamma}_{2t}, \bar{c}_t),
\end{aligned}
\end{equation}
where the derivatives $\tilde f'$ and $g'$ are taken with respect to their first argument, and $P_t \sim N(0, \tau_{1t})$ is independent of $B^\star$ and $Q_t \sim N(0, \tau_{2t})$ is independent of $W = (\zeta, D).$ We will call the terms $\bgamma_{1t}$ and $\bgamma_{2t}$ the \textit{precision parameters}.

\begin{rem}[Matched initialization]\label{rem: matched-initialization}
The recursion in \eqref{eq:Bayes VAMP-SE} is initialized with $\bgamma_{10} = 1/\tau_{10} = 1 / L_{\mu}(V_\pi)$. Following the terminology introduced in \textcite{Rangan2018vamp}, we
call the setting with $\bgamma_{10} = 1/\tau_{10}$ the ``matched
scenario'': the reciprocal of the effective
Gaussian error entering the prior stage matches the precision used by the Bayes denoiser. Under Assumption~\ref{ass:bayes-denoiser-regularity}, the algebraic matched-state-evolution
induction recorded in \textcite[Theorem~2]{Rangan2018vamp} gives
\begin{equation}\label{eq: bvamp-matched}
\tau_{1t} ={1}/{\bgamma_{1t}},
\qquad
\tau_{2t} = {1}/{\bgamma_{2t}},
\qquad
\bgamma_{2t}=\bar\eta_{1t}-\bgamma_{1t},
\quad
\bar\eta_{1t}:=\bgamma_{1t}/\bar b_t,
\end{equation}
for every \(t\). Thus, under the matched initialization, the variance and precision parameters need not be tracked separately. 
Furthermore, under the matched initialization, the Bayes denoiser will see the ``correct'' matched variance in the effective Gaussian observation, and hence attains the MMSE.
To see this, we compute
\begin{align*}
\E[f(P_t, B^\star, \bgamma_{1t})^2] &= \E[ (\E[B^\star | B^\star + P_t] - B^\star )^2 ]
= \E[ (\E[B^\star | B^\star + N(0, \tau_{1t})] - B^\star )^2 ]\\
&= \E[(\E[B^\star | B^\star + \sqrt{\tau_{1t}}Z ] - B^\star )^2 ],
\end{align*}
where $Z \sim N(0, 1)$.
Without matched initialization, the identities in
\eqref{eq: bvamp-matched} need not hold, so the variances and precisions must
be retained as separate state variables and the Bayes state evolution does
not reduce to the matched scalar recursion, for which we derive the fixed-point equations below.
\end{rem}

\subsection{Fixed-Point Equations of Bayes VAMP}
In this section, we assume that the matched condition discussed 
above holds. We now derive the stationary equations associated with
\eqref{eq:Bayes VAMP-SE}. Suppose \((\bar{b}_\star, \bar{c}_\star, \bar{\gamma}_{1\star}, \bar{\gamma}_{2\star}, \tau_{1\star}, \tau_{2\star})\) is a fixed-point solution to \eqref{eq:Bayes VAMP-SE}, i.e., it solves
\begin{align}
\bar{b}_\star
&=
\E\!\left[
   \tilde f'(P_\star + B^\star, \bgamma_{1\star})
\right]
\label{eq:vamp-fp-b}
\\
\tau_{2\star}
&=
C^2(\bar{b}_\star)
\left(
    \E\!\left[
        f(P_\star, B^\star, \bgamma_{1\star})^2
    \right]
    -
    \bar{b}_\star^2 \tau_{1\star}
\right)
\label{eq:vamp-fp-tau2}
\\
\bgamma_{2\star}
&=
\Gamma(\bgamma_{1\star}, \bar{b}_\star)
\label{eq:vamp-fp-gamma2}
\\
\bar{c}_\star
&=
\E\!\left[
    g'(Q_\star, W, \bgamma_{2\star})
\right]
\label{eq:vamp-fp-c}
\\
\tau_{1\star}
&=
C^2(\bar{c}_\star)
\left(
    \E\!\left[
        g(Q_\star, W, \bgamma_{2\star})^2
    \right]
    -
    \bar{c}_\star^2 \tau_{2\star}
\right)
\label{eq:vamp-fp-tau1}
\\
\bgamma_{1\star}
&=
\Gamma(\bgamma_{2\star}, \bar{c}_\star),
\label{eq:vamp-fp-gamma1}
\end{align}
where $P_\star \sim N(0, \tau_{1\star}) \perp B^\star$ and $Q_\star \sim N(0, \tau_{2\star}) \perp W = (\zeta, D)$. For simplicity, in this section, we use the notation $S = D^2$ so that $g(Q_\star, (\zeta, D), \bgamma_{2\star}) = g(Q_\star, (\zeta, \sqrt{S}), \bgamma_{2\star}) = (\sigma^{-2}\sqrt{S} \zeta + \bgamma_{2\star}Q_\star)/(\sigma^{-2}S + \bgamma_{2\star})$.

First, rearranging \eqref{eq:vamp-fp-gamma2} and \eqref{eq:vamp-fp-gamma1} gives us
\[
\bgamma_{2\star} = \bgamma_{1\star} \Big(\frac{1}{\bar{b}_\star} -1\Big) \Longleftrightarrow \bar{b}_\star = \frac{\bgamma_{1\star}}{\bgamma_{1\star} + \bgamma_{2\star}}\quad \text{and} \quad
\bgamma_{1\star} = \bgamma_{2\star} \Big(\frac{1}{\bar{c}_\star} - 1\Big) \Longleftrightarrow \bar{c}_\star = \frac{\bgamma_{2\star}}{\bgamma_{1\star} + \bgamma_{2\star}}.
\]
We denote $\eta_\star := \bgamma_{1\star} + \bgamma_{2\star}$ so that $\bar{b}_\star = {\bgamma_{1\star}}/{\eta_\star}$ and $\bar{c}_\star = {\bgamma_{2\star}}/{\eta_\star}$. Using this notation and the definition of $\tilde f$ in \eqref{eq:denoising-functions}, \eqref{eq:vamp-fp-b} can be rewritten as
\begin{align*}
\frac{\bgamma_{1\star}}{\eta_\star} = \bar{b}_\star &= \E\left[ \tilde f'(P_\star + B^\star, \bgamma_{1\star})\right]= \E\left[\frac{\rd}{\rd y} \E\left[B^\star |y\right] |_{y = B^\star + \sqrt{\tau_{1\star}}Z}\right].
\end{align*}
Next, we rearrange \eqref{eq:vamp-fp-c} as
\begin{align*}
\frac{\bar{\gamma}_{2\star}}{\eta_\star} = \bar{c}_\star = \E\left[\frac{\partial}{\partial Q_\star} \frac{\frac{1}{\sigma^2}\sqrt{S}\zeta + \bgamma_{2\star}Q_\star}{\frac{1}{\sigma^2}S + \bgamma_{2\star}}\right]= \E\left[\frac{\sigma^2\bgamma_{2\star}}{S + \sigma^2\bgamma_{2\star}}\right] = \E\left[\frac{\sigma^2\bar{\gamma}_{2\star}}{S + \sigma^2\bar{\gamma}_{2\star}}\right].
\end{align*}
Multiplying both sides by $1/ (\sigma^2\bgamma_{2\star})$ gives $\frac{1}{\sigma^2\eta_\star} = \E\left[\frac{1}{S + \sigma^2\bar{\gamma}_{2\star}}\right] = G(\sigma^2 \bgamma_{2\star}) = G(\sigma^2 (\eta_\star - \bgamma_{1\star}))$,
where $G(\cdot)$ is the Stieltjes transform defined in \eqref{eq:stieltjes-transform}. Since $G$ is invertible by strict monotonicity, we have
$G^{-1}(\frac{1}{\sigma^2\eta_\star}) = \sigma^2 (\eta_\star - \bgamma_{1\star})$.
Rearranging the terms further yields
$\bar{\gamma}_{1\star} = \eta_\star - \frac{1}{\sigma^2}G^{-1}(\frac{1}{\sigma^2\eta_\star}).$
Finally, we simplify \eqref{eq:vamp-fp-tau2}:
\begin{align*}
\tau_{2\star} &= C^2(\bar{b}_\star) \left(\E\left[f^2(P_\star, B^\star, \bgamma_{1\star})\right] - \bar{b}_\star^2 \tau_{1\star}\right)
\\
&= \frac{1}{(1- \bar b_\star)^2} \left(\E\left[ \left(\E\left[B^\star | B^\star + \sqrt{\tau_{1\star}}Z \right] - B^\star \right)^2 \right] - \bar{b}_\star^2 \tau_{1\star}\right)
\\
&= \Big(\frac{\eta_\star}{\eta_\star - \bgamma_{1\star}}\Big)^2\Big(\E\left[ \left(\E\left[B^\star | B^\star + \sqrt{\tau_{1\star}}Z \right] - B^\star \right)^2 \right] - \Big(\frac{\bgamma_{1\star}}{\eta_\star}\Big)^2 \tau_{1\star}\Big),
\end{align*}
and \eqref{eq:vamp-fp-tau1}:
\begin{align*}
\tau_{1\star} &= \frac{1}{(1 - \bar{c}_\star)^2}\Big(\E\Big[\Big(\frac{\frac{1}{\sigma^2} \sqrt{S}\zeta + \bgamma_{2\star}Q_\star}{\frac{1}{\sigma^2}S + \bgamma_{2\star}}\Big)^2 \Big]- \bar{c}_\star^2 \tau_{2\star}\Big)
\\
&= \frac{1}{(1 - \bar{c}_\star)^2} \Big(\E\Big[\frac{\frac{S}{\sigma^4}}{(\frac{S}{\sigma^2} + \bgamma_{2\star})^2}\Big] \E[\zeta^2] + \E\Big[\frac{\bgamma_{2\star}^2}{(\frac{S}{\sigma^2} + \bgamma_{2\star})^2}\Big] \E\Big[Q_\star^2\Big] + \E\Big[\frac{2\frac{1}{\sigma^2}\sqrt{S}\zeta\bgamma_{2\star}}{(\frac{S}{\sigma^2} + \bgamma_{2\star})^2}\Big]\E[Q_\star]- \bar{c}_\star^2 \tau_{2\star}\Big)
\\
&= \frac{1}{(1 - \bar{c}_\star)^2} \Big(\E\Big[\frac{\frac{S}{\sigma^2}}{(\frac{S}{\sigma^2} + \bgamma_{2\star})^2}\Big] + \E\Big[\frac{\bgamma_{2\star}^2\tau_{2\star}}{(\frac{S}{\sigma^2} + \bgamma_{2\star})^2}\Big] - \bar{c}_\star^2 \tau_{2\star}\Big)
\\
&= \Big(\frac{\eta_\star}{\bgamma_{1\star}}\Big)^2 \Big(\mathbb{E}\Big[\frac{\frac{S}{\sigma^2} + (\eta_\star - \bgamma_{1\star})^2\tau_{2\star}}{(\frac{S}{\sigma^2} + \eta_\star - \bgamma_{1\star})^2}\Big] - \Big(\frac{\eta_\star - \bar{\gamma}_{1\star}}{\eta_\star}\Big)^2 \tau_{2\star}\Big).
\end{align*}
The second equality uses $S\perp\zeta$ and $(\zeta, S) \perp Q_\star$. The cross term vanishes because $Q_\star$ is independent of $(\zeta, S)$ and $\E[Q_\star] = 0$.

Combining the derivations above, we arrive at the fixed-point equations for Bayes VAMP:
\begin{align}
\frac{\bgs}{\eta_\star} &= \E\Big[ \frac{\dd}{\dd y}\E\left[ B^\star \mid \sqrt{\tau_{1\star}} Z + B^\star = y\right]\Big]\label{eq: fpt-bvamp-a}\\
\tau_{2\star} &= \Big(\frac{\eta_\star}{\eta_\star - \bgs}\Big)^2 \Big(\E\left[\left(\E\left[ B^\star\mid \sqrt{\tau_{1\star}} Z + B^\star\right] - B^\star\right)^2\right] - \Big(\frac{\bgs}{\eta_\star} \Big)^2 \tau_{1\star}\Big)\label{eq: fpt-bvamp-b}\\
\frac{\bar{\gamma}_{1\star}}{\eta_\star} &= 1 - \frac{1}{\sigma^2\eta_\star}G^{-1}\Big(\frac{1}{\sigma^2\eta_\star}\Big)\label{eq: fpt-bvamp-c}\\
\tau_{1\star} &= \Big(\frac{\eta_\star}{\bgs}\Big)^2 \Big(\E\Big[\frac{\frac{S}{\sigma^2} + (\eta_\star - \bgs)^2\tau_{2\star}}{(\frac{S}{\sigma^2} + \eta_\star - \bgs)^2}\Big] - \Big(\frac{\eta_\star - \bgs}{\eta_\star}\Big)^2 \tau_{2\star}\Big)\label{eq: fpt-bvamp-d}.
\end{align}
Note that the function $G$ maps $(0,\infty)$ to $(0,\rho_{\max})$, where
\begin{equation}
\label{eq:rho_max}
    \rho_{\max}
    :=
    \lim_{c\downarrow0}G(c)
    =
    \mathbb E\left[\frac{1}{S}\right]
    \in(0,\infty].
\end{equation}
Here $1/0=\infty$ by convention.

\subsection{Convergence of Bayes VAMP recursion and greatest-fixed-point selection}

In this section, we will show that the asymptotic MSE of Bayes VAMP \cref{alg: Bayes VAMP} will converge to a value that can be characterized by the fixed point of the state evolution equations. To begin with, the next lemma explains how the state evolution parameters evolve over iterations.

\begin{lemma}\label{lem: SE-bvamp-tau1}
Let $\tau_t^B \equiv \tau_{1t}$ be the state evolution parameter of Bayes VAMP in \eqref{eq:Bayes VAMP-SE}. Then we have
$\tau_{t+1}^B = T_{\mu, \pi}(\tau_t^B).$
\end{lemma}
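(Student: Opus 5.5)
We use the matched identities \eqref{eq: bvamp-matched}: under the matched initialization, $\tau_{1t}=1/\bgamma_{1t}$ and $\tau_{2t}=1/\bgamma_{2t}$ for every $t$. The goal is to show that one pass through the prior stage maps $\tau_{1t}$ to $\tau_{2t}=\cE_\pi(\tau_{1t})$, and that one pass through the spectrum stage maps $\tau_{2t}$ to $\tau_{1,t+1}=L_\mu(\tau_{2t})$. Composing the two gives $\tau_{t+1}^B=L_\mu(\cE_\pi(\tau_t^B))=T_{\mu,\pi}(\tau_t^B)$.

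\textbf{Prior stage.} Set $\tau=\tau_{1t}$ and $m=m_\pi(\tau)$. Under matching, the denoiser is the exact posterior mean for the observation $B^\star+\sqrt{\tau}Z$. Hence $\E[f(P_t,B^\star,\bgamma_{1t})^2]=m$ (as computed in \cref{rem: matched-initialization}). Tweedie's formula, or equivalently Stein's lemma applied to the posterior mean, gives $\bar b_t=\E[\tilde f'(Y)]=m/\tau$. Substituting into the $\tau_{2t}$ line of \eqref{eq:Bayes VAMP-SE}:
\[
\tau_{2t}=\frac{m-(m/\tau)^2\tau}{(1-m/\tau)^2}=\frac{m(1-m/\tau)}{(1-m/\tau)^2}=\frac{m}{1-m/\tau}=\cE_\pi(\tau).
\]
As a consistency check, $\bgamma_{2t}=\bgamma_{1t}(1/\bar b_t-1)=1/m-1/\tau$ is exactly $1/\cE_\pi(\tau)$, which agrees with the matched identity. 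We need $m<\tau$ so that $\bar b_t\in(0,1)$. This holds because $m\le\tau$, with strict inequality whenever $\VAR(B^\star)>0$. Positivity of $m$ holds because the prior is nondegenerate.

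\textbf{Spectrum stage.} Set $\omega=\tau_{2t}$, so that $\bgamma_{2t}=1/\omega$. Then $\bar c_t=\E[\sigma^2/(\sigma^2+\omega S)]$, which lies in $(0,1)$ because $\E[S]>0$. For the second moment, use the computation preceding \eqref{eq: fpt-bvamp-d}:
\[
\E[g^2]=\E\left[\frac{S\sigma^{-2}+\omega^{-1}}{(S\sigma^{-2}+\omega^{-1})^2}\right]=\E\left[\frac{\sigma^2\omega}{\sigma^2+\omega S}\right]=\omega\,\bar c_t.
\]
Then
\[
\tau_{1,t+1}=\frac{\omega\bar c_t-\bar c_t^2\omega}{(1-\bar c_t)^2}=\frac{\omega\bar c_t}{1-\bar c_t}.
\]
Since $1-\bar c_t=\E[\omega S/(\sigma^2+\omega S)]$, the right-hand side is exactly $L_\mu(\omega)$ as defined in \eqref{eq:spectrum-mod}. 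This is the same computation already carried out for the initialization $\tau_{10}=L_\mu(V_\pi)$.

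\textbf{Main obstacle.} The algebra is routine. The step that needs care is justifying the matched identities at every iteration: in particular, that $\bar b_t$ and $\bar c_t$ stay in $(0,1)$ so that $\Gamma$ and $C$ are well defined. We would handle this by induction on $t$, starting from $\bgamma_{10}=1/\tau_{10}$. At each step, the two computations above show that precision equals inverse variance again. We cite \textcite[Theorem~2]{Rangan2018vamp} for the matched-state-evolution induction under the stated denoiser regularity.
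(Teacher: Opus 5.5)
Your proposal is correct and follows essentially the same route as the paper: under the matched identities, the prior stage gives $\bar b_t=m_\pi(\tau_{1t})/\tau_{1t}$ and hence $\tau_{2t}=\cE_\pi(\tau_{1t})$, and the spectrum stage gives $\bar c_t=\E[\sigma^2/(\sigma^2+\omega S)]$ and $\E[g^2]=\omega\bar c_t$, hence $\tau_{1,t+1}=L_\mu(\omega)$. Your check that $\Gamma$ returns the inverse variance after each stage is a small addition, since it makes the matched induction nearly self-contained rather than resting only on the citation to Rangan et al.
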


\begin{proof}[Proof of \cref{lem: SE-bvamp-tau1}.]
Suppose the prior stage receives at iteration $t$ the matched scalar quantities $R_1=B^\star+\sqrt{\tau_{1t}}Z$ and $\bgamma_{1t}=\tau_{1t}^{-1}$.
We use the posterior-variance identity on \(\eta_{\bar\gamma_{1t}}^{\mathrm B}\) to obtain 
\(
    b_t = \E[\partial_r\eta_{\bgamma_{1t}}^{\rm B}(R_1)]
        = \frac{m_\pi(\tau_{1t})}{\tau_{1t}} \in (0, 1).
\)
Its MSE is \(m_\pi(\tau_{1t})\). Hence the outgoing extrinsic
variance and precision are
\(
    \tau_{2t}   = {(1-b_t)^{-2}}({m_\pi(\tau_{1t})-b_t^2\tau_{1t}})
                = \cE_\pi(\tau_{1t})
\)
and
\(
    \bgamma_{2t} = \tau_{2t}^{-1},
\)
respectively.

For the matched variance \(\omega=\tau_{2t}\), let 
\(
    c(\omega) := \E\left[{\sigma^2}/({\sigma^2+\omega S})\right].
\)
For the denoiser $g$ defined in \cref{eq:denoising-functions} with inputs \(Q\sim N(0,\omega)\) independent of \((\zeta,S)\), we have
\(
    g(Q,(\zeta,\sqrt{S}),\omega^{-1}) = (\omega\sqrt S\,\zeta+\sigma^2Q)/(\sigma^2+\omega S).
\)
Therefore,
\(
\mathbb E[
  g(Q,(\zeta,\sqrt{S}),\omega^{-1})^2
  \mid S
]
=
(\omega\sigma^2)/(\sigma^2+\omega S).
\)
Consequently, we have that
\(
\mathbb E[
  g(Q,(\zeta,\sqrt{S}),\omega^{-1})^2
]
=
\omega c(\omega).
\)
Applying the spectrum-stage map gives $(\omega c(\omega)-c(\omega)^2\omega)/(1-c(\omega))^2
    =
    (\omega c(\omega))/(1-c(\omega))
    =
    L_\mu(\omega)$. Combining the
two stages yields $\tau_{1,t+1}
    =
    T_{\mu,\pi}(\tau_{1t})$. 
\end{proof}

\begin{lemma}\label{lem: Bayes-fpt-limit}
Let \(\{\tau^B_{t,+}\}_{t\ge 0}\) denote the Bayes-VAMP state-evolution sequence initialized at
\(
    \tau^B_{0,+}=L_\mu(V_\pi)
\)
and updated according to
\(
\tau^B_{t+1,+}=T_{\mu,\pi}(\tau^B_{t,+}).
\)
Then
\[
\tau^B_{t,+}\downarrow\tau_B^+
=\max\{\tau>0:T_{\mu,\pi}(\tau)=\tau\}.
\]
\end{lemma}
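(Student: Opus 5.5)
Since $T_{\mu,\pi}$ is continuous and nondecreasing, this is a monotone iteration started above every fixed point. The result should follow from a Knaster--Tarski/Kleene-type argument on the compact interval $[L_\mu(0),L_\mu(V_\pi)]$.

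\textbf{Step 1: the iteration stays in the box and starts at the top.} By \cref{lem: best-linear-risk},
\[
L_\mu(0)\le T_{\mu,\pi}(\tau)\le L_\mu(V_\pi) \qquad \text{for every } \tau>0 .
\]
Hence every iterate lies in $[L_\mu(0),L_\mu(V_\pi)]$, and $\tau^B_{1,+}=T_{\mu,\pi}(\tau^B_{0,+})\le L_\mu(V_\pi)=\tau^B_{0,+}$.

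\textbf{Step 2: the sequence is nonincreasing.} If $\tau^B_{t+1,+}\le\tau^B_{t,+}$, then applying the nondecreasing map $T_{\mu,\pi}$ (\cref{lem: m-E-nondecreasing}) gives $\tau^B_{t+2,+}\le\tau^B_{t+1,+}$. By induction the sequence is nonincreasing. It is bounded below by $L_\mu(0)>0$, so it converges to some limit $\tau_\infty\ge L_\mu(0)>0$.

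\textbf{Step 3: the limit is a fixed point.} Pass to the limit in $\tau^B_{t+1,+}=T_{\mu,\pi}(\tau^B_{t,+})$ using continuity of $T_{\mu,\pi}$ (composition of the continuous maps $\cE_\pi$ and $L_\mu$). This gives $T_{\mu,\pi}(\tau_\infty)=\tau_\infty$, so $\tau_\infty\le\tau_B^+$ by the definition of the maximum. The maximum is attained because the fixed-point set is nonempty and compact, as noted after \eqref{eq:Bayes-VAMP-recursion}.

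\textbf{Step 4: the limit is the greatest fixed point.} Let $\tau^\ast$ be any fixed point. Then $\tau^\ast=T_{\mu,\pi}(\tau^\ast)\le L_\mu(V_\pi)=\tau^B_{0,+}$. If $\tau^\ast\le\tau^B_{t,+}$, monotonicity gives
\[
\tau^\ast=T_{\mu,\pi}(\tau^\ast)\le T_{\mu,\pi}(\tau^B_{t,+})=\tau^B_{t+1,+}.
\]
By induction $\tau^\ast\le\tau^B_{t,+}$ for all $t$, hence $\tau^\ast\le\tau_\infty$. Taking $\tau^\ast=\tau_B^+$ yields $\tau_\infty=\tau_B^+$, and together with Step 2 this gives $\tau^B_{t,+}\downarrow\tau_B^+$.

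\textbf{Main obstacle.} The only delicate point is the global upper bound $T_{\mu,\pi}(\tau)\le L_\mu(V_\pi)$, equivalently $\cE_\pi(\tau)\le V_\pi$ combined with monotonicity of $L_\mu$. This holds because the extrinsic variance never exceeds the prior variance: $\cE_\pi(\tau)\uparrow V_\pi$ as $\tau\to\infty$ and $\cE_\pi$ is nondecreasing. I would cite \cref{lem: best-linear-risk} for this. If that lemma only gives the bound along the trajectory, the argument still works, since Step 4 only needs the bound at fixed points.
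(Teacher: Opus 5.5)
Your proposal is correct and follows the paper's proof essentially step for step: the global bound $T_{\mu,\pi}\le L_\mu(V_\pi)=\tau^B_{0,+}$ from \cref{lem: best-linear-risk}, monotone decrease by induction with lower bound $L_\mu(0)>0$, continuity to identify the limit as a fixed point, and the induction $\tau^\ast\le\tau^B_{t,+}$ showing the limit dominates every fixed point. One small citation fix: monotonicity of $T_{\mu,\pi}$ also needs monotonicity of $L_\mu$, which comes from \cref{lem:Lcalculus} rather than \cref{lem: m-E-nondecreasing}.
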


\begin{proof}[Proof of \cref{lem: Bayes-fpt-limit}.]
By \cref{lem: best-linear-risk},
\(\cE_\pi(\tau)\le V_\pi\), and hence
\(
    T_{\mu,\pi}(\tau)\le L_\mu(V_\pi)=\tau_{0,+}^B
    \text{ for all }\tau>0.
\)
Thus \(\tau_{1,+}^B\le\tau_{0,+}^B\). Since
\(T_{\mu,\pi}\) is nondecreasing, \(\{\tau_{t,+}^B\}\) must be nonincreasing by induction. It is bounded below by
\(L_\mu(0)>0\) and hence must converge to a finite
\(\tau_\infty>0\). Continuity of $T_{\mu, \pi}$ by \cref{lem:mmsecalculus} and \cref{lem:Lcalculus} gives
\(T_{\mu,\pi}(\tau_\infty)=\tau_\infty\).

If \(\tau_\star\) is any fixed point, then
\(\tau_\star=T_{\mu,\pi}(\tau_\star)\le\tau_{0,+}^B\).
Monotonicity gives
\(\tau_\star\le\tau_{t,+}^B\) for every \(t\), and hence
\(\tau_\star\le\tau_\infty\). Therefore,
\(\tau_\infty\) is the greatest fixed point that satisfies $T_{\mu, \pi}(\tau) = \tau$, namely \(\tau_B^+\).
\end{proof}

Under two additional mild assumptions which we will state below, the asymptotic MSE of $\widehat\beta_{1,t}$ in \cref{alg: Bayes VAMP} converges to the quantities defined by the state evolution parameters in \eqref{eq:Bayes VAMP-SE}.

\begin{assumption}[Assumption 1.3 in \textcite{LiFanSenWu2024RotInv}]\label{ass:extra-regularity-prior}
Recall that $B^\star \sim \pi$, $b_\pi = \E[B^\star]$, and $V_\pi = \VAR(B^\star)$. Let $B_0 := B^\star - b_\pi \sim \pi_0$, where $\pi_0$ is the centered distribution of $\pi$. We assume the following conditions hold:
\begin{enumerate}[label=(\roman*)]
\item There exists \(C_\pi>0\) such that, for every \(s>0\),
\(
V_\pi\leq C_\pi\), and \(
\mathbb P\bigl(|B_0|>s\bigr)
\leq
2\exp\left(-\frac{s^2}{2C_\pi}\right).
\)

\item For \(k\in\{1,2\}\), let
\(A \in\mathbb R^{k\times k}\) be symmetric with
\(
A\prec(4C_\pi)^{-1}I_k,
\)
and let \(z\in\mathbb R^k\). Define the probability measure
\(\nu_{A,z}\) on \(\mathbb R^k\) by
\[
d\nu_{A,z}(x)
:=
\frac{
 \exp\!\left(x^T A x+x^T z\right)
 \prod_{i=1}^k d\pi_0(x_i)
}{
 \displaystyle
 \int_{\mathbb R^k}
 \exp\!\left(u^T A u+u^T z\right)
 \prod_{i=1}^k d\pi_0(u_i)
}.
\]
There exists \(K_\pi>0\), depending only on \(C_\pi\), such that,
for every unit vector \(v\in\mathbb R^k\),
\[
\VAR_{\nu_{A,z}}(v^T x)
\leq K_\pi,
\qquad
\VAR_{\nu_{A,z}}\!\left((v^T x)^2\right)
\leq
K_\pi\left\{
1 + \E_{\nu_{A,z}}\left[(v^T x)^2\right]\right\}.
\]
\end{enumerate}

\end{assumption}

The first condition of \cref{ass:extra-regularity-prior} requires the centered prior to have sub-Gaussian
tails, so that large signal values are exponentially unlikely. The second condition requires that, after every admissible quadratic-linear tilt, the variance of \(v^T x\) remains uniformly bounded and the variance of \((v^T x)^2\) remains controlled by its
second moment.

\begin{assumption}[Bayes VAMP regularity]
\label{ass:bayes-denoiser-regularity}
Fix an iteration count \(T \in \N\).

\begin{enumerate}[label=(\roman*)]

\item \(C\) is continuous at
\(\bar b_t\) for $t = 0, \ldots, T-1$, and continuous at \(\bar c_t\) for \(t=-1,0,\ldots,T-1\).

\item \(\Gamma\) is continuous at
\((\bar\gamma_{1t},\bar b_t)\) for \(t=0,\ldots,T-1\), and continuous at
\((\bar\gamma_{2t},\bar c_t)\) for \(t=-1,\ldots,T-1\).

\item For every \(t=0,\ldots,T\), there exists a neighborhood
\(I_{1t}\subset(0,\infty)\) of \(\bar\gamma_{1t}\) such that
\[
(p,\beta)\longmapsto f(p,\beta,\gamma_1)
\quad\text{and}\quad
(p,\beta)\longmapsto
 f'(p,\beta,\gamma_1)
\]
are jointly Lipschitz in \((p,\beta)\), with a Lipschitz constant
that is uniform over \(\gamma_1\in I_{1t}\).

\item For every \(t=-1,0,\ldots,T-1\), there exists a neighborhood
\(I_{2t}\subset(0,\infty)\) of \(\bar\gamma_{2t}\) such that, with \(w\) held fixed,
\[
q\longmapsto g(q,w,\gamma_2)
\quad\text{and}\quad
q\longmapsto g'(q,w,\gamma_2)
\]
are uniformly Lipschitz in \(q\), uniformly over
\(\gamma_2\in I_{2t}\).

\end{enumerate}
\end{assumption}

\begin{rem}\label{rem:assumption-met}
Under Assumptions~\ref{ass1}--\ref{ass3} and \cref{ass:extra-regularity-prior}, Assumption~\ref{ass:bayes-denoiser-regularity}
holds for every fixed iteration count $T$: conditions (i)--(ii) hold because, at every state-evolution step,
\(\bar b_t,\bar c_t\in(0,1)\) and
\(\bar\gamma_{1t},\bar\gamma_{2t}>0\), so the functions
\(C, \Gamma\) are continuous at the values where
they are evaluated; condition (iii) follows from \cref{ass3} and \cref{ass:extra-regularity-prior} (shown in
\textcite[Appendix~A]{LiFanSenWu2024RotInv}); and condition (iv),
including the initialization step, follows from
\textcite[Lemma~7]{Fletcher2018Plugin}.
\end{rem}

\begin{proposition}[Bayes VAMP converging to the greatest fixed point]
\label{prop:bayes-vamp-risk}
Suppose Assumptions~\ref{ass1}--\ref{ass3} and Assumption~\ref{ass:bayes-denoiser-regularity} hold, and further assume \(V_\pi>0\). Let
\(\widehat\beta_{1,t}\) be the vector returned by \cref{alg: Bayes VAMP} at iteration $t$, and define
\[
    R_{p,t}^B := \frac1p \left\|\widehat\beta_{1,t}-\beta^\star\right\|_2^2.
\]
For the sequence $\{\tau_{t, +}^B\}$ defined in \cref{lem: Bayes-fpt-limit}, we have that, for every fixed integer \(t\geq0\),
\begin{equation}
    R_{p,t}^B
    \xrightarrow{\PP}
    m_\pi(\tau_{t,+}^B).
    \label{eq:fixed-time-bayes-vamp-risk}
\end{equation}
For every \(\varepsilon>0\),
\begin{equation}
    \lim_{t\to\infty}
    \limsup_{p\to\infty}
    \mathbb P\left(
        \left|
            R_{p,t}^B-m_\pi(\tau_B^+)
        \right|>\varepsilon
    \right)
    =0.
    \label{eq:iterated-bayes-vamp-risk}
\end{equation}
Consequently, there exists a sequence \(t_p\uparrow\infty\) such
that
\begin{equation}
    R_{p,t_p}^B
    \xrightarrow{\PP}
    m_\pi(\tau_B^+).
\end{equation}
\end{proposition}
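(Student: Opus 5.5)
The plan has three parts: a fixed-$t$ state-evolution statement, then a passage to the limit in $t$ using the greatest-fixed-point lemma, then a diagonal argument.

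\textbf{Fixed $t$.} We invoke the rigorous VAMP state evolution of \textcite[Theorem~2]{Fletcher2018Plugin} (equivalently \textcite{Rangan2018vamp}) for the abstract recursion \eqref{eq:alternative-formulation-vamp}. Its hypotheses are checked as follows:
\begin{itemize}
\item The empirical laws of $\beta^\star$ and $\Bw=(\bm\xi,\Bd)$ converge in $W_2$, by \cref{ass3} and the conditional Gaussian law of large numbers recorded above.
\item $V$ is Haar and independent of everything else, by \cref{ass1}.
\item The Lipschitz and continuity requirements are exactly \cref{ass:bayes-denoiser-regularity}.
\end{itemize}
The initialization in lines 2--6 has the form required by the theorem, with $\Bq^{-1}=V^T\Bv^{-1}$ and $\tau_{10}=L_\mu(V_\pi)$, as computed above.

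State evolution then gives, for each fixed $t$, that the empirical law of $(\Bp^t,\beta^\star)$ converges in $W_2$ to $\mathrm{Law}(P_t,B^\star)$ with $P_t\sim N(0,\tau_{1t})$ independent of $B^\star$. Since $\widehat\beta_{1,t}-\beta^\star=f(\Bp^t,\beta^\star,\gamma_{1t})$, we apply $W_2$ convergence to the pseudo-Lipschitz test function $f^2$. Two further ingredients are needed here:
\begin{itemize}
\item The algorithmic $\gamma_{1t}$ converges to $\bgamma_{1t}$. This also comes from state evolution, and is combined with the uniform-in-$\gamma_1$ Lipschitz condition (iii).
\item The early-stop branches (lines 5, 13, 19) occur with vanishing probability, because $\bar b_t,\bar c_t\in(0,1)$ and the algorithmic $b_t,c_t$ converge to them.
\end{itemize}
Together these give $R^B_{p,t}\xrightarrow{\PP}\E[f(P_t,B^\star,\bgamma_{1t})^2]$. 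Under the matched initialization (\cref{rem: matched-initialization}), this limit equals $m_\pi(\tau_{1t})$. By \cref{lem: SE-bvamp-tau1}, $\tau_{1t}=\tau^B_{t,+}$, which proves \eqref{eq:fixed-time-bayes-vamp-risk}.

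\textbf{Limit in $t$.} By \cref{lem: Bayes-fpt-limit}, $\tau^B_{t,+}\downarrow\tau_B^+$, and $m_\pi$ is continuous, so $m_\pi(\tau^B_{t,+})\to m_\pi(\tau_B^+)$. Fix $\varepsilon>0$ and choose $t_0$ such that $|m_\pi(\tau^B_{t,+})-m_\pi(\tau_B^+)|<\varepsilon/2$ for all $t\ge t_0$. For such $t$, the event in \eqref{eq:iterated-bayes-vamp-risk} is contained in $\{|R^B_{p,t}-m_\pi(\tau^B_{t,+})|>\varepsilon/2\}$. The probability of the latter tends to zero as $p\to\infty$ by the fixed-$t$ step. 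This gives \eqref{eq:iterated-bayes-vamp-risk}, in fact with $\limsup_p$ equal to $0$ for every $t\ge t_0$.

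\textbf{Diagonal sequence.} Let $a_{p,t}:=\PP(|R^B_{p,t}-m_\pi(\tau^B_{t,+})|>1/t)$, so that $a_{p,t}\to0$ as $p\to\infty$ for each $t$. Choose integers $p_1<p_2<\cdots$ such that $a_{p,t}<1/t$ for all $p\ge p_t$. Set $t_p:=\max\{t:p_t\le p\}$, which is nondecreasing and diverges. Then
\[
|R^B_{p,t_p}-m_\pi(\tau_B^+)|\le 1/t_p+|m_\pi(\tau^B_{t_p,+})-m_\pi(\tau_B^+)|
\]
outside an event of probability less than $1/t_p$, and both terms vanish as $p\to\infty$.

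\textbf{Main obstacle.} The delicate step is the fixed-$t$ application of state evolution. One must verify that the matched identities $\tau_{1t}=1/\bgamma_{1t}$ propagate, and that the plug-in of the random $\gamma_{1t}$ into the denoiser is controlled. Both rest on \cref{ass:bayes-denoiser-regularity}(iii)--(iv), and that is where I would focus the care. The remaining steps are elementary.
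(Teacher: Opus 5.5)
Your proposal is correct and follows essentially the same route as the paper. Both arguments use the same three ingredients:
\begin{itemize}
\item fixed-$t$ state evolution from Fletcher et al.\ (Theorem~2), together with the matched identities $\bar\gamma_{1t}=\tau_{1t}^{-1}$ and the recursion $\tau_{1,t+1}=T_{\mu,\pi}(\tau_{1t})$ started at $\tau_{10}=L_\mu(V_\pi)$;
\item continuity of $m_\pi$ along $\tau^B_{t,+}\downarrow\tau_B^+$;
\item the early-stopping events having vanishing probability.
\end{itemize}
Your explicit diagonal construction of $t_p$ fills in a step the paper only asserts with ``Consequently.''
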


\begin{proof}
By \textcite[Theorem~2]{Fletcher2018Plugin}, we have that, for every fixed \(t\) and every pseudo-Lipschitz
function \(\psi:\mathbb R^2\to\mathbb R\) of order two,
\[
\frac1p\sum_{j=1}^p
\psi\left(\widehat\beta_{1,t,j},\beta_j^\star\right)
\xrightarrow{\mathbb P}
\mathbb E\left[
  \psi\left(
    \eta_{\bar\gamma_{1t}}^{\rm B}
    (B^\star+\sqrt{\tau_{1t}}Z),
    B^\star
  \right)
\right],
\]
where \(B^\star\sim\pi\), \(Z\sim N(0,1)\), and
\(B^\star\perp Z\). Because the Bayes denoiser and observation-noise precision are matched, and the initialization satisfies \(\bar\gamma_{10}=\tau_{10}^{-1}\), we may apply Theorem 2 of \textcite{Rangan2018vamp}, which gives that \(\bar\gamma_{1t}=\tau_{1t}^{-1}\) and \(\bar\gamma_{2t}=\tau_{2t}^{-1}\) for every fixed \(t\). \cref{lem: SE-bvamp-tau1} then gives
\(
    \tau_{1,t+1} = T_{\mu, \pi}(\tau_{1t}).
\)
Since \(\tau_{10}=L_\mu(V_\pi)=\tau_{0,+}^B\), it follows by
induction that \(\tau_{1t}=\tau_{t,+}^B\) for every \(t\geq0\).

Finally, take
\(\psi(x,b)=(x-b)^2\). The empirical convergence above and the
identity \(\bar\gamma_{1t}=\tau_{1t}^{-1}\) give
\[
R_{p,t}^B =
\frac1p
\left\|
  \widehat\beta_{1,t}-\beta^\star
\right\|_2^2 \xrightarrow{\mathbb P}
\mathbb E\left[
  \left(
    \eta_{\tau_{1t}^{-1}}^{\rm B}
    (B^\star+\sqrt{\tau_{1t}}Z)-B^\star
  \right)^2
\right] =
m_\pi(\tau_{1t})
=
m_\pi(\tau_{t,+}^B),
\]
which proves \eqref{eq:fixed-time-bayes-vamp-risk}.

We now aim to show \eqref{eq:iterated-bayes-vamp-risk}. By \cref{lem: Bayes-fpt-limit},
$\tau_{t, +}^B \xlongrightarrow{t \rightarrow \infty} \tau_B^+.
$ Since the function $m_\pi(\tau)$ is continuous in its
input,
\begin{equation}
    m_\pi(\tau_{t, +}^B)
    \xlongrightarrow{t \rightarrow \infty}
    m_\pi(\tau_B^+).
    \label{eq:scalar-bayes-risk-limit}
\end{equation}

Fix \(\varepsilon>0\). By the triangle inequality,
\begin{align*}
&\mathbb P\left(
    \left|
        R_{p,t}^B-m_\pi(\tau_B^+)
    \right|>\varepsilon
\right)\leq
\mathbb P\left(
    \left|
        R_{p,t}^B-m_\pi(\tau_{t, +}^B)
    \right|>\frac{\varepsilon}{2}
\right)
+
\mathbf 1\left\{
    \left|
        m_\pi(\tau_{t, +}^B)-m_\pi(\tau_B^+)
    \right|>\frac{\varepsilon}{2}
\right\}.
\end{align*}
For every fixed \(t\), the first term tends to zero as
\(p\to\infty\) by
\eqref{eq:fixed-time-bayes-vamp-risk}. Therefore,
\begin{align*}
&\limsup_{p\to\infty}
\mathbb P\left(
    \left|
        R_{p,t}^B-m_\pi(\tau_B^+)
    \right|>\varepsilon
\right) \leq
\mathbf 1\left\{
    \left|
        m_\pi(\tau_{t, +}^B)-m_\pi(\tau_B^+)
    \right|>\frac{\varepsilon}{2}
\right\}.
\end{align*}
Letting \(t\to\infty\) and using
\eqref{eq:scalar-bayes-risk-limit} proves
\eqref{eq:iterated-bayes-vamp-risk}.

We note that \cref{alg: Bayes VAMP} has three stopping rules (lines 5, 13, and 19). On the event that these stopping rules are not invoked, the algorithm produces exactly the iterates analyzed in \textcite[Theorem~2]{Fletcher2018Plugin}. The state evolution asymptotics then show that the probability of termination tends to zero, as the quantities defining the stopping rules will converge in probability to values within the admissible range. Therefore, the fixed-$t$ limiting risk given by Fletcher et al.'s result also holds for the output of \cref{alg: Bayes VAMP}.

\end{proof}

\begin{rem}[Degenerate prior]
If \(V_\pi=0\), then \(\pi\) is a point mass at \(b_\pi\). The data-only
estimator \(b_\pi\mathbf1_p\) has normalized risk converging to zero,
which equals \(m_\pi(\tau_B^+)\). The precision \(V_\pi^{-1}\) is then
unnecessary.
\end{rem}

\section{Proof of Theorem~\ref{thm:oracle-mse}}\label{sec:main_result1_proof}

Before getting to the proof, we will introduce the necessary background on the VAMP algorithm.

\subsection{Canonical convex VAMP and an ``oracle-initialized'' convex VAMP}\label{sec: canonical-convex-vamp}
We first present the ``canonical'' convex VAMP algorithm \cref{alg: convex-vamp}, which can be used to find convex estimators of the form in \eqref{eq: penalized-OLS}.
\begin{algorithm}
\caption{Canonical Convex VAMP}\label{alg: convex-vamp}
\begin{algorithmic}[1]
\State \textbf{Initialize: } $\{r_{1j}^0\}_{j \in [p]} \subset \R, \Br_1^0 = (r_{11}^0, \cdots, r_{1p}^0) \in \R^p$ and $\gamma_{10} > 0$
\While{not converged}
\State $\widehat{\beta}^h_t = \prox_{\frac{h}{\gamma_{1t}}}(\Br_1^t)$
\State $b_t = p^{-1} \dive \prox_{\frac{h}{\gamma_{1t}}}(\Br_1^t)$
\State $\eta_1^t = \frac{\gamma_{1t}}{b_t}$
\State $\gamma_{2t} = \eta_1^t - \gamma_{1t}$
\State $\Br_2^t = \frac{1}{\gamma_{2t}}\left(\eta_1^t \widehat{\beta}^h_t - \gamma_{1t} \Br_1^t\right)$
\State $c_t = \frac{1}{p} \tr\left[\tilde{g}_t'(\Br_2^t, \gamma_{2t})\right]$
\State $\gamma_{1, t+1} = \gamma_{2t} \left(\frac{1}{c_t} - 1\right)$
\State $\Br_1^{t+1} = \frac{1}{1 -c_t}\left(\tilde{g}_t\left(\Br_2^t,\gamma_{2t}\right) - c_t\Br^t_2\right)$
\EndWhile
\State \textbf{Output} $\widehat{\beta}^h_t$.
\end{algorithmic}
\end{algorithm}

We can see that the only two differences between convex VAMP, presented in \cref{alg: convex-vamp}, and Bayes VAMP \cref{alg: Bayes VAMP} are (i) in step 3, where we choose the denoiser function to be the proximal operator
$\widehat{\beta}_t^h = \prox_{{h}/{\gamma_{1t}}}(\Br_1^t)$ for a convex penalty $h$
instead of the posterior expectation, and (ii) in steps 8 and 10, where the denoiser function $\tilde g_t$ in \cref{alg: convex-vamp} is defined as 
\(
\tilde{g}_t(\Br_2^t,\gamma_{2t}) = \left(X^TX + \gamma_{2t}I\right)^{-1}
\left(X^T y + \gamma_{2t} \Br_2^t\right),
\)
and does not have the $\sigma^{-2}$ term present in the Bayes VAMP algorithm because it solves the problem without the scaling by $\sigma^{-2}$. 

The fixed-point equations of \cref{alg: convex-vamp} can be derived in a similar fashion to those of Bayes VAMP (we omit the derivations here since they follow exactly the same procedure as those of Bayes VAMP \cref{alg: Bayes VAMP}): let $(\bgamma_{1\star}, \eta_\star, \tau_{1\star}, \tau_{2\star})$ solve the following system of equations
\begin{align}
\frac{\bgs}{\eta_\star} &= \E\left[ \prox_{\frac{h}{\bgs}}'(\sqrt{\tau_{1\star}} Z + B^\star) \right]\label{eq: fpt-equation-a}\\
\tau_{2\star} &= \left(\frac{\eta_\star}{ \eta_\star - \bgs}\right)^2 \left(\E\left[\left(\prox_{\frac{h}{\bgs}}( \sqrt{\tau_{1\star}} Z + B^\star) - B^\star\right)^2\right] - \left(\frac{\bgs}{\eta_\star}\right)^2 \tau_{1\star}\right)\label{eq: fpt-equation-b}\\
\frac{\bgs}{\eta_\star} &= 1 - \frac{1}{\eta_\star} G^{-1}\left(\frac{1}{\eta_\star}\right)\label{eq: fpt-equation-c}\\
\tau_{1\star} &= \left(\frac{\eta_\star}{\bgs}\right)^2 \left(\E\left[\frac{\sigma^2 S + (\eta_\star - \bgs)^2\tau_{2\star}}{(S +\eta_\star - \bgs)^2}\right] - \left(\frac{\eta_\star - \bgs}{\eta_\star}\right)^2 \tau_{2\star}\right).\label{eq: fpt-equation-d}
\end{align}

When a solution exists to the fixed-point equations, the asymptotic MSE of the estimator produced by VAMP can be characterized by analyzing the VAMP algorithm initialized at said fixed point. This fixed-point-initialized method to analyze the asymptotic performance was first introduced, to the best of our knowledge, by \textcite{Gerbelot2020vamp}, and it was later also used in \textcite{Li2026debiasing}; it is termed ``oracle-initialization'' in both because initializing at the fixed point is not practically possible. \cref{alg: oracle-convex-vamp-0} is the exact algorithm presented in both works. Note that the ``oracle'' nature of the initialization refers to the fixed point and differs from the ``oracle''-perturbed $\ell_2$ penalty, which uses the true signal $\beta^\star$.

We now present the oracle-fixed-point-initialized VAMP algorithm as \cref{alg: oracle-convex-vamp-0}. Fix a penalty $h$ and assume that \cref{ass1}--\ref{ass4} hold. Furthermore, assume that a fixed-point solution $(\tau_{1\star}, \tau_{2\star}, \bgamma_{1\star}, \eta_\star)$ to \eqref{eq: fpt-equation-a}--\eqref{eq: fpt-equation-d} exists and is known, and that we then fix the state evolution parameters as these fixed-point values in \cref{alg: oracle-convex-vamp-0}. This is the critical difference between the oracle-initialized algorithm and the canonical version \cref{alg: convex-vamp}, where the latter has corresponding state evolution parameters that need to be iteratively updated. Hence, \cref{alg: oracle-convex-vamp-0} is taken to be a proof device, not an implementable algorithm. For examples of its use in deriving asymptotic characterizations of the MSE of Lasso, ridge, or elastic net convex penalties in the RRI high-dimensional setting, see \textcite[Appendix C.3]{Li2026debiasing}.

\begin{algorithm}
\caption{Oracle-initialized convex VAMP}
\label{alg: oracle-convex-vamp-0}
\begin{algorithmic}[1]
\State \textbf{Initialize: }
\(q^0\sim N(0,\tau_{1\star} I_p)\) independently of
\((U,\Sigma,V,\beta^\star,\varepsilon)\),
\(\Br_{1,0}=\beta^\star+q^0\), \(c = \eta_\star - \bgamma_{1\star}\)
\For{\(t=1,2,\ldots\)}
\State
\(\widehat{\beta}_{1,t}^{h}
      =\prox_{h/\bgamma_{1\star}}(\Br_{1,t-1})\) coordinatewise
\State
\(\Br_{2,t}
      =c^{-1}\bigl(
       \eta_\star\widehat{\beta}_{1,t}^{h}
       -\bgamma_{1\star}\Br_{1,t-1}\bigr)\)
\State
\(\widehat{\beta}_{2,t}^{h}
      =(X^T X+(\eta_\star - \bgamma_{1\star})I)^{-1}
       (X^T y+(\eta_\star - \bgamma_{1\star})\Br_{2,t})\)
\State
\(\Br_{1,t}
      =\bgamma_{1\star}^{-1}\bigl(
       \eta_\star\widehat{\beta}_{2,t}^{h}
       -(\eta_\star - \bgamma_{1\star})\Br_{2,t}\bigr)\)
\EndFor
\State \textbf{Output at time \(t\): }
\(\widehat{\beta}_{1,t}^{h}\).
\end{algorithmic}
\end{algorithm}

\subsection{Oracle-perturbed convex-VAMP fixed-point equations} \label{sec: oracle-convex-vamp-1} In this section, we discuss the fixed-point equations of the oracle-perturbed convex penalty VAMP algorithm.

Fix the oracle-perturbation regularization parameter \(\lambda>0\). For any \(b\in\R\), define the scalar oracle-perturbed penalty
$\phi_{\lambda,b}(x)=h(x)+\frac\lambda2(x-b)^2.$ The vector penalty in \eqref{eq:oracle_perturbed_beta} is separable, conditional on the signal: we have that $ h^{(\lambda)}(\beta) =\sum_{i=1}^p\phi_{\lambda,\beta_i^\star}(\beta_i).$

We now derive the fixed-point equations \eqref{eq: fpt-equation-a}--\eqref{eq: fpt-equation-d} used by the stationary oracle
recursion, which will be presented in \cref{alg: oracle-convex-vamp}. Let
\((\bar\gamma_{1,\lambda},\eta_\lambda,
\tau_{1,\lambda},\tau_{2,\lambda})\) denote a candidate fixed point. We first replace \(h\) in the convex VAMP fixed-point equations by the coordinatewise
penalty \(\phi_{\lambda,B^\star}\) in
\eqref{eq: fpt-equation-a}--\eqref{eq: fpt-equation-d}, which gives
\begin{align*}
\frac{\bar\gamma_{1,\lambda}}{\eta_\lambda}
&=
\E\Big[
\frac{\partial}{\partial r}
\prox_{\phi_{\lambda,B^\star}/\bar\gamma_{1,\lambda}}(r)
\Big|_{r=B^\star+\sqrt{\tau_{1,\lambda}}Z}
\Big],\\
\tau_{2,\lambda}
&=
\Big(\frac{\eta_\lambda}
{\eta_\lambda-\bar\gamma_{1,\lambda}}\Big)^2
\Big(
\E\Big[
\left(
\prox_{\phi_{\lambda,B^\star}/\bar\gamma_{1,\lambda}}
(B^\star+\sqrt{\tau_{1,\lambda}}Z)-B^\star
\right)^2\Big]
-\Big(\frac{\bar\gamma_{1,\lambda}}{\eta_\lambda}\Big)^2
\tau_{1,\lambda}
\Big),\\
\frac{\bar\gamma_{1,\lambda}}{\eta_\lambda}
&=1-\frac1{\eta_\lambda}G^{-1}\Big(\frac1{\eta_\lambda}\Big),\\
\tau_{1,\lambda}
&=
\Big(\frac{\eta_\lambda}{\bar\gamma_{1,\lambda}}\Big)^2
\Big(
\E\Big[
\frac{\sigma^2S+
(\eta_\lambda-\bar\gamma_{1,\lambda})^2\tau_{2,\lambda}}
{(S+\eta_\lambda-\bar\gamma_{1,\lambda})^2}
\Big]
-\Big(\frac{\eta_\lambda-\bar\gamma_{1,\lambda}}
{\eta_\lambda}\Big)^2\tau_{2,\lambda}
\Big).
\end{align*}
Thus the first two lines are exactly \eqref{eq: fpt-equation-a} and \eqref{eq: fpt-equation-b}, with
\(\phi_{\lambda,B^\star}\) in place of \(h\). The last two lines coincide
with \eqref{eq: fpt-equation-c} and \eqref{eq: fpt-equation-d} because the
spectrum stage of the state evolution does not involve the choice of penalty.

First consider the denoiser. For arbitrary \(\gamma>0\) and
\(b,r\in\R\), completing the square gives
\begin{align*}
 \prox_{\phi_{\lambda,b}/\gamma}(r)
 &=\argmin_x\left\{
 \frac12(x-r)^2+\frac1\gamma h(x)
 +\frac\lambda{2\gamma}(x-b)^2\right\} =\prox_{h/(\gamma+\lambda)}
 \left(\frac{\gamma r+\lambda b}{\gamma+\lambda}\right).
\end{align*}
In particular, at the scalar effective observation
\(r=b+\sqrt\tau Z\), we have
\begin{equation}
 \prox_{\phi_{\lambda,b}/\gamma}(b+\sqrt\tau Z)
 =\prox_{h/(\gamma+\lambda)}
 \left(b+\frac\gamma{\gamma+\lambda}\sqrt\tau Z\right).
 \label{eq:oracle-prox-fp-derivation}
\end{equation}
Taking the almost-everywhere derivative with respect to the effective
observation gives
\begin{equation}
 \frac{\partial}{\partial r}
 \prox_{\phi_{\lambda,b}/\gamma}(r)
 =\frac\gamma{\gamma+\lambda}
 \prox_{h/(\gamma+\lambda)}'
 \left(\frac{\gamma r+\lambda b}{\gamma+\lambda}\right).
 \label{eq:oracle-prox-derivative-fp}
\end{equation}
We now set \(b=B^\star\), \(\gamma=\bar\gamma_{1,\lambda}\), and
\(\tau=\tau_{1,\lambda}\) in
\eqref{eq:oracle-prox-fp-derivation}--\eqref{eq:oracle-prox-derivative-fp}. Substituting the resulting two
identities into the first two equations of the preceding oracle system gives
\eqref{eq: fpt-oracle-a} and \eqref{eq: fpt-oracle-b} below. Retaining the
unchanged second pair of equations gives \eqref{eq: fpt-oracle-c} and
\eqref{eq: fpt-oracle-d}. Therefore, we call
$(\bar\gamma_{1,\lambda},\eta_\lambda,
   \tau_{1,\lambda},\tau_{2,\lambda})$
an oracle fixed point if
\(0<\bar\gamma_{1,\lambda}<\eta_\lambda\) and
\(\tau_{1,\lambda},\tau_{2,\lambda}\geq0\), and the fixed-point equations hold:
\begin{align}
\frac{\bar\gamma_{1,\lambda}}{\eta_\lambda}
&=
\frac{\bar\gamma_{1,\lambda}}
     {\lambda+\bar\gamma_{1,\lambda}}
\E\Big[
\prox_{\frac{h}{(\lambda+\bar\gamma_{1,\lambda})}}'
\Big(B^\star+
\frac{\bar\gamma_{1,\lambda}}
     {\lambda+\bar\gamma_{1,\lambda}}
\sqrt{\tau_{1,\lambda}}Z\Big)
\Big],
\label{eq: fpt-oracle-a}\\
\tau_{2,\lambda}
&=
\Big(\frac{\eta_\lambda}
{\eta_\lambda-\bar\gamma_{1,\lambda}}\Big)^2
\Big(
\E\Big[
\Big(
\prox_{\frac{h}{(\lambda+\bar\gamma_{1,\lambda})}}
\left(B^\star+
\frac{\bar\gamma_{1,\lambda}}
     {\lambda+\bar\gamma_{1,\lambda}}
\sqrt{\tau_{1,\lambda}}Z\right)-B^\star
\Big)^2\Big]
-\Big(\frac{\bar\gamma_{1,\lambda}}{\eta_\lambda}\Big)^2
\tau_{1,\lambda}
\Big),
\label{eq: fpt-oracle-b}\\
\frac{\bar\gamma_{1,\lambda}}{\eta_\lambda}
&=1-\frac1{\eta_\lambda}G^{-1}\left(\frac1{\eta_\lambda}\right),
\label{eq: fpt-oracle-c}\\
\tau_{1,\lambda}
&=
\Big(\frac{\eta_\lambda}{\bar\gamma_{1,\lambda}}\Big)^2
\Big(
\E\Big[
\frac{\sigma^2S+
(\eta_\lambda-\bar\gamma_{1,\lambda})^2\tau_{2,\lambda}}
{(S+\eta_\lambda-\bar\gamma_{1,\lambda})^2}
\Big]
-\Big(\frac{\eta_\lambda-\bar\gamma_{1,\lambda}}
{\eta_\lambda}\Big)^2\tau_{2,\lambda}
\Big).
\label{eq: fpt-oracle-d}
\end{align}
The first two equations describe the oracle proximal stage, and the
last two describe the linear stage and its spectral dependence. The
existence of a solution for every \(\lambda>0\) is established in
Lemma~\ref{thm:oracle-fixed-point}, which is proved in the following subsection after the proof of Theorem~\ref{thm:oracle-mse}.

Fix a solution to \eqref{eq: fpt-oracle-a}--\eqref{eq: fpt-oracle-d} and abbreviate
\(
 \gamma=\bar\gamma_{1,\lambda}, \eta=\eta_\lambda,
 c=\eta-\gamma, \tau_1=\tau_{1,\lambda}.
\)
For \(b\in\R\), define the oracle-perturbed proximal map denoiser
$T_b(r):= \prox_{\{h(\,\cdot\,)+\lambda(\,\cdot-b\,)^2/2\}/\gamma}(r)$.
We now present \cref{alg: oracle-convex-vamp}, the oracle-perturbed convex VAMP algorithm. Observe that the structure of \cref{alg: oracle-convex-vamp} matches that of \cref{alg: oracle-convex-vamp-0}, with $\prox_{h/\gamma_{1\star}}(r)$ replaced by $T_{\beta^\star}(\cdot):=
 \prox_{\{h(\,\cdot\,)+\lambda(\,\cdot-\beta^\star\,)^2/2\}/\gamma}(r)$.

\begin{algorithm}
\caption{Oracle-perturbed convex VAMP}
\label{alg: oracle-convex-vamp}
\begin{algorithmic}[1]
\State \textbf{Initialize: }
\(q^0\sim N(0,\tau_1I_p)\) independently of
\((U,\Sigma,V,\beta^\star,\varepsilon)\), and
\(\Br_{1,0}=\beta^\star+q^0\)
\For{\(t=1,2,\ldots\)}
\State
\(\widehat{\beta}_{1,t}^{h,\lambda}
      =T_{\beta^\star}(\Br_{1,t-1})\) coordinatewise
\State
\(\Br_{2,t}
      =c^{-1}\bigl(
       \eta\widehat{\beta}_{1,t}^{h,\lambda}
       -\gamma\Br_{1,t-1}\bigr)\)
\State
\(\widehat{\beta}_{2,t}^{h,\lambda}
      =(X^T X+cI)^{-1}
       (X^T y+c\Br_{2,t})\)
\State
\(\Br_{1,t}
      =\gamma^{-1}\bigl(
       \eta\widehat{\beta}_{2,t}^{h,\lambda}
       -c\Br_{2,t}\bigr)\)
\EndFor
\State \textbf{Output at time \(t\): }
\(\widehat{\beta}_{1,t}^{h,\lambda}\).
\end{algorithmic}
\end{algorithm}

\subsection{Proof of Theorem~\ref{thm:oracle-mse}}
\label{sec: oracle-convex-vamp-2}

Fix \(\lambda>0\), and let $(\bar\gamma_{1,\lambda},\eta_\lambda,
  \tau_{1,\lambda},\tau_{2,\lambda})$ be a solution of
\eqref{eq: fpt-oracle-a}--\eqref{eq: fpt-oracle-d}. Throughout this
subsection, write
\(
 \gamma=\bar\gamma_{1,\lambda}, \eta=\eta_\lambda, \tau_1=\tau_{1,\lambda}, \tau_2=\tau_{2,\lambda}, c=\eta-\gamma.
\)
We will use the oracle-perturbed \cref{alg: oracle-convex-vamp} as a proof device and adapt the stationary conditional-Haar argument that underlies \textcite[Appendix~C, Eqs.~(73)--(77), Props.~C.3--C.5 and C.8--C.10, and Cors.~C.7 and C.9]{Li2026debiasing}. Specifically, Proposition~C.3 verifies the centering, variance, and Lipschitz identities needed for the fixed-point-initialized recursion, while Proposition~C.4 identifies the limiting joint law of the signal, spectral quantities, noise, and initialization $q^0$ in \cref{alg: oracle-convex-vamp}. Proposition~C.5 establishes the fixed-time state evolution, which Corollary~C.7 translates back to the original VAMP iterates. Proposition~C.8 proves convergence of the cross-iteration covariances, yielding the Cauchy property of the vector iterates in Corollary~C.9. Finally, Proposition~C.10 uses this Cauchy convergence and strong convexity to identify the limiting iterates with the convex optimizer. Taken together, these results provide a complete asymptotic analysis of fixed-point-initialized convex VAMP \cref{alg: oracle-convex-vamp}: they establish its fixed-time state evolution, prove that its iterates become Cauchy across iterations, and identify their limit with the convex optimizer.

There are, however, three issues that must be kept in mind, as we detail below: 
\begin{enumerate}

\item Our nonlinear prior stage is the coordinate-dependent oracle map \(T_b\) for \(h\) that is only assumed to be proper, lsc, and convex, whereas \textcite{Li2026debiasing} impose additional regularity assumptions on the penalty (nonnegativity and twice continuously differentiable almost surely). 

\item The conditional-Haar induction used by Li and Sur requires the empirical limits of the signal, noise, and spectral variables. We have two separate stages in each iteration, and the prior stage is expressed in prior-dependent variables, while the spectrum stage is expressed in spectrum-dependent variables. Therefore, we need to establish the empirical limits in both sets of bases.

\item The conditional-Haar argument used by Li and Sur is applicable when the limiting covariance matrices formed from the previous iterations are nonsingular. First, however, we handle the three exceptional cases, in which the limiting covariance matrices \textit{are} singular. In the nonsingular case, the state evolution covariance between two iterations evolves according to a one-dimensional deterministic recursion. We prove that this recursion is strictly contractive, which implies that the sequence $\{r_{2,t}\}_{t \in \N}$ defined in \eqref{eq:oracle-vamp-r2} is Cauchy in normalized squared norm, and allows us to identify its limit with the convex optimizer.
\end{enumerate}

We give these verifications below; after they are established, the inductive calculations in the conditional-Haar argument repeated at each iteration are the same as in the proof of \textcite[Appendix~C, Prop.~C.5]{Li2026debiasing}.

\paragraph{The oracle nonlinear prior stage.}

For \(b\in\R\), set
\(
 \phi_{\lambda,b}(x)
 =h(x)+\frac{\lambda}{2}(x-b)^2\), and
 \(T_b(r)=\prox_{\phi_{\lambda,b}/\gamma}(r).\)
Completing the square gives
\begin{equation}
 T_b(r)
 =
 \prox_{h/(\gamma+\lambda)}
 \left(\frac{\gamma r+\lambda b}{\gamma+\lambda}\right).
 \label{eq:oracle-prox-proof}
\end{equation}
Let
\(
 s_\lambda=\frac{\gamma}{\gamma+\lambda}<1.
\)
Since a scalar convex proximal map is nondecreasing and
one-Lipschitz,
\begin{equation}
 D_\lambda(q,b)
 :=
 \partial_qT_b(b+q)
 \in[0,s_\lambda]
 \qquad\text{for Lebesgue-a.e. }q.
 \label{eq:oracle-D-bound}
\end{equation}
Moreover, $\left|
 T_b(b+q)-b
 -
 \bigl\{
 T_{\widetilde b}(\widetilde b+\widetilde q)-\widetilde b
 \bigr\}
 \right|
 \leq
 2|b-\widetilde b|
 +s_\lambda|q-\widetilde q|$.
Thus \((q,b)\mapsto T_b(b+q)-b\) is jointly Lipschitz in the effective noise \(q\) and the signal value \(b\).

More precisely, whenever \(H:\R^2\to\R\) is jointly Lipschitz, \(\partial_qH(q,b)\) denotes a jointly Borel representative of the one-dimensional almost-everywhere derivative of \(q\mapsto H(q,b)\). For example, we may take
\[
 \partial_qH(q,b)
 =
 \begin{cases}
 \displaystyle
 \lim_{m\to\infty}
 m\bigl\{H(q+m^{-1},b)-H(q,b)\bigr\},
 &\text{if the limit exists},\\[1ex]
 0,&\text{otherwise}.
 \end{cases}
\]
For every fixed \(b\), this equals the weak derivative for
Lebesgue-a.e. \(q\). Consequently, it may be evaluated at
\((\sqrt{\tau_1}Z,B^\star)\), even when the law of \(B^\star\) is discrete or singular, and one-dimensional Gaussian integration by parts applies after conditioning on \(B^\star\).

The fixed-point-initialized stationary recursion in
\cref{alg: oracle-convex-vamp} is
\begin{align}
 r_{1,0}
 &=
 \beta^\star+q^0,
 \qquad
 q^0\sim N(0,\tau_1I_p),
 \qquad
 q^0\perp(U,\Sigma,V,\beta^\star,\varepsilon),
 \label{eq:oracle-vamp-r10}\\
 \widehat\beta_{1,t}
 &=
 T_{\beta^\star}(r_{1,t-1}),
 \label{eq:oracle-vamp-stage1}\\
 r_{2,t}
 &=
 \frac{\eta\widehat\beta_{1,t}
       -\gamma r_{1,t-1}}{c},
 \label{eq:oracle-vamp-r2}\\
 \widehat\beta_{2,t}
 &=
 (X^T X+cI)^{-1}
 (X^T y+cr_{2,t}),
 \label{eq:oracle-vamp-stage2}\\
 r_{1,t}
 &=
 \frac{\eta\widehat\beta_{2,t}-cr_{2,t}}{\gamma}.
 \label{eq:oracle-vamp-r1}
\end{align}
Here and below,
\(\widehat\beta_{1,t}\) abbreviates the theorem's
\(\widehat\beta_{1,t}^{h,\lambda}\). This is a fixed-parameter recursion initialized at a fixed point, so its limiting marginal state-evolution distributions are stationary across iterations. We make no claim that it is a finite-\(p\) realization of the adaptive scalar
updates in \cref{alg: convex-vamp}.

Write \(
 X=U\Sigma V^T\) and \(
 \xi=U^T\varepsilon, \)
and define
\begin{align}
 \Lambda
 &:=
 \frac{\eta c}{\gamma}
 (\Sigma^T\Sigma+cI)^{-1}
 -\frac c\gamma I,
 \label{eq:oracle-Lambda}\\
 \widetilde e
 &:=
 \frac\eta\gamma
 (\Sigma^T\Sigma+cI)^{-1}
 \Sigma^T\xi,
 \qquad
 e:=V\widetilde e,
 \label{eq:oracle-e}\\
 F_\lambda(q,b)
 &:=
 \frac\eta c
 \bigl\{T_b(b+q)-b\bigr\}
 -\frac\gamma c q.
 \label{eq:oracle-Flambda}
\end{align}
For
\[
 x^t=r_{2,t}-\beta^\star,\qquad
 s^t=V^T x^t,\qquad
 y^t=r_{1,t}-\beta^\star-e,
\]
direct substitution in
\eqref{eq:oracle-vamp-stage1}--\eqref{eq:oracle-vamp-r1}
gives
\begin{equation}
 s^t=V^T x^t,\qquad
 y^t=V\Lambda s^t,\qquad
 x^{t+1}=F_\lambda(y^t+e,\beta^\star),
 \label{eq:oracle-Haar-recursion}
\end{equation}
with $x^1=F_\lambda(q^0,\beta^\star)$.
This has exactly the centered Haar recursion form of
\textcite[Appendix~C, Eq.~(77)]{Li2026debiasing}, with
\(F_\lambda\) in place of their nonlinear map. We write $\BX_t = [x^1, \cdots, x^t]$, $\BY_t = [y^1, \cdots, y^t]$, and $\BS_t = [s^1, \cdots, s^t]$.

\paragraph{Stationarity identities.}

Let \(S\sim\mu\), and define
\begin{equation}
 L
 =
 \frac c\gamma
 \left(\frac{\eta}{S+c}-1\right),
 \qquad
 \kappa=\E[L^2],
 \qquad
 b_\star
 =
 \frac{\eta^2\sigma^2}{\gamma^2}
 \E\left[\frac{S}{(S+c)^2}\right].
 \label{eq:oracle-roadmap-kappa-b}
\end{equation}
The spectrum stage fixed-point equations
\eqref{eq: fpt-oracle-c}--\eqref{eq: fpt-oracle-d}
imply
\begin{equation}
 \E[L] =0,\ \frac1p\operatorname{tr}(\Lambda)
 \xrightarrow{\PP}0,\ \frac1p\operatorname{tr}(\Lambda^2) \xrightarrow{\PP}\kappa,\ \frac1p\|e\|_2^2 \xrightarrow{\PP}b_\star,\ \tau_1 =b_\star+\kappa\tau_2.
 \label{eq:oracle-roadmap-linear-stationarity}
\end{equation}
Indeed, \eqref{eq: fpt-oracle-c} gives $G(c)=\E\left[\frac1{S+c}\right]=\frac1\eta$,
which yields \(\E[L]=0\). The trace limits follow by empirical spectral convergence. Conditional on \((U,\Sigma)\), the quadratic form defining \(\|e\|_2^2\) concentrates around \(b_\star\), and \eqref{eq: fpt-oracle-d} then gives
\(\tau_1=b_\star+\kappa\tau_2\).

For the prior stage, let
\(
 Q=\sqrt{\tau_1}Z\),
\(Z\sim N(0,1)\), and \(Z\perp B^\star.\)
Differentiating \eqref{eq:oracle-Flambda} with respect to \(q\) gives
\(
 \partial_qF_\lambda(q,b)
 =
 (\eta / c)D_\lambda(q,b)-\gamma / c.
\)
The fixed-point equation \eqref{eq: fpt-oracle-a} is precisely
\(
 \E[D_\lambda(Q,B^\star)]=\gamma / \eta.
\)
Consequently,
\begin{equation}
 \E\left[
 \partial_qF_\lambda(Q,B^\star)
 \right]=0.
 \label{eq:oracle-roadmap-zero-jacobian}
\end{equation}
Conditional Gaussian integration by parts, followed by
\eqref{eq: fpt-oracle-b}, similarly gives
\begin{equation}
 \E\left[
 F_\lambda(Q,B^\star)^2
 \right]=\tau_2.
 \label{eq:oracle-roadmap-nonlinear-variance}
\end{equation}
Equations
\eqref{eq:oracle-roadmap-linear-stationarity}, \eqref{eq:oracle-roadmap-zero-jacobian}, and
\eqref{eq:oracle-roadmap-nonlinear-variance}
are the oracle counterparts of the stationary identities in \textcite[Appendix~C, Prop.~C.3]{Li2026debiasing}.
The identity \eqref{eq:oracle-roadmap-zero-jacobian} makes the cross terms vanish in the conditional-Haar induction, while the variance identities make all
state-evolution marginals stationary.

Notice also that \(b_\star>0\), since \(\sigma^2>0\) and
\(\E[S]>0\). Hence \(\tau_1>0\), although \(\tau_2\) or
\(\kappa\) may equal zero.

\paragraph{Two-basis conditional-Haar state evolution.}

We analyze the empirical limits for the two sets of coordinates separately. If
\(s_{1,p},\ldots,s_{p,p}\) are the eigenvalues of \(X^T X\), then, conditionally on \((U,\Sigma)\), we may write
\begin{equation}
 \widetilde e_i
 =
 \frac\eta\gamma
 \frac{\sqrt{s_{i,p}}}{s_{i,p}+c}\,\zeta_i,
 \qquad
 \zeta_i\stackrel{\mathrm{iid}}{\sim}N(0,\sigma^2),
 \label{eq:oracle-roadmap-spectral-noise}
\end{equation}
with zero-padding in the null singular directions. Thus, in the spectral coordinates,
\(
 \bigl(
 (s_{i,p})_{i\leq p},
 \operatorname{diag}(\Lambda),
 \widetilde e
 \bigr)
\)
has the empirical \(W_2\) limit
\[
 (S,L,\widetilde E),
 \qquad
 \widetilde E
 =
 \frac\eta\gamma
 \frac{\sqrt S}{S+c}\,\Xi,
 \qquad
 \Xi\sim N(0,\sigma^2),\quad \Xi\perp S.
\]
The \(W_2\) convergence above gives the required second-moment convergence. The required fourth-moment convergence follows
separately from the explicit conditional-Gaussian representation
in \eqref{eq:oracle-roadmap-spectral-noise}, the boundedness of
\(
s\longmapsto \frac{\eta}{\gamma}\frac{\sqrt{s}}{s+c}
\), the spectral bound in \cref{ass1}, and the conditional Gaussian law of large numbers.

Separately, in the original coordinates, the independent Haar orthogonal transformation implies the joint empirical convergence
\begin{equation}
 (\beta^\star,e,q^0)
 \xrightarrow{W_2}_{\PP}
 (B^\star,E,P_0),
 \label{eq:oracle-roadmap-original-environment}
\end{equation}
where \(E\sim N(0,b_\star)\), \(P_0\sim N(0,\tau_1)\), and \(E,P_0,B^\star\) are mutually independent. This convergence
is used separately from the preceding convergence involving
\((S,L,\widetilde E)\).

We next make the Haar conditioning arguments precise. Define the following $\sigma$-algebras iteratively
\[
\cF_{p,0}
 :=\sigma(U,\Sigma,\beta^\star,\varepsilon,q^0),\qquad
\cF^s_{t,p}
 :=\cF_{p,0}\vee\sigma(e,\BS_t,\BY_t),
\qquad
\cF^y_{t,p}
 :=\cF^s_{t,p}\vee\sigma(s^{t+1}).
\]
The recursion \(x^1=F_\lambda(q^0,\beta^\star)\) and \(x^{t+1}=F_\lambda(y^t+e,\beta^\star)\) shows that \(x^{t+1}\) is \(\cF^s_{t,p}\)-measurable for every \(t\ge0\). Conditional on the accumulated history \(\cF^s_{t,p}\), \(V\)
is uniformly distributed over the orthogonal matrices satisfying
\(
V(\widetilde e,\mathbf S_t,\Lambda\mathbf S_t)
 =(e,\mathbf X_t,\mathbf Y_t);
\)
equivalently,
\begin{equation}\label{eq:conditional-law-1}
\mathrm{Law}(V\mid\mathcal F^s_{t,p})
 =
 \operatorname{Unif}\!\left\{
 O\in O(p):
 O(\widetilde e,\mathbf S_t,\Lambda\mathbf S_t)
 =(e,\mathbf X_t,\mathbf Y_t)
 \right\}.
\end{equation}
Next, for the step \(s^{t+1}=V^Tx^{t+1}\), orthogonality
of \(V\) gives the equivalent identity
\(
Vs^{t+1}=x^{t+1},
\)
and hence the corresponding linear constraint
\(
V(\widetilde e, \BS_{t+1},\Lambda\BS_t)
 =(e,\BX_{t+1},\BY_t).
\)
Accordingly,
\begin{equation}\label{eq:conditional-law-2}
\mathrm{Law}(V\mid\cF^y_{t,p})
 =
 \operatorname{Unif}\!\left\{
 O\in O(p):
 O(\widetilde e, \BS_{t+1},\Lambda\BS_t)
 =(e,\BX_{t+1},\BY_t)
 \right\}.
\end{equation}
Since \(\Lambda\) is \(\cF_{p,0}\)-measurable,
\(\Lambda s^{t+1}\) is \(\cF^y_{t,p}\)-measurable. Thus the first conditional law is used to determine the distribution of \(s^{t+1}=V^Tx^{t+1}\), and the second is used to determine the distribution of \(y^{t+1}=V\Lambda s^{t+1}\). These are the same conditioning relations used in
\textcite[Appendix~C, Eqs.~(89)--(90)]{Li2026debiasing}.
They retain the exact dependence \(e=V\widetilde e\) at every \(p\).

We now state the two empirical \(W_2\) convergence assertions.
Let \(P_0,E,B^\star\) have the independent laws specified above, and
set
\(
 X_1=F_\lambda(P_0,B^\star).
\)
Recursively, define a centered Gaussian process
\((Y_j)_{j\geq1}\), independent of \((B^\star,E,P_0)\), by
\begin{equation}
 \mathbb E[Y_iY_j]=\kappa\Delta_{ij},\qquad
\Delta_{ij}:=\mathbb E[X_iX_j],\qquad
X_{j+1}=F_\lambda(Y_j+E,B^\star),
\qquad i,j\geq1.
 \label{eq:oracle-roadmap-limiting-process}
\end{equation}
Separately, let \((\mathsf S_j)_{j\geq1}\) be a centered Gaussian process, independent of \((S,L,\widetilde E)\), with
\(\E[\mathsf S_i\mathsf S_j]=\Delta_{ij}, i,j\geq1\). Let $\nu_p$ denote the empirical law. We claim that for every fixed $t \geq 1$, the original coordinates satisfy
\begin{equation}\label{eq: W2-limit-original}
\nu_p(\beta^\star,e,q^0,\BX_{t+1},\BY_t) \xrightarrow{W_2}_{\PP} \mathrm{Law}(B^\star, E, P_0, X_1, \cdots, X_{t+1}, Y_1, \cdots, Y_t),
\end{equation}
and the spectral coordinates satisfy
\begin{equation}\label{eq: W2-limit-spectral}
\nu_p((s_{i,p})_{i\leq p},\operatorname{diag}(\Lambda),
  \widetilde e,\BS_t) \xrightarrow{W_2}_{\PP} \mathrm{Law}(S, L, \widetilde E, \mathsf S_1, \cdots, \mathsf S_t).
\end{equation}
We prove these two assertions by induction. The base case \(t=1\) follows from \eqref{eq:oracle-roadmap-linear-stationarity}--\eqref{eq:oracle-roadmap-original-environment}, using the
conditional laws \eqref{eq:conditional-law-1}--\eqref{eq:conditional-law-2}. Indeed,
\(x^1=F_\lambda(q^0,\beta^\star)\), and these identities give
\(
\frac1p e^T x^1\xrightarrow{\mathbb P}0\), and
\(
\frac1p\|x^1\|_2^2\xrightarrow{\mathbb P}\tau_2.
\)
The conditional-Haar Gaussian limit under the constraint
\(V\widetilde e=e\) therefore gives the spectral-coordinate assertion \eqref{eq: W2-limit-spectral} at \(t=1\), where the limiting variable \(\mathsf S_1\sim N(0,\tau_2)\) is independent of
\((S,L,\widetilde E)\), together with
\[
\frac1p(s^1)^T\Lambda s^1\xrightarrow{\mathbb P}0,
\qquad
\frac1p(s^1)^T\Lambda^2s^1
   \xrightarrow{\mathbb P}\kappa\tau_2,
\qquad
\frac1p\widetilde e^T(s^1,\Lambda s^1)
   \xrightarrow{\mathbb P}(0,0).
\]
The second conditional step, using
\(V(\widetilde e,s^1)=(e,x^1)\), gives the joint
original-coordinate limit in which \(y^1\) converges empirically to
\(Y_1\sim N(0,\kappa\tau_2)\), independent of
\((B^\star,E,P_0,X_1)\). Since
\(x^2=F_\lambda(y^1+e,\beta^\star)\), joint Lipschitz continuity then gives the original-coordinate assertion through
\((\BX_2,\BY_1)\). Thus both empirical \(W_2\) convergence statements hold at \(t=1\).

A separate induction on \(j\), using the limiting recursion
\eqref{eq:oracle-roadmap-limiting-process} and the stationarity identities \eqref{eq:oracle-roadmap-linear-stationarity} and \eqref{eq:oracle-roadmap-nonlinear-variance}, gives that
\begin{equation}
 \Delta_{jj}=\E[X_j^2]=\tau_2,
 \qquad
 Y_j+E\sim N(0,\tau_1)
 \qquad\text{for every }j\geq1.
 \label{eq:oracle-roadmap-stationary-marginals}
\end{equation}

For the induction step, fix \(t\geq1\) and assume that the two
empirical \(W_2\) convergence statements \eqref{eq: W2-limit-original} and \eqref{eq: W2-limit-spectral} above hold at time \(t\). Because \((\mathsf S_1,\ldots,\mathsf S_t)\) is centered and independent of \((S,L,\widetilde E)\), because
\(\E[L]=0\) and \(\E[L^2]=\kappa\) by \eqref{eq:oracle-roadmap-linear-stationarity}, and because
\(\|\Lambda\|_{\mathrm{op}}\) is uniformly bounded, the
spectral-coordinate assertion \eqref{eq: W2-limit-spectral} implies
\begin{equation}\label{eq:oracle-roadmap-weighted-overlaps}
\frac1p\mathbf S_t^\top\mathbf S_t\xrightarrow{\PP}\Delta_t,
\qquad
\frac1p\mathbf S_t^\top\Lambda\mathbf S_t\xrightarrow{\PP}0,
\qquad
\frac1p\mathbf S_t^\top\Lambda^2\mathbf S_t
\xrightarrow{\PP}\kappa\Delta_t,\qquad 
\frac1p\widetilde e^\top\mathbf S_t\xrightarrow{\PP}0,
\qquad
\frac1p\widetilde e^\top\Lambda\mathbf S_t\xrightarrow{\PP}0.
\end{equation}
Here \(\Delta_t=(\Delta_{ij})_{i,j\leq t}\), and all matrix and vector convergences are entrywise.

There are three possible singular-history cases, which we treat separately. First, suppose that \(\tau_2=0\). The stationary identity
\(
\E\!\left[
F_\lambda(\sqrt{\tau_1}Z,B^\star)^2
\right]=\tau_2
\)
implies that
\(F_\lambda(\sqrt{\tau_1}Z,B^\star)=0\) almost surely. Joint Lipschitz continuity and the everywhere positive Gaussian density
extend this identity to
\(F_\lambda(q,b)=0\) for every \(q\in\mathbb R\) and
\(b\in\operatorname{supp}(\pi)\). Although the finite-\(p\) coordinates \(\beta_i^\star\) need not belong
to \(\operatorname{supp}(\pi)\), empirical \(W_2\) convergence and
Lipschitz continuity of $F_\lambda(q, b)$ in \(b\) imply
\[
\frac1p\sum_{i=1}^p F_\lambda(q_i,\beta_i^\star)^2
\leq
L^2\frac1p\sum_{i=1}^p
\operatorname{dist}\!\left(\beta_i^\star,\operatorname{supp}(\pi)\right)^2
\longrightarrow 0,
\]
where $L$ is the Lipschitz constant of \(F_\lambda\). Consequently, orthogonality of \(V\) and the uniform bound on
\(\|\Lambda\|_{\mathrm{op}}\) give, for every fixed \(t\),
\(
\frac1p\|x^t\|_2^2+
\frac1p\|s^t\|_2^2+
\frac1p\|y^t\|_2^2
\xrightarrow{\PP}0.
\)

Second, suppose that
\(
\VAR\!\left(
D_\lambda(\sqrt{\tau_1}Z,B^\star)
\right)=0.
\)
The fixed-point mean identity then gives
\(D_\lambda(\sqrt{\tau_1}Z,B^\star)=\gamma / \eta\) almost surely, and therefore
\(\partial_qF_\lambda(\sqrt{\tau_1}Z,B^\star)=0\) almost surely.
Absolute continuity in \(q\), positivity of the Gaussian density, and
continuity in \(b\) imply that
\(
F_\lambda(q,b)=f_\lambda(b):=F_\lambda(0,b)\)
for \(q\in\mathbb R\) and
\(b\in\operatorname{supp}(\pi)\). Set
\(
\bar x:=f_\lambda(\beta^\star)\),
\(\bar s:=V^T\bar x\), and
\(\bar y:=V\Lambda\bar s.
\)
Then, for every fixed \(t\),
\[
\frac1p\|x^t-\bar x\|_2^2+
\frac1p\|s^t-\bar s\|_2^2+
\frac1p\|y^t-\bar y\|_2^2
\xrightarrow{\PP}0.
\]
Since
\(p^{-1}e^T\bar x\xrightarrow{\PP}0\) and
\(p^{-1}\|\bar x\|_2^2\xrightarrow{\PP}\tau_2\), conditionally on
\(V\widetilde e=e\), the vector \(\bar s=V^T\bar x\) has an
empirical \(N(0,\tau_2)\) limit independent of
\((S,L,\widetilde E)\) and satisfies the scalar-product limits in
\eqref{eq:oracle-roadmap-weighted-overlaps}. Conditioning further on
\(\bar s\) adds \(V\bar s=\bar x\), so the combined constraints are
\(
V(\widetilde e,\bar s)=(e,\bar x).
\)
Under these constraints, the conditional law of \(V\) gives an
empirical \(N(0,\kappa\tau_2)\) limit for
\(\bar y=V\Lambda\bar s\), independent of \((B^\star,E,P_0)\).

Finally, in the remaining singular case, suppose that
\(\tau_2>0\),
\(\VAR(D_\lambda(\sqrt{\tau_1}Z,B^\star))>0\), and \(\kappa=0\). In addition to
\(
\frac1p\operatorname{tr}(\Lambda^2)\xrightarrow{\PP}0,
\)
the spectral-coordinate convergence gives
\(
\frac1p\|\Lambda\widetilde e\|_2^2\xrightarrow{\PP}0.
\)
Conditioning on \(V\widetilde e=e\) fixes the action of \(V\) in the
direction of \(\widetilde e\), while its action on the perpendicular directions remains uniformly random. Consequently, for any vector \(z\) fixed under this conditioning with \(p^{-1}\|z\|_2^2=O_{\PP}(1)\),
\(
\frac1p\|\Lambda V^\top z\|_2^2\xrightarrow{\PP}0,
\) because
\(p^{-1}\|\Lambda\widetilde e\|_2^2\xrightarrow{\PP}0\) and
\(p^{-1}\operatorname{tr}(\Lambda^2)\xrightarrow{\PP}0\).
Applying this conclusion first to \(z=x^1\) and then to
\(z=z_\star:=F_\lambda(e,\beta^\star)\) gives
\(
\frac1p\|y^1\|_2^2\xrightarrow{\PP}0,
\) and
\(
\frac1p\|V\Lambda V^\top z_\star\|_2^2\xrightarrow{\PP}0.
\)
Joint Lipschitz continuity of \(F_\lambda\) and the uniform bound on
\(\|\Lambda\|_{\mathrm{op}}\) then give, for every fixed \(t\geq2\),
\(
\frac1p\|y^t\|_2^2\xrightarrow{\PP}0
\), \(
\frac1p\|x^t-z_\star\|_2^2\xrightarrow{\PP}0\), and \(
\frac1p\|s^t-V^T z_\star\|_2^2\xrightarrow{\PP}0.
\)
Applying the same conditional-law calculation jointly to \(x^1\) and
\(z_\star\) gives the required empirical Gaussian limits and weighted
scalar products. These conclusions establish the fixed-time state
evolution and the subsequent Cauchy property in all three singular
cases.

These arguments establish the two empirical \(W_2\) assertions at
time \(t+1\) in all three singular cases; they also give the later
Cauchy conclusion directly.

In the remaining, nonsingular case,
\(
\tau_2>0\), \(\kappa>0\), and \(
\operatorname{Var}\!\left(
D_\lambda(\sqrt{\tau_1}Z,B^\star)
\right)>0.
\)
Assume inductively that \(\Delta_t\succ 0\); the base case is
\(\Delta_1=(\tau_2)\succ0\). Define
\(
\delta_t
 :=(\Delta_{1,t+1},\ldots,\Delta_{t,t+1})^T,
\) and \(
v_t:=\tau_2-\delta_t^T\Delta_t^{-1}\delta_t .
\)
Put \(a_t:=\Delta_t^{-1}\delta_t\). The Schur complement \(v_t\)
is strictly positive. Indeed, if \(v_t=0\), then \(X_{t+1}\) is
almost surely a linear combination of \(X_1,\ldots,X_t\).
Conditional on
\((B^\star,E,P_0,Y_1,\ldots,Y_{t-1})\), however, \(Y_t\) contains
an independent Gaussian component of positive variance. The asserted
linear dependence would therefore force
\(q\mapsto F_\lambda(q,B^\star)\) to be constant almost everywhere.
Since
\(
\partial_qF_\lambda(q,b)
=
\frac{\eta}{c}
\left(D_\lambda(q,b)-\frac{\gamma}{\eta}\right),
\)
this contradicts
\(
\VAR\!\left(
D_\lambda(\sqrt{\tau_1}Z,B^\star)
\right)>0.
\)
Hence \(v_t>0\) and \(\Delta_{t+1}\succ0\).

We first determine \(s^{t+1}=V^T x^{t+1}\). The
original-coordinate induction hypothesis and conditional Gaussian
integration by parts give
\(
p^{-1}(e,\mathbf X_t,\mathbf Y_t)^T x^{t+1}
\xrightarrow{\PP}
(0,\delta_t^T,0_t^T)^T.
\)
Moreover, \eqref{eq:oracle-roadmap-weighted-overlaps} and
\(p^{-1}\|\widetilde e\|_2^2\xrightarrow{\PP}b_\star\) give
\(
p^{-1}
(\widetilde e,\mathbf S_t,\Lambda\mathbf S_t)^T
(\widetilde e,\mathbf S_t,\Lambda\mathbf S_t)
\xrightarrow{\PP}
\operatorname{diag}(b_\star,\Delta_t,\kappa\Delta_t).
\)
Thus, under the conditional law in \eqref{eq:conditional-law-1}, the regression coefficient
converges to
\((0,a_t^T,0_t^T)^T\), and the remaining component of
\(s^{t+1}\) has limiting variance \(v_t\). This extends the
spectral-coordinate \(W_2\) convergence from \(\mathbf S_t\) to
\(\mathbf S_{t+1}\). Conditional concentration of the quadratic forms
involving \(\Lambda\) and \(\Lambda^2\) also extends all the
scalar-product limits in \eqref{eq:oracle-roadmap-weighted-overlaps} from \(t\) to \(t+1\).

After this step, the new scalar-product limits give
\[
\frac1p
(\widetilde e,\mathbf S_{t+1},\Lambda\mathbf S_t)^T
\Lambda s^{t+1}
\xrightarrow{\PP}
(0,0_{t+1}^T,\kappa\delta_t^T)^T, \quad
\frac1p
(\widetilde e,\mathbf S_{t+1},\Lambda\mathbf S_t)^T
(\widetilde e,\mathbf S_{t+1},\Lambda\mathbf S_t)
\xrightarrow{\PP}
\operatorname{diag}(b_\star,\Delta_{t+1},\kappa\Delta_t).
\]
Therefore, under the conditional law in \eqref{eq:conditional-law-2}, the regression
coefficient converges to
\((0,0_{t+1}^T,a_t^T)^T\), and the remaining component of
\(y^{t+1}=V\Lambda s^{t+1}\) has limiting variance \(\kappa v_t\).
This extends the original-coordinate \(W_2\) convergence to
\(y^{t+1}\). Joint Lipschitz continuity of \(F_\lambda\) then gives
the limit of
\(
x^{t+2}=F_\lambda(y^{t+1}+e,\beta^\star),
\)
thereby proving both induction hypotheses at time \(t+1\) while
retaining the exact relation \(V\widetilde e=e\). Thus, \eqref{eq: W2-limit-original} and \eqref{eq: W2-limit-spectral} hold with \(t\) replaced by \(t+1\), completing
the induction in the nondegenerate case.

The two limiting Gram matrices $\operatorname{diag}(b_\star,\Delta_t,\kappa\Delta_t)$ and $\operatorname{diag}(b_\star,\Delta_{t+1},\kappa\Delta_t)$ displayed above are positive definite.
Consequently, their finite-dimensional counterparts are invertible
with probability tending to one; on the complementary event, a Moore--Penrose inverse may be used without changing any
convergence-in-probability conclusion.

Thus, the preceding two-basis induction establishes the fixed-time conclusion corresponding to \textcite[Corollary~C.7]{Li2026debiasing}. Translating back to the VAMP variables and aligning iteration indices gives, for every fixed \(t\ge1\),
\(
(\widehat\beta_{1,t},r_{1,t-1},\beta^\star)
 \xrightarrow{W_2}_{\PP}
 \left(
 T_{B^\star}(B^\star+\sqrt{\tau_1}Z),
 B^\star+\sqrt{\tau_1}Z,
 B^\star
 \right)
\).
Consequently,
\begin{equation}
 \frac1p
 \|\widehat\beta_{1,t}-\beta^\star\|_2^2
 \xrightarrow{\PP}
 \E\left[
 \left(
 T_{B^\star}(B^\star+\sqrt{\tau_1}Z)-B^\star
 \right)^2
 \right]
 =:r_\lambda.
 \label{eq:oracle-fixed-time-risk}
\end{equation}

\paragraph{Contraction of the overlap recursion.}

It remains to control correlations between different iterations.
Write $\delta_{st}:=\E[X_sX_t]$.
By \eqref{eq:oracle-roadmap-stationary-marginals},
\(\delta_{tt}=\tau_2\), and Cauchy--Schwarz gives
\(|\delta_{st}|\leq\tau_2\).

For \(\delta\in[0,\tau_2]\), let
\((Q_\delta',Q_\delta'')\) be a centered Gaussian pair, independent
of \(B^\star\), with
\(
 \operatorname{Var}(Q_\delta')
 =
 \operatorname{Var}(Q_\delta'')
 =
 \tau_1
 \) and
\(
 \operatorname{Cov}(Q_\delta',Q_\delta'')
 =
 b_\star+\kappa\delta,
\)
and define $g(\delta)
 :=
 \E\left[
 F_\lambda(Q_\delta',B^\star)
 F_\lambda(Q_\delta'',B^\star)
 \right]$.
The Gaussian recursion
\eqref{eq:oracle-roadmap-limiting-process} gives
\(
 \delta_{s+1,t+1}=g(\delta_{st}).
\)
Moreover, if we define
\(
 m(b)
 :=
 \E_Q[F_\lambda(Q,b)]\) for \( Q\sim N(0,\tau_1),
\)
then conditional independence gives, for \(t\geq2\),
\(
 \delta_{1t}=\E[m(B^\star)^2]\geq0.
\)

For each fixed \(b\), expand the square-integrable function
\(z\mapsto F_\lambda(\sqrt{\tau_1}z,b)\) in the 
Hermite basis $\{H_k\}_{k \in \N}$ \parencite[Sections~1.4, Proposition~1.4.2]{NourdinPeccati2012}:
\(
 F_\lambda(\sqrt{\tau_1}z,b)
 =
 \sum_{k=0}^{\infty}a_k(b)H_k(z)\), and \(
 a_k(b)
 =
 (1/k!)
 \E\left[
 F_\lambda(\sqrt{\tau_1}Z,b)H_k(Z)
 \right].
\)
Denoting \(Z'=Q_\delta' / \sqrt{\tau_1}\) and \(
Z''= Q_\delta''/ \sqrt{\tau_1}\), the standardized Gaussian pair
\((Z', Z'')\)
has correlation
\(
 r(\delta)
 =
 \frac{b_\star+\kappa\delta}{\tau_1}.
\)
The Hermite identity $\E[H_k(Z')H_\ell(Z'')]
 =
 \mathbf 1_{\{k=\ell\}}\,k!\,r(\delta)^k$ in \textcite[Proposition~2.2.1, Theorem 2.8.2]{NourdinPeccati2012}
therefore gives $g(\delta)
 =
 \sum_{k=0}^{\infty}
 k!\,\E[a_k(B^\star)^2]\,r(\delta)^k$.
All coefficients in this power series are nonnegative, and
\(r(\delta)\) is nonnegative, affine, and nondecreasing on
\([0,\tau_2]\). Hence \(g\) is nonnegative, nondecreasing, and convex
on this interval. Finally,
\(\tau_1=b_\star+\kappa\tau_2\) implies \(r(\tau_2)=1\), so the
stationarity identity gives
\(
 g(\tau_2)
 =
 \E\left[
 F_\lambda(\sqrt{\tau_1}Z,B^\star)^2
 \right]
 =
 \tau_2.
\)
Consequently, \(0\leq g(\delta)\leq\tau_2\) for every
\(\delta\in[0,\tau_2]\). It follows inductively that
\( 0\leq\delta_{st}\leq\tau_2\) for all \(s,t\geq1\).

Price's identity \parencite[Theorem~1 and Corollary~1]{Voigtlaender2021Price} for the one-dimensional weak derivatives gives
\(
 g'(\delta)
 =
 \kappa
 \E\left[
 \partial_qF_\lambda(Q_\delta',B^\star)
 \partial_qF_\lambda(Q_\delta'',B^\star)
 \right]\) for
\(0\leq\delta<\tau_2.
\)
Let
\(
 a=\gamma / \eta
 =
 \E[D_\lambda(\sqrt{\tau_1}Z,B^\star)].
\)
Taking \(\delta\uparrow\tau_2\) yields the left limit
\begin{equation}
 g_-'(\tau_2) := \lim_{\delta \uparrow \tau_2} g'(\delta)
 =
 \kappa
 \E\left[
 \partial_qF_\lambda(\sqrt{\tau_1}Z,B^\star)^2
 \right] =
 \kappa
 \left(\frac\eta c\right)^2
 \left\{
 \E[D_\lambda(\sqrt{\tau_1}Z,B^\star)^2]-a^2
 \right\}.
 \label{eq:oracle-roadmap-overlap-derivative}
\end{equation}
Since \(0\leq D_\lambda\leq s_\lambda<1\),
\(
 \E[D_\lambda^2]
 \leq
 s_\lambda\E[D_\lambda]
 =
 s_\lambda a,
\)
and therefore
\(
 \E[D_\lambda^2]-a^2
 \leq s_\lambda a-a^2
 <a-a^2.
\)
On the other hand, if
\(
 J=\E[(S+c)^{-2}],
\)
then
\(
 \kappa
 =
 c^2/ \gamma^2(\eta^2J-1),
\)
and hence $\kappa
 \left(\frac\eta c\right)^2(a-a^2)
 =
 \frac c\gamma(\eta^2J-1)$.
Finally, $\frac1\eta-cJ
 =
 \E\left[\frac{S}{(S+c)^2}\right]>0$,
which implies that
\(
(c/ \gamma)(\eta^2J-1)<1.
\)
Combining these bounds gives
\(
 g_-'(\tau_2)<1.
\)

Because \(g\) is convex, \(g'\) is nondecreasing, and therefore $\rho_\lambda
 :=
 \sup_{0\leq\delta<\tau_2}g'(\delta)
 =
 g_-'(\tau_2)
 <1$.
Using \(g(\tau_2)=\tau_2\) gives $0\leq\tau_2-g(\delta)
 \leq
 \rho_\lambda(\tau_2-\delta)$ for $0\leq\delta\leq\tau_2$.
Iterating the covariance recursion consequently yields
\begin{equation}
 0\leq\tau_2-\delta_{st}
 \leq
 \rho_\lambda^{\min(s,t)-1}\tau_2.
 \label{eq:oracle-roadmap-geometric-overlap}
\end{equation}
This is the oracle counterpart of the covariance argument in
\textcite[Appendix~C, Prop.~C.8]{Li2026debiasing}.

For every fixed \(s,t\), the two-basis state evolution gives $\frac1p\|r_{2,t}-r_{2,s}\|_2^2
 =
 \frac1p\|x^t-x^s\|_2^2
 \xrightarrow{\PP}
 2(\tau_2-\delta_{st})$.
Together with
\eqref{eq:oracle-roadmap-geometric-overlap}, this proves that, for
every \(\epsilon>0\),
\begin{equation}
 \lim_{T\to\infty}
 \sup_{s,t\geq T}
 \limsup_{p\to\infty}
 \PP\left(
 \frac1p\|r_{2,t}-r_{2,s}\|_2^2>\epsilon
 \right)
 =0.
 \label{eq:oracle-r2-cauchy}
\end{equation}
The Lipschitz nonlinear update and the uniformly bounded linear
resolvent give the analogous Cauchy conclusions for
\(r_{1,t}\), \(\widehat\beta_{1,t}\), and
\(\widehat\beta_{2,t}\), as in
\textcite[Appendix~C, Cor.~C.9]{Li2026debiasing}.

\paragraph{Convergence of the oracle optimizer.}

Let
\[
 \mathcal L_{\lambda,p}(\beta)
 =
 \frac12\|y-X\beta\|_2^2
 +\sum_{i=1}^ph(\beta_i)
 +\frac\lambda2\|\beta-\beta^\star\|_2^2,
\]
and let \(\widehat\beta_{\cvx}^{h,\lambda}\) denote its unique
minimizer. For \(t\geq2\), define $u_t
 :=
 X^T(X\widehat\beta_{1,t}-y)
 +\gamma(r_{1,t-1}-\widehat\beta_{1,t})$.
The proximal optimality condition gives $u_t
 \in
 \partial\mathcal L_{\lambda,p}(\widehat\beta_{1,t})$.
Combining the equations from the two VAMP stages gives the exact identity
\begin{equation}
 u_t
 =
 \frac c\eta
 (X^T X-\gamma I)
 (r_{2,t}-r_{2,t-1}).
 \label{eq:oracle-subgradient-identity}
\end{equation}
Consequently, $\frac1p\|u_t\|_2^2
 \leq
 \left(\frac c\eta\right)^2
 \bigl(\|X\|_{\mathrm{op}}^2+\gamma\bigr)^2
 \frac1p\|r_{2,t}-r_{2,t-1}\|_2^2$.
The operator-norm assumption and
\eqref{eq:oracle-r2-cauchy} therefore imply, for every \(\epsilon>0\), $\lim_{t\to\infty}
 \limsup_{p\to\infty}
 \PP\left(
 p^{-1}\|u_t\|_2^2>\epsilon
 \right)
 =0$.
Since \(\mathcal L_{\lambda,p}\) is
\(\lambda\)-strongly convex, strong monotonicity of its
subdifferential gives $\|\widehat\beta_{1,t}
 -\widehat\beta_{\cvx}^{h,\lambda}\|_2
 \leq
 \lambda^{-1}\|u_t\|_2$.
Hence, for every \(\epsilon>0\),
\begin{equation}
 \lim_{t\to\infty}
 \limsup_{p\to\infty}
 \PP\left(
 \frac1p
 \|\widehat\beta_{1,t}
 -\widehat\beta_{\cvx}^{h,\lambda}\|_2^2
 >\epsilon
 \right)
 =0.
 \label{eq:oracle-roadmap-optimizer-tracking}
\end{equation}
This is the same optimizer convergence result as
\textcite[Appendix~C, Prop.~C.10]{Li2026debiasing};
here the required strong convexity is supplied automatically by the
oracle perturbation.

It remains only to pass from the fixed-time iterate risk to the optimizer risk without interchanging
the \(p\)- and \(t\)-limits. The reverse triangle inequality gives
\[
 \left|
 \frac{
 \|\widehat\beta_{\cvx}^{h,\lambda}-\beta^\star\|_2
 }{\sqrt p}
 -
 \frac{
 \|\widehat\beta_{1,t}-\beta^\star\|_2
 }{\sqrt p}
 \right|
 \leq
 \frac{
 \|\widehat\beta_{1,t}
 -\widehat\beta_{\cvx}^{h,\lambda}\|_2
 }{\sqrt p}.
\]
For each fixed \(t\), the second normalized norm converges to
\(\sqrt{r_\lambda}\) by
\eqref{eq:oracle-fixed-time-risk}. Taking
\(\limsup_{p\to\infty}\) first and then \(t\to\infty\), and using
\eqref{eq:oracle-roadmap-optimizer-tracking}, proves
\begin{equation}
 \frac1p
 \|\widehat\beta_{\cvx}^{h,\lambda}-\beta^\star\|_2^2
 \xrightarrow{\PP}
 r_\lambda = \E\left[
 \left(
 T_{B^\star}(B^\star+\sqrt{\tau_1}Z)-B^\star
 \right)^2
 \right].
 \label{eq:oracle-roadmap-optimizer-risk}
\end{equation}

Finally, \eqref{eq:oracle-prox-proof} and
\eqref{eq:alpha_and_nu} give
\begin{equation}
 T_{B^\star}(B^\star+\sqrt{\tau_1}Z) =
 \prox_{h/(\gamma+\lambda)}
 \left(
 B^\star+
 \frac{\gamma}{\gamma+\lambda}\sqrt{\tau_1}Z
 \right) =
 \prox_{\alpha_\lambda h}
 (B^\star+\sqrt{\nu_\lambda}Z).
 \label{eq: equivalent-representation-proximal}
\end{equation}
Substituting \eqref{eq: equivalent-representation-proximal} into \eqref{eq:oracle-roadmap-optimizer-risk} gives
\[
 \frac1p
 \|\widehat\beta_{\cvx}^{h,\lambda}-\beta^\star\|_2^2
 \xrightarrow{\PP}
 \E\left[
 \left\{
 \prox_{\alpha_\lambda h}
 (B^\star+\sqrt{\nu_\lambda}Z)
 -B^\star
 \right\}^2
 \right].
\]
This completes the proof of Theorem~\ref{thm:oracle-mse}.

\subsection{Proof of Lemma~\ref{thm:oracle-fixed-point}}\label{sec:oracle-fp-existence}

Recall the fixed-point equations \eqref{eq: fpt-oracle-a}--\eqref{eq: fpt-oracle-d} for the VAMP state evolution associated with the oracle penalty $h^{(\lambda)}$. These are given by
\begin{align}
\frac{\bgs}{\eta_\star} &= \frac{\bgs}{\lambda + \bgs} \E\left[\prox_{{\frac{h}{\lambda + \bgs}}}'\left(B^\star + \frac{\bgs }{\lambda + \bgs} \sqrt{\tau_{1\star}}Z\right)\right]\label{eq: fpt-oracle-a-copy}\\
\tau_{2\star} &= \left(\frac{\eta_\star}{\eta_\star - \bgs}\right)^2 \left(\E\left[\left(\prox_{{\frac{h}{\lambda + \bgs}}}\left(B^\star + \frac{\bgs }{\lambda + \bgs} \sqrt{\tau_{1\star}}Z\right) - B^\star\right)^2\right] - \left(\frac{\bgs}{\eta_\star}\right)^2 \tau_{1\star}\right)\label{eq: fpt-oracle-b-copy}\\
\frac{\bgs}{\eta_\star} &= 1 - \frac{1}{\eta_\star} G^{-1}\left(\frac{1}{\eta_\star}\right)\label{eq: fpt-oracle-c-copy}\\
\tau_{1\star} &= \left(\frac{\eta_\star}{\bgs}\right)^2 \left(\E\left[\frac{\sigma^2 S + (\eta_\star - \bgs)^2\tau_{2\star}}{(S +\eta_\star - \bgs)^2}\right] - \left(\frac{\eta_\star - \bgs}{\eta_\star}\right)^2 \tau_{2\star}\right)\label{eq: fpt-oracle-d-copy}.
\end{align}

To complete the proof of the lemma, we show that for any $\lambda > 0$, there exists a solution for the oracle-perturbed fixed-point equations \eqref{eq: fpt-oracle-a-copy}--\eqref{eq: fpt-oracle-d-copy}, denoted
$(\bar\gamma_{1\star},\eta_\star,\tau_{1\star},\tau_{2\star}),$ with $0<\bar\gamma_{1\star}<\eta_\star,
    $ and $\tau_{1\star},\tau_{2\star}\ge0.$
Our proof uses techniques similar to those of \textcite[Prop.\ 2.11]{Li2026debiasing} and uses a two-dimensional intermediate value argument. 

We mention here that, throughout our proof, we will make use of technical lemmas related to spectral properties of $X$. We have collected these lemmas and their proofs in Section~\ref{sec:technical_lemmas}.

Our first step is to reduce the four fixed-point equations in \eqref{eq: fpt-oracle-a-copy}--\eqref{eq: fpt-oracle-d-copy} to two scalar equations.

\subsubsection{Reduction from four to two fixed-point equations}

To condense the four fixed-point equations \eqref{eq: fpt-oracle-a-copy}--\eqref{eq: fpt-oracle-d-copy} into two, in Step 1 we will reparameterize the variables $(\bar\gamma_{1\star},\eta_\star,\tau_{1\star},\tau_{2\star})$, then eliminate two of them, producing two equations in two unknowns $(\rho, \xi)$ of the form:
\begin{equation}
\begin{split}
\label{eq:new_FPs}
F_1(\rho, \xi) = 0 \qquad \text{ and } \qquad F_2(\rho, \xi) = 0.
\end{split}
\end{equation}
In Step 2, we show that there is a solution to the two equations above.

We proceed with the proof here and mention that some technical lemmas, along with their proofs, are given in Section~\ref{sec:technical_lemmas}.

\textbf{Step 1.}
We begin with the reparameterization.
First, we will simplify the equations \eqref{eq: fpt-oracle-a-copy}--\eqref{eq: fpt-oracle-d-copy}. Set
\begin{equation}
\label{eq: change-variable-rho}
\theta := {1}/{\bgs} \qquad \text{ and } \qquad \rho := {1}/{\eta_\star}.
\end{equation}
Then the required condition $\eta_\star > \bgs > 0$ is equivalent to $0 < \rho < \theta.$

We next introduce the auxiliary parameter $\xi > 0$, and set
$\tau_{1\star} = {\theta(\rho)^2}/{\xi^2}.$
In the above, small $\xi$ corresponds to large scalar noise $\tau_{1\star}$, while large $\xi$ corresponds to small scalar noise. This is the second reparameterization.

Next, to eliminate variables, we use the function $G$ in \eqref{eq:stieltjes-transform} and define the spectral $R$-transform as
\begin{equation}
\label{eq:R}
 R(\rho) := G^{-1}(\rho) - \frac{1}{\rho},
\end{equation}
and the linear fixed-point equation in \eqref{eq: fpt-oracle-c-copy} in the new variables in \eqref{eq: change-variable-rho} then becomes
\begin{equation}
\label{eq:Ginv}
\frac{\rho}{\theta} = 1- \rho G^{-1}(\rho) = - \rho R(\rho).
\end{equation}
Canceling the (strictly positive) term $\rho$, we have 
$\theta(\rho) = -{1}/{R(\rho)}.$
Thus, the spectral fixed-point equation determines $\theta$, and hence $\bgs$, as a function of $\rho$. This removes one unknown.

We mention additionally that \eqref{eq: fpt-oracle-b-copy} is explicit in $\tau_{2\star}$, in the sense that it is fully specified once $\rho$ and $\xi$ are specified.

Now, we consider the final reduction. The remaining fixed-point equations are \eqref{eq: fpt-oracle-a-copy} and \eqref{eq: fpt-oracle-d-copy}, which we will represent as functions of $(\rho, \xi)$.

We will also rewrite the original denoiser function $\prox_{\frac{h}{\lambda + \bgs }}(\cdot)$ using the following reparameterization: for fixed $\lambda > 0$, define
\begin{equation}\label{eq: definition-w-rho}
w(\rho) = \frac{\theta(\rho)}{1 + \lambda \theta(\rho)} =
    \frac1{\theta(\rho)^{-1}+\lambda}.
\end{equation}
Equivalently, $w(\rho)=\frac1{\bgs + \lambda}$ if $\bgs = 1/\theta(\rho)$.
Since $\lambda>0$, we have $0<w(\rho)<\frac1\lambda.$

Let $\xi > 0$ and let $Z\sim N(0,1)$ be independent of $B^\star$. Then, define
$
U_{\rho, \xi} := B^\star + \frac{w(\rho)}{\xi} Z,$ and $Q_\rho(u) := \prox_{w(\rho)h}(u).$
We define the average derivative and scalar risk as 
\begin{equation}
\label{eq:Aandr}
A(\rho, \xi) = \E\left[Q_{\rho}'(U_{\rho, \xi})\right] \qquad \text{
and } \qquad 
r(\rho, \xi) := \E\left[ \left(Q_{\rho}(U_{\rho, \xi}) - B^\star\right)^2\right].
\end{equation}
These are the derivative and MSE terms appearing in the oracle-perturbed VAMP state evolution \eqref{eq: fpt-oracle-a-copy} and \eqref{eq: fpt-oracle-b-copy}. Since $Q_\rho$ is a one-dimensional proximal map, it is nondecreasing and
$1$-Lipschitz, so $ 0\le Q_\rho'\le1$ almost everywhere.

Using Gaussian integration by parts,
\begin{equation}
\label{eq:A-Stein-no-gap}
    A(\rho,\xi)
    =
    \frac{\xi}{w(\rho)}
    \mathbb E
    \left[
        Q_\rho(U_{\rho,\xi})Z
    \right]
    =
    \frac{\xi}{w(\rho)}
    \mathbb E
    \left[
        \left(
            Q_\rho(U_{\rho,\xi})-B^\star
        \right)Z
    \right].
\end{equation}
This representation of $A$ will be useful for proving its continuity because it avoids having to establish pointwise continuity of $Q'_{\rho}$.

Setting $\bgs = 1/{\theta(\rho)}$
    and $\eta_{\star} =1/\rho$, and using \eqref{eq: equivalent-representation-proximal}, the oracle denoiser evaluated at
\(B^\star+\sqrt{\tau_{1\star}}Z\) is
\begin{equation}
%
    \prox_{\frac{\phi_{\lambda,B^\star}}{\bgs}}
    \left(B^\star+\sqrt{\tau_{1\star}}Z\right)
    =
    \prox_{\frac{h}{\bgs+\lambda}}
    \Big(
        B^\star+
        \frac{\bgs}{\bgs+\lambda}
        \sqrt{\tau_{1\star}}Z
    \Big) =
    Q_\rho
    \Big(
        B^\star+\frac{w(\rho)}{\xi}Z
    \Big).
\end{equation}

In our change of variables, \eqref{eq: fpt-oracle-a-copy} is therefore equivalent to $\frac{\rho}{\theta(\rho)}
    =
    \frac1{1+\lambda\theta(\rho)}
    A(\rho,\xi)$,
or
\begin{equation}
\label{eq:F1}
    F_1(\rho,\xi)
    :=
    \rho-w(\rho)A(\rho,\xi)
    =
    0.
\end{equation}

Now, we turn our attention to \eqref{eq: fpt-oracle-d-copy}. First, notice that the denoising-variance equation \eqref{eq: fpt-oracle-b-copy} can be written in this notation as
\begin{equation}
\label{eq:tau2_new}
    \tau_{2\star} = \frac{\theta(\rho)^2}
         {(\theta(\rho)-\rho)^2} \left(
        r(\rho,\xi)-\frac{\rho^2}{\xi^2}\right),
\end{equation}
for $r(\rho,\xi)$ defined in \eqref{eq:Aandr}.
We next eliminate $\tau_{2\star}$ from the linear-variance equation \eqref{eq: fpt-oracle-d-copy}. Recall the definitions of $G$ in \eqref{eq:stieltjes-transform} and $\rho_{\max}$ in \eqref{eq:rho_max}. For $\rho\in(0,\rho_{\max})$, let
$c=c(\rho)=G^{-1}(\rho).$ We also define $c(\rho_{\max})$ as its continuous limit. Since $ c
    =
    \frac1\rho-\frac1{\theta(\rho)}$ by \eqref{eq:Ginv},
we have $ c = \eta_\star - \bgs.$ For every $\rho\in(0,\rho_{\max}]$, define
\begin{equation}
\label{eq:JandM}
    J(\rho)
    := \mathbb E\left[\frac1{(S+c(\rho))^2}\right]
    \qquad \text{ and } \qquad 
    M(\rho) := \mathbb E\left[\frac{S}{(S+c(\rho))^2}\right].
\end{equation}

The linear-variance equation \eqref{eq: fpt-oracle-d-copy} is therefore equivalent to $\tau_{1\star} = \left(\frac{\eta_\star}{\gamma}\right)^2
    \left( \mathbb E
        \left[ \frac{ \sigma^2 S+c^2\tau_{2\star}
            }{ (S+c)^2 } \right] - (c\rho)^2\tau_{2\star}\right)$.
Multiplying by $(\rho/\theta)^2$ yields $\frac{\rho^2}{\xi^2}
    =
    \sigma^2M(\rho) + c^2\bigl(J(\rho)-\rho^2\bigr)\tau_{2\star}$.
Because $c = \frac{\theta-\rho}{\rho\theta}$,
substitution of this and \eqref{eq:tau2_new} into the above gives $\frac{\rho^2}{\xi^2} = \sigma^2M(\rho) + \frac{J(\rho)-\rho^2}{\rho^2}
    \left( r(\rho,\xi)-\frac{\rho^2}{\xi^2}
    \right)$.
After rearranging and using result (ii) of Lemma~\ref{lem:spectral-identities}, we find $\frac1{\xi^2}
    =
    R'(\rho)r(\rho,\xi)
    +
    \sigma^2\frac{M(\rho)}{J(\rho)}$.
Using result (iv) of Lemma~\ref{lem:spectral-identities}, this becomes
\begin{equation}
\label{eq:F2}
    F_2(\rho,\xi) := \xi^2R'(\rho)r(\rho,\xi)
    + \sigma^2\xi^2K(\rho)-1= 0.
\end{equation}
Equivalently, we have that $F_2(\rho,\xi)
    =
    \xi^2R'(\rho)r(\rho,\xi) +
    \frac{\sigma^2\xi^2}{\theta(\rho)}
    \left( 1+\rho\frac{R'(\rho)}{R(\rho)}\right)
    - 1$.

Thus, it suffices to find a simultaneous zero in $(\rho, \xi)$ for $F_1$ and $F_2$ in \eqref{eq:F1} and \eqref{eq:F2}. The rest of the proof is dedicated to establishing this.

\textbf{Step 2.} Fix $\lambda>0$. The remaining task is to show that the two reduced fixed-point equations
$F_1(\rho,\xi)=0$ and $F_2(\rho,\xi)=0$ have a simultaneous solution. We do
this by a two-dimensional intermediate-value argument. 

First, we will choose a rectangle
$[\rho_-,\rho_+]\times[\xi_-,\xi_+]$ on which $F_1$ has opposite signs on the
left and right edges. Thus, $F_1$ must cross zero within the rectangle.

We will first select the right edge of the rectangle, $\rho_+$. Recall from \eqref{eq:rho_max} that $\rho_{\max}
    = \mathbb E\left[1/S\right] \in(0,\infty].$
We consider the cases $\rho_{\max}=\infty$ and $\rho_{\max} \neq \infty$ separately, but in both cases, we shall choose a finite number $0<\rho_+<\rho_{\max}$
such that $\rho_+>w(\rho_+).$

If $\rho_{\max}=\infty$, choose any finite $\rho_+>1/\lambda.$
Then $0<\rho_+<\rho_{\max}$, and since $w(\rho)<1/\lambda$, we have $ \rho_+>w(\rho_+).$

If $\rho_{\max}<\infty$, then as $\rho\uparrow\rho_{\max}$, or, equivalently, as
$c(\rho)\downarrow0$, we have
$\theta(\rho)\to\rho_{\max},$ and therefore $w(\rho)
    =
    \frac{\theta(\rho)}{1+\lambda\theta(\rho)}
    \to
    \frac{\rho_{\max}}{1+\lambda\rho_{\max}}
    <
    \rho_{\max}$.
Hence, we may choose $\rho_+<\rho_{\max}$
sufficiently close to $\rho_{\max}$ that $ \rho_+>w(\rho_+).$

Next, we choose $\xi_+$ large enough so that $F_2>0$ on the top edge of the rectangle, and
$\xi_-$ small enough so that $F_2<0$ on the bottom edge wherever $F_1=0$.

On $[0,\rho_+]$, for the function $K$ defined in \eqref{eq:Kdef}, let
\begin{equation}
\label{eq:Kbounds}
    k_-:=\min_{\rho\in[0,\rho_+]}K(\rho)>0,
    \qquad \text{ and } \qquad 
    k_+:=\max_{\rho\in[0,\rho_+]}K(\rho)<\infty.
\end{equation}
Also define
\begin{equation}
\label{eq:qlambda}
    q_\lambda
    :=
    \max_{\rho\in[0,\rho_+]}
    \frac{w(\rho)}{\theta(\rho)}
    =
    \max_{\rho\in[0,\rho_+]}
    \frac1{1+\lambda\theta(\rho)}
    <1.
\end{equation}

Choose $\xi_+>0$ large enough that $\sigma^2 k_-\xi_+^2>1.$
Then, using the definitions of $F_2$ from \eqref{eq:F2} and $k_-$ from \eqref{eq:Kbounds}, as well as the fact that $R'\ge0 $ by result (ii) of Lemma~\ref{lem:spectral-identities} and $r\ge0$ by definition \eqref{eq:Aandr}, we have that $F_2(\rho,\xi_+) = \xi_+^2R'(\rho)r(\rho,\xi_+)
    + \sigma^2\xi_+^2K(\rho) -
    1 \ge \sigma^2\xi_+^2k_- -1 >0$ for every $\rho\in[0,\rho_+]$.
Next, let $C_R:=\max_{\rho\in[0,\rho_+]}R'(\rho)<\infty$. Choose $\xi_->0$ sufficiently small that
\begin{equation}
\label{eq:xi-minus-choice-no-gap}
    C_R\varepsilon_h(\xi_-)
    +
    \sigma^2k_+\xi_-^2
    <
    1-q_\lambda
\end{equation}
for the function $\varepsilon_h$ from Lemma~\ref{lem:uniform-risk-estimate}; the fact that such an $\xi_-$ exists follows from Lemma~\ref{lem:uniform-risk-estimate}, in which we show that $ \varepsilon_h(\xi)\to0$ as $\xi\downarrow0.$
If $F_1(\rho,\xi_-)=0$, then Lemma~\ref{lem:uniform-risk-estimate} further tells us that
\(
    \xi_-^2r(\rho,\xi_-)
    \le
    w(\rho)\rho+\varepsilon_h(\xi_-).
\)
Therefore, we have
\[
\begin{aligned}
    F_2(\rho,\xi_-)
    =
    \xi_-^2R'(\rho)r(\rho,\xi_-)
    +
    \sigma^2\xi_-^2K(\rho)
    -
    1 \le
    R'(\rho)w(\rho)\rho
    + R'(\rho)\varepsilon_h(\xi_-)
    + \sigma^2\xi_-^2k_+
    - 1.
\end{aligned}
\]
Next, using result (iii) of Lemma~\ref{lem:spectral-identities} and the definition of $q_{\lambda}$ in \eqref{eq:qlambda}, we have $R'(\rho)w(\rho)\rho
    =
    \frac{w(\rho)}{\theta(\rho)}
    \left(
        -\rho\frac{R'(\rho)}{R(\rho)}
    \right)
    \le
    \frac{w(\rho)}{\theta(\rho)}
    \le
    q_\lambda$,
and therefore $F_2(\rho,\xi_-)
    \le q_\lambda
    +
    C_R\varepsilon_h(\xi_-)
    +
    \sigma^2k_+\xi_-^2-1 <0$,
where the final inequality follows from \eqref{eq:xi-minus-choice-no-gap}. Thus,
\begin{equation}
\label{eq:F2-bottom-on-zero-set}
    F_2(\rho,\xi_-)<0
    \qquad
    \text{whenever }F_1(\rho,\xi_-)=0.
\end{equation}

We will now prove that $F_1(\rho_-,\xi)<0$ and $F_1(\rho_+,\xi)>0$
on the vertical edges of $\mathcal R$. To do so, we now choose the left edge of the rectangle. By
Lemma~\ref{lem:F-continuity}, we have $A(0,\xi)>0$ for every $\xi\in[\xi_-,\xi_+].$
Compactness gives us that $a_0
    :=
    \min_{\xi\in[\xi_-,\xi_+]}A(0,\xi)
    >0.$
Since $w(0)>0$, continuity allows us to choose $0<\rho_-<\rho_+$
small enough that $ w(\rho_-)A(\rho_-,\xi)>\rho_-$ for every $\xi\in[\xi_-,\xi_+].$
Equivalently, using the definition of $F_1$ from \eqref{eq:F1}, we have $F_1(\rho_-,\xi) = \rho_--w(\rho_-)A(\rho_-,\xi) <0$ for every $\xi\in[\xi_-,\xi_+]$.
At the right edge, since $A\le1$ and by the way we selected $\rho_+$, for every $\xi\in[\xi_-,\xi_+]$, we have $F_1(\rho_+,\xi)
    =
    \rho_+-w(\rho_+)A(\rho_+,\xi) \ge
    \rho_+-w(\rho_+)
    >0.$
    
We now focus on the rectangle
$ \mathcal R
    := [\rho_-,\rho_+]\times[\xi_-,\xi_+].$
We have shown that $F_1(\rho_-,\xi)<0,$ and $F_1(\rho_+,\xi)>0$ 
on the vertical edges of $\mathcal R$. Moreover, we have shown $F_2(\rho,\xi_-)<0$ whenever $F_1(\rho,\xi_-)=0$ on the bottom edge and $F_2(\rho,\xi_+) > 0$ on the top edge.

Since we have only shown $F_2(\rho,\xi_-)<0$ on the zero set of $F_1$ on the bottom edge, in what follows, we will need to use a modified function to properly invoke the Poincaré-Miranda theorem. Specifically, we will use the modified function $\widetilde F_2=F_2+M H(\xi)|F_1|$, which agrees with $F_2$ on $\{F_1=0\}$ but has the desired signs on the entire top and bottom edges. Then, Poincaré-Miranda can be applied to $(F_1,\widetilde F_2)$ and gives a point
$(\rho_\star,\xi_\star)$ where $F_1=0$ and $\widetilde F_2=0$. Lastly, since the
modification vanishes on $F_1=0$, this point also satisfies $F_2=0$, yielding
the desired fixed-point solution.

We proceed with the argument. First, define
$\mathcal Z_-
    :=
    \left\{
        \rho\in[\rho_-,\rho_+]:
        F_1(\rho,\xi_-)=0
    \right\}.$
The set $\mathcal Z_-$ is nonempty by the intermediate value theorem, since we have shown that $F_1(\rho_-,\xi)<0$ and $F_1(\rho_+,\xi)>0$ for all $\xi \in [\xi_-, \xi_+]$, and it
is compact by continuity. By \eqref{eq:F2-bottom-on-zero-set}, we have $ F_2(\rho,\xi_-)<0$ for all $\rho\in\mathcal Z_-.$ Thus, by continuity, there is an open neighborhood $\mathcal U_-\supseteq\mathcal Z_-$
such that $ F_2(\rho,\xi_-)<0$ for all $\rho\in\mathcal U_-.$

If the compact complement
\([\rho_-,\rho_+]\setminus\mathcal U_-\) is nonempty, the continuous
function \(|F_1(\rho,\xi_-)|\) has the positive minimum
\begin{equation}
\label{eq:m_minus}
    m_-:=
    \min_{\rho\in[\rho_-,\rho_+]\setminus\mathcal U_-}
    |F_1(\rho,\xi_-)|>0.
\end{equation}
Writing
\(
B_-:=\max_{\rho\in[\rho_-,\rho_+]}|F_2(\rho,\xi_-)|<\infty,
\)
choose \(M>B_-/m_-\). If the complement is empty, no domination is
needed and we instead set \(M=0\).
Define $H(\xi)
    :=
    \frac{2\xi-\xi_- -\xi_+}{\xi_+ - \xi_-}$
so that $ H(\xi_-)=-1,$ and $H(\xi_+)=1,$
and $\widetilde F_2(\rho,\xi)
    :=
    F_2(\rho,\xi)
    +
    M H(\xi)|F_1(\rho,\xi)|.$
We notice that $ \widetilde F_2$ agrees with $F_2$
on $F_1=0$, and we will now show that it has the desired sign on the entire top and bottom edges.

On the top edge, $\widetilde F_2(\rho,\xi_+)
    =
    F_2(\rho,\xi_+)+M|F_1(\rho,\xi_+)|.$
Since $F_2(\rho,\xi_+)>0$, we immediately have $\widetilde F_2(\rho,\xi_+)>0$ for all $\rho\in[\rho_-,\rho_+]$ as desired.

On the bottom edge, $\widetilde F_2(\rho,\xi_-)
    =
    F_2(\rho,\xi_-)-M|F_1(\rho,\xi_-)|.$
If $\rho\in\mathcal U_-$, then $F_2(\rho,\xi_-)<0$, so $\widetilde F_2(\rho,\xi_-)<0.$
If $\rho\notin\mathcal U_-$, by the definition of $m_-$ in \eqref{eq:m_minus}, we have $|F_1(\rho,\xi_-)|\ge m_-,$
and therefore $\widetilde F_2(\rho,\xi_-) = F_2(\rho,\xi_-)
    -
    M |F_1(\rho,\xi_-)|
    \le
    B_- - M m_-<0$.
Hence $ \widetilde F_2(\rho,\xi_-)<0$ for all $\rho\in[\rho_-,\rho_+]$, as desired.

Thus, the continuous map
$(\rho,\xi)
    \mapsto
    (
        F_1(\rho,\xi),
        \widetilde F_2(\rho,\xi)
    )$
satisfies the Poincaré--Miranda sign conditions on
$\mathcal R$:
\[
    F_1<0 \text{ on the left edge},
    \qquad
    F_1>0 \text{ on the right edge},
\]
and
\[
    \widetilde F_2<0 \text{ on the bottom edge},
    \qquad
    \widetilde F_2>0 \text{ on the top edge}.
\]
Thus there exists $(\rho_\star,\xi_\star)\in\mathcal R$
such that $ F_1(\rho_\star,\xi_\star)=0,$ and $
    \widetilde F_2(\rho_\star,\xi_\star)=0.$
At a zero of $F_1$, the correction term vanishes. Hence, $ F_2(\rho_\star,\xi_\star)=0$ as well. This proves the existence of a solution to \eqref{eq:new_FPs}.

We now reconstruct the original VAMP parameters. Set
\[
    \bar\gamma_{1\star}
    :=
    \frac1{\theta(\rho_\star)},
    \quad
    \eta_\star
    :=
    \frac1{\rho_\star}, \quad \tau_{1\star}
    :=
    \frac{\theta(\rho_\star)^2}{\xi_\star^2}, \quad\tau_{2\star}
    :=
    \frac{\theta(\rho_\star)^2}
         {(\theta(\rho_\star)-\rho_\star)^2}
    \left(
        r(\rho_\star,\xi_\star)
        -
        \frac{\rho_\star^2}{\xi_\star^2}
    \right).
\]

We check admissibility. Since $F_1(\rho_\star,\xi_\star)=0,$
we have $\rho_\star
    =
    w(\rho_\star)A(\rho_\star,\xi_\star)
    \le
    w(\rho_\star)$ because $0 \leq A \leq 1$.
But $ w(\rho_\star)
    =
    \frac{\theta(\rho_\star)}
         {1+\lambda\theta(\rho_\star)}
    <
    \theta(\rho_\star)$.
Therefore $0<\rho_\star<\theta(\rho_\star),$
and hence $0<\bar\gamma_{1\star}<\eta_\star.$

Next, by the Stein identity \eqref{eq:A-Stein-no-gap} and Cauchy--Schwarz, we have \[\Big(
        \frac{w(\rho_\star)}
             {\xi_\star}
        A(\rho_\star,\xi_\star)
    \Big)^2
    =
    \left(
        \mathbb E
        \left[
            \left(
                Q_{\rho_\star}(U_{\rho_\star,\xi_\star})
                -
                B^\star \right)Z \right] \right)^2 \le r(\rho_\star,\xi_\star)\].
Using \(w(\rho_\star)A(\rho_\star,\xi_\star)=\rho_\star\), we get $r(\rho_\star,\xi_\star)
    \ge
    {\rho_\star^2}/{\xi_\star^2}.$
Thus $\tau_{2\star}\ge0.$

Finally, the construction gives the four oracle-perturbed fixed-point equations:
\begin{itemize}
    \item \(F_1(\rho_\star,\xi_\star)=0\) is the denoising-derivative equation \eqref{eq: fpt-oracle-a-copy};
    \item the definition of \(\tau_{2\star}\) is the denoising-variance equation \eqref{eq: fpt-oracle-b-copy};
    \item \(\theta(\rho_\star)=-1/R(\rho_\star)\) is the linear-precision equation \eqref{eq: fpt-oracle-c-copy};
    \item \(F_2(\rho_\star,\xi_\star)=0\) is the linear-variance equation \eqref{eq: fpt-oracle-d-copy}.
\end{itemize}
Therefore $(\bar\gamma_{1\star},\eta_\star,\tau_{1\star},\tau_{2\star})$
solves the oracle-perturbed convex-VAMP fixed-point equations for the given
$\lambda>0$.

\begin{rem}\label{rem: close-up-Cedric-conjecture} 
The perturbed $\ell_2$ penalty is important in the proof of \cref{thm:oracle-fixed-point} since it gives the exact contraction required for \eqref{eq:qlambda} so that $q_\lambda < 1$. In fact, we can use any perturbed penalty $\frac{\lambda}{2}\|\beta - v\|_2^2$, where $v \in \R^p$ is a vector such that (i) $v$ is independent of the noise $\varepsilon$ and the Haar-distributed matrix $V$, and (ii) the empirical joint law of $(\beta^\star, v)$ converges to some measure in $\cP_2(\R^2)$. Then the corresponding perturbed convex problem has a fixed-point solution to its VAMP fixed-point equations, which we can then use to characterize the asymptotic MSE of the perturbed convex penalty. The same strategy as \cref{thm:oracle-fixed-point} and \cref{thm:oracle-mse} will apply.
\end{rem}

\begin{rem}[Comparison with \textcite{Celentano2022barrier} and \textcite{Li2026debiasing}]
The fixed-point existence argument above differs substantially from the corresponding argument of \textcite[Lemma~F.3 and its proof]{Celentano2022barrier}. Their proof uses an
auxiliary graphical construction to organize the relevant solution sets and establish the existence of a point satisfying the coupled fixed-point conditions. This construction is powerful and is suited to the more general class of penalties considered there, but it requires a comparatively elaborate
topological analysis.

Here, the reduction to the two scalar variables \((\rho,\xi)\), which is used in \textcite[Section B.3]{Li2026debiasing}, and the assumption that the penalty is separable, permit a more direct argument. Specifically, this reduction constructs the two target functions \(F_1\) and \(F_2\) so that the fixed-point equations are exactly the simultaneous-zero conditions
$F_1(\rho,\xi)=F_2(\rho,\xi)=0.$
The particular parametrization makes the required boundary signs clear: \(F_1\) changes sign across the vertical edges of a suitable rectangle, while \(F_2\) has the desired signs on the relevant portions of the horizontal edges. In the proof presented by Li and Sur, they added additional monotonicity and asymptotic linearity assumptions on the penalty $h$ so that they could invoke the Poincar\'e-Miranda theorem on $F_1$ and $F_2$ directly. On the other hand, to avoid restricting the class of penalties, we use the modification,
$\widetilde F_2
    =
    F_2+M H(\xi)|F_1|,$
which extends the horizontal sign conditions to the entire boundary without changing
\(F_2\) on the zero set of \(F_1\). The Poincar\'e--Miranda theorem can then be applied directly to \((F_1,\widetilde F_2)\), and its simultaneous zero is also a simultaneous zero of \((F_1,F_2)\).
\end{rem}


\subsection{Technical lemmas}
\label{sec:technical_lemmas}

\begin{lemma}[Spectral identities]
\label{lem:spectral-identities}
Let $0<\rho_+<\rho_{\max}$, where $\rho_{\max}$ is defined in \eqref{eq:rho_max}. Let $c(\rho)=G^{-1}(\rho)$ for $G$ defined in \eqref{eq:stieltjes-transform}, and recall $J(\rho)$ and $M(\rho)$ defined in \eqref{eq:JandM}. Then the following hold on
$(0,\rho_+]$, with continuous extensions to $\rho=0$.

\begin{enumerate}[label=(\roman*)]
    \item $R(\rho)<0$ and
      \[
        \frac1{\theta(\rho)}
        =
        -R(\rho)
        =
        \frac{
            \mathbb E\left[\dfrac{S}{S+c(\rho)}\right]
        }{
            \mathbb E\left[\dfrac1{S+c(\rho)}\right]
        }.
    \]
    In particular,
    \[
        0<
        \frac1{\theta(\rho)}
        \le s_+,
        \qquad
        \theta(\rho)\ge \frac1{s_+}, \text{ where } s_+ = \sup \mathrm{supp}(\mu).
    \]

    \item $R$ is differentiable and
    \(
        R'(\rho) =
        1/(\rho^2)- 1/ J(\rho)
        \geq 0.
    \)

    \item $0 \le -\rho R'(\rho)/R(\rho) <1.$

    \item The quantity
    \begin{equation}
    \label{eq:Kdef}
    K(\rho) :=
        \frac1{\theta(\rho)}
        \left(
            1+\rho\frac{R'(\rho)}{R(\rho)} \right)
    \end{equation}
    satisfies $ K(\rho) = {M(\rho)}/{J(\rho)},$
    and hence $K(\rho)>0$ for every $\rho\in[0,\rho_+]$. Consequently, for some constants $k_-,k_+$, on $[0,\rho_+]$, we have
    $0<k_-\le K(\rho)\le k_+<\infty.$

    \item There exists a finite constant $C_R$, depending only on
    the law of $S$ and $\rho_+$, such that on $[0,\rho_+]$,
    \(
        0\le R'(\rho)\le C_R,
    \)
    where the derivative at $0$ is taken to be the right limit
    \[
    R'(0) := \lim_{\rho \downarrow 0} \frac{R(\rho) - R(0)}{\rho}, \text{ and } R(0) := \lim_{\rho \downarrow 0} R(\rho) \text{ exists }.
    \]
\end{enumerate}
\end{lemma}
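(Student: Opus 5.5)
The plan is to work throughout in the variable $c=c(\rho)=G^{-1}(\rho)$. Since $G$ is a strictly decreasing bijection from $(0,\infty)$ onto $(0,\rho_{\max})$, we have $\rho=G(c)$, and $\rho\in(0,\rho_+]$ corresponds to $c\in[c_+,\infty)$ with $c_+:=G^{-1}(\rho_+)>0$. The limit $\rho\downarrow0$ corresponds to $c\uparrow\infty$. All claims then become statements about moments of the bounded variable $S$ at fixed $c\geq c_+>0$, where every integrand is bounded and smooth in $c$. Differentiation under the expectation is therefore justified by dominated convergence.

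\emph{Part (i).} I would expand
\[
R(\rho)=c-\frac1{G(c)}=\frac{cG(c)-1}{G(c)}=-\frac{\mathbb E[S/(S+c)]}{\mathbb E[1/(S+c)]}.
\]
The numerator is positive because $\mathbb E[S]>0$, so $R<0$, and the formula for $1/\theta=-R$ follows. The upper bound $1/\theta\le s_+$ comes from writing $-R$ as the mean of $S$ under the tilted probability measure with density proportional to $(S+c)^{-1}$, which is supported in $[0,s_+]$.

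\emph{Parts (ii) and (v).} For (ii), the chain rule gives $G'(c)=-J$, so $dc/d\rho=-1/J$ and $R'(\rho)=1/\rho^2-1/J(\rho)$. Nonnegativity is Jensen's inequality, $J=\mathbb E[(S+c)^{-2}]\ge G(c)^2=\rho^2$. For (v), I would expand as $c\to\infty$: $G=c^{-1}-m_1c^{-2}+m_2c^{-3}+\dots$ and $J=c^{-2}-2m_1c^{-3}+3m_2c^{-4}+\dots$, where $m_k=\mathbb E[S^k]$. This gives $R(\rho)\to-m_1$, and
\[
\frac1{\rho^2}-\frac1J=c^2\bigl(1+2m_1/c+(3m_1^2-m_2)/c^2\bigr)-c^2\bigl(1+2m_1/c+(4m_1^2-3m_2)/c^2\bigr)+o(1)\to m_2-m_1^2.
\]
So $R'$ extends continuously to $\rho=0$, with value $\operatorname{Var}(S)$. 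Continuity on the compact interval $[0,\rho_+]$ then yields the bound $C_R$. The right-derivative claim at $0$ follows from the mean value theorem combined with this limit.

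\emph{Parts (iii) and (iv).} These are algebraic. Write $M=\mathbb E[S/(S+c)^2]$ and use the identity $G=cJ+M$. I would first compute $-\rho R'/R$ and $1+\rho R'/R$ in terms of $G,J,M,c$. The claim $K=M/J$ should follow from
\[
K=-R\Bigl(1+\frac{\rho R'}{R}\Bigr)=-R-\rho R'=-c+\frac1\rho-\frac1\rho+\frac{\rho}{J}=\frac{G-cJ}{J}=\frac MJ.
\]
Here I have used $\rho=G$. Since $K=-R(1+\rho R'/R)$ with $-R>0$ and $K=M/J>0$, we get $1+\rho R'/R>0$, that is, $-\rho R'/R<1$. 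The bound $-\rho R'/R\ge0$ follows from $R'\ge0$ and $R<0$, which gives (iii).

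Positivity of $K$ on $[0,\rho_+]$ including the endpoint uses $M/J\to m_1>0$ as $c\to\infty$. Together with continuity and compactness, this gives the constants $k_\pm$.

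The main obstacle is the $\rho\to0$ endpoint: making the asymptotic expansions rigorous uniformly. I would handle this through bounded $S\le s_+$ and the Neumann series $(S+c)^{-1}=c^{-1}\sum_k(-S/c)^k$, which is valid once $c>s_+$.
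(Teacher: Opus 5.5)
Your proposal is correct and follows the paper's proof essentially step by step for (i)--(iv). The paper proves (iii) directly from $1-c\rho-\rho^2R'(\rho)=\rho M(\rho)/J(\rho)>0$ rather than deducing it from $K=M/J$, but this is the same identity. For (v) you expand $R'=1/\rho^2-1/J$ as $c\to\infty$ and then use continuity on $[0,\rho_+]$ and the mean value theorem. The paper instead gives the explicit bound $R'(\rho)\le\operatorname{Var}(S)\bigl((s_++c_+)/c_+\bigr)^4$ on $(0,\rho_+]$, and separately inverts the series for $G$ to obtain $R(\rho)=-\mathbb E[S]+\operatorname{Var}(S)\rho+O(\rho^2)$, so your route is a slightly more economical variant of the same idea.

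There is one arithmetic slip. The constant term in the expansion of $1/\rho^2$ is $3m_1^2-2m_2$, not $3m_1^2-m_2$; as displayed, your difference would tend to $2m_2-m_1^2$. The limit $m_2-m_1^2=\operatorname{Var}(S)$ that you state is nevertheless the correct one.
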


\begin{proof}
Some of these results follow from \textcite[Lemma A.8]{Li2026debiasing}, but we include the proof here for completeness.

\textbf{Result (i).} Fix $\rho\in(0,\rho_+]$. Since $c=G^{-1}(\rho),$ we have $\rho = G(c) = \mathbb{E}[1/(c + S)]$. Then, since $S \geq 0$ and $\PP(S > 0) > 0$, we have
\begin{equation}
\label{eq:crho_bound}
    1-c\rho= \mathbb E\left[ 1-\frac{c}{S+c}
    \right] = \mathbb E\left[ \frac{S}{S+c}
    \right]>0. 
\end{equation}
Therefore, by the definition of $R$ from \eqref{eq:R}, 
\begin{equation}
\label{eq:negativeR}
    -R(\rho) = \frac1\rho-c = \frac{1-c\rho}{\rho} = \frac{ \mathbb E\left[\dfrac{S}{S+c}\right] }{ \mathbb E\left[\dfrac1{S+c}\right] } >0.
\end{equation}
This is a weighted average of $S$, with nonnegative weights proportional to
$(S+c)^{-1}$. Since $0\le S\le s_+$ and $\mathbb E[S]>0$, the ratio is
strictly positive and at most $s_+$. This proves the bounds on $\theta$ in the first claim.

\textbf{Result (ii).} Since for $J(\rho)$ defined in \eqref{eq:JandM},
\(
    G'(c) = -\mathbb E\left[\frac1{(S+c)^2}\right]
    = -J(\rho),
\)
the inverse-function theorem gives $c'(\rho)=-1/{J(\rho)}.$
Consequently,
\( R'(\rho) =c'(\rho)+\frac1{\rho^2}=\frac1{\rho^2}-\frac1{J(\rho)}\).
Because $J(\rho)\ge \rho^2$ by Jensen's inequality, $R'(\rho)\ge0$.

\textbf{Result (iii).} Notice that 
\begin{equation}
\label{eq:R_equiv}
-\rho\frac{R'(\rho)}{R(\rho)}=\frac{\rho^2R'(\rho)}{1-c\rho}.
\end{equation}
This is nonnegative using result (ii) and \eqref{eq:crho_bound}. For the upper bound, using the expression for $R'$ from result (ii),
\begin{align*}
    1-c\rho-\rho^2R'(\rho)&=
    1-c\rho-\left(
        1-\frac{\rho^2}{J(\rho)}
    \right) = \rho \left(
        \frac{\rho}{J(\rho)}-c\right).
\end{align*}
But, by \eqref{eq:JandM},
\begin{equation}
\label{eq:need_ref}
    \frac{\rho}{J(\rho)}-c=\frac{\rho-cJ(\rho)}{J(\rho)} = \frac{
        \mathbb E\left[ \dfrac{S}{(S+c)^2}
        \right] }{
        \mathbb E\left[
            \dfrac1{(S+c)^2}
        \right]} = \frac{M(\rho)}{J(\rho)} >0.
\end{equation}
Because $\mathbb E[S]>0$ and $c>0$, we have $M(\rho)>0$.
Thus $\rho^2R'(\rho)<1-c\rho$, which, together with \eqref{eq:R_equiv}, proves that
\(
    0\le-\rho\frac{R'(\rho)}{R(\rho)}<1.
\)

\textbf{Result (iv).} Since $R=-1/\theta$, $K(\rho) = \frac1{\theta(\rho)}
        \left(1+\rho\frac{R'(\rho)}{R(\rho)} \right) = - R(\rho)-\rho R'(\rho)$.
Now using result (ii) along with \eqref{eq:negativeR} and \eqref{eq:need_ref}, we have $K(\rho) =
    \frac1\rho-c
    -
    \rho
    \left(
        \frac1{\rho^2}-\frac1{J(\rho)}
    \right) =
    \frac{\rho}{J(\rho)}-c =
    \frac{M(\rho)}{J(\rho)}$.
This is a weighted average of $S$, with weights proportional to
$(S+c)^{-2}$. Hence, it is strictly positive for every finite $c>0$, because
$\mathbb E[S]>0$.

As \(\rho\downarrow0\), equivalently \(c\to\infty\), we have
\[
    K(\rho)
    =
    \frac{
        \mathbb E\left[\dfrac{S}{(S+c)^2}\right]
    }{
        \mathbb E\left[\dfrac1{(S+c)^2}\right] }
    =
    \frac{
        \mathbb E\left[\dfrac{S}{(1+S/c)^2}\right]
    }{ \mathbb E\left[\dfrac1{(1+S/c)^2}\right] }
    \longrightarrow \mathbb E[S]>0,
\]
by dominated convergence. Thus, $K$ extends continuously to $\rho=0$ and is
strictly positive on the compact interval $[0,\rho_+]$. Hence, it has a positive
minimum $k_->0$ and a finite maximum $k_+<\infty$.

\textbf{Result (v).} It remains to show that $R'$ is bounded on $[0,\rho_+]$. Let $c_+:=c(\rho_+)>0.$
For $\rho\in(0,\rho_+]$, we have $c(\rho)\ge c_+$. 
Define $X_c:=\frac1{S+c}$ and notice that $\VAR(X_c) = \mathbb{E}[(X_c)^2] - (\mathbb{E}[X_c])^2 = J(\rho) - G(c)^2 = J(\rho) - \rho^2$.
Then, using result (ii), we have $R'(\rho) = \frac{J(\rho) - \rho^2}{\rho^2J(\rho)}= \frac{\VAR(X_c)}
         {\rho^2J(\rho)}$.
We bound the numerator by Lipschitz continuity of $s\mapsto(s+c)^{-1}$:
\(
    \operatorname{Var}(X_c)
    \le
    {\operatorname{Var}(S)}/{c^4}.
\)
We also have that $\rho
    =\mathbb E\left[\frac1{S+c}\right]
    \ge \frac1{s_++c}$ and $J(\rho)
    =
    \mathbb E\left[\frac1{(S+c)^2}\right]
    \ge
    \frac1{(s_++c)^2}$.
Therefore, $R'(\rho)
    \le \operatorname{Var}(S)
    \left(
        \frac{s_++c}{c} \right)^4
    \le
    \operatorname{Var}(S)
    \left( \frac{s_++c_+}{c_+} \right)^4$, and we may conclude that $R'$ is bounded on $(0,\rho_+]$. 

Finally, to show that $R'$ is bounded at $0$, we will use analytic function tools. We first show that $R(0)$ is well-defined. Recall that $z = G^{-1}(\rho) \rightarrow \infty$ as $\rho \rightarrow 0$. Since $S$ has compact support, a Taylor expansion of $G(z)$ gives $G(z) = \E[\frac{1}{z + S}] = \frac{1}{z} - \frac{\E[S]}{z^2} + \frac{\E[S^2]}{z^3} + O(z^{-4})$.
Set $\tilde w = \frac{1}{z}$. Then $z \rightarrow \infty$ is equivalent to $\tilde w \rightarrow 0$, and thus the expansion becomes
\begin{equation}\label{eq: map-w-rho}
\rho = G(z) = \tilde w - \E[S]\tilde w^2 + \E[S^2] \tilde w^3 + O(\tilde w^4), 
\end{equation}
which has derivative $1$ at $\tilde w = 0$. Then by the analytic inverse theorem, there exists an analytic inverse $\tilde w = \tilde w(\rho)$ near $\rho = 0$. We will need to invert the map \eqref{eq: map-w-rho}. To this end, we write
\begin{equation}\label{eq: map-rho-w}
\tilde w = \rho + a\rho^2 + b\rho^3 + O(\rho^4).
\end{equation}
Plugging this into the expansion for $\rho$ in \eqref{eq: map-w-rho}, we obtain
\begin{align*}
\tilde w &= \rho + a\rho^2 + b\rho^3 + O(\rho^4)\\
\tilde w^2 &= \rho^2 + 2a\rho^3 + O(\rho^4)\\
\tilde w^3 &= \rho^3 + O(\rho^4)
\end{align*}
Therefore, we have $\rho = (\rho + a \rho^2 + b\rho^3) - \E[S] (\rho^2 + 2a\rho^3) + \E[S^2]\rho^3 + O(\rho^4)= \rho + (a - \E[S])\rho^2 + (b - 2\E[S]a + \E[S^2])\rho^3 + O(\rho^4)$.
For this equation to hold, the coefficients for $\rho^2$ and $\rho^3$ must vanish. Solving the equation above gives
\begin{equation}\label{eq: solution-a-b}
a = \E[S], b = 2\E[S]^2 - \E[S^2].
\end{equation}
Substituting \eqref{eq: solution-a-b} back into \eqref{eq: map-rho-w} gives
\(
\tilde w = \rho + \E[S]\rho^2 + (2\E[S]^2 - \E[S^2])\rho^3 + O(\rho^4).
\)
Recall that $z = \frac{1}{\tilde w}$. Therefore, $z = ({\rho + \E[S]\rho^2 + (2\E[S]^2 - \E[S^2])\rho^3 + O(\rho^4)})^{-1}= \frac{1}{\rho} ({1 + \E[S]\rho + (2\E[S]^2 - \E[S^2])\rho^2 + O(\rho^3)})^{-1}$.
Using $\frac{1}{1 + x} = 1 - x + x^2 + O(x^3)$, with $x = \E[S]\rho + (2\E[S]^2 - \E[S^2])\rho^2 + O(\rho^3)$, we get
\begin{align*}
\frac{1}{1 + x} &= 1 - x + x^2 + O(x^3)\\
&= 1 - \left[\E[S]\rho + (2\E[S]^2 - \E[S^2])\rho^2 + O(\rho^3)\right] + \E[S]^2 \rho^2 + O(\rho^3)\\
&= 1 - \E[S]\rho + \left[-(2\E[S]^2 - \E[S^2]) + \E[S]^2\right]\rho^2 + O(\rho^3)\\
&= 1 - \E[S] \rho + (\E[S^2] - \E[S]^2)\rho^2 + O(\rho^3),
\end{align*}
where the second equality holds because $x = O(\rho)$ and hence
\(
x^2 = \E[S]^2 \rho^2 + O(\rho^3), x^3 = O(\rho^3).
\)
Therefore, we have $z = \frac{1}{\rho} - \E[S] + (\E[S^2] - \E[S]^2)\rho + O(\rho^2) = \frac{1}{\rho} - \E[S] + \VAR(S)\rho + O(\rho^2)$.
Finally, by the definition of $R(\rho)$, we have
\begin{equation}\label{eq: equation-R-polynomial}
R(\rho) = z - \frac{1}{\rho} = \frac{1}{\rho} - \E[S] + \VAR(S)\rho + O(\rho^2) - \frac{1}{\rho} = - \E[S] + \VAR(S)\rho + O(\rho^2).
\end{equation}
Using \eqref{eq: equation-R-polynomial} gives $\lim_{\rho \rightarrow 0} R(\rho) = -\E[S] < \infty$.
To conclude our claim, since $R$ is analytic near $0$ and $S$ has compact support, we have $R'(0) = \lim_{\rho \downarrow 0} \VAR(S) + O(\rho) = \VAR(S) < \infty$.
\end{proof}

\begin{lemma}[Continuity of $F_1$ and $F_2$]
\label{lem:F-continuity}
Fix $0<\rho_+<\rho_{\max}$. On every compact rectangle
\begin{equation}
\label{eq:rectangle}
[0,\rho_+]\times[\xi_-,\xi_+],
    \qquad
    0<\xi_-<\xi_+<\infty,
\end{equation}
the functions $A$ and $r$ defined in \eqref{eq:Aandr} are continuous. Consequently, $F_1$ and $F_2$ defined in \eqref{eq:F1} and \eqref{eq:F2}
are continuous. Moreover, $A(0,\xi)>0$ for all $\xi>0.$
\end{lemma}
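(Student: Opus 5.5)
We will prove joint continuity of $(\rho,\xi)\mapsto A(\rho,\xi)$ and $r(\rho,\xi)$ on $[0,\rho_+]\times[\xi_-,\xi_+]$, deduce continuity of $F_1,F_2$ from the formulas \eqref{eq:F1}, \eqref{eq:F2}, and then show $A(0,\xi)>0$ separately.

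\textbf{Step 1: the scale $w$.} First, $\rho\mapsto w(\rho)=\theta(\rho)/(1+\lambda\theta(\rho))$ is continuous on $[0,\rho_+]$. By Lemma~\ref{lem:spectral-identities}(i), $\theta=-1/R$. Here $R$ is continuous on $(0,\rho_+]$ because $c=G^{-1}$ is continuous, and it extends continuously to $0$ with $R(0)=-\E[S]<0$ by \eqref{eq: equation-R-polynomial}. Hence $w$ is continuous with values in $[w_-,w_+]\subset(0,1/\lambda)$. In particular $w(0)=\E[S]^{-1}/(1+\lambda\E[S]^{-1})>0$.

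\textbf{Step 2: continuity of $r$.} Write $Q_\rho=\prox_{w(\rho)h}$. For convex $h$, the map $(a,u)\mapsto\prox_{ah}(u)$ is jointly continuous on $(0,\infty)\times\R$. It is $1$-Lipschitz in $u$, and continuity in $a$ follows from the resolvent identity, or from $|\prox_{ah}(u)-\prox_{a'h}(u)|\le|1-a'/a|\,|u-\prox_{ah}(u)|$. Since $U_{\rho,\xi}=B^\star+(w(\rho)/\xi)Z$ depends continuously on $(\rho,\xi)$ pointwise, the integrand $(Q_\rho(U_{\rho,\xi})-B^\star)^2$ converges pointwise along sequences. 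We dominate it as follows. Fix $x_0\in\operatorname{dom}h$ with $\partial h(x_0)\neq\varnothing$. Then $|\prox_{ah}(u)-x_0|\le|u-x_0|+a|g_0|$ for $g_0\in\partial h(x_0)$, so the integrand is at most $C(1+|B^\star|^2+|Z|^2)$, uniformly over the rectangle. Since $\pi\in\cP_2$, dominated convergence gives continuity of $r$.

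\textbf{Step 3: continuity of $A$.} Use the Stein form \eqref{eq:A-Stein-no-gap}, $A=\frac{\xi}{w(\rho)}\E[(Q_\rho(U_{\rho,\xi})-B^\star)Z]$. The prefactor is continuous and bounded because $w\ge w_->0$. The expectation is continuous by the same pointwise convergence together with the domination $C(1+|B^\star|^2+|Z|^2)$. This avoids any pointwise continuity of $Q_\rho'$. Then $F_1=\rho-wA$ is continuous. $F_2=\xi^2R'r+\sigma^2\xi^2K-1$ is continuous once $R'$ and $K$ are: $R'=\rho^{-2}-J^{-1}$ is continuous on $(0,\rho_+]$, and at $0$ it is continuous through the analytic expansion in the proof of Lemma~\ref{lem:spectral-identities}(v). $K=M/J$ is continuous with the limit $\E[S]$ at $0$ shown in result (iv).

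\textbf{Step 4: positivity $A(0,\xi)>0$ (the main obstacle).} Set $a=w(0)>0$ and $s=a/\xi$. Since $Q=\prox_{ah}$ is nondecreasing and $1$-Lipschitz, we have $Q'\ge0$ a.e. Conditional on $B^\star=b$, Stein's identity gives $\E[Q'(b+sZ)]=s^{-1}\E[Q(b+sZ)Z]$, which is zero only if $Q$ is a.e.\ constant in $u$, hence constant on $\R$. It therefore suffices to show $\prox_{ah}$ is nonconstant. If $\prox_{ah}\equiv x^\ast$, then $u-x^\ast\in a\,\partial h(x^\ast)$ for all $u\in\R$. So $\partial h(x^\ast)=\R$, which forces $\operatorname{dom}h=\{x^\ast\}$, a singleton-indicator penalty excluded by Assumption~\ref{ass5}. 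Hence $\prox_{ah}$ is strictly increasing somewhere, so $Q'>0$ on a set of positive Lebesgue measure. The Gaussian density is everywhere positive, so $\E[Q'(b+sZ)]>0$ for every $b$, and averaging over $B^\star$ gives $A(0,\xi)>0$. The delicate point is this nondegeneracy argument, namely checking that a.e.\ vanishing of the derivative of a Lipschitz monotone map forces constancy. That holds by absolute continuity of Lipschitz functions.
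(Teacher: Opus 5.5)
Your proposal is correct and follows essentially the same route as the paper: continuity of $w$ on $[0,\rho_+]$; joint continuity of $(w,u)\mapsto\prox_{wh}(u)$ together with a uniform linear-growth envelope, giving continuity of $r$ by dominated convergence; the Stein representation \eqref{eq:A-Stein-no-gap}, giving continuity of $A$; and nonconstancy of $\prox_{w(0)h}$ on a Gaussian-smoothed input, giving $A(0,\xi)>0$. The differences are only local and, if anything, slightly cleaner than the paper's versions: you get joint continuity from the scale-perturbation bound of Lemma~\ref{lem:prox-scale-continuity-app} rather than a compactness/subsequence argument, you get the envelope from a subgradient anchor point $x_0$ rather than $\sup_w|\prox_{wh}(0)|$, and you prove nonconstancy directly from $\partial h(x^\ast)=\mathbb{R}$ rather than by citing Lemma~\ref{lem:nonconstant-proximal}.
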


\begin{proof}
On $[0,\rho_+]$, the function $w(\rho)$ in \eqref{eq: definition-w-rho} is continuous and bounded:
\[ 0<w_{\min}\le w(\rho)\le w_{\max}<\infty.\]
For $w>0$, we have $Q_w(u):=\prox_{w h}(u).$

We first argue that the map $(w,u)\mapsto Q_w(u)$ is jointly continuous on $(0,\infty)\times\mathbb R$. Indeed, if $w_n\to w>0$ and $u_n\to u$
with $x_n=Q_{w_n}(u_n)$,
then $x_n$ minimizes $x\mapsto \frac12(x-u_n)^2+w_nh(x)$ by definition.
To justify boundedness uniformly in \(n\), fix an affine minorant
\(h(x)\ge s x+t\). Since \(u_n\) is bounded and \(w_n\) stays in a compact
subset of \((0,\infty)\), the lower bounds
\(
    \frac12(x-u_n)^2+w_n(sx+t)
\)
tend to \(+\infty\) as \(|x|\to\infty\), uniformly for all large \(n\).
Comparing the objective at \(x_n\) with its value at any fixed point of
\(\dom h\) therefore shows that \(\{x_n\}\) is bounded. Any subsequential limit
$x$ minimizes $x\mapsto \frac12(x-u)^2+w h(x)$
by lower semicontinuity of \(h\). Since this objective is strongly convex, its
minimizer is unique. Thus $Q_{w_n}(u_n)\to Q_w(u).$

Since every $Q_w$ is $1$-Lipschitz, $|Q_w(u)|
    \le |u|+|Q_w(0)|.$
Moreover, for every fixed \(w>0\),
\cref{lem:finiteness-proximal-0} gives
\(
|Q_{w}(0)| = |\prox_{w h}(0)| < \infty.
\)
The joint continuity proved above makes
\(w\mapsto Q_w(0)\) continuous. Since $w \mapsto Q_w(0)$ is continuous on the compact interval
\([w_{\min},w_{\max}]\) and by \cref{lem:prox-scale-continuity-app}, $Q_w$ is locally Lipschitz and bounded, we have
\[
C := \sup_{w \in [w_{\min}, w_{\max}]} |Q_{w}(0)| < \infty.
\]
Thus $|Q_w(u)|
    \le |u|+C$
uniformly over $w\in[w_{\min},w_{\max}]$.

Let $(\rho_n,\xi_n)\to(\rho,\xi)$ inside the compact rectangle given in \eqref{eq:rectangle}. Then $U_{\rho_n,\xi_n}
    =
    B^\star+\frac{w(\rho_n)}{\xi_n}Z
    \to
    B^\star+\frac{w(\rho)}{\xi}Z
    =
    U_{\rho,\xi}$
pointwise in $(B^\star,Z)$. By joint continuity of $Q_w$, we have $ Q_{\rho_n}(U_{\rho_n,\xi_n})
    \to
    Q_\rho(U_{\rho,\xi})$
pointwise. Moreover, $|Q_{\rho_n}(U_{\rho_n,\xi_n})-B^\star|^2
    \le
    C\left(1+(B^\star)^2+Z^2\right)$,
which is integrable because $\mathbb E[(B^\star)^2]<\infty.$
Dominated convergence then gives $r(\rho_n,\xi_n)\to r(\rho,\xi).$

For proving continuity of $A$, we use the Stein representation \eqref{eq:A-Stein-no-gap}:
\(
    A(\rho,\xi)
    =
    \frac{\xi}{w(\rho)}
    \mathbb E[Q_\rho(U_{\rho,\xi})Z].
\)
The integrand converges pointwise, and
\(
    |Q_\rho(U_{\rho,\xi})Z|
    \le
    C(1+|B^\star|+|Z|)|Z|,
\)
which is integrable by Cauchy--Schwarz. Hence dominated convergence gives
continuity of $A$.

Finally, fix $\xi>0$. The random variable $U_{0,\xi}
    =
    B^\star+\frac{w(0)}{\xi}Z$
has a strictly positive density on all of $\mathbb R$. Since $h$ satisfies \cref{ass4} and \cref{ass5}, by \cref{lem:nonconstant-proximal}, the map $ Q_0=\prox_{w(0)h}$
is a nonconstant, nondecreasing absolutely continuous map. Hence $Q_0'\ge0$ a.e.\ and $Q_0'>0$ on a set of positive Lebesgue measure. Therefore $A(0,\xi) = \mathbb E[Q_0'(U_{0,\xi})] >0.$
\end{proof}

\begin{lemma}[Vanishing-at-origin Gaussian Poincaré inequality]
\label{lem:mod-poincare}
Let $Z\sim N(0,1)$. If $\phi$ is absolutely continuous,
$\phi(0)=0$, and $\mathbb E[\phi'(Z)^2]<\infty,$
then $\mathbb E[\phi(Z)^2] \le \mathbb E[\phi'(Z)^2].$
\end{lemma}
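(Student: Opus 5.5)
We prove the inequality with the Hermite expansion of $\phi$ in $L^2(\gamma)$, where $\gamma$ is the standard Gaussian measure; this is the same basis already used in the overlap-contraction argument above. The idea is that differentiation lowers the Hermite degree by one and multiplies the $k$-th coefficient by $k$. The ordinary Poincar\'e inequality drops only the constant mode; the normalization $\phi(0)=0$ will let us control that constant mode as well.

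\textbf{Step 1 (reduction to $\phi\in L^2(\gamma)$).} Since $\phi'\in L^2(\gamma)$ and $\phi(0)=0$, we have $\phi(z)=\int_0^z\phi'(s)\,ds$. The standard Gaussian Poincar\'e inequality applies to absolutely continuous functions with $\phi'\in L^2(\gamma)$; after a truncation-and-limit argument, it shows that $\phi\in L^2(\gamma)$ and
\[
\VAR(\phi(Z))\le\E[\phi'(Z)^2].
\]
This step is routine.

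\textbf{Step 2 (coefficients of $\phi$ and $\phi'$).} Expand $\phi=\sum_{k\ge0}a_kH_k$, so that $\E[\phi(Z)^2]=\sum_k k!\,a_k^2$. Gaussian integration by parts together with $H_k'=kH_{k-1}$ shows that the Hermite coefficients of $\phi'$ are $(k+1)a_{k+1}$. Hence
\[
\E[\phi'(Z)^2]=\sum_{k\ge1}k\,k!\,a_k^2.
\]
The target inequality therefore becomes
\[
a_0^2\le\sum_{k\ge1}(k-1)\,k!\,a_k^2 .
\]
For the odd/even split below, note that $\phi(0)=0$ only constrains the even part $\phi_e(z)=(\phi(z)+\phi(-z))/2$, which also vanishes at $0$. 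Splitting $\phi=\phi_e+\phi_o$ is orthogonal both for $\phi$ and for $\phi'$, and the odd part satisfies $\E[\phi_o^2]\le\E[\phi_o'^2]$ by Step 1, since its mean is zero. So it suffices to treat even $\phi$, for which only even $k$ appear.

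\textbf{Step 3 (controlling $a_0$; the main obstacle).} The expansion cannot simply be evaluated at $0$, because $L^2$ convergence gives no pointwise control. Instead I would use the representation $\phi(z)=\int_0^z\phi'$ directly:
\[
a_0=\E[\phi(Z)]=\E\Big[\int_0^{|Z|}\phi'(s)\,\mathrm{sgn}(Z)\,ds\Big].
\]
By Fubini this becomes
\[
a_0=\int_{\R}\phi'(s)\,\mathrm{sgn}(s)\,\PP(|Z|>|s|,\ \mathrm{sgn}Z=\mathrm{sgn}\,s)\,ds=\E[\phi'(Z)\,m(Z)],
\]
where $m(s)=\mathrm{sgn}(s)\,\overline\Phi(|s|)/\varphi(s)$ is the Mills ratio. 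Cauchy--Schwarz gives $a_0^2\le\E[\phi'(Z)^2]\,\E[m(Z)^2]$, but a bare Cauchy--Schwarz is wasteful because part of $\E[\phi'^2]$ is already needed for the variance. I would therefore run Cauchy--Schwarz jointly. Write $\phi'=g$, so that $\E[\phi^2]=\E[\phi]^2+\VAR(\phi)$, and bound the quadratic form $g\mapsto \E[(\int_0^Z g)^2]$ directly by $\E[g^2]$. Using the kernel
\[
\E\Big[\Big(\int_0^Z g\Big)^2\Big]=\iint g(s)g(t)\,\PP(Z\text{ beyond }\max\text{ on the same side})\,ds\,dt,
\]
this is a Hardy-type inequality with Gaussian weight. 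The weight satisfies $\overline\Phi(x)\le\varphi(x)\cdot\min(\sqrt{\pi/2},1/x)$, and I expect a Schur test with test function $\varphi^{-1/2}$ or $\overline\Phi^{1/2}/\varphi$ to give operator norm at most $1$.

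If the Schur constant comes out slightly above $1$, the fallback is the ODE route. Set
\[
u(x)=\int_x^\infty\phi(t)\varphi(t)\,dt .
\]
Then write
\[
\int_0^\infty\phi^2\varphi=\int_0^\infty\phi(x)^2\Big(-\tfrac{d}{dx}\overline\Phi\Big)\,dx
\]
and integrate by parts. Because $\phi(0)=0$, the boundary term at $0$ vanishes, and this yields
\[
\int_0^\infty 2\phi\phi'\,\overline\Phi\le 2\Big(\int_0^\infty\phi^2\varphi\Big)^{1/2}\Big(\int_0^\infty\phi'^2\,\overline\Phi^2/\varphi\Big)^{1/2},
\]
after which $\overline\Phi^2/\varphi\le\varphi\cdot\sup_x(\overline\Phi/\varphi)^2$. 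That supremum is $\pi/2$, which is too large, so on the region near $0$ the estimate must be refined using $\overline\Phi(x)\le\varphi(x)/x$. This one-sided half-line estimate, with constant exactly $1$, is the crux of the proof; I would verify it first numerically on $\phi(z)=z^2$, where both sides equal $\E[Z^4]=3$ and $\E[4Z^2]=4$, and then on $\phi(z)=|z|$ near the extremal case.
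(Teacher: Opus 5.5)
Your Steps 1 and 2 are correct but do not advance the proof. The target $a_0^2\le\sum_{k\ge1}(k-1)\,k!\,a_k^2$ is just the original inequality rewritten, and the parity split only reduces it to even $\phi$. Everything therefore rests on the half-line estimate with constant exactly $1$, and Step 3 never proves it.

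The Schur test is only anticipated, not carried out. For the symmetrized kernel $\overline{\Phi}(\max(s,t))/\sqrt{\varphi(s)\varphi(t)}$ on $L^2(ds)$, your test function $\varphi^{-1/2}$ even makes the Schur integral diverge, because $\overline{\Phi}(t)/\varphi(t)\sim 1/t$.

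The fallback is structurally lossy. After integrating by parts, the absorption $A\le 2\sqrt{AB}$ forces $A\le 4B$. Combined with $\sup_{x\ge 0}\overline{\Phi}(x)^2/\varphi(x)^2=\pi/2$, the resulting constant is $2\pi$, not $\pi/2$. To reach constant $1$ along this chain you would need $4\overline{\Phi}^2\le\varphi^2$ pointwise, which fails on an interval near the origin. Near $0$ the bound $\overline{\Phi}(x)\le\varphi(x)/x$ is weaker than $\sqrt{\pi/2}\,\varphi(x)$, so no refinement of that kind can close the gap.

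The missing idea is to apply Cauchy--Schwarz pointwise before integrating against the Gaussian, which is what the paper does. For $z>0$, $\phi(0)=0$ gives $\phi(z)^2\le z\int_0^z\phi'(t)^2\,dt$. Tonelli and the identity $\int_t^\infty z\varphi(z)\,dz=\varphi(t)$ then give
\[
\int_0^\infty\phi(z)^2\varphi(z)\,dz\le\int_0^\infty\phi'(t)^2\varphi(t)\,dt,
\]
and the same argument works on $(-\infty,0]$. This yields constant $1$ directly and shows $\phi\in L^2(\gamma)$ along the way. It needs no Hermite expansion, no Poincar\'e inequality, and no parity reduction.

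Your kernel formulation can be rescued in the same spirit. In $\iint g(s)g(t)\,\overline{\Phi}(\max(s,t))\,ds\,dt$, bound $g(s)g(t)\le\tfrac12\bigl(g(s)^2+g(t)^2\bigr)$. This is the Schur test with a constant test function in the $g$ variable, or $\varphi^{1/2}$ for the symmetrized kernel. The identity $\int_0^\infty\overline{\Phi}(\max(s,t))\,dt=s\overline{\Phi}(s)+\int_s^\infty\overline{\Phi}(t)\,dt=\varphi(s)$ then gives exactly the same bound.
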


\begin{proof}
For $z>0$, we have $\phi(z)=\int_0^z\phi'(t)\,dt,$
so Cauchy--Schwarz gives
\( \phi(z)^2 \le z\int_0^z\phi'(t)^2\,dt\).
If $\varphi$ denotes the standard Gaussian density, Tonelli's theorem
and the fact that $\int_t^\infty z\varphi(z)\,dz=\varphi(t)$
give
\[
\begin{aligned}
 \int_0^\infty\phi(z)^2\varphi(z)\,dz
    &\le
    \int_0^\infty
    z \left(\int_0^z\phi'(t)^2\,dt \right)\,\varphi(z)\,dz =
    \int_0^\infty
    \phi'(t)^2
    \left(\int_t^\infty z\varphi(z)\,dz \right)\,dt =
    \int_0^\infty
    \phi'(t)^2\varphi(t)\,dt.
\end{aligned}
\]
The same argument on $(-\infty,0]$ proves the result.
\end{proof}

\begin{lemma}
\label{lem:uniform-risk-estimate}
Fix $0<\rho_+<\rho_{\max}$. There exists a function $\varepsilon_h(\xi)\to0$ as $\xi\downarrow0$
such that, for all $\rho\in[0,\rho_+]$ and all $\xi>0$, $\xi^2r(\rho,\xi)
    \le
    w(\rho)^2A(\rho,\xi)+\varepsilon_h(\xi)$,
where the functions $A$ and $r$ are defined in \eqref{eq:Aandr}.
Consequently, if $F_1(\rho,\xi)=0$, then $\xi^2r(\rho,\xi)
    \le
    w(\rho)\rho+\varepsilon_h(\xi).$
\end{lemma}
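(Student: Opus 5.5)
The plan is to split the error $Q_\rho(U_{\rho,\xi})-B^\star$ into a noise-driven part and a bias part, and to control the noise part with the vanishing-at-origin Gaussian Poincar\'e inequality of \cref{lem:mod-poincare}. Throughout, write $s:=w(\rho)/\xi$, so that $U_{\rho,\xi}=B^\star+sZ$ and $\xi^2=w(\rho)^2/s^2$. Decompose
\[
Q_\rho(U_{\rho,\xi})-B^\star
=\underbrace{\bigl(Q_\rho(B^\star+sZ)-Q_\rho(B^\star)\bigr)}_{=:N}
+\underbrace{\bigl(Q_\rho(B^\star)-B^\star\bigr)}_{=:D}.
\]

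\textbf{Step 1 (noise part).} Condition on $B^\star=b$ and set $\phi_b(z):=Q_\rho(b+sz)-Q_\rho(b)$. This function is Lipschitz, hence absolutely continuous, and satisfies $\phi_b(0)=0$ and $\phi_b'(z)=sQ_\rho'(b+sz)$ almost everywhere. \cref{lem:mod-poincare} then gives
\[
\E[\phi_b(Z)^2]\le s^2\,\E[Q_\rho'(b+sZ)^2].
\]
Since $Q_\rho$ is a scalar proximal map, $0\le Q_\rho'\le 1$, so $(Q_\rho')^2\le Q_\rho'$. Integrating over $b$ yields
\[
\E[N^2]\le s^2A(\rho,\xi),
\qquad\text{that is,}\qquad
\xi^2\,\E[N^2]\le w(\rho)^2A(\rho,\xi).
\]
This is exactly the leading term in the claimed bound.

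\textbf{Step 2 (bias part).} I need a uniform bound on $d(\rho):=\E[D^2]$ over $\rho\in[0,\rho_+]$. As in the proof of \cref{lem:F-continuity}, $w$ ranges over a compact set $[w_{\min},w_{\max}]\subset(0,\infty)$, and $|Q_w(u)|\le|u|+C$ holds uniformly in $w$ there. Hence $|D|\le 2|B^\star|+C$, and so
\[
d(\rho)\le \bar d:=\E\bigl[(2|B^\star|+C)^2\bigr]<\infty,
\]
using $\pi\in\cP_2(\R)$. This bound depends only on $h$, $\pi$ and $\rho_+$.

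\textbf{Step 3 (combine).} Minkowski's inequality gives $\sqrt{r}\le\sqrt{\E N^2}+\sqrt{\E D^2}$. Multiplying by $\xi$ and squaring,
\[
\xi^2r(\rho,\xi)\le\Bigl(w(\rho)\sqrt{A(\rho,\xi)}+\xi\sqrt{\bar d}\Bigr)^2
\le w(\rho)^2A(\rho,\xi)+2w_{\max}\sqrt{\bar d}\,\xi+\bar d\,\xi^2,
\]
where I used $A\le1$ in the cross term. I therefore set
\[
\varepsilon_h(\xi):=2w_{\max}\sqrt{\bar d}\,\xi+\bar d\,\xi^2,
\]
which tends to $0$ as $\xi\downarrow0$ and does not depend on $\rho$.

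\textbf{Step 4 (consequence).} If $F_1(\rho,\xi)=0$, then $w(\rho)A(\rho,\xi)=\rho$, so $w(\rho)^2A(\rho,\xi)=w(\rho)\rho$, and the stated bound $\xi^2r(\rho,\xi)\le w(\rho)\rho+\varepsilon_h(\xi)$ follows.

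\textbf{Main obstacle.} The only delicate point is Step 1. The Poincar\'e inequality must be applied conditionally on $B^\star$, because the law of $B^\star$ may be discrete. The derivative $Q_\rho'$ must be understood as the almost-everywhere derivative of a Lipschitz map, so that the chain rule holds for Lebesgue-almost every $z$ for each fixed $b$. The resulting conditional bounds then need to be integrated in $b$, which is justified by measurability as in the jointly Borel representative of the derivative discussed earlier in the paper.
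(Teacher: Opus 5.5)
Your proposal is correct and follows essentially the same route as the paper. Both arguments apply the vanishing-at-origin Poincar\'e inequality conditionally on $B^\star$ to the noise increment, use $(Q_\rho')^2\le Q_\rho'$ to get $w(\rho)^2A(\rho,\xi)$, and bound the bias term $\E[(Q_\rho(B^\star)-B^\star)^2]$ uniformly over $\rho\in[0,\rho_+]$. The only cosmetic difference is that you combine the two pieces with Minkowski's inequality, whereas the paper expands the square and applies Cauchy--Schwarz to the cross term; both give the same $\varepsilon_h(\xi)=2w_{\max}\sqrt{C}\,\xi+C\xi^2$.
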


\begin{proof}
For fixed $\rho$, condition on $B^\star=b$, and define $\phi_b(z)
    :=
    \xi
    \left[
        Q_\rho
        \left(
            b+\frac{w(\rho)}{\xi}z
        \right)
        -
        Q_\rho(b)
    \right]$.
Then $\phi_b(0)=0$ and $\phi_b'(z)
    = w(\rho) Q_\rho'
    \left( b+\frac{w(\rho)}{\xi}z
    \right)$ almost everywhere.
By Lemma~\ref{lem:mod-poincare} and the fact that $0\le Q_\rho'\le1$ because $Q_\rho$ is $1$-Lipschitz,
\[
\begin{aligned}
     \xi^2
    \mathbb E_Z
    \left[
        \left(
            Q_\rho
            \left(
                b+\frac{w(\rho)}{\xi}Z
            \right)
            -
            Q_\rho(b)
        \right)^2
    \right]
    &\le
    w(\rho)^2
    \mathbb E_Z
    \left[
        Q_\rho' \left( b+\frac{w(\rho)}{\xi}Z \right)^2 \right] \le
    w(\rho)^2
    \mathbb E_Z
    \left[
        Q_\rho'
        \left(
            b+\frac{w(\rho)}{\xi}Z
        \right)
    \right].
\end{aligned}
\]
Using the above and averaging over $B^\star$, we obtain
\begin{equation}
\label{eq:increment-bound-no-gap}
     \xi^2 \mathbb E
    \left[
        \left( Q_\rho(U_{\rho,\xi})-Q_\rho(B^\star)
        \right)^2
    \right] \le w(\rho)^2A(\rho,\xi).
\end{equation}

Since $Q_\rho$ is $1$-Lipschitz and $Q_\rho(0)$ is uniformly bounded over
$\rho\in[0,\rho_+]$, we have that $|Q_\rho(B^\star)-B^\star|
    \le
    2|B^\star|+C.$
Therefore $C_h
    :=
    \sup_{\rho\in[0,\rho_+]}
    \mathbb E
    \left[
        \left(
            Q_\rho(B^\star)-B^\star
        \right)^2
    \right]
    <\infty$.
    
Write $ X_\rho
    :=
    Q_\rho(U_{\rho,\xi})-Q_\rho(B^\star)$ and $Y_\rho
    :=
    Q_\rho(B^\star)-B^\star.$
Then $Q_\rho(U_{\rho,\xi})-B^\star
    =
    X_\rho+Y_\rho.$ Let $w_{\max}:=\max_{\rho\in[0,\rho_+]}w(\rho)<\infty.$
Using \eqref{eq:increment-bound-no-gap}, the fact that $ 0\le A\le1$, and Cauchy--Schwarz,
\[
\begin{aligned}
    \xi^2r(\rho,\xi)
    =
    \xi^2\mathbb E[(X_\rho+Y_\rho)^2] &\le
    w(\rho)^2A(\rho,\xi)
    +
    2\xi w(\rho)\sqrt{A(\rho,\xi)C_h}
    +
    \xi^2C_h \\
    &\le
    w(\rho)^2A(\rho,\xi)
    +
    2\xi w_{\max}\sqrt{C_h}
    +
    \xi^2C_h.
\end{aligned}
\]
Thus the result holds with $ \varepsilon_h(\xi)
    :=
    2\xi w_{\max}\sqrt{C_h}
    +
    \xi^2C_h.$
If $F_1(\rho,\xi)=0$, then $w(\rho)A(\rho,\xi)=\rho,$
and hence $w(\rho)^2A(\rho,\xi)=w(\rho)\rho.$
This proves the second claim.
\end{proof}

\section{Proof of Theorem~\ref{thm:Bayes VAMP-lower-bound}} \label{sec:main_result2_proof}

\subsection{Effective Gaussian observation and denoising}

Throughout this proof, abbreviate $T(\tau):=L_\mu(\cE_\pi(\tau))$ and $V_\pi:=\VAR(B^\star)$.
For every \(\tau>0\), take the following everywhere-defined version of
the posterior mean:
\begin{equation}
 f_\tau(y)
 =
 \frac{\displaystyle
       \int_{\R}b\exp\!\left\{-\frac{(y-b)^2}{2\tau}\right\}\pi(db)}
      {\displaystyle
       \int_{\R}\exp\!\left\{-\frac{(y-b)^2}{2\tau}\right\}\pi(db)}.
 \label{eq:canonicalposterior}
\end{equation}
The denominator is strictly positive, and the Gaussian likelihood factor makes every
posterior moment finite. Thus \(f_\tau\) is a smooth version of
\(\E[B^\star\mid B^\star+\sqrt\tau Z=y]\). Let
\[
    Y_\tau=B^\star+\sqrt\tau Z,\qquad
    m(\tau):=m_\pi(\tau),\qquad
    \cE_\pi(\tau)
    =
    \frac{m(\tau)}{1-m(\tau)/\tau}.
\]

\begin{lemma}
\label{lem:mmsecalculus}
Assume \(0<V_\pi<\infty\). The functions \(m\) and
\(\cE_\pi\) are differentiable (and hence continuous) on
\((0,\infty)\), and
\begin{align}
 f_\tau'(y)
 &=\frac{\VAR(B^\star\mid Y_\tau=y)}{\tau}
 \quad\text{for Lebesgue-a.e. }y, \label{eq:postderiv}\\
 d_\tau &=\E[f_\tau'(Y_\tau)]
 =\frac{m(\tau)}{\tau}\in(0,1), \\
 m'(\tau)
 &=\E[f_\tau'(Y_\tau)^2], \label{eq:mmseprime}\\
 \cE_\pi'(\tau)
 &=\frac{\VAR(f_\tau'(Y_\tau))}{(1-d_\tau)^2}\geq0. \label{eq:Eprimeglobal}
\end{align}
In particular, both \(m\) and \(\cE_\pi\) are nondecreasing and
continuous. Moreover,
\begin{equation}
 m(\tau)\leq\frac{V_\pi\tau}{V_\pi+\tau},
 \qquad
 0<\cE_\pi(\tau)\leq V_\pi. \label{eq:bestlinear}
\end{equation}
\end{lemma}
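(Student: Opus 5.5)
I will work throughout with the explicit version \(f_\tau\) in \eqref{eq:canonicalposterior} and with Gaussian-channel calculus. The plan has four steps, in order.

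\textbf{Step 1: the posterior derivative \eqref{eq:postderiv}.} Differentiating under the integral in \eqref{eq:canonicalposterior} is justified because \(\pi\in\cP_2\) and the Gaussian factor dominates the polynomial weights locally uniformly in \(y\). This gives
\[
f_\tau'(y)=\tau^{-1}\bigl(\E[(B^\star)^2\mid Y_\tau=y]-f_\tau(y)^2\bigr),
\]
which is \eqref{eq:postderiv}. It in fact holds for every \(y\), since \(f_\tau\) is smooth.

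\textbf{Step 2: the identity for \(d_\tau\) and its range.} Taking expectations in Step 1 gives \(d_\tau=\E[\VAR(B^\star\mid Y_\tau)]/\tau=m(\tau)/\tau\). For the range:
\begin{itemize}
\item \(d_\tau>0\): we have \(m(\tau)>0\) because \(V_\pi>0\) and the posterior is nondegenerate whenever \(\pi\) is not a point mass.
\item \(d_\tau<1\): use the bound \(m(\tau)\le V_\pi\tau/(V_\pi+\tau)<\tau\), proved next.
\end{itemize}
That bound is the risk of the best affine estimator \(\E B^\star+\frac{V_\pi}{V_\pi+\tau}(Y_\tau-\E B^\star)\), so it follows from Bayes optimality of \(f_\tau\).

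\textbf{Step 3: the derivative \eqref{eq:mmseprime}.} This is the classical Gaussian-channel formula: in precision \(s=1/\tau\) one has \(\frac{d}{ds}\mathrm{mmse}=-\E[\VAR(B^\star\mid Y)^2]\) (Guo--Wu--Shamai--Verd\'u). Changing variables and using Step 1 then gives
\[
m'(\tau)=\tau^{-2}\E[\VAR(B^\star\mid Y_\tau)^2]=\E[f_\tau'(Y_\tau)^2].
\]
To keep things self-contained I would instead derive it by differentiating \(\tau\mapsto\E[\VAR(B^\star\mid Y_\tau)]\) directly, using the heat-equation structure of the Gaussian likelihood (\(\partial_\tau\) of the density equals \(\tfrac12\partial_y^2\)) and integrating by parts in \(y\). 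I expect this to be the main technical obstacle. It requires dominated-convergence bounds on the posterior moments up to order four that are locally uniform in \(\tau\in(0,\infty)\); these come from the Gaussian tilt together with the finite second moment of \(\pi\). If the direct computation becomes heavy, I will simply cite the Guo et al.\ result, which holds under \(\E[(B^\star)^2]<\infty\).

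\textbf{Step 4: the derivative \eqref{eq:Eprimeglobal} and the bounds \eqref{eq:bestlinear}.} Write \(\cE_\pi=m/(1-d)\) with \(d=m/\tau\). By the quotient rule,
\[
\cE_\pi'=\frac{m'(1-d)+m\,d'}{(1-d)^2},\qquad d'=\frac{m'}{\tau}-\frac{m}{\tau^2}.
\]
Expanding the numerator gives \(m'(1-d)+m'd-d^2=m'-d^2\), so \(\cE_\pi'=(m'-d^2)/(1-d)^2\). By Step 3 and Step 2, \(m'-d^2=\E[f'^2]-(\E f')^2=\VAR(f_\tau'(Y_\tau))\), which gives \eqref{eq:Eprimeglobal}.

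It follows that \(m\) and \(\cE_\pi\) are differentiable, hence continuous, and nondecreasing. Positivity of \(\cE_\pi\) follows from \(m>0\) and \(d<1\). For the upper bound, \(m\le V_\pi\tau/(V_\pi+\tau)\) implies \(1-m/\tau\ge\tau/(V_\pi+\tau)\). Hence
\[
\cE_\pi\le \frac{V_\pi\tau/(V_\pi+\tau)}{\tau/(V_\pi+\tau)}=V_\pi,
\]
which completes \eqref{eq:bestlinear}.
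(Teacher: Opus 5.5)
Your proposal is correct and follows essentially the same route as the paper: the posterior-derivative identity (you differentiate the explicit ratio directly, while the paper phrases the same computation via Tweedie's score identity), the Guo--Wu--Shamai--Verd\'u derivative formula after the change of variables $s=1/\tau$, the quotient rule giving $\mathcal{E}_\pi'=(m'-d_\tau^2)/(1-d_\tau)^2$, and the best-affine-estimator bound for \eqref{eq:bestlinear}. The only extra detail in the paper is the bound $\mathbb{E}[\operatorname{Var}(B^\star\mid Y_\tau)^2]\le 3\tau^2$, which shows that no fourth-moment assumption on $\pi$ is needed; your fallback citation under $\mathbb{E}[(B^\star)^2]<\infty$ already covers this.
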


\begin{proof}
Let \(p_\tau\) denote the everywhere-positive smooth density of
\(Y_\tau\), and let \(s_\tau=(\log p_\tau)'\) be its score. Differentiating
the Gaussian convolution under the integral gives Tweedie's identities
\[
 f_\tau(y)=y+\tau s_\tau(y),\qquad
 f_\tau'(y)=1+\tau s_\tau'(y)
            =\frac{\VAR(B^\star\mid Y_\tau=y)}{\tau}.
\]
The identities are first pointwise for the smooth convolution and hence
hold \(P_{Y_\tau}\)-almost surely. Averaging the second identity and using
the conditional-variance formula for the scalar MMSE gives
\(\E f_\tau'(Y_\tau)=m(\tau)/\tau\).

We use the positive-SNR MMSE differentiation theorem
\parencite[Proposition~9]{Guo2011MMSE}. In the present
noise-variance parametrization, it states that, for every input with finite
second moment and every \(\tau>0\),
\begin{equation}
 m'(\tau)
 =\frac1{\tau^2}
 \E[\VAR(B^\star\mid Y_\tau)^2]. \label{eq:mmsetheorem}
\end{equation}
Indeed, observing \(Y_\tau\) is equivalent, after multiplication by
\(\tau^{-1/2}\), to observing
\(\sqrt\gamma B^\star+Z\) with \(\gamma=1/\tau\). Next, conditional
variance minimizes conditional squared error and conditional
Cauchy--Schwarz gives $\VAR(B^\star\mid Y_\tau)^2
 \leq
 \E[(B^\star-Y_\tau)^2\mid Y_\tau]^2
 \leq
 \E[(B^\star-Y_\tau)^4\mid Y_\tau]$.
Averaging the right side gives
\(\E[(\sqrt{\tau}Z)^4]=3\tau^2\). Combining
\eqref{eq:mmsetheorem} with \eqref{eq:postderiv} proves
\eqref{eq:mmseprime} without any fourth-moment assumption on the prior.

Differentiating
\(\cE_\pi(\tau)=\tau d_\tau/(1-d_\tau)\), or directly
\(\cE_\pi=m/(1-m/\tau)\), and using
\eqref{eq:mmseprime} gives
\(
 \cE_\pi'(\tau)
 =\frac{\E[f_\tau'(Y_\tau)^2]-d_\tau^2}{(1-d_\tau)^2},
\)
which is \eqref{eq:Eprimeglobal}.

The linear estimator
\(
 b_\pi+\frac{V_\pi}{V_\pi+\tau}(Y_\tau-b_\pi)
\)
has risk \(V_\pi\tau/(V_\pi+\tau)\); optimality of the conditional
expectation proves the first inequality in \eqref{eq:bestlinear}, and
substitution proves \(\cE_\pi(\tau)\leq V_\pi\).
Because a positive-variance prior remains nondegenerate after conditioning
through a strictly positive Gaussian likelihood, \(m(\tau)>0\). The
linear bound is strictly below \(\tau\), proving
\(d_\tau\in(0,1)\) and the remaining strict positivity assertion.
\end{proof}

\begin{lemma}\label{lem:Lcalculus}
The map \(L_\mu:[0,\infty)\to(0,\infty)\), with
\(L_\mu(0)=\sigma^2/\E[S]\), is continuous and nondecreasing. For
\(\omega>0\), if
\[
 c(\omega)=\E\!\left[\frac{\sigma^2}{\sigma^2+\omega S}\right],
 \qquad
 x_\omega(S)=\frac{\omega S}{\sigma^2+\omega S},
\]
then
\begin{equation}
 L_\mu(\omega)=\frac{\omega c(\omega)}{1-c(\omega)},
 \qquad
 L_\mu'(\omega)
 =\frac{\VAR(x_\omega(S))}{(1-c(\omega))^2}\geq0. \label{eq:Lglobalderiv}
\end{equation}
It is strictly increasing unless \(\mu\) is a point mass.
Consequently \(T=L_\mu\circ\cE_\pi\) is continuous and
nondecreasing.
\end{lemma}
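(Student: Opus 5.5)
The plan is to work with the representation $L_\mu(\omega)=\omega c(\omega)/(1-c(\omega))$, where $c(\omega)=\E[\sigma^2/(\sigma^2+\omega S)]$. This form comes directly from \eqref{eq:spectrum-mod}, since the numerator equals $\omega c(\omega)$ and the denominator equals $\E[x_\omega(S)]=1-c(\omega)$. For $\omega>0$ the denominator is strictly positive: $\E[S]>0$ (Assumption~\ref{ass1}) gives $\PP(S>0)>0$, and hence $c(\omega)<1$.

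\textbf{Continuity.} On $(0,\infty)$, continuity follows from dominated convergence. The integrands $\sigma^2/(\sigma^2+\omega S)\in(0,1]$ are bounded and continuous in $\omega$, so $c$ is continuous, and $1-c>0$. At $\omega\downarrow0$ I will divide numerator and denominator by $\omega$. The ratio becomes $\E[\sigma^2/(\sigma^2+\omega S)]/\E[S/(\sigma^2+\omega S)]$. The numerator tends to $1$ and the denominator tends to $\E[S]/\sigma^2$, using bounded convergence and compact support of $\mu$. This gives $L_\mu(0)=\sigma^2/\E[S]$.

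\textbf{Derivative.} Both integrands are smooth in $\omega$ with uniformly bounded derivatives on compact subsets of $(0,\infty)$, since $S\le s_+$. So I may differentiate under the expectation. Write $x=x_\omega(S)$ and note $\sigma^2/(\sigma^2+\omega S)=1-x$ and $\partial_\omega x = x(1-x)/\omega$. Then the numerator $N=\omega\E[1-x]$ and the denominator $D=\E[x]$ have
\[
N'=\E[1-x]-\E[x(1-x)],\qquad D'=\E[x(1-x)]/\omega .
\]
Next I compute $N'D-ND'$. The cross terms combine to
\[
N'D-ND'=\E[x]\E[1-x]-\E[x(1-x)]\bigl(\E[x]+\E[1-x]\bigr)=\E[x]-\E[x]^2-\E[x]+\E[x^2]=\VAR(x).
\]
Dividing by $D^2=(1-c)^2$ gives \eqref{eq:Lglobalderiv}. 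This algebra is the main (mild) obstacle and I would verify it carefully.

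\textbf{Monotonicity.} Since $L_\mu'\ge0$, $L_\mu$ is nondecreasing on $(0,\infty)$, and by continuity on $[0,\infty)$. For strictness: $s\mapsto \omega s/(\sigma^2+\omega s)$ is strictly increasing, so $\VAR(x_\omega(S))=0$ if and only if $S$ is almost surely constant, that is, $\mu$ is a point mass. Otherwise $L_\mu'>0$ on $(0,\infty)$, and $L_\mu$ is strictly increasing.

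\textbf{Composition.} Finally, $T=L_\mu\circ\cE_\pi$ is continuous and nondecreasing, because $\cE_\pi$ is continuous, nondecreasing and positive by Lemma~\ref{lem:mmsecalculus}, and $L_\mu$ is continuous and nondecreasing on $[0,\infty)$.
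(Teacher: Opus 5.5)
Your proposal is correct and takes essentially the same route as the paper. Both arguments use the representation $L_\mu(\omega)=\omega c(\omega)/(1-c(\omega))$ and differentiate it directly to obtain the variance numerator; you spell out the cancellation that the paper leaves as ``simplifying.'' Both then get strictness from the strict monotonicity of $s\mapsto x_\omega(s)$, continuity from dominated convergence together with the $\omega\downarrow 0$ limit $\sigma^2/\E[S]$, and the claim about $T$ from Lemma~\ref{lem:mmsecalculus}.
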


\begin{proof}
The first representation follows directly from \eqref{eq:spectrum-mod}. Since $\omega c'(\omega)
 =-\E\!\left[
 \frac{\sigma^2\omega S}{(\sigma^2+\omega S)^2}
 \right]$, differentiating \(\omega c/(1-c)\) and simplifying gives
\eqref{eq:Lglobalderiv}; the numerator is unchanged if \(x_\omega\) is
replaced by \(1-x_\omega\). It vanishes exactly when \(S\) is almost
surely constant. Dominated convergence gives continuity for
\(\omega>0\), and expansion at zero gives
\(L_\mu(\omega)\to\sigma^2/\E[S]\). The last assertion follows from
Lemma~\ref{lem:mmsecalculus}.
\end{proof}

\begin{lemma}[Posterior integration by parts]\label{lem:posterioribp}
Fix \(\tau>0\). If \(e\) is locally absolutely continuous,
\(e(Y_\tau)\in L^2\), and
\(\E|e'(Y_\tau)|<\infty\), then $\E[e'(Y_\tau)]
 =\frac1\tau
  \E[e(Y_\tau)(Y_\tau-f_\tau(Y_\tau))]$.
\end{lemma}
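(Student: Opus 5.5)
The plan is to reduce the claim to one-dimensional Gaussian integration by parts applied conditionally on \(B^\star\), and then to average out \(B^\star\) using the posterior mean. Let \(\varphi_\tau\) denote the \(N(0,\tau)\) density. Since \(Y_\tau\) has the everywhere-positive density \(p_\tau(y)=\int\varphi_\tau(y-b)\,\pi(db)\), we can write
\[
\E[e'(Y_\tau)]=\int e'(y)p_\tau(y)\,dy .
\]
The first step is Stein's lemma conditionally on \(B^\star=b\): for \(Y=b+\sqrt\tau Z\),
\[
\E[e'(Y)\mid B^\star=b]=\tau^{-1}\E[e(Y)(Y-b)\mid B^\star=b].
\]
This holds whenever \(e\) is locally absolutely continuous and both sides are finite. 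We then integrate over \(b\sim\pi\) to get
\[
\E[e'(Y_\tau)]=\tau^{-1}\E[e(Y_\tau)(Y_\tau-B^\star)].
\]

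The second step is to apply the tower property, conditioning on \(Y_\tau\). This gives
\[
\E[e(Y_\tau)B^\star]=\E[e(Y_\tau)f_\tau(Y_\tau)],
\]
because \(f_\tau\) in \eqref{eq:canonicalposterior} is a version of the posterior mean. Substituting this yields the claimed identity. This exchange is legitimate because \(e(Y_\tau)\in L^2\) and \(B^\star\in L^2\), so \(e(Y_\tau)B^\star\) is integrable by Cauchy--Schwarz. The same argument shows \(e(Y_\tau)Y_\tau\in L^1\), since \(Y_\tau\in L^2\).

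The main obstacle is justifying the conditional Stein identity and the Fubini exchange under only the stated hypotheses. The hypothesis \(\E|e'(Y_\tau)|<\infty\) is integrability against the mixture \(p_\tau\), not against each \(\varphi_\tau(\cdot-b)\) separately. To handle this I would not use conditional Stein at all, but work directly with the mixture density. Observe that
\[
p_\tau'(y)=-\tau^{-1}\int (y-b)\varphi_\tau(y-b)\,\pi(db)=-\tau^{-1}\,(y-f_\tau(y))\,p_\tau(y),
\]
which is Tweedie's formula from the proof of Lemma~\ref{lem:mmsecalculus}. Integrating by parts on \([-R,R]\) then gives
\[
\int_{-R}^R e'p_\tau\,dy=\bigl[e\,p_\tau\bigr]_{-R}^{R}+\tau^{-1}\int_{-R}^{R} e(y)(y-f_\tau(y))p_\tau(y)\,dy .
\]
Both integrands are absolutely integrable on \(\R\). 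For the second, \(e(Y_\tau)\in L^2\) and \(Y_\tau-f_\tau(Y_\tau)\in L^2\), the latter because its second moment is at most \(2\E[Y_\tau^2]+2\E[(B^\star)^2]\). The remaining task is to show that the boundary term vanishes along some sequence \(R_k\to\infty\), and similarly on the negative side. For this, note that \((e p_\tau)'=e'p_\tau+e\,p_\tau'\) is integrable on \(\R\) by the two bounds just given. Hence \(e p_\tau\) has finite limits at \(\pm\infty\). Since \(|e|p_\tau\) is integrable (because \(e(Y_\tau)\in L^1\)), those limits must be zero. Letting \(R\to\infty\) by dominated convergence then completes the proof.
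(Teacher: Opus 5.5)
Your argument is correct and is essentially the paper's. Both rest on Tweedie's identity $p_\tau'(y)=-\tau^{-1}(y-f_\tau(y))p_\tau(y)$ plus truncated integration by parts; the only difference is the boundary term, which the paper kills with a smooth cutoff $\chi_R$ satisfying $\|\chi_R'\|_\infty\le C/R$ (so that term is at most $(C/R)\E|e(Y_\tau)|$), whereas you show $e\,p_\tau$ has finite limits at $\pm\infty$ that must vanish because $|e|p_\tau$ is integrable. Incidentally, the conditional-Stein route you set aside would also have worked: by Tonelli, $\E|e'(Y_\tau)|<\infty$ already gives $\E|e'(b+\sqrt\tau Z)|<\infty$ for $\pi$-a.e.\ $b$, and together with the Cauchy--Schwarz bounds you supply for the Fubini and tower steps, that is all that route requires.
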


\begin{proof}
Tweedie's identity gives
\(
 \frac{p_\tau'(y)}{p_\tau(y)}
 =\frac{f_\tau(y)-y}{\tau}.
\)
Let \(\chi_R\) be a smooth cutoff which equals one on \([-R,R]\), is
supported on \([-2R,2R]\), and satisfies
\(\|\chi_R'\|_\infty\leq C/R\). Ordinary compactly supported integration
by parts gives
\[
 -\int e\chi_R p_\tau'
 =\int e'\chi_Rp_\tau+\int e\chi_R'p_\tau.
\]
The first term on the right converges by dominated convergence. The
absolute value of the second is at most
\((C/R)\E|e(Y_\tau)|\) and hence tends to zero. The left side converges to
\(\tau^{-1}\E[e(Y_\tau)(Y_\tau-f_\tau(Y_\tau))]\);
this is integrable by Cauchy--Schwarz because
\(\E[(Y_\tau-f_\tau(Y_\tau))^2]\leq\tau\).
\end{proof}

\subsubsection{Comparison of the MSE and extrinsic variance}

Fix \(\tau>0\), abbreviate \(Y=Y_\tau\), \(f=f_\tau\),
\(m=m(\tau)\), and let \(g\) be a Lipschitz scalar estimator. Let $a=\E[g'(Y)]$ and $r=\E[(g(Y)-B^\star)^2],$ and when \(a<1\), define $\omega_g(\tau)=\frac{r-a^2\tau}{(1-a)^2}$.

\begin{lemma}
\label{lem:scalarcomparison}
For every such \(g\),
$r\geq m(\tau).$
If \(a<1\), then $\omega_g(\tau)\geq\cE_\pi(\tau)$.
\end{lemma}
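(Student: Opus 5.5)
The first claim, \(r\ge m(\tau)\), is immediate: \(m(\tau)\) is the MMSE of estimating \(B^\star\) from \(Y\), and \(g(Y)\) is a square-integrable estimator of \(B^\star\). Square integrability holds because \(g\) is Lipschitz and \(\E[Y^2]<\infty\). So the work is in the second claim. The plan is to decompose the error of \(g\) around the posterior mean \(f\) and to use posterior integration by parts (\cref{lem:posterioribp}) to tie the derivative \(a=\E[g'(Y)]\) to the correlation of \(g\) with the posterior residual.

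Write \(e:=g-f\). Since \(f(Y)=\E[B^\star\mid Y]\), orthogonality gives
\[
r=m+\E[e(Y)^2].
\]
Next compute \(a\). By \cref{lem:mmsecalculus}, \(\E[f'(Y)]=d_\tau=m/\tau\). For the remaining piece, apply \cref{lem:posterioribp} to \(e\). Its hypotheses hold: \(e\) is locally absolutely continuous since \(g\) is Lipschitz and \(f\) is smooth, \(e(Y)\in L^2\), and \(\E|e'(Y)|\le \mathrm{Lip}(g)+d_\tau<\infty\) because \(f'\ge0\). This yields
\[
\E[e'(Y)]=\frac{1}{\tau}\,\E\bigl[e(Y)\,(Y-f(Y))\bigr].
\]
Setting \(W:=Y-f(Y)\), so that \(\E[W^2]=\E[(Y-f(Y))^2]=\tau-m\) (variance decomposition of the noise \(\sqrt\tau Z=Y-B^\star\), with \(f(Y)-B^\star\) orthogonal to functions of \(Y\)), we get
\[
a=d+\frac{1}{\tau}\,\E[eW], \qquad d:=\frac{m}{\tau}.
\]

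Now reduce to a one-variable inequality. Let \(x:=\E[eW]/\tau\), so \(a=d+x\). Cauchy--Schwarz gives \(\E[e^2]\ge \E[eW]^2/\E[W^2]=\tau^2x^2/(\tau-m)=\tau x^2/(1-d)\). Hence
\[
\omega_g(\tau)\ \ge\ \frac{m+\tau x^2/(1-d)-(d+x)^2\tau}{(1-d-x)^2}.
\]
Using \(m=d\tau\), the numerator equals
\[
\tau\Bigl[d(1-d)-2dx+x^2\,\tfrac{d}{1-d}\Bigr]=\frac{\tau d}{1-d}\,(1-d-x)^2 .
\]
Dividing by \((1-d-x)^2=(1-a)^2\), which is strictly positive since \(a<1\), gives
\[
\omega_g(\tau)\ \ge\ \frac{\tau d}{1-d}=\frac{m}{1-m/\tau}=\cE_\pi(\tau).
\]

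The main obstacle I expect is the algebraic identity for the numerator, which I would verify by direct expansion. The remaining delicate point is justifying \cref{lem:posterioribp} for \(e\); with \(e(Y)\in L^2\) and \(e'\) bounded in absolute value by \(\mathrm{Lip}(g)+f'\), whose expectation is finite, this looks routine.
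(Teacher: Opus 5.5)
Your proposal is correct and follows essentially the same route as the paper's proof. Both use the orthogonal decomposition $r=m+\mathbb{E}[(g(Y)-f(Y))^2]$, posterior integration by parts to obtain $a-d=\tau^{-1}\mathbb{E}[(g-f)(Y)\,(Y-f(Y))]$, the identity $\mathbb{E}[(Y-f(Y))^2]=\tau-m$, and Cauchy--Schwarz, followed by the same algebra giving $r-a^2\tau\ge \tau d(1-a)^2/(1-d)$.
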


\begin{proof}
Conditional expectation is the \(L^2\)-projection onto
\(\sigma(Y)\), hence $r=m+\E[(g(Y)-f(Y))^2]$, which proves the first claim.

For the second claim, set \(u=g-f\) and \(d=m/\tau\). The Gaussian posterior
identities $f'(y)=\frac{\VAR(B^\star\mid Y=y)}{\tau}$ and $\E f'(Y)=d$, combined with Lemma~\ref{lem:posterioribp}, give
\(
 \E[u'(Y)]
 =a-d
 =\frac1\tau\E[u(Y)(Y-f(Y))],
\)
while the projection identity gives
\(
 \tau
 =\E[(Y-B^\star)^2]
 =\E[(B^\star-f(Y))^2]+\E[(Y-f(Y))^2].
\)
Consequently, we have
\(
 \E[(Y-f(Y))^2]=\tau-m=\tau(1-d).
\)
The lemma's hypotheses hold because \(g\) is Lipschitz; hence
\(u(Y)\in L^2\) and
\(\E|u'(Y)|\leq\operatorname{Lip}(g)+d<\infty\).
Cauchy--Schwarz therefore yields $\E[u(Y)^2]\geq \frac{\tau(a-d)^2}{1-d}$, and
using \(r=\tau d+\E u^2\) and simplifying, we have that $r-a^2\tau
 \geq
 \frac{\tau d(1-a)^2}{1-d}$.
Division by \((1-a)^2\) gives
\[
 \omega_g(\tau)
 \geq \frac{\tau d}{1-d}
 =\frac{m(\tau)}{1-m(\tau)/\tau}
 =\cE_\pi(\tau).
\]
\end{proof}

\subsubsection{A strict gap in extrinsic variance when the posterior mean is not proximal}

\begin{lemma}[Extrinsic Pythagorean identity]\label{lem:pythag}
Assume \(V_\pi>0\), fix \(\tau>0\), and let
\[
 d=\frac{m(\tau)}{\tau}\in(0,1),\qquad
 e_B(y)=\frac{f_\tau(y)-dy}{1-d}.
\]
For a Lipschitz \(g\) with \(a=\E g'(Y_\tau)<1\), let
\[
 e_g(y)=\frac{g(y)-ay}{1-a}.
\]
Then
\begin{equation}
 \omega_g(\tau)-\cE_\pi(\tau)
 =
 \norm{e_g-e_B}_{L^2(P_{Y_\tau})}^2. \label{eq:pythag}
\end{equation}
Moreover, if \(f_\tau\) is not a scalar convex proximal map, then there is
a number \(\Delta_\tau>0\), depending only on \((\pi,\tau)\), such that
\begin{equation}
 \omega_g(\tau)\geq\cE_\pi(\tau)+\Delta_\tau \label{eq:strictomega}
\end{equation}
for every scalar convex proximal map \(g\) with \(\E g'(Y_\tau)<1\).
\end{lemma}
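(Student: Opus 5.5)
The plan is to interpret $\omega_g(\tau)$ and $\cE_\pi(\tau)$ as mean squared errors of the "extrinsic" estimators $e_g$ and $e_B$. The identity \eqref{eq:pythag} then becomes an orthogonality statement. The strict gap will come from a closedness argument in $L^2(P_{Y_\tau})$, not from any compactness in $g$.

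\textbf{Step 1: the two variances are MSEs of $e_g$ and $e_B$.} Write $Y=Y_\tau=B^\star+\sqrt\tau Z$. First,
\[
e_g(Y)-B^\star=\frac{(g(Y)-B^\star)-a(Y-B^\star)}{1-a}.
\]
Conditioning on $B^\star$ and applying Stein's lemma gives $\E[(g(Y)-B^\star)(Y-B^\star)]=\tau\E[g'(Y)]=\tau a$. Expanding the square then yields
\[
\E[(e_g(Y)-B^\star)^2]=\frac{r-2a^2\tau+a^2\tau}{(1-a)^2}=\omega_g(\tau).
\]
The same computation with $f_\tau$ in place of $g$, together with $\E f_\tau'(Y)=d$ from Lemma~\ref{lem:mmsecalculus}, gives $\E[(e_B(Y)-B^\star)^2]=(m-d^2\tau)/(1-d)^2=\tau d/(1-d)=\cE_\pi(\tau)$. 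Note that $f_\tau$ is smooth with $\E|f_\tau'|<\infty$, so Stein's lemma applies to it as well.

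\textbf{Step 2: the cross term vanishes.} Put $k:=e_g-e_B$, which is locally Lipschitz and in $L^2$. Then
\[
\omega_g-\cE_\pi=\|k\|^2+2\E[k(Y)(e_B(Y)-B^\star)],
\]
so it suffices to show the cross term is zero. Split
\[
e_B(Y)-B^\star=\frac{(f(Y)-B^\star)-d(Y-B^\star)}{1-d}.
\]
The projection property gives $\E[k(Y)(f(Y)-B^\star)]=0$. Stein's lemma gives $\E[k(Y)(Y-B^\star)]=\tau\E[k'(Y)]$. Finally,
\[
\E k'(Y)=\frac{\E g'-a}{1-a}-\frac{\E f'-d}{1-d}=0 .
\]
This proves \eqref{eq:pythag}.

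\textbf{Step 3: the strict gap via a closed set.} For a scalar convex proximal map $g$ (nondecreasing and $1$-Lipschitz), we have $e_g'=(g'-a)/(1-a)\le 1$ a.e. Equivalently, $y\mapsto y-e_g(y)$ is nondecreasing. Let $\mathcal K:=\{e\in L^2(P_Y): e \text{ has a version with } y-e(y) \text{ nondecreasing}\}$. I claim $\mathcal K$ is closed. Given $L^2$ convergence, pass to an a.e.-convergent subsequence; because $P_Y$ has an everywhere-positive density, an a.e. limit of nondecreasing functions agrees a.e. with a nondecreasing function (take right-continuous regularizations). On the other hand, $f=f_\tau$ is always nondecreasing since $f'=\VAR(B^\star\mid Y)/\tau\ge0$. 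Hence $f$ is a scalar convex proximal map if and only if $f'\le1$, which in turn holds if and only if $e_B'=(f'-d)/(1-d)\le 1$, i.e. $e_B\in\mathcal K$ (using that $e_B$ is smooth). Here I use the standard scalar characterization of proximal maps of proper lsc convex functions as the nondecreasing $1$-Lipschitz maps. Therefore, if $f_\tau$ is not proximal, then $e_B\notin\mathcal K$, and closedness gives
\[
\Delta_\tau:=\operatorname{dist}_{L^2(P_Y)}(e_B,\mathcal K)^2>0.
\]
This depends only on $(\pi,\tau)$, and Step 2 yields \eqref{eq:strictomega} uniformly over proximal $g$ with $a<1$.

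The main obstacle I expect is the technical side of Steps 2 and 3: justifying Stein's lemma for $k$ (it needs $f_\tau'$ integrable and $f_\tau(Y)\in L^2$, which Lemma~\ref{lem:mmsecalculus} and Lemma~\ref{lem:posterioribp} supply), and the closedness of $\mathcal K$. The key simplification is working with $e_g$ rather than $g$, so that the possible degeneration $a\uparrow1$ never has to be handled.
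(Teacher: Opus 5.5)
Your proof is correct and is essentially the paper's argument. The paper phrases your Step~2 as $e_B$ being the orthogonal projection of $f_\tau$ onto the kernel of the functional $\ell(e)=\tau^{-1}\E[e(Y)(Y-f_\tau(Y))]=\E[e'(Y)]$, which is exactly your vanishing cross term, and it obtains $\Delta_\tau$ as the squared distance from $e_B$ to a closed set of functions with $De\le 1$ distributionally. The only differences are minor: the paper's closed set also imposes $\ell(e)=0$ and is shown closed via $L^1_{\mathrm{loc}}$ limits of distributional inequalities, while your larger set (no hyperplane constraint, closedness via a.e.\ subsequences and monotone regularization) works equally well, since only positivity of $\Delta_\tau$ is needed.
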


\begin{proof}
Let
\(
Y:=Y_\tau, f:=f_\tau, m:=m_\pi(\tau),
\)
and let \(P_Y\) denote the law of \(Y\). Define on
\(\mathcal H=L^2(P_Y)\) the bounded linear functional $\ell(e)
 :=
 \frac1\tau\E[e(Y)(Y-f(Y))]$.
It is bounded because \(\E[(Y-f(Y))^2]=\tau-m<\infty\).
For the functions used below, Lemma~\ref{lem:posterioribp} gives
\(\ell(e)=\E[e'(Y)]\). Indeed, \(e_g\) is Lipschitz, and we have $e_B'=\frac{f'-d}{1-d}$ and $\E|e_B'(Y)|\leq\frac{2d}{1-d}<\infty$;
both functions are square-integrable with respect to the law of $Y$.
In particular, \(\ell(e_g)=\ell(e_B)=0\). Furthermore,
\(
 f-e_B=\frac{d}{1-d}(Y-f),
\)
so \(f-e_B\) is a scalar multiple of the Riesz representer of \(\ell\).
Thus \(e_B\) is the orthogonal projection of \(f\) onto the closed
hyperplane \(\ker\ell\).

To make the risk identity explicit, observe that $ (1-a)(e_g(Y)-B^\star)
 =g(Y)-B^\star-a\sqrt\tau Z$,
and, conditionally on \(B^\star\), Gaussian integration by parts gives $\E[Z(g(Y)-B^\star)]
 =\sqrt\tau\,\E[g'(Y)]
 =\sqrt\tau\,a$.
Expanding the preceding square therefore gives $\E[(e_g(Y)-B^\star)^2]
 =\frac{r-a^2\tau}{(1-a)^2}
 =\omega_g(\tau)$.
A direct calculation gives $\norm{f-e_B}_{\mathcal H}^2
 =\frac{d^2}{(1-d)^2}\E[(Y-f(Y))^2]
 =\frac{\tau d^2}{1-d}$.
Because conditional expectation is an \(L^2\)-projection, we have that $\E[(e(Y)-B^\star)^2]
 =m+\norm{e-f}_{\mathcal H}^2$, and taking \(e=e_B\) gives $\E[(e_B(Y)-B^\star)^2]
 =\tau d+\frac{\tau d^2}{1-d}
 =\frac{\tau d}{1-d}
 =\cE_\pi(\tau)$.
Applying the Hilbert-space Pythagorean theorem inside \(\ker\ell\) proves
\eqref{eq:pythag}.

It remains to prove uniform strictness: the uniform gap over all scalar convex proximal maps. By
\eqref{eq:canonicalposterior}, \(f\) is smooth, and the density of \(Y\)
is smooth and strictly positive on \(\R\). Consider the closed set
\[
 \mathcal K
 =
 \left\{
 e\in\mathcal H:\ell(e)=0,\quad De\leq\mathcal L^1
 \text{ in the sense of distributions}
 \right\}.
\]
To see that \(\mathcal K\) is closed, suppose \(e_n\to e\) in
\(\mathcal H\). On every compact interval, the density of \(Y\) is bounded
below by a positive number, so \(e_n\to e\) in \(L^1_{\mathrm{loc}}\).
For every nonnegative \(\varphi\in C_c^\infty(\R)\), the inequality $-\int e_n\varphi'\leq\int\varphi$ passes to the limit. The condition \(\ell(e)=0\) also passes to the limit
because \(\ell\) is bounded. Hence \(e\in\mathcal K\).

If \(g\) is proximal, then \(0\leq g'\leq1\) almost everywhere and $ e_g'=\frac{g'-a}{1-a}\leq1$.
Thus \(e_g\in\mathcal K\). On the other hand, $ e_B'=\frac{f'-d}{1-d}$.
The posterior mean is nondecreasing because
\(f'=\VAR(B^\star\mid Y=\cdot)/\tau\geq0\). By \cref{prop:scalar-prox-characterization-app}, a map
\(T:\mathbb R\to\mathbb R\) is the proximal map of some proper lower semicontinuous convex function if and only if it is nondecreasing
and one-Lipschitz. Since \(f=f_\tau\) is nondecreasing, the assumption
that \(f\) is not a scalar convex proximal map implies that
\(f\) is not one-Lipschitz. If \(f'\leq1\) almost everywhere, then local absolute continuity implies that $f$ is one-Lipschitz. Because $f$ is locally absolutely continuous but not one-Lipschitz, \(f'>1\) on a set of positive Lebesgue measure. 
Consequently,
\(e_B'>1\) on that set, so \(e_B\notin\mathcal K\). Since
\(\mathcal K\) is closed,
\[
 \Delta_\tau
 :=\operatorname{dist}_{\mathcal H}(e_B,\mathcal K)^2>0.
\]
Now \eqref{eq:pythag} implies \eqref{eq:strictomega}.
\end{proof}

\subsubsection{Existence of an \texorpdfstring{$h$}{h}-admissible lower-bound pair}

\begin{lemma}\label{lem:lb-pair-existence}
Assume \(V_\pi>0\).

\begin{enumerate}[label=\textup{(\roman*)}]
\item If \(\mu\) is not a point mass and \(f_{\tau_B^+}\) is not proximal,
there is \(u_+>\cE_\pi(\tau_B^+)\) such that
\begin{equation}
              L_\mu(u_+)>\tau_B^+.  
\end{equation}

\item If \(f_{\tau_B^+}\) is proximal, then there is
\(\epsilon_{\rm fp}>0\) such that
\begin{equation}
       T(\tau)>\tau
       \quad\text{for every }\tau\in(\tau_B^+-\epsilon_{\rm fp},\tau_B^+).
\end{equation}
\end{enumerate}
\end{lemma}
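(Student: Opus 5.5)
Part (i) uses strict monotonicity of $L_\mu$. Since $\tau_B^+$ is a fixed point, $L_\mu(\cE_\pi(\tau_B^+))=T(\tau_B^+)=\tau_B^+$. When $\mu$ is not a point mass, Lemma~\ref{lem:Lcalculus} says $L_\mu$ is strictly increasing on $[0,\infty)$: its derivative $\VAR(x_\omega(S))/(1-c(\omega))^2$ is strictly positive for every $\omega>0$. So any $u_+>\cE_\pi(\tau_B^+)$ gives $L_\mu(u_+)>L_\mu(\cE_\pi(\tau_B^+))=\tau_B^+$. To match the use in the proof sketch, we take $u_+:=\cE_\pi(\tau_B^+)+\Delta_{\tau_B^+}$, where $\Delta_{\tau_B^+}>0$ comes from Lemma~\ref{lem:pythag}; this uses the hypothesis that $f_{\tau_B^+}$ is not proximal. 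Note that $\cE_\pi(\tau_B^+)>0$ by Lemma~\ref{lem:mmsecalculus}, so strict monotonicity applies on the relevant range.

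Part (ii) needs more care, because $T$ could coincide with the identity on a left neighborhood of $\tau_B^+$, or have fixed points accumulating at $\tau_B^+$ from below. The plan is to use proximality to show that $T$ crosses the diagonal transversally at $\tau_B^+$. First, $T(\tau)-\tau$ has constant sign just below $\tau_B^+$, because $\tau_B^+$ is the greatest fixed point and $T(\tau)\le L_\mu(V_\pi)<\infty$. More precisely, for $\tau>\tau_B^+$ we have $T(\tau)<\tau$: the difference $T(\tau)-\tau$ is continuous, tends to $-\infty$, and has no zero above $\tau_B^+$. It therefore suffices to prove $T'(\tau)<1$ in a left neighborhood of $\tau_B^+$, or at least that $\tau\mapsto T(\tau)-\tau$ is strictly decreasing there. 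Then $T(\tau)-\tau>T(\tau_B^+)-\tau_B^+=0$ for $\tau$ slightly below $\tau_B^+$.

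To bound $T'$, use the chain rule with \eqref{eq:Eprimeglobal} and \eqref{eq:Lglobalderiv}:
\[
T'(\tau)=L_\mu'(\cE_\pi(\tau))\,\frac{\VAR(f_\tau'(Y_\tau))}{(1-d_\tau)^2}.
\]
When $f_\tau$ is proximal we have $0\le f_\tau'\le 1$, hence $\E[(f_\tau')^2]\le d_\tau$ and $\VAR(f_\tau')\le d_\tau(1-d_\tau)$. This gives $\cE_\pi'(\tau)\le d_\tau/(1-d_\tau)=\cE_\pi(\tau)/\tau$, the same mechanism as the contraction bound $g'_-(\tau_2)<1$ in the proof of Theorem~\ref{thm:oracle-mse}. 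On the spectral side, the analogous bound $\omega L_\mu'(\omega)<L_\mu(\omega)$ should follow from the formula in Lemma~\ref{lem:Lcalculus}, using $\VAR(x_\omega)<\E[x_\omega(1-x_\omega)]$, the same inequality $\E[S/(S+c)^2]>0$ used earlier. Multiplying, at the fixed point we get
\[
T'(\tau_B^+)<\frac{L_\mu(\cE_\pi)}{\cE_\pi}\cdot\frac{\cE_\pi}{\tau_B^+}=1.
\]

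The main obstacle is that proximality is assumed only at $\tau_B^+$, not at nearby $\tau<\tau_B^+$, so we cannot bound $\cE_\pi'(\tau)$ pointwise near $\tau_B^+$. The first fix to try is continuity: show that $\tau\mapsto\VAR(f_\tau'(Y_\tau))$ and $d_\tau$ are continuous (dominated convergence via Tweedie's formula and the smooth posterior density), so $T'$ is continuous. Then the strict inequality $T'(\tau_B^+)<1$ extends to a neighborhood, and the mean value theorem gives $T(\tau)-\tau>0$ on $(\tau_B^+-\epsilon_{\rm fp},\tau_B^+)$. If the strictness at $\tau_B^+$ degenerates, for instance in the flat spectrum case, we fall back on the separate flat-case handling (Lemma~\ref{lem:flat}), where $T$ is constant and the claim is immediate.
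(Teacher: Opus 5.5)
Part (i) is the paper's argument. The core of your part (ii), the transversality bound $T'(\tau_B^+)<1$, is also the paper's. Your factorization through the two elasticity bounds $\mathcal E_\pi'(\tau)\le \mathcal E_\pi(\tau)/\tau$ (from $0\le f_\tau'\le 1$) and $\omega L_\mu'(\omega)<L_\mu(\omega)$, combined with $L_\mu(\mathcal E_\pi(\tau_B^+))=\tau_B^+$, is an equivalent repackaging of the paper's step. The paper instead derives $\mathbb E[x]=d$ at the fixed point and bounds both variances by $d(1-d)$.

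Two points need fixing.

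First, the spectral inequality is misstated. The bound $\omega L_\mu'(\omega)<L_\mu(\omega)$ is equivalent to $\operatorname{Var}(x_\omega)<\mathbb E[x_\omega](1-\mathbb E[x_\omega])$, i.e.\ to $\mathbb E[x_\omega(1-x_\omega)]>0$. That holds because $x_\omega(1-x_\omega)=cS/(S+c)^2$ with $c=\sigma^2/\omega$ and $\mathbb P(S>0)>0$. The inequality you actually wrote, $\operatorname{Var}(x_\omega)<\mathbb E[x_\omega(1-x_\omega)]$, is false in general. For $\mu=\tfrac12\delta_0+\tfrac12\delta_s$ with $\omega s/\sigma^2>2$, the variable $x_\omega$ is two-point on $\{0,x_1\}$ with $x_1>2/3$, and then the left side exceeds the right.

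Second, the ``main obstacle'' you identify is not an obstacle. You do not need $T'<1$ on a neighborhood, continuity of $T'$, the mean value theorem, or proximality away from $\tau_B^+$. Since $T$ is differentiable at $\tau_B^+$ with $T(\tau_B^+)=\tau_B^+$,
\[
\frac{T(\tau)-\tau}{\tau-\tau_B^+}\longrightarrow T'(\tau_B^+)-1<0 \qquad (\tau\uparrow\tau_B^+).
\]
Because $\tau-\tau_B^+<0$, this gives $T(\tau)>\tau$ for all $\tau$ sufficiently close below $\tau_B^+$, which is exactly how the paper concludes. Your continuity route could be completed (the MMSE is smooth in $\tau>0$), but as written it is an unproved claim doing work that is not needed.

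Two smaller remarks. Your claim that $T(\tau)-\tau$ has constant sign just below $\tau_B^+$ ``because $\tau_B^+$ is the greatest fixed point'' is unjustified, since maximality only controls the sign above $\tau_B^+$; you never use it, so it can be dropped. The flat-spectrum fallback is also unnecessary: there $L_\mu'\equiv 0$, so your strict spectral bound holds trivially and the main argument already covers that case.
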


\begin{proof}
For (i), apply Lemma~\ref{lem:pythag} at \(\tau_B^+\) and take $u_+=\cE_\pi(\tau_B^+)+\Delta_{\tau_B^+}$.
If \(\mu\) is not a point mass, \(L_\mu\) is strictly increasing, since by \cref{lem:Lcalculus},
\begin{equation}
 L_\mu'(\omega)
 =
 \frac{\VAR(x_\omega(S))}
 {\E[x_\omega(S)]^2}>0, \label{eq:Lderiv}
\end{equation}
where \(x_\omega(S) =\frac{\omega S}{\sigma^2+\omega S}\). Hence $L_\mu(u_+)>L_\mu(\cE_\pi(\tau_B^+))=T(\tau_B^+)=\tau_B^+$.

For (ii), abbreviate
\[
 f=f_{\tau_B^+},\quad m=m(\tau_B^+),\quad
 d=\frac{m}{\tau_B^+}\in(0,1),\quad
 \omega=\cE_\pi(\tau_B^+), x = \frac{\omega S}{\sigma^2 + \omega S}.
\]
Lemma~\ref{lem:mmsecalculus}, applied at \(\tau_B^+\), gives
\begin{equation}
 \cE_\pi'(\tau_B^+)
 =
 \frac{\VAR(f'(Y_{\tau_B^+}))}{(1-d)^2}. \label{eq:Ederiv}
\end{equation}

At a Bayes fixed point, we have $L_\mu(\omega)+\omega
 =
 \left(\E\!\left[\frac{S}{\sigma^2+\omega S}\right]\right)^{-1}$.
Since \(\omega=d\tau_B^+/(1-d)\), this identity gives $\E[x]=d$.
Equation \eqref{eq:Lderiv}, written in terms of \(x\), becomes
\begin{equation}
              L_\mu'(\omega)=\frac{\VAR(x)}{d^2}. \label{eq:Lderivx}
\end{equation}
Because \(f\) is proximal, \(0\leq f'\leq1\), whence $\VAR(f'(Y_{\tau_B^+}))\leq d(1-d)$.
Also \(0\leq x<1\) almost surely and
\(\PP(0<x<1)=\PP(S>0)>0\). Therefore $\VAR(x)<d(1-d)$.
Combining \eqref{eq:Ederiv} and \eqref{eq:Lderivx} yields $T'(\tau_B^+)
 =
 \frac{\VAR(x)}{d^2}
 \frac{\VAR(f'(Y_{\tau_B^+}))}{(1-d)^2}
 <1$.
Thus the derivative of \(T(\tau)-\tau\) at \(\tau_B^+\) is strictly
negative. Since \(T(\tau_B^+)-\tau_B^+=0\), the claimed strict inequality to
the left follows.
\end{proof}

\begin{rem}
If \(V_\pi=0\), then \(m(\tau)=0\) for every \(\tau\), and
Theorem~\ref{thm:Bayes VAMP-lower-bound} is immediate. Hence the nondegeneracy assumption in
Lemma~\ref{lem:lb-pair-existence} does not lose generality.
\end{rem}

\subsection{The shifted linear map}

\subsubsection{Shifted map and linear comparison}

For \(\lambda>0\), \(c\geq0\), and \(\omega\geq0\), define
\begin{equation}
 L_{\lambda,c}(\omega)
 =
 \frac{\displaystyle
 \E\!\left[\frac{\sigma^2S}{(S+c+\lambda)^2}\right]
 +c^2\omega\,
 \VAR\!\left(\frac1{S+c+\lambda}\right)}
 {\displaystyle
 \left(
 1-c\E\!\left[\frac1{S+c+\lambda}\right]
 \right)^2}. \label{eq:Lshift}
\end{equation}
For \(c=\infty\), its continuous extension is $L_{\lambda,\infty}(\omega)
 =
 \frac{\sigma^2\E[S]+\omega\VAR(S)}{(\E[S]+\lambda)^2}$.

\begin{lemma}\label{lem:linearcomparison}
For every \(c\in(0,\infty]\) and \(\omega\geq0\),
\begin{equation}
                 L_{0,c}(\omega)\geq L_\mu(\omega). \label{eq:lincomp}
\end{equation}
Equality in \eqref{eq:lincomp} holds if and only if either the law of $S$ conditioned on $S > 0$ is a point mass, or $c = \sigma^2 / \omega$ (with the convention that $\sigma^2 / 0 = \infty$), and if $\mu$ is a point mass, equality holds for any $c \in (0, \infty]$. If \(p_0=0\) and \(I_+<\infty\), the same statement holds at \(c=0\)
with
$L_{0,0}(\omega)=\sigma^2 I_+.$
\end{lemma}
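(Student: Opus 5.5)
The plan is to recast \(L_{0,c}(\omega)\) as a ratio of the form \(\E[a^2w]/(\E a)^2\), apply Cauchy--Schwarz, and read off the equality cases. This mirrors the scalar comparison of \cref{lem:scalarcomparison}: the matched linear stage, with ridge parameter \(c=\sigma^2/\omega\), minimizes the extrinsic variance over all ridge parameters \(c\).

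\textbf{Step 1: rewrite \(L_{0,c}\).} Fix \(c\in(0,\infty)\) and put \(a(S):=S/(S+c)\in[0,1)\), so that \(1/(S+c)=(1-a)/c\).
\begin{itemize}
\item The denominator of \eqref{eq:Lshift} at \(\lambda=0\) is \((1-cG(c))^2=(\E[a])^2>0\).
\item Since \(c^2\VAR((S+c)^{-1})=\VAR(a)\), the numerator is \(\sigma^2\E[S/(S+c)^2]+\omega\VAR(a)\).
\item Using \(S/(S+c)^2=a^2/S\) on \(\{S>0\}\), and \(a=0\) on \(\{S=0\}\), the numerator becomes
\[
\E\Big[a^2\,\frac{\sigma^2+\omega S}{S}\,\mathbf 1_{\{S>0\}}\Big]-\omega(\E[a])^2 .
\]
\end{itemize}
Writing \(w(S):=(\sigma^2+\omega S)/S\) on \(\{S>0\}\), this gives
\[
L_{0,c}(\omega)=\frac{\E[a^2w\,\mathbf 1_{\{S>0\}}]}{(\E[a\,\mathbf 1_{\{S>0\}}])^2}-\omega .
\]
The ratio is invariant under rescaling \(a\mapsto ta\) for \(t>0\).

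\textbf{Step 2: Cauchy--Schwarz.} On \(\{S>0\}\),
\[
(\E[a\mathbf 1_{\{S>0\}}])^2\le \E[a^2w\mathbf 1_{\{S>0\}}]\,\E[w^{-1}\mathbf 1_{\{S>0\}}],
\]
where \(\E[w^{-1}\mathbf 1_{\{S>0\}}]=\E[S/(\sigma^2+\omega S)]\). Hence
\[
L_{0,c}(\omega)\ge \Big(\E\Big[\frac{S}{\sigma^2+\omega S}\Big]\Big)^{-1}-\omega .
\]
It remains to check that the right-hand side equals \(L_\mu(\omega)\). This is the identity \(L_\mu(\omega)+\omega=(\E[S/(\sigma^2+\omega S)])^{-1}\) used in \cref{lem:lb-pair-existence}, and it holds for every \(\omega>0\), not just at the fixed point. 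It follows directly from \eqref{eq:spectrum-mod}, because the numerator and denominator expectations there add up to \(\omega\).

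\textbf{Step 3: equality cases.} Equality in Cauchy--Schwarz holds iff \(a\propto w^{-1}\) a.s.\ on \(\{S>0\}\), i.e.\ iff \((\sigma^2+\omega S)/(S+c)\) is a.s.\ constant on \(\{S>0\}\). This happens iff either the law of \(S\) given \(S>0\) is a point mass, or the two affine functions are proportional, i.e.\ \(c=\sigma^2/\omega\). If \(\mu\) itself is a point mass, both sides equal \(\sigma^2/s\), so equality holds for every \(c\).

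\textbf{Step 4: endpoint cases.}
\begin{itemize}
\item \(c=\infty\): by scale invariance, replace \(a\) by \(ca\to S\). This gives \(L_{0,\infty}=\E[S^2w]/(\E S)^2-\omega\), which agrees with the stated extension \((\sigma^2\E S+\omega\VAR S)/(\E S)^2\). The same argument applies with equality iff \(S\) given \(S>0\) is constant or \(\omega=0\), which matches the convention \(\sigma^2/0=\infty\).
\item \(c=0\), \(p_0=0\), \(I_+<\infty\): here \(a\equiv1\), so \(L_{0,0}=\E[w]-\omega=\sigma^2I_+\), and the bound and equality analysis go through unchanged.
\end{itemize}

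The main obstacle is Step 1, namely spotting the substitution \(S/(S+c)^2=a^2/S\) that turns the mismatched variance into the weighted quadratic form. Once that form is in hand, the rest is bookkeeping, with care at \(S=0\) and at the endpoints \(c\in\{0,\infty\}\).
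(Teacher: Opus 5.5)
Your proof is correct and is essentially the paper's argument. The paper uses the same Cauchy--Schwarz inequality $\bigl(\E[S/(S+c)]\bigr)^2\le \E[S(\sigma^2+\omega S)/(S+c)^2]\,\E[S/(\sigma^2+\omega S)]$; your $a^2w$ is exactly $S(\sigma^2+\omega S)/(S+c)^2$. It reads equality off the constancy of $(\sigma^2+\omega S)/(S+c)$, and your direct Cauchy--Schwarz at the endpoints $c\in\{0,\infty\}$ is slightly cleaner than the paper's passage to the limit, which at $c=\infty$ yields only the inequality. One small slip: $L_\mu(\omega)+\omega=(\E[S/(\sigma^2+\omega S)])^{-1}$ holds because $\E[\sigma^2/(\sigma^2+\omega S)]+\E[\omega S/(\sigma^2+\omega S)]=1$, so the numerator in \eqref{eq:spectrum-mod} is $\omega(1-D)$ with $D$ the denominator, giving $L_\mu+\omega=\omega/D$. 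It is not because numerator and denominator sum to $\omega$, which is false in general and would in any case give $L_\mu+1$.
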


\begin{proof}
For \(0<c<\infty\), put
\[
 a_c=\E\!\left[\frac{S}{S+c}\right],\quad
 A_c=\E\!\left[\frac{\sigma^2S}{(S+c)^2}\right],\quad
 \Delta_c=\VAR\!\left(\frac{S}{S+c}\right).
\]
Then \(L_{0,c}(\omega)=(A_c+\omega\Delta_c)/a_c^2\), and $L_{0,c}(\omega)+\omega
 =
 \frac1{a_c^2}{\displaystyle
 \E\!\left[\frac{S(\sigma^2+\omega S)}{(S+c)^2}\right]}$.
Cauchy--Schwarz gives $a_c^2
 \leq
 \E\!\left[\frac{S(\sigma^2+\omega S)}{(S+c)^2}\right]
 \E\!\left[\frac{S}{\sigma^2+\omega S}\right]$, and consequently, $L_{0,c}(\omega)
 \geq
 \left(\E\!\left[\frac{S}{\sigma^2+\omega S}\right]\right)^{-1}
 -\omega
 =L_\mu(\omega)$.
If $\mu$ is not a point mass measure, equality in Cauchy--Schwarz holds if and only if
\(
\frac{\sigma^2 + \omega S}{ S + c}
\)
is constant as a function of $S$, which is achieved by $c = \sigma^2/ \omega$. If $\mu$ is a point mass measure at $\bar{s} > 0$, then a simple calculation shows that $L_{0, c}(\omega) = L_{\mu}(\omega) = \sigma^2 / \bar{s}$.

Next, the endpoint \(c=\infty\) is a \(0/0\) limit, so we make the cancellation
explicit. With \(v=1/c\),
\[
 L_{0,1/v}(\omega)
 =
 \frac{\displaystyle
 \E\!\left[\frac{\sigma^2S}{(1+vS)^2}\right]
 +\omega\VAR\!\left(\frac{S}{1+vS}\right)}
 {\displaystyle
 \left(\E\!\left[\frac{S}{1+vS}\right]\right)^2}.
\]
Compact support and \(\E[S]>0\) allow dominated convergence as
\(v\downarrow0\), giving $L_{0,\infty}(\omega)
 =
 \frac{\sigma^2\E[S]+\omega\VAR(S)}{(\E[S])^2}$.
Passing the already-proved finite-\(c\) inequality to this limit proves \eqref{eq:lincomp} at \(c=\infty\). 

Finally, if \(p_0=0\) and
\(I_+<\infty\), then $c\,\E\!\left[\frac1{S+c}\right]\longrightarrow0$ and $c^2\VAR\!\left(\frac1{S+c}\right)
 \leq
 \E\!\left[\left(\frac{c}{S+c}\right)^2\right]\longrightarrow0$
by dominated convergence. Hence the denominator in
\eqref{eq:Lshift} tends to one. Moreover,
\(S/(S+c)^2\leq1/S\), so dominated convergence gives $\E\!\left[\frac{S}{(S+c)^2}\right]\longrightarrow I_+$.
Consequently \(L_{0,c}(\omega)\to\sigma^2I_+\), proving
\(L_{0,0}(\omega)=\sigma^2I_+\) and the other endpoint.
\end{proof}

\subsubsection{Two-variable reduction}

For the Stieltjes transform $G(d)=\E\!\left[\frac1{S+d}\right]$ defined for $d > 0$ and with \(1/0:=+\infty\) in the extended integral, we have $\rho_{\max}=\lim_{d\downarrow0}G(d)=\E[1/S]\in(0,\infty]$.
For \(\rho\in(0,\rho_{\max})\), let
\[
 d(\rho)=G^{-1}(\rho),\qquad
 R(\rho)=d(\rho)-\frac1\rho,\qquad
 \theta(\rho)=-\frac1{R(\rho)}.
\]
Since $-R(\rho)
 =
 \frac{\E[S/(S+d(\rho))]}{\E[1/(S+d(\rho))]}>0$, we have $\theta(\rho)>0$. Set
\begin{equation}\label{eq:definition-J-MS-K}
 J(\rho)=\E\!\left[\frac1{(S+d(\rho))^2}\right],
 \quad
 M_S(\rho)=\E\!\left[\frac{S}{(S+d(\rho))^2}\right],
\quad
 K(\rho)=\frac{M_S(\rho)}{J(\rho)}.
\end{equation}
Differentiation gives
\begin{equation}
 R'(\rho)=\frac1{\rho^2}-\frac1{J(\rho)}\geq0,\qquad
 0\leq-\rho\frac{R'(\rho)}{R(\rho)}<1. \label{eq:Rfacts}
\end{equation}
Indeed, the first identity follows from
\(d'(\rho)=-1/J(\rho)\). For the strict upper bound, a direct
calculation gives $1-d\rho-\rho^2R'(\rho)
 =
 \rho\left(\frac{\rho}{J(\rho)}-d\right)
 =
 \rho\,\frac{M_S(\rho)}{J(\rho)}>0$.
The functions above have continuous extensions at \(\rho=0\). To make
the endpoint explicit, write \(\mu_j=\E[S^j]\). Uniformly for bounded
\(S\), expansion in \(d^{-1}\) as \(d\to\infty\) gives $\rho=d^{-1}-\mu_1d^{-2}+\mu_2d^{-3}+O(d^{-4})$ and $J=d^{-2}-2\mu_1d^{-3}+3\mu_2d^{-4}+O(d^{-5})$.
It then follows that
\[
 R(0)=-\E[S],\qquad
 \theta(0)=\frac1{\E[S]},\qquad
 R'(0)=\VAR(S),\qquad
 K(0)=\E[S].
\]
Consequently, on every
compact interval \([0,\rho_+]\subset[0,\rho_{\max})\),
\[
 0<\inf\theta,\quad
 \sup R'<\infty,\quad
 0<\inf K\leq\sup K<\infty. 
\]
The strict lower bounds follow from \(\E[S]>0\), continuity, and
\(M_S(\rho)>0\).

For fixed \(\lambda>0\), define $w_\lambda(\rho)
 =
 \frac{\theta(\rho)}{1+\lambda\theta(\rho)}
 =
 \frac1{\theta(\rho)^{-1}+\lambda}$.
For an effective scalar-noise variable \(t>0\), set $Q_{\lambda,\rho}(y)=\prox_{w_\lambda(\rho)h}(y)$ and
\begin{align}
 A_\lambda(\rho,t)
 &=
 \E[Q_{\lambda,\rho}'(B^\star+\sqrt t Z)],\\
 \mathfrak r_\lambda(\rho,t)
 &=
 \E[(Q_{\lambda,\rho}(B^\star+\sqrt t Z)-B^\star)^2].
\end{align}
Finally, define
\begin{align}
 F_{1,\lambda}(\rho,t)
 &:=
 \rho-w_\lambda(\rho)A_\lambda(\rho,t), \label{eq:F1red}\\
 F_{2,\lambda}(\rho,t)
 &:=
 \frac{w_\lambda(\rho)^2}{t}
 \left(
 R'(\rho)\mathfrak r_\lambda(\rho,t)+\sigma^2K(\rho)
 \right)-1. \label{eq:F2red}
\end{align}
These are the functions \(F_1,F_2\) of the fixed-point proof in \eqref{eq:F1} and \eqref{eq:F2}, after replacing its inverse-noise variable \(\xi\) by
\(
                     \xi=\frac{w_\lambda(\rho)}{\sqrt t}.
\)
Recall that, by \cref{lem:F-continuity}, $F_{1, \lambda}$ and $F_{2, \lambda}$ are continuous on compact subsets of
\([0,\rho_{\max})\times(0,\infty)\). 

\begin{lemma}\label{lem:zero-set-sign-match}
Suppose \(h\) is non-affine, that its effective domain contains at least two
points, and that $ F_{1,\lambda}(\rho,t)=0$.
Set
\[
 a=A_\lambda(\rho,t)=\frac{\rho}{w_\lambda(\rho)},\qquad
 c=\frac1\rho-\frac1{w_\lambda(\rho)},       \qquad
 \omega
 =
 \frac{\mathfrak r_\lambda(\rho,t)-a^2t}{(1-a)^2}.      
\]
Then \(0<a<1\), \(c\geq0\), \(d(\rho)=c+\lambda\), and
\begin{equation}
 F_{2,\lambda}(\rho,t)
 =
 \frac{\rho^2}{J(\rho)t}
 \left(L_{\lambda,c}(\omega)-t\right). \label{eq:sign-match}
\end{equation}
In particular, on the zero set of \(F_{1,\lambda}\), \(F_{2,\lambda}\) has the same sign as \(L_{\lambda,c}(\omega)-t\).
\end{lemma}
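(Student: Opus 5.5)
The plan is to verify the three elementary claims about $(a,c)$ directly from the definitions. The identity \eqref{eq:sign-match} should then follow by pure algebra, using \eqref{eq:Rfacts} and the definitions in \eqref{eq:definition-J-MS-K}. Throughout, $\rho\in(0,\rho_{\max})$, so that $d(\rho)$, $R(\rho)$ and $\theta(\rho)$ are defined.

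\textbf{Step 1: the range of $a$ and the identity $d(\rho)=c+\lambda$.} On the zero set of $F_{1,\lambda}$ we have $a=A_\lambda(\rho,t)=\rho/w_\lambda(\rho)$, which is positive since $\rho>0$ and $w_\lambda(\rho)>0$. Since $Q_{\lambda,\rho}$ is a scalar proximal map, $0\le Q_{\lambda,\rho}'\le 1$ a.e., so $a\le 1$.

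For strictness I will argue that $Q_{\lambda,\rho}'=1$ a.e.\ forces $Q_{\lambda,\rho}(y)=y+\text{const}$. That in turn forces $\partial h$ to be a single constant slope on all of $\R$, so the domain of $h$ is $\R$ and $h$ is affine, which is excluded. Because the law of $B^\star+\sqrt t Z$ has an everywhere positive density, the set where $Q_{\lambda,\rho}'<1$ carries positive probability, and hence $a<1$. This is the only step that uses a property of $h$. The two-point-domain hypothesis (\cref{ass5}) is there to keep the prox nondegenerate, consistent with \cref{lem:F-continuity}.

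Next, $c=\frac1\rho-\frac1{w_\lambda(\rho)}=\frac{1-a}{\rho}\ge 0$. Finally, $\frac1{w_\lambda(\rho)}=\frac1{\theta(\rho)}+\lambda=-R(\rho)+\lambda$ and $R(\rho)=d(\rho)-\frac1\rho$. Substituting gives $c=\frac1\rho+R(\rho)-\lambda=d(\rho)-\lambda$.

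\textbf{Step 2: simplify the shifted map $L_{\lambda,c}(\omega)$.} Since $c+\lambda=d(\rho)$, the expectations in \eqref{eq:Lshift} become
\[
\E\!\left[\frac{\sigma^2S}{(S+d)^2}\right]=\sigma^2M_S(\rho),\qquad \VAR\!\left(\frac1{S+d}\right)=J(\rho)-\rho^2,\qquad 1-c\,G(d)=1-c\rho=a.
\]
Moreover $c^2\omega=\frac{(1-a)^2}{\rho^2}\omega=\frac{\mathfrak r_\lambda-a^2t}{\rho^2}$, because the factor $(1-a)^2$ cancels against the one in the definition of $\omega$. Hence
\[
L_{\lambda,c}(\omega)=\frac{\sigma^2M_S(\rho)\rho^2+(J(\rho)-\rho^2)(\mathfrak r_\lambda-a^2t)}{a^2\rho^2}.
\]

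\textbf{Step 3: compare with $F_{2,\lambda}$.} Substitute $w_\lambda(\rho)=\rho/a$, $R'(\rho)=(J-\rho^2)/(\rho^2J)$ and $K=M_S/J$ into \eqref{eq:F2red}. This gives
\[
F_{2,\lambda}=\frac{(J-\rho^2)\mathfrak r_\lambda+\sigma^2M_S\rho^2}{a^2Jt}-1.
\]
Multiply the expression from Step 2 by $\rho^2/(Jt)$ and subtract $\rho^2/J$. The term $-(J-\rho^2)a^2t/(a^2Jt)=-(J-\rho^2)/J$ combines with $-\rho^2/J$ to give exactly $-1$, which yields \eqref{eq:sign-match}. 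Since $\rho^2/(J(\rho)t)>0$, the sign statement follows at once.

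The main obstacle is the strict inequality $a<1$, that is, making rigorous the claim that a prox with a.e.\ unit slope comes from an affine penalty. Everything else is bookkeeping once $1-c\rho=a$ is noticed.
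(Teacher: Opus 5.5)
Your proof is correct and follows the paper's argument essentially step for step. You show that unit prox slope a.e.\ forces a translation and hence an affine \(h\), which gives \(a<1\), and then you combine \(1-c\rho=a\) and \(d(\rho)=c+\lambda\) with direct substitution of \(w_\lambda=\rho/a\), \(R'=(J-\rho^2)/(\rho^2J)\) and \(K=M_S/J\) into \(F_{2,\lambda}\).

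There are two small differences from the paper. First, you get \(a>0\) directly from \(a=\rho/w_\lambda(\rho)\) with \(\rho>0\), whereas the paper invokes nonconstancy of the prox, which your route shows is unnecessary. Second, you omit the paper's check that \(\omega\ge 0\), which is needed so that \(L_{\lambda,c}(\omega)\) is evaluated in its stated domain. It follows at once from Stein's identity and Cauchy--Schwarz: \(a^2t=\bigl(\E[Z\,(Q_{\lambda,\rho}(B^\star+\sqrt t Z)-B^\star)]\bigr)^2\le\mathfrak r_\lambda(\rho,t)\).
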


\begin{proof}
The equality \(a=\rho/w_\lambda(\rho)\) follows since $F_{1,\lambda}(\rho,t)=0$. Lemma~\ref{lem:nonconstant-proximal} shows that the proximal
map is nonconstant. A nonconstant monotone
one-Lipschitz map has derivative strictly positive on a set of positive
Lebesgue measure. The input law has a strictly positive density, and
therefore \(a>0\).
If \(a=1\), then \(Q_{\lambda,\rho}'=1\) almost everywhere because the
law of \(B^\star+\sqrt tZ\) has a strictly positive density. Hence
\(Q_{\lambda,\rho}(y)=y+b\). The proximal optimality condition would
then imply that \(h\) is affine, contrary to assumption. Thus \(a<1\);
and
\(
 c=\frac1\rho-\frac1{w_\lambda}
   =\frac{1-a}{aw_\lambda}\geq0.
\)
Lemma~\ref{lem:scalarcomparison}, applied at scalar noise \(t\), also
gives \(\omega\geq\cE_\pi(t)\geq0\), so
\(L_{\lambda,c}(\omega)\) is within its stated domain.

From \(R=d-1/\rho\) and \(1/w_\lambda=-R+\lambda\),
\(
 \frac1{w_\lambda}
 =
 \frac1\rho-d+\lambda,
\)
which proves \(d=c+\lambda\). Moreover
\[
 1-c\rho=a,\qquad
 c=\frac{1-a}{aw_\lambda},\qquad
 \rho=aw_\lambda.                                      
\]
For this paragraph, abbreviate
\[
 J=J(\rho),\quad M_S=M_S(\rho),\quad
 w=w_\lambda(\rho),\quad r=\mathfrak r_\lambda(\rho,t).
\]
Since \(S+c+\lambda=S+d(\rho)\),
\[
 \VAR\!\left(\frac1{S+c+\lambda}\right)=J-\rho^2,
 \qquad
 L_{\lambda,c}(\omega)
 =\frac{\sigma^2M_S+c^2\omega(J-\rho^2)}{a^2}.
\]
Moreover,
\[
 w^2c^2\omega
 =
 \frac{r-a^2t}{a^2},
\qquad
 \rho=aw,
\qquad
 R'(\rho)=\frac{J-\rho^2}{\rho^2J},
\qquad
 K(\rho)=\frac{M_S}{J}.
\]
Substitution now gives, with every cancellation displayed,
\begin{align*}
 \frac{\rho^2}{Jt}(L_{\lambda,c}(\omega)-t)
 =
 \frac{\sigma^2w^2M_S}{Jt}
 +\frac{J-\rho^2}{Ja^2t}(r-a^2t)
 -\frac{\rho^2}{J}&=
 \frac{\sigma^2w^2K(\rho)}{t}
 +\frac{w^2R'(\rho)r}{t}
 -\frac{J-\rho^2}{J}-\frac{\rho^2}{J}\\
 &=
 \frac{\sigma^2w^2K(\rho)}{t}
 +\frac{w^2R'(\rho)r}{t}-1=F_{2,\lambda}(\rho,t).
\end{align*}
This shows \eqref{eq:sign-match}.
\end{proof}

\subsection{The \texorpdfstring{$h$}{h}-admissible lower-bound pair and preservation under sufficiently small \texorpdfstring{$\lambda$}{lambda}}

\begin{definition}[\(h\)-admissible lower-bound pair]\label{def:lb-pair}
A pair \((b,u_b)\), with \(b>0\) and \(u_b\in[0,\infty)\), is an
\(h\)-admissible lower-bound pair if
\begin{equation}
                       L_\mu(u_b)>b, \label{eq:lb-pair1}
\end{equation}
and if, for every \(\kappa>0\) and whenever \(\E[g'(Y_b)]<1\), the proximal map
\(g=\prox_{\kappa h}\) satisfies
\begin{equation}
 \frac{\E[(g(Y_b)-B^\star)^2]
       -b(\E[g'(Y_b)])^2}
      {(1-\E[g'(Y_b)])^2}
 \geq u_b \label{eq:lb-pair2}.
\end{equation}

\end{definition}

If \(T(b) = L_{\mu}(\cE_{\pi}(b)) > b\), then \((b,\cE_\pi(b))\) is an admissible lower-bound pair for every convex \(h\), by
Lemma~\ref{lem:scalarcomparison}. If \(\mu\) is not a point mass and the
posterior mean at \(\tau_B^+\) is not proximal,
\(
 (\tau_B^+,\cE_\pi(\tau_B^+)+\Delta_{\tau_B^+})
\)
is a \(h\)-admissible lower-bound pair, where the corresponding strict spectral inequality \eqref{eq:lb-pair1} follows from
Lemma~\ref{lem:lb-pair-existence}(i). These are the only two $h$-admissible lower-bound pairs used below.

\begin{lemma}\label{lem:lb-pair-sign}
Let \(h\) be non-affine and have at least two points in its effective
domain, and let \((b,u_b)\) be an \(h\)-admissible lower-bound pair. In the regime $p_0 > 0$ and $I_+ < \infty$, we require that \(h\) satisfies
\eqref{eq:q-width-condition} and that
$b\in[\tau_B^+-\epsilon_0,\tau_B^+].$
Outside that regime, if either $p_0 = 0$ or $I_+ = \infty$, impose no restriction on the location of
\(b>0\), and $h$ need not satisfy \eqref{eq:q-width-condition}. Then there is
\(\lambda_b>0\) such that, for every
\(\lambda\in(0,\lambda_b)\) and every
\(\rho\in(0,\rho_{\max})\) satisfying $ F_{1,\lambda}(\rho,b)=0$, one has
\begin{equation}
                  F_{2,\lambda}(\rho,b)>0. \label{eq:loweredgesign}
\end{equation}
\end{lemma}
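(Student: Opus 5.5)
By \cref{lem:zero-set-sign-match}, on the zero set of \(F_{1,\lambda}(\cdot,b)\) the sign of \(F_{2,\lambda}(\rho,b)\) equals the sign of \(L_{\lambda,c}(\omega)-b\). Here
\[
c=\frac1\rho-\frac1{w_\lambda(\rho)}\ge0,\qquad a=\frac{\rho}{w_\lambda(\rho)}\in(0,1),\qquad \omega=\frac{\mathfrak r_\lambda(\rho,b)-a^2b}{(1-a)^2}.
\]
Because \(Q_{\lambda,\rho}=\prox_{w_\lambda(\rho)h}\) is a proximal map with \(\E[Q'_{\lambda,\rho}(Y_b)]=a<1\), the admissibility condition \eqref{eq:lb-pair2} gives \(\omega\ge u_b\).

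The key step is therefore to show
\[
L_{\lambda,c}(\omega)\;\ge\;L_{\lambda,c}(u_b)\;>\;b
\]
uniformly over the relevant \(\rho\), once \(\lambda\) is small. The first inequality is monotonicity of \eqref{eq:Lshift} in \(\omega\): the numerator increases in \(\omega\) and the denominator does not depend on it. For the second, \cref{lem:linearcomparison} gives \(L_{0,c}(u_b)\ge L_\mu(u_b)>b\) for every \(c\in(0,\infty]\), and also at \(c=0\) when \(p_0=0\) and \(I_+<\infty\). What remains is to pass from \(\lambda=0\) to small \(\lambda>0\) uniformly in \(c\).

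I would argue by contradiction. Suppose there are \(\lambda_n\downarrow0\) and \(\rho_n\) with \(F_{1,\lambda_n}(\rho_n,b)=0\) and \(F_{2,\lambda_n}(\rho_n,b)\le0\). Then \(L_{\lambda_n,c_n}(u_b)\le b\) for the associated \(c_n\). Pass to a subsequence so that \(c_n\to c_\infty\in[0,\infty]\). When \(c_\infty\in(0,\infty)\), joint continuity of \((\lambda,c)\mapsto L_{\lambda,c}(u_b)\) by dominated convergence gives \(L_{0,c_\infty}(u_b)\le b\), which contradicts \cref{lem:linearcomparison}. When \(c_\infty=\infty\), the explicit \(1/c\) expansion used in \cref{lem:linearcomparison} works uniformly for \(\lambda\in[0,1]\), so the limit is \(L_{0,\infty}(u_b)\ge L_\mu(u_b)>b\), again a contradiction.

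The main obstacle is the case \(c_\infty=0\). Then \(d(\rho_n)=c_n+\lambda_n\to0\), so \(\rho_n\to\rho_{\max}\).

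\begin{itemize}
\item If \(\rho_{\max}=\infty\), that is \(\E[1/S]=\infty\), this should be impossible. On the zero set, \(\rho_n=a_nw_{\lambda_n}(\rho_n)\le\theta(\rho_n)\), and \(\theta\) stays bounded because \(1/\theta\to\) a positive limit (the weighted mean of \(S\) under weights \(1/S\)). Contradiction.
\item If \(p_0=0\) and \(I_+<\infty\), \cref{lem:linearcomparison} at \(c=0\) gives \(L_{0,0}(u_b)=\sigma^2I_+\ge L_\mu(u_b)>b\). The limit \(\lambda_n,c_n\to0\) can be taken by dominated convergence, since \(S/(S+d)^2\le1/S\). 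Contradiction.
\item If \(p_0>0\) and \(I_+=\infty\), then \(\E[S/(S+d)^2]\to\infty\) while the denominator of \eqref{eq:Lshift} stays bounded below. So \(L_{\lambda_n,c_n}(u_b)\to\infty\), a contradiction.
\item If \(p_0>0\) and \(I_+<\infty\), I would use the width condition. Here \(\rho_{\max}=\infty\) because of the atom at zero, so \(\rho_n\to\infty\) requires \(w_{\lambda_n}(\rho_n)\to\infty\) as well, that is \(\kappa_n:=w_{\lambda_n}(\rho_n)\to\infty\). The identity \(1-c\rho=a\) together with \(c_n\rho_n=c_n\E[1/(S+c_n+\lambda_n)]\) shows \(c_n\rho_n\to p_0\cdot\lim\bigl(c_n/(c_n+\lambda_n)\bigr)\). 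Hence \(\liminf a_n=\liminf A_h(\kappa_n,b)\ge 1-p_0=q\). This contradicts \eqref{eq:q-width-condition}, since \(b\in[\tau_B^+-\epsilon_0,\tau_B^+]\). If instead \(c_n/(c_n+\lambda_n)\) does not tend to one, I would show directly that \(L_{\lambda_n,c_n}(u_b)\) stays above \(b\), using the same Cauchy--Schwarz comparison with \(c\) replaced by the ratio \(c/(c+\lambda)\) applied to the atom at zero. This subcase is the delicate bookkeeping I expect to take the most care.
\end{itemize}
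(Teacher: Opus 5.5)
Your skeleton matches the paper's proof: contradiction along $\lambda_n\downarrow0$, reduction to $L_{\lambda_n,c_n}(u_b)\le b$ via $\omega_n\ge u_b$ and monotonicity in $\omega$, then a split on $\lim c_n$. Your treatment of $c_\infty\in(0,\infty]$ is the paper's Case~1. The gap is in the first bullet of the case $c_\infty=0$. When $\rho_{\max}=\E[1/S]=\infty$,
\[
\frac1{\theta(\rho)}=\frac{\E[S/(S+d)]}{\E[1/(S+d)]}\longrightarrow\frac{\PP(S>0)}{\E[1/S]}=0\qquad\text{as } d=d(\rho)\downarrow0,
\]
so $\theta(\rho_n)\to\infty$. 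The ``weighted mean of $S$ under weights $1/S$'' is zero, not positive. The inequality $\rho\le\theta(\rho)$ also carries no information, since $\rho/\theta(\rho)=1-d\rho=\E[S/(S+d)]\le1$ for every $\rho$. Likewise, $\rho_n<w_{\lambda_n}(\rho_n)<1/\lambda_n$ is no obstruction, because $1/\lambda_n\to\infty$. So $c_n\to0$ is not excluded when $\rho_{\max}=\infty$. Your later bullets, which treat $p_0>0$ (where also $\rho_{\max}=\infty$), already contradict this bullet. As written, the sub-regime $p_0=0$, $I_+=\infty$ is left uncovered.

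The repair is the argument of your third bullet, which never uses $p_0>0$. On the zero set of $F_{1,\lambda}$, the denominator of \eqref{eq:Lshift} equals $(1-c_n\rho_n)^2=a_n^2\le1$. That is bounded \emph{above}, which is the direction you need; you wrote ``bounded below,'' which would not give $L\to\infty$. The variance term is nonnegative, and Fatou gives $\sigma^2\E[S/(S+d_n)^2]\to\sigma^2I_+=\infty$. The paper treats all of $I_+=\infty$ this way in one step.

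In the regime $p_0>0$, $I_+<\infty$, the subcase you flag as delicate is vacuous. With $G_+(d)=\E[\mathbf 1_{\{S>0\}}/(S+d)]$, one has $a_n=1-p_0c_n/d_n-c_nG_+(d_n)\ge q-c_nG_+(d_n)$, simply because $c_n/d_n\le1$. Since $c_nG_+(d_n)\le c_nI_+\to0$, this gives $\liminf a_n\ge q$ for every limit of $c_n/(c_n+\lambda_n)$, and the width condition closes all subcases at once.

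You should not pursue the ``direct'' comparison you sketch for $c_n/(c_n+\lambda_n)\to\ell<1$. In that case $L_{\lambda_n,c_n}(u_b)\to(\sigma^2I_++u_bp_0q\ell^2)/(1-p_0\ell)^2$, which equals $\sigma^2I_+$ at $\ell=0$. For $\mu=(1-q)\delta_0+q\delta_s$ this is $\sigma^2q/s<\sigma^2/(qs)\le\tau_B^+$, so it lies below $b$ once $\epsilon_0$ is small. That configuration is exactly what the width condition is imposed to exclude, so a comparison of $L_{\lambda,c}$ alone cannot close it.
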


\begin{proof}
Let $\Delta_b=L_\mu(u_b)-b>0$.
For a point on \(F_{1,\lambda}(\rho,b)=0\), use
Lemma~\ref{lem:zero-set-sign-match} and write \(a,c,\omega\) for the corresponding
quantities. The denoiser is a proximal map and \(a<1\), so \eqref{eq:lb-pair2} gives \(\omega\geq u_b. \)

Suppose the conclusion is false. Then there are
\(\lambda_k\downarrow0\) and \(\rho_k\) on the indicated zero set such
that
\begin{equation}
 L_{\lambda_k,c_k}(\omega_k)\leq b, \label{eq:badresid}
\end{equation}
where \(c_k\geq0\). Pass to a subsequence. Either
\(\liminf c_k>0\) or \(c_k\to0\).

\medskip
\noindent\emph{Case 1: \(\liminf c_k>0\).}
For every \(\delta>0\) and fixed finite \(u\),
\begin{equation}
 \sup_{c\in[\delta,\infty]}
 |L_{\lambda,c}(u)-L_{0,c}(u)|\longrightarrow0. \label{eq:awayzero}
\end{equation}
To verify this, set \(v=1/c\). After cancellation of \(v^2\),
\[
 L_{\lambda,1/v}(u)
 =
 \frac{\displaystyle
 \E\!\left[
 \frac{\sigma^2S}{(1+v(S+\lambda))^2}\right]
 +u\,\VAR\!\left(
 \frac{S+\lambda}{1+v(S+\lambda)}
 \right)}
 {\displaystyle
 \left(
 \E\!\left[
 \frac{S+\lambda}{1+v(S+\lambda)}
 \right]\right)^2}.
\]
This is uniformly continuous on the compact set
\((\lambda,v)\in[0,1]\times[0,\delta^{-1}]\); its denominator is bounded
away from zero because \(\E[S]>0\). This proves \eqref{eq:awayzero}.
By monotonicity in the last argument, the fact that $\omega \geq u_b$,
\eqref{eq:awayzero}, and Lemma~\ref{lem:linearcomparison}, $L_{\lambda_k,c_k}(\omega_k)
 \geq L_{\lambda_k,c_k}(u_b)
 \geq L_{0,c_k}(u_b)-o(1)
 \geq L_\mu(u_b)-o(1)
 =b+\Delta_b-o(1)$, contradicting \eqref{eq:badresid}.

\medskip
\noindent\emph{Case 2: \(c_k\to0\).}
Set \(d_k=c_k+\lambda_k\). If \(I_+=\infty\), the denominator in
\eqref{eq:Lshift}, evaluated at a point with \(F_1=0\), equals \(a_k^2\leq1\).
The variance term is nonnegative. Hence $L_{\lambda_k,c_k}(\omega_k)
 \geq
 \sigma^2\E\!\left[\frac{S}{(S+d_k)^2}\right]$.
Fatou's lemma makes the right-hand side tend to \(+\infty\), because $\liminf_{k\to\infty}\frac{S}{(S+d_k)^2}
 =\frac{\mathbf 1_{\{S>0\}}}{S}$.
This contradicts \eqref{eq:badresid}.

Suppose next that \(p_0=0\) and \(I_+<\infty\). Dominated convergence
gives, for fixed \(u_b\), $L_{\lambda_k,c_k}(u_b)\longrightarrow\sigma^2I_+$.
Indeed, both denominators tend to one, the observation-noise term tends
to \(\sigma^2I_+\), and $c_k^2\VAR\!\left(\frac1{S+d_k}\right)
 \leq
 \E\!\left[\left(\frac{c_k}{S+d_k}\right)^2\right]
 \longrightarrow0$.
The endpoint part of Lemma~\ref{lem:linearcomparison} gives
\(\sigma^2I_+\geq L_\mu(u_b)=b+\Delta_b\), again contradicting
\eqref{eq:badresid}.

It remains exactly the regime \(p_0>0\), \(I_+<\infty\).
Here \(d_k\to0\), and we have 
\begin{equation}
    \rho_k = G(d_k),
            \qquad
    a_k     = \frac{\rho_k}{w_{\lambda_k}(\rho_k)}
            = 1-c_k\rho_k. 
\label{eq:signaturea}
\end{equation}
Writing $G_+(d)=\E\!\left[\frac{\mathbf 1_{\{S>0\}}}{S+d}\right]$,
\eqref{eq:signaturea} yields $a_k
 =1-p_0\frac{c_k}{d_k}-c_kG_+(d_k)\geq q-c_kG_+(d_k)$.
 
The last term tends to zero by dominated convergence, since its integrand
is bounded by one and converges pointwise to zero on \(\{S>0\}\).
Consequently,
\begin{equation}
    \liminf_{k\to\infty}a_k\geq q. 
    \label{eq:alimq2}
\end{equation}
Also \(\rho_k\geq p_0/d_k\to\infty\), and since \(a_k\leq1\), we have $w_{\lambda_k}(\rho_k)=\frac{\rho_k}{a_k}\geq\rho_k
 \longrightarrow\infty$.
But $a_k=A_h(w_{\lambda_k}(\rho_k),b)$,
and \(b\) lies in $[\tau_B^+ - \epsilon_0, \tau_B^+]$. Condition
\eqref{eq:q-width-condition} therefore
implies \(a_k\leq q-\delta_0/2\) for all large \(k\), contradicting
\eqref{eq:alimq2}. Thus the singular sequence is impossible.

Every possible subsequence has produced a contradiction. Therefore
\(L_{\lambda,c}(\omega)>b\) on the \(F_1\)-zero set for all sufficiently
small \(\lambda\). The positive prefactor in
\eqref{eq:sign-match} proves \eqref{eq:loweredgesign}.
\end{proof}

\subsection{\texorpdfstring{$q$}{q}-bounded modified Poincar\'e--Miranda construction}

We now prove that the sign supplied by Lemma~\ref{lem:lb-pair-sign} forces an
oracle fixed point on or above the lower-bound pair.

\subsubsection{A uniform large-noise estimate}

\begin{lemma}\label{lem:largenoise}
Fix \(\lambda>0\) and
\(0<\rho_+<\rho_{\max}\). There is \(M_0<\infty\) such that, for every
\(M\geq M_0\) and every \(\rho\in[0,\rho_+]\),
\begin{equation}
 F_{1,\lambda}(\rho,M)=0
 \quad\Longrightarrow\quad
 F_{2,\lambda}(\rho,M)<0.                          
\end{equation}
\end{lemma}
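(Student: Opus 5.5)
The plan is to redo the bottom-edge estimate from Step~2 of the proof of Lemma~\ref{thm:oracle-fixed-point}, now in the variable $t$. The two parametrizations are linked by $\xi=w_\lambda(\rho)/\sqrt t$, so large scalar noise $t=M$ corresponds to small $\xi$. Uniformly for $\rho\in[0,\rho_+]$, we have
\[
0<\inf_{[0,\rho_+]}w_\lambda\le w_\lambda(\rho)\le w_{\max}:=\max_{[0,\rho_+]}w_\lambda<1/\lambda .
\]
Hence $\xi\le w_{\max}/\sqrt M$, and this tends to zero as $M\to\infty$ uniformly in $\rho$.

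First, apply Lemma~\ref{lem:uniform-risk-estimate} with $\xi=w_\lambda(\rho)/\sqrt M$; the quantities $\mathfrak r_\lambda(\rho,M)$ and $A_\lambda(\rho,M)$ are exactly $r(\rho,\xi)$ and $A(\rho,\xi)$. On the zero set $F_{1,\lambda}(\rho,M)=0$ we have $w_\lambda A_\lambda=\rho$, so that lemma gives
\[
\frac{w_\lambda(\rho)^2}{M}\,\mathfrak r_\lambda(\rho,M)\le w_\lambda(\rho)\rho+\varepsilon_h\!\big(w_\lambda(\rho)/\sqrt M\big).
\]
The function $\varepsilon_h(\xi)=2\xi w_{\max}\sqrt{C_h}+\xi^2C_h$ is explicit and nondecreasing in $\xi$. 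Therefore the error term is at most $\varepsilon_h(w_{\max}/\sqrt M)$, uniformly in $\rho$.

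Next, substitute this bound into \eqref{eq:F2red}, using $R'\ge0$ and $R'\le C_R$ on $[0,\rho_+]$:
\[
F_{2,\lambda}(\rho,M)\le R'(\rho)w_\lambda(\rho)\rho+C_R\,\varepsilon_h(w_{\max}/\sqrt M)+\sigma^2k_+\frac{w_{\max}^2}{M}-1 .
\]
Here $k_+=\sup_{[0,\rho_+]}K$. The leading term is handled as in \eqref{eq:qlambda}. By Lemma~\ref{lem:spectral-identities}(iii),
\[
R'(\rho)w_\lambda(\rho)\rho=\frac{w_\lambda(\rho)}{\theta(\rho)}\Big(-\rho\frac{R'(\rho)}{R(\rho)}\Big)\le\frac{1}{1+\lambda\theta(\rho)}\le q_\lambda<1 .
\]
The inequality $q_\lambda<1$ holds because $\theta\ge 1/s_+$ and $\lambda>0$. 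Finally, choose $M_0$ so that
\[
C_R\,\varepsilon_h(w_{\max}/\sqrt{M_0})+\sigma^2k_+w_{\max}^2/M_0<1-q_\lambda .
\]
Both terms decrease in $M$, so $F_{2,\lambda}(\rho,M)<0$ for every $M\ge M_0$.

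The main point needing care is uniformity in $\rho$, including the endpoint $\rho=0$, where all quantities are understood through their continuous extensions. This is where we need $\varepsilon_h$ to be the explicit monotone function from Lemma~\ref{lem:uniform-risk-estimate}, together with the uniform constants $C_R$, $k_+$ and $q_\lambda$, all of which come from compactness of $[0,\rho_+]$. I expect no other obstacle: at $\rho=0$ the zero set of $F_{1,\lambda}$ is in fact empty, since $A_\lambda(0,M)>0$ by Lemma~\ref{lem:F-continuity}, so the implication is vacuous there.
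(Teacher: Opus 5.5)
Your proposal is correct and follows essentially the same route as the paper: substitute $\xi=w_\lambda(\rho)/\sqrt M$, apply Lemma~\ref{lem:uniform-risk-estimate} on the zero set of $F_{1,\lambda}$, bound the leading term $R'(\rho)w_\lambda(\rho)\rho$ by $1/(1+\lambda\theta(\rho))$, whose maximum over $[0,\rho_+]$ is strictly below one, and send the remaining terms to zero uniformly in $\rho$. The paper simply asserts this uniform convergence. Your remark that $\varepsilon_h$ is explicit and nondecreasing in $\xi$ makes that uniformity, and the conclusion for every $M\ge M_0$, concrete.
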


\begin{proof}
Set
\(
    \xi_{\rho,M}
    :=
    \frac{w_\lambda(\rho)}{\sqrt M}.
\)
Under the change of variables
\(
    w(\rho)=w_\lambda(\rho),
    \xi=\xi_{\rho,M},
\)
the quantities in Lemma~\ref{lem:uniform-risk-estimate} satisfy
\(
    A(\rho,\xi)=A_\lambda(\rho,M),\
    r(\rho,\xi)=\mathfrak r_\lambda(\rho,M).
\)
Therefore, Lemma~\ref{lem:uniform-risk-estimate} gives $\frac{w_\lambda(\rho)^2}{M} \mathfrak r_\lambda(\rho,M) \leq
    w_\lambda(\rho)^2A_\lambda(\rho,M)
    +
    \varepsilon_h(\xi_{\rho,M})$ uniformly for \(\rho\in[0,\rho_+]\), where
\(
    \sup_{\rho\in[0,\rho_+]}
    \varepsilon_h(\xi_{\rho,M}) \to 0
\)
as \(M\to\infty\).

If \(F_{1,\lambda}(\rho,M)=0\), then
\(w_\lambda(\rho)A_\lambda(\rho,M)=\rho\). Hence $F_{2,\lambda}(\rho,M)
\leq
R'(\rho)w_\lambda(\rho)\rho
+
R'(\rho)\varepsilon_h(\xi_{\rho,M})
+
\frac{\sigma^2w_\lambda(\rho)^2K(\rho)}{M}
-1$.
By \eqref{eq:Rfacts}, we have that $R'(\rho)w_\lambda(\rho)\rho
    \leq
    \frac{w_\lambda(\rho)}{\theta(\rho)}$.
Moreover, $\sup_{\rho\in[0,\rho_+]}
    \frac{w_\lambda(\rho)}{\theta(\rho)}
    =
    \sup_{\rho\in[0,\rho_+]}
    \frac1{1+\lambda\theta(\rho)}
    <1$.
The remaining terms converge to zero uniformly in \(\rho\). Therefore
\(F_{2,\lambda}(\rho,M)<0\) for every sufficiently large \(M\).
\end{proof}

\subsubsection{Existence of a fixed point lower bounded by an \texorpdfstring{$h$}{h}-admissible lower-bound pair}

\begin{proposition}\label{prop:perturbed-fp-var}
Suppose the spectral law $\mu$ is not flat. Under the hypotheses of Lemma~\ref{lem:lb-pair-sign}, let \((b,u_b)\) be
the corresponding \(h\)-admissible lower-bound pair. Then for every
sufficiently small fixed
\(\lambda>0\), the oracle-perturbed fixed-point equations admit a solution
whose effective noise variance satisfies
\begin{equation}
                         \nu_\lambda\geq b. \label{eq:nulower}
\end{equation}
\end{proposition}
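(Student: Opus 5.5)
We would rerun the modified Poincar\'e--Miranda argument from the proof of Lemma~\ref{thm:oracle-fixed-point}, now in the variables \((\rho,t)\) with \(t=w_\lambda(\rho)^2/\xi^2\) the effective scalar noise, and on a rectangle whose lower edge is \(t=b\). A solution \((\rho_\star,t_\star)\) with \(t_\star\ge b\) gives an oracle fixed point with \(\nu_\lambda\ge b\). Indeed, from \eqref{eq:alpha_and_nu} and the reparametrization, \(\nu_\lambda=(\bar\gamma_{1,\lambda}/(\bar\gamma_{1,\lambda}+\lambda))^2\tau_{1,\lambda}=w_\lambda(\rho)^2\tau_{1,\lambda}/\theta(\rho)^2=w_\lambda(\rho)^2/\xi^2=t\). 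This identification \(\nu_\lambda=t\) is the first thing we will check, directly from \(\tau_{1\star}=\theta^2/\xi^2\) and \(w=\theta/(1+\lambda\theta)\).

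Fix \(\lambda\in(0,\lambda_b)\) with \(\lambda_b\) from Lemma~\ref{lem:lb-pair-sign}. Choose \(\rho_+<\rho_{\max}\) with \(\rho_+>w_\lambda(\rho_+)\), exactly as in Step~2 of the proof of Lemma~\ref{thm:oracle-fixed-point}. Then \(F_{1,\lambda}(\rho_+,t)\ge\rho_+-w_\lambda(\rho_+)>0\) for all \(t\). Take \(M\ge\max(M_0,b+1)\) from Lemma~\ref{lem:largenoise}, so that \(F_{2,\lambda}<0\) on the zero set of \(F_{1,\lambda}\) along the top edge \(t=M\). By Lemma~\ref{lem:lb-pair-sign}, \(F_{2,\lambda}>0\) on the zero set along the bottom edge \(t=b\). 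For the left edge, Lemma~\ref{lem:F-continuity} gives \(A_\lambda(0,t)>0\), uniformly positive on the compact range \(t\in[b,M]\). Continuity then yields \(\rho_->0\) with \(F_{1,\lambda}(\rho_-,t)<0\) for all \(t\in[b,M]\). Continuity of \(F_{1,\lambda},F_{2,\lambda}\) on \([\rho_-,\rho_+]\times[b,M]\) follows from Lemma~\ref{lem:F-continuity} under the change of variables \(\xi=w_\lambda(\rho)/\sqrt t\), since \(\xi\) then stays in a compact subset of \((0,\infty)\).

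The signs of \(F_{2,\lambda}\) are now controlled only on the zero set of \(F_{1,\lambda}\) on the horizontal edges, and they are reversed relative to the earlier proof: positive at the bottom, negative at the top. We therefore form \(\widetilde F_2=F_{2,\lambda}-M'H(t)|F_{1,\lambda}|\), with \(H\) affine, \(H(b)=-1\) and \(H(M)=1\). The construction of \(M'\) is the same open-neighbourhood and compactness argument as before, now done on both edges (the bottom zero set \(\mathcal Z\) is nonempty by the intermediate value theorem). This gives \(\widetilde F_2>0\) on the whole bottom edge and \(<0\) on the whole top edge. Poincar\'e--Miranda, applied to \((F_{1,\lambda},-\widetilde F_2)\), yields a common zero \((\rho_\star,t_\star)\), and \(F_{2,\lambda}=0\) there. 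Reconstructing \((\bar\gamma_{1\star},\eta_\star,\tau_{1\star},\tau_{2\star})\) and checking admissibility (\(0<\rho_\star<\theta(\rho_\star)\) and \(\tau_{2\star}\ge 0\) by Stein and Cauchy--Schwarz) goes verbatim as before. Hence \(\nu_\lambda=t_\star\ge b\).

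The main obstacle is the bottom-edge sign, which is exactly where the lower-bound pair and the \(q\)-width condition enter. Lemma~\ref{lem:lb-pair-sign} already provides it, but only under its hypotheses: \(h\) non-affine with at least two points in its domain. The non-flat hypothesis matters only through the choice of the pair (Lemma~\ref{lem:lb-pair-existence}(i)). A second, minor subtlety is that Lemma~\ref{lem:lb-pair-sign} gives the sign for \(\rho\in(0,\rho_{\max})\). The zero set on the bottom edge within \([\rho_-,\rho_+]\) is contained in that range, so no endpoint issue arises.
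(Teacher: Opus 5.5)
Your proposal is correct and follows the paper's proof essentially step for step. It uses the same rectangle \([\rho_-,\rho_+]\times[b,M]\) in the \((\rho,t)\) variables, the same edge signs from Lemmas~\ref{lem:largenoise} and \ref{lem:lb-pair-sign}, the same \(|F_{1,\lambda}|\)-weighted correction of \(F_{2,\lambda}\) before applying Poincar\'e--Miranda, and the identification \(\nu_\lambda=t_\star\ge b\). The only cosmetic difference is that the paper obtains strict positivity of \(\tau_{2,\lambda}\) from Lemma~\ref{lem:scalarcomparison}, whereas you obtain nonnegativity via Stein and Cauchy--Schwarz, which is all that admissibility requires.
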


\begin{proof}
Fix a \(\lambda>0\) small enough for Lemma~\ref{lem:lb-pair-sign}.
We construct a rectangle in the \((\rho,t)\)-plane.

\medskip
\noindent\emph{Right vertical edge.}
Choose a finite \(0<\rho_+<\rho_{\max}\) such that
\begin{equation}
                         \rho_+>w_\lambda(\rho_+). \label{eq:rightchoice}
\end{equation}
This is always possible. If \(\rho_{\max}=\infty\), take
\(\rho_+>1/\lambda\), since \(w_\lambda<1/\lambda\). If
\(\rho_{\max}<\infty\), then \(d(\rho)\downarrow0\) and
\(\theta(\rho)\to\rho_{\max}\) as
\(\rho\uparrow\rho_{\max}\), whereas $w_\lambda(\rho)\to
 \frac{\rho_{\max}}{1+\lambda\rho_{\max}}
 <\rho_{\max}$.
Thus \eqref{eq:rightchoice} holds near the endpoint. Since
\(A_\lambda\leq1\),
\begin{equation}
 F_{1,\lambda}(\rho_+,t)
 \geq\rho_+-w_\lambda(\rho_+)>0
 \quad\text{for every }t>0. \label{eq:rightsign}
\end{equation}

\medskip
\noindent\emph{Upper effective-noise edge.}
By Lemma~\ref{lem:largenoise}, choose \(M>b\) so large that
\begin{equation}
 F_{2,\lambda}(\rho,M)<0
 \quad\text{whenever}\quad
 F_{1,\lambda}(\rho,M)=0,\quad \rho\in[0,\rho_+]. \label{eq:topsignonzero}
\end{equation}

\medskip
\noindent\emph{Left vertical edge.}
At \(\rho=0\), the proximal scale \(w_\lambda(0)\) is positive.
Because \(h\) has at least two points in its effective domain,
Lemma~\ref{lem:nonconstant-proximal} shows that
\(\prox_{w_\lambda(0)h}\) is nonconstant. The law of
\(B^\star+\sqrt tZ\) has a strictly positive density; therefore $A_\lambda(0,t)>0$ for all $t > 0$. 
By continuity and compactness of \(t\in[b,M]\), choose
\(0<\rho_-<\rho_+\) so small that
\begin{equation}
 F_{1,\lambda}(\rho_-,t)<0
 \quad\text{for every }t\in[b,M]. \label{eq:leftsign}
\end{equation}

Let $\mathcal R=[\rho_-,\rho_+]\times[b,M]$.
Equations \eqref{eq:leftsign} and \eqref{eq:rightsign} give the opposite
vertical signs. Lemma~\ref{lem:lb-pair-sign} gives
\begin{equation}
 F_{2,\lambda}(\rho,b)>0
 \quad\text{whenever }F_{1,\lambda}(\rho,b)=0, \label{eq:bottomzero}
\end{equation}
and \eqref{eq:topsignonzero} gives the opposite sign at \(t=M\).

We now extend the two horizontal signs from the \(F_1\)-zero sets to the
whole horizontal edges without changing any simultaneous zero. Define $Z_b=\{\rho\in[\rho_-,\rho_+]:F_{1,\lambda}(\rho,b)=0\}$ and $Z_M=\{\rho\in[\rho_-,\rho_+]:F_{1,\lambda}(\rho,M)=0\}$.
Both sets are nonempty by the vertical signs and compact by continuity.
The inequalities in \eqref{eq:bottomzero} and
\eqref{eq:topsignonzero} are strict. Hence there are open neighborhoods
\(U_b\supset Z_b\) and \(U_M\supset Z_M\) on which the corresponding
strict signs of \(F_2\) continue to hold.

On each nonempty compact complement of these neighborhoods, \(|F_1|\)
has a positive minimum; if a complement is empty, no domination is needed
on that edge. Since \(F_2\) is bounded on each horizontal edge, there is
a constant \(C>0\) large enough that the continuous modification
\begin{equation}
 \widetilde F_{2,\lambda}(\rho,t)
 =
 F_{2,\lambda}(\rho,t)
 +C\,H(t)|F_{1,\lambda}(\rho,t)|,\qquad
 H(t)=\frac{M+b-2t}{M-b}, \label{eq:correction}
\end{equation}
satisfies $\widetilde F_{2,\lambda}(\rho,b)>0$ and $\widetilde F_{2,\lambda}(\rho,M)<0
 \quad(\rho\in[\rho_-,\rho_+])$.
Indeed, \(H(b)=1\), \(H(M)=-1\); on \(U_b,U_M\) the original signs already
hold, and on their complements the \(C|F_1|\) term dominates the bounded
value of \(F_2\).

After an affine rescaling of \(\mathcal R\) to a square and, if
necessary, replacing the first component by its negative, the
continuous map $(\rho,t)\longmapsto
(F_{1,\lambda}(\rho,t),\widetilde F_{2,\lambda}(\rho,t))$ now satisfies the Poincar\'e--Miranda sign conditions on \(\mathcal R\).
Therefore it has a zero \((\rho_\star,t_\star)\in\mathcal R\). At a zero
of \(F_1\), the correction term in \eqref{eq:correction} vanishes, so $F_{1,\lambda}(\rho_\star,t_\star)
 =
 F_{2,\lambda}(\rho_\star,t_\star)=0$.

To recover the original oracle fixed-point parameters, let
\[
 \xi_\star=\frac{w_\lambda(\rho_\star)}{\sqrt{t_\star}},
 \quad
 \bar\gamma_{1,\lambda}=\frac1{\theta(\rho_\star)},
 \quad
 \eta_\lambda=\frac1{\rho_\star}, \quad
 \tau_{1,\lambda}
 =
 \frac{\theta(\rho_\star)^2}{\xi_\star^2},
\quad
 \tau_{2,\lambda}
 =
 \frac{\theta(\rho_\star)^2}
 {(\theta(\rho_\star)-\rho_\star)^2}
 \left(
 \mathfrak r_\lambda(\rho_\star,t_\star)
 -\frac{\rho_\star^2}{\xi_\star^2}
 \right). \label{eq:reconstruct}
\]
From \(F_{1, \lambda}=0\),
\(\rho_\star=w_\lambda A_\lambda\leq w_\lambda<\theta\), so $0<\bar\gamma_{1,\lambda}<\eta_\lambda$.
Lemma~\ref{lem:scalarcomparison}, applied at noise variance
\(t_\star\), gives the sharper inequality $\mathfrak r_\lambda(\rho_\star,t_\star)
 -A_\lambda^2t_\star
 \geq
 (1-A_\lambda)^2\cE_\pi(t_\star)>0$.
Here \(A_\lambda<1\) by Lemma~\ref{lem:zero-set-sign-match}, and
\(\cE_\pi(t_\star)>0\) because \(\VAR(B^\star)>0\).
Because
\(\rho_\star^2/\xi_\star^2=A_\lambda^2t_\star\),
the last parameter in \eqref{eq:reconstruct} is strictly positive.
We now reverse the preceding reparameterization. Since $\frac{\bar\gamma_{1,\lambda}}{\eta_\lambda}
 =\frac{\rho_\star}{\theta(\rho_\star)}
 =\frac{w_\lambda(\rho_\star)}{\theta(\rho_\star)}
  A_\lambda(\rho_\star,t_\star)
 =\frac{\bar\gamma_{1,\lambda}}
        {\bar\gamma_{1,\lambda}+\lambda}
  A_\lambda(\rho_\star,t_\star)$, we have that \(F_{1, \lambda}=0\) is exactly the denoising-divergence equation
\eqref{eq: fpt-oracle-a}. Moreover, $\frac{\eta_\lambda}{\eta_\lambda-\bar\gamma_{1,\lambda}}
 =\frac{\theta(\rho_\star)}
        {\theta(\rho_\star)-\rho_\star}$ and $\left(\frac{\bar\gamma_{1,\lambda}}{\eta_\lambda}\right)^2
 \tau_{1,\lambda}
 =\frac{\rho_\star^2}{\xi_\star^2}$.
Therefore the displayed definition of \(\tau_{2,\lambda}\) in
\eqref{eq:reconstruct} is precisely the denoising-variance equation
\eqref{eq: fpt-oracle-b}.
Next, because \(R(\rho)=d(\rho)-1/\rho\) and
\(\theta=-1/R\), we have $1-\rho_\star d(\rho_\star)
 =-\rho_\star R(\rho_\star)
 =\frac{\rho_\star}{\theta(\rho_\star)}$,
which is the linear-precision equation \eqref{eq: fpt-oracle-c}, because
\(G^{-1}(\rho_\star)=d(\rho_\star)\).

It remains to verify \eqref{eq: fpt-oracle-d} by explicitly substituting the reconstructed parameters into the original equation. Evaluate all scalar functions at
\(\rho_\star\), and abbreviate
\[
 d=d(\rho_\star)=\eta_\lambda-\bar\gamma_{1,\lambda},
 \quad J=J(\rho_\star),\quad M_S=M_S(\rho_\star),
 \quad A=A_\lambda(\rho_\star,t_\star),\quad
 r=\mathfrak r_\lambda(\rho_\star,t_\star).
\]
After multiplying \eqref{eq: fpt-oracle-d} by
\((\bar\gamma_{1,\lambda}/\eta_\lambda)^2\), its left side is
\(A^2t_\star\). Equation \eqref{eq: fpt-oracle-b} gives $d^2\tau_{2,\lambda}
=\eta_\lambda^2(r-A^2t_\star)
=\frac{r-A^2t_\star}{\rho_\star^2}$, and \eqref{eq: fpt-oracle-d} is thus equivalent to $A^2t_\star
=
\sigma^2M_S
+\frac{r-A^2t_\star}{\rho_\star^2}
(J-\rho_\star^2)$.
Moving the term containing \(A^2t_\star\) to the left and using
\(\rho_\star=A w_\lambda(\rho_\star)\) gives $t_\star
 =
 w_\lambda(\rho_\star)^2
 \left(
 \frac{J-\rho_\star^2}{\rho_\star^2J}\,r
 +\sigma^2\frac{M_S}{J}
 \right)$.
The right side is
\(w_\lambda^2(R'(\rho_\star)r+\sigma^2K(\rho_\star))\);
hence the last display is exactly
\(F_{2,\lambda}(\rho_\star,t_\star)=0\). This proves
\eqref{eq: fpt-oracle-d}. Therefore \eqref{eq:reconstruct} is an admissible oracle
fixed point satisfying all four displayed equations.

Finally, its scalar effective noise is $\nu_\lambda
 =
 \left(
 \frac{\bar\gamma_{1,\lambda}}
 {\bar\gamma_{1,\lambda}+\lambda}
 \right)^2\tau_{1,\lambda}
 =
 \left(\frac{w_\lambda}{\theta}\right)^2
 \frac{\theta^2t_\star}{w_\lambda^2}
 =t_\star\in[b,M]$.

For later reference, denote the scalar risk of this selected solution by
\begin{equation}\label{eq: perturbed-risk-version2}
 r^{h,\lambda}
 :=
 \mathfrak r_\lambda(\rho_\star,t_\star).
\end{equation}
This agrees with the notation of
Theorem~\ref{thm:oracle-mse}.
This proves \eqref{eq:nulower}.
\end{proof}

\begin{rem}
The proof of \cref{prop:perturbed-fp-var} follows the same overall argument as the proof of \cref{thm:oracle-fixed-point}, but uses a different sequence of changes of variables.
\end{rem}

\subsection{Oracle-perturbed risk to unperturbed risk}\label{sec:proof-main-theorem}

\subsubsection{Oracle monotonicity}

\begin{lemma}
\label{lem:oraclemonotone}
Let \(\widehat\beta_{\cvx}^h\) be any minimizer of the unperturbed convex
objective, and let \(\widehat\beta^{h,\lambda}\) minimize
\eqref{eq:oracle_perturbed_beta}. Then, for every realization and every
\(\lambda>0\),
\begin{equation}
 \norm{\widehat\beta_{\cvx}^h-\beta^\star}_2^2
 \geq
 \norm{\widehat\beta^{h,\lambda}-\beta^\star}_2^2. \label{eq:detoracle}
\end{equation}
\end{lemma}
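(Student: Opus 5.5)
The plan is to compare the two objectives directly, using only optimality of each estimator and the structure of the oracle perturbation. Write $\mathcal L_0(\beta)=\frac12\|y-X\beta\|_2^2+\sum_j h(\beta_j)$ for the unperturbed objective and $\mathcal L_\lambda(\beta)=\mathcal L_0(\beta)+\frac\lambda2\|\beta-\beta^\star\|_2^2$ for the perturbed one, so that $\widehat\beta_{\cvx}^h$ minimizes $\mathcal L_0$ and $\widehat\beta^{h,\lambda}$ minimizes $\mathcal L_\lambda$.

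\textbf{Step 1: check that both values are finite.} By \cref{ass4}, the minimizer $\widehat\beta_{\cvx}^h$ exists. Since $h$ is proper, $\mathcal L_0(\widehat\beta_{\cvx}^h)<\infty$. The function $\mathcal L_\lambda$ is proper, lsc, and $\lambda$-strongly convex, so it has a unique minimizer $\widehat\beta^{h,\lambda}$. Its value there satisfies
\[
\mathcal L_\lambda(\widehat\beta^{h,\lambda})\le\mathcal L_\lambda(\widehat\beta_{\cvx}^h)<\infty .
\]

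\textbf{Step 2: add the two optimality inequalities.} Optimality of $\widehat\beta^{h,\lambda}$ for $\mathcal L_\lambda$, evaluated at the competitor $\widehat\beta_{\cvx}^h$, gives
\[
\mathcal L_0(\widehat\beta^{h,\lambda})+\tfrac\lambda2\|\widehat\beta^{h,\lambda}-\beta^\star\|_2^2\le \mathcal L_0(\widehat\beta_{\cvx}^h)+\tfrac\lambda2\|\widehat\beta_{\cvx}^h-\beta^\star\|_2^2 .
\]
Optimality of $\widehat\beta_{\cvx}^h$ for $\mathcal L_0$ gives
\[
\mathcal L_0(\widehat\beta_{\cvx}^h)\le\mathcal L_0(\widehat\beta^{h,\lambda}).
\]
Adding these, the finite $\mathcal L_0$ terms cancel. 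Dividing by $\lambda/2>0$ yields \eqref{eq:detoracle}.

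\textbf{Step 3: the one point needing care.} The obstacle is to make sure the cancellation in Step 2 involves only finite quantities. Step 1 shows $\mathcal L_0(\widehat\beta^{h,\lambda})\le\mathcal L_\lambda(\widehat\beta^{h,\lambda})<\infty$, and $\mathcal L_0(\widehat\beta_{\cvx}^h)<\infty$ as well. So no $\infty-\infty$ arises.

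The argument holds for every realization. It uses no probabilistic structure and no uniqueness of $\widehat\beta_{\cvx}^h$; any minimizer will do.
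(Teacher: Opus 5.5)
Your proposal is correct and follows the same route as the paper: combine the two optimality inequalities so the unperturbed objective values cancel, leaving \eqref{eq:detoracle}. Your explicit check that every objective value involved is finite, so no $\infty-\infty$ can occur, is a small piece of rigor the paper leaves implicit.
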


\begin{proof}
Let \(F_p\) be the unperturbed objective. Optimality gives
$F_p(\widehat\beta_{\cvx}^h)
 \leq F_p(\widehat\beta^{h,\lambda})$
and
\[
 F_p(\widehat\beta^{h,\lambda})
 +\frac\lambda2\norm{\widehat\beta^{h,\lambda}-\beta^\star}_2^2
 \leq
 F_p(\widehat\beta_{\cvx}^h)
 +\frac\lambda2\norm{\widehat\beta_{\cvx}^h-\beta^\star}_2^2.
\]
Subtracting proves \eqref{eq:detoracle}. No uniqueness or measurability of
the selected unperturbed minimizer is used.
\end{proof}

\begin{lemma}
\label{lem:lb-pair-risk}
Under the conditions of Proposition~\ref{prop:perturbed-fp-var}, every
measurably selected unperturbed minimizer satisfies
\[
 \pliminf_{p\to\infty}
 \frac1p\norm{\widehat\beta_{\cvx}^h-\beta^\star}_2^2
 \geq m(b).
\]
\end{lemma}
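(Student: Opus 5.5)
The plan is to chain Proposition~\ref{prop:perturbed-fp-var}, Theorem~\ref{thm:oracle-mse} and Lemma~\ref{lem:oraclemonotone}, exactly along the lines of the proof sketch in Section~\ref{sec:proof_sketch}.

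\textbf{Step 1: fix the perturbation.} Fix the $h$-admissible lower-bound pair $(b,u_b)$. By Proposition~\ref{prop:perturbed-fp-var}, choose one sufficiently small $\lambda_b>0$ together with the oracle-perturbed fixed point constructed there. Its effective noise satisfies $\nu_{\lambda_b}=t_\star\ge b$, and its scalar risk is $r^{h,\lambda_b}=\mathfrak r_{\lambda_b}(\rho_\star,t_\star)$ as in \eqref{eq: perturbed-risk-version2}.

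\textbf{Step 2: compare the scalar risk with the MMSE.} By \eqref{eq: equivalent-representation-proximal},
\[
r^{h,\lambda_b}=\E\bigl[(\prox_{\alpha_{\lambda_b}h}(B^\star+\sqrt{\nu_{\lambda_b}}Z)-B^\star)^2\bigr].
\]
The proximal map $\prox_{\alpha_{\lambda_b}h}$ is a Lipschitz estimator based on the observation $Y_{\nu_{\lambda_b}}$. Lemma~\ref{lem:scalarcomparison} (or simply Bayes optimality of the conditional mean) gives $r^{h,\lambda_b}\ge m(\nu_{\lambda_b})$. Since $m$ is nondecreasing by Lemma~\ref{lem:mmsecalculus} and $\nu_{\lambda_b}\ge b$, this gives $r^{h,\lambda_b}\ge m(b)$.

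\textbf{Step 3: transfer to the finite-dimensional problem.} Theorem~\ref{thm:oracle-mse} applies to this particular fixed point. The hypotheses of Proposition~\ref{prop:perturbed-fp-var} already exclude singleton-domain $h$, so Assumption~\ref{ass5} holds. The theorem gives $p^{-1}\|\widehat\beta_{\cvx}^{h,\lambda_b}-\beta^\star\|_2^2\xrightarrow{\PP} r^{h,\lambda_b}$. By Lemma~\ref{lem:oraclemonotone}, pointwise for every realization and any selected minimizer,
\[
\|\widehat\beta_{\cvx}^{h}-\beta^\star\|_2^2\ge \|\widehat\beta_{\cvx}^{h,\lambda_b}-\beta^\star\|_2^2 .
\]
Hence for every $\epsilon>0$,
\[
\PP\Bigl(\tfrac1p\|\widehat\beta_{\cvx}^{h}-\beta^\star\|_2^2<m(b)-\epsilon\Bigr)\le \PP\Bigl(\tfrac1p\|\widehat\beta_{\cvx}^{h,\lambda_b}-\beta^\star\|_2^2<r^{h,\lambda_b}-\epsilon\Bigr)\to0 .
\]
So every $t<m(b)$ belongs to the set defining the probability-liminf, and therefore $\pliminf_{p\to\infty} p^{-1}\|\widehat\beta_{\cvx}^{h}-\beta^\star\|_2^2\ge m(b)$.

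\textbf{Main obstacle.} The delicate point is making sure that Theorem~\ref{thm:oracle-mse} is applied to the specific fixed point selected in Proposition~\ref{prop:perturbed-fp-var}, since only that one is known to satisfy $\nu_{\lambda_b}\ge b$. This is harmless for two reasons:
\begin{itemize}
\item the theorem is stated for an arbitrary solution of the fixed-point equations;
\item as remarked after it, every fixed point yields the same limit, because the strongly convex objective has a unique minimizer.
\end{itemize}
Measurability of the unperturbed minimizer plays no role, because the comparison in Lemma~\ref{lem:oraclemonotone} is deterministic.
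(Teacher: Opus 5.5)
Your proposal is correct and follows the paper's proof essentially step for step. You fix one small $\lambda$ together with the fixed point from Proposition~\ref{prop:perturbed-fp-var}, which satisfies $\nu_\lambda\ge b$. Bayes optimality and monotonicity of $m$ then give $r^{h,\lambda}\ge m(\nu_\lambda)\ge m(b)$. Finally, Theorem~\ref{thm:oracle-mse} combined with the deterministic comparison of Lemma~\ref{lem:oraclemonotone} yields the same chain of probability inequalities the paper uses.
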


\begin{proof}
Choose one sufficiently small \emph{fixed} \(\lambda>0\) and its corresponding fixed point from Proposition~\ref{prop:perturbed-fp-var}. Its effective noise variance
satisfies \(\nu_\lambda\geq b\). 
Scalar Bayes optimality \cref{lem:bayes-optimality} and monotonicity
of \(m\) give $r^{h,\lambda}
 \geq m(\nu_\lambda)
 \geq m(b),$ where $r^{h,\lambda}$ is defined in \cref{eq: perturbed-risk-version2}.
 
By Theorem~\ref{thm:oracle-mse}, applied to the fixed point reconstructed
in Proposition~\ref{prop:perturbed-fp-var}, we have $\frac1p\norm{\widehat\beta^{h,\lambda}-\beta^\star}_2^2
 \xrightarrow{\PP}r^{h, \lambda}$.
Together with \eqref{eq:detoracle}, for every \(\epsilon>0\),
\begin{align*}
 \PP\!\left(
 \frac1p\norm{\widehat\beta_{\cvx}^h-\beta^\star}_2^2
 <m(b)-\epsilon
 \right)&\leq
 \PP\!\left(
 \frac1p\norm{\widehat\beta^{h,\lambda}-\beta^\star}_2^2
 <m(b)-\epsilon
 \right)\leq
 \PP\!\left(
 \frac1p\norm{\widehat\beta^{h,\lambda}-\beta^\star}_2^2
 <r^{h, \lambda} - \epsilon
 \right)
 \rightarrow0.
\end{align*}
\end{proof}

\subsubsection{Special cases handled separately}

\begin{lemma}[Singleton-domain penalties]\label{lem:singleton}
If \(\operatorname{dom}(h)=\{a\}\), then
\[
 \frac1p\norm{\widehat\beta_{\cvx}^h-\beta^\star}_2^2
 \xrightarrow{\mathrm{a.s.}}
 \E[(a-B^\star)^2]
 \geq\VAR(B^\star)\geq m(\tau_B^+).
\]
\end{lemma}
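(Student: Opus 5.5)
When $\operatorname{dom}(h)=\{a\}$, the objective in \eqref{eq: penalized-OLS} equals $+\infty$ at every $\beta$ except $\beta=a\vecone_p$. By \cref{ass4} the minimization set is nonempty almost surely, so $\widehat\beta_{\cvx}^h=a\vecone_p$ is forced almost surely, for every $p$ and for any measurable selection. The estimator therefore ignores the data, and the normalized error is the explicit quantity
\[
\frac1p\norm{\widehat\beta_{\cvx}^h-\beta^\star}_2^2=\frac1p\sum_{j=1}^p(a-\beta_j^\star)^2 .
\]

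For the almost sure limit, the function $b\mapsto(a-b)^2$ grows quadratically, so it is integrable against every law in $\cP_2(\R)$. Convergence in $W_2$ implies convergence of integrals of continuous functions with quadratic growth. By \cref{ass3}, $\frac1p\sum_j\delta_{\beta_j^\star}\xrightarrow{W_2}\pi$ almost surely, and hence the average above converges almost surely to $\E[(a-B^\star)^2]$.

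The first inequality is the bias--variance decomposition:
\[
\E[(a-B^\star)^2]=(a-\E B^\star)^2+\VAR(B^\star)\ge\VAR(B^\star).
\]

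The second inequality uses the bound \eqref{eq:bestlinear} from \cref{lem:mmsecalculus}, namely $m(\tau)\le V_\pi\tau/(V_\pi+\tau)\le V_\pi$. This covers $V_\pi>0$. If $V_\pi=0$, then $m\equiv0$ and the inequality is trivial.

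The only point that needs care is justifying that the minimizer equals $a\vecone_p$ rather than merely some point of an empty set. \cref{ass4} supplies nonemptiness, and since the effective domain of the objective is the single point $a\vecone_p$, the minimizer is exactly that point. No fixed point machinery is needed for this case.
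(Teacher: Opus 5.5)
Your proof is correct and matches the paper's. In both, the singleton domain forces $\widehat\beta_{\cvx}^h=a\mathbf 1_p$; nonemptiness is actually automatic here, since $h(a)<\infty$ by properness, so you do not need Assumption 4. Then $W_2$ convergence of the empirical signal law gives the almost sure limit, and the inequalities follow from the bias--variance decomposition together with $m(\tau_B^+)\le\VAR(B^\star)$. The paper justifies that last bound by noting that the posterior mean cannot do worse than the constant estimator $\E[B^\star]$. This is the same fact as the best-linear bound you cite, and it covers $\VAR(B^\star)=0$ without a separate case.
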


\begin{proof}
The constraint forces
\(\widehat\beta_{\cvx}^h=a\mathbf1_p\). Wasserstein--\(2\) convergence of
the empirical prior gives the displayed limit. Moreover, $\E[(a-B^\star)^2]
 =\VAR(B^\star)+(a-b_\pi)^2\geq\VAR(B^\star)$,
and the scalar posterior mean cannot have larger risk than the constant
estimator \(b_\pi\), so \(m(\tau_B^+)\leq\VAR(B^\star)\).
\end{proof}

\begin{lemma}[Affine penalties]\label{lem:affine}
Suppose \(h(x)=ux+v\) with \(u\neq0\). If \(p_0=0\) or
\(I_+=\infty\), then the conclusion of
Theorem~\ref{thm:Bayes VAMP-lower-bound} holds.
In the remaining regime, an affine penalty does not satisfy
\eqref{eq:q-width-condition}.
\end{lemma}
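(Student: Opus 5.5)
For an affine penalty $h(x)=ux+v$ with $u\neq 0$, the proximal map is an explicit translation: $\prox_{\kappa h}(y)=y-\kappa u$ for every $\kappa>0$. This makes both assertions of \cref{lem:affine} direct.

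We begin with the second assertion, which is immediate. Here $(\prox_{\kappa h})'\equiv 1$, so $A_h(\kappa,\nu)=1$ for all $\kappa,\nu>0$. In the regime $p_0>0$ we have $q<1$, so \eqref{eq:q-width-condition} would require $1\le q-\delta_0<1$, which is impossible. Hence $h\notin\cC_{\rm WB}(q;\tau_B^+)$.

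For the first assertion we do not use the oracle fixed-point machinery. That machinery fails here because \cref{lem:zero-set-sign-match} needs $h$ non-affine; otherwise $a=1$ and the extrinsic variance is undefined. Instead we analyse the estimator directly.

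The objective $\frac12\|y-X\beta\|_2^2+u\mathbf 1^T\beta$ has a minimizer only if $u\mathbf 1\in\operatorname{range}(X^T)$, and \cref{ass4} guarantees this almost surely. The minimizers then satisfy $X^TX\beta=X^Ty-u\mathbf 1$. Write $P$ for the projection onto $\operatorname{range}(X^T)$ and $(X^TX)^+$ for the pseudoinverse. Every minimizer has the form
\[
\widehat\beta=P\beta^\star+(X^TX)^+X^T\varepsilon-u(X^TX)^+\mathbf 1+w,\qquad w\in\ker X,
\]
so its error splits as
\[
\widehat\beta-\beta^\star=\bigl(-(I-P)\beta^\star+w\bigr)+(X^TX)^+X^T\varepsilon-u(X^TX)^+\mathbf 1.
\]
The first group lies in $\ker X$ and the last two terms lie in $\operatorname{range}(X^T)$, so these pieces are orthogonal.

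Case $p_0=0$. Then $\ker X$ is asymptotically negligible. We drop the deterministic shift term and the cross term. The noise term gives $\E\bigl[\frac1p\|(X^TX)^+X^T\varepsilon\|^2\mid X\bigr]=\frac{\sigma^2}{p}\tr (X^TX)^+\to\sigma^2 I_+$, with concentration by Gaussianity. Cross terms with $\varepsilon$ vanish because $\varepsilon$ is mean zero and independent of $X$.
\begin{itemize}
\item If $I_+=\infty$, the risk tends to infinity in probability. A truncation argument on the spectral law handles the fact that the $p$-wise empirical law is compactly supported while $I_+$ is a limit.
\item If $I_+<\infty$, the liminf is at least $\sigma^2 I_+$. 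By the endpoint case of \cref{lem:linearcomparison}, $\sigma^2 I_+=L_{0,0}(\omega)\ge L_\mu(\omega)$ for every $\omega$. Taking $\omega=\cE_\pi(\tau_B^+)$ gives $\sigma^2I_+\ge\tau_B^+$.
\end{itemize}
Since $m_\pi(\tau)\le\tau$, we get $m_\pi(\tau_B^+)\le\tau_B^+\le\sigma^2I_+$, which is the required bound.

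Case $p_0>0$, $I_+=\infty$. We use the same orthogonal decomposition and keep only the noise term. Its normalized norm is $\frac{\sigma^2}{p}\tr(X^TX)^+$, which diverges because $I_+=\infty$. This diverging contribution dominates, so the bound holds trivially.

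The main obstacle is making the divergence and limit statements rigorous from finite-$p$ spectra. Specifically, $\frac1p\tr(X^TX)^+$ must be lower bounded by $\E_\mu[\mathbf 1\{S>\epsilon\}/S]$ up to $o(1)$. We would obtain this from the $W_2$ convergence $\mu_p\to\mu$ applied to the bounded continuous test function $s\mapsto \mathbf 1\{s>\epsilon\}/s$, smoothed at $\epsilon$, and then let $\epsilon\downarrow 0$.
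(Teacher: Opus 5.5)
\textbf{Verdict.} Your argument is correct and takes a genuinely different route from the paper. The second assertion is handled exactly as in the paper.

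\textbf{How the paper argues.} The paper never writes the unperturbed affine estimator in closed form. For each $\lambda>0$ it takes an oracle fixed point from Lemma~\ref{thm:oracle-fixed-point}.

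The oracle machinery does \emph{not} fail for affine $h$. The oracle denoiser has slope $\bar\gamma_{1,\lambda}/(\bar\gamma_{1,\lambda}+\lambda)<1$. Only the lower-bound-pair step (Lemmas~\ref{lem:zero-set-sign-match} and~\ref{lem:lb-pair-sign}) is unavailable.

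The paper's steps are:
\begin{itemize}
\item Since $\prox_{\alpha_\lambda h}(y)=y-\alpha_\lambda u$, one gets $r^{h,\lambda}=\nu_\lambda+\alpha_\lambda^2u^2$.
\item The fixed-point equations force $\eta_\lambda=\bar\gamma_{1,\lambda}+\lambda$, which gives the exact identity $\nu_\lambda=\sigma^2\E[S/(S+\lambda)^2]+\lambda^2\tau_{2,\lambda}\VAR(1/(S+\lambda))$.
\item It sends $\lambda\downarrow0$, using Fatou when $I_+=\infty$, and dominated convergence plus Lemma~\ref{lem:linearcomparison} when $p_0=0$ and $I_+<\infty$.
\item It transfers the bound to $\widehat\beta_{\cvx}^h$ through Lemma~\ref{lem:oraclemonotone} and Theorem~\ref{thm:oracle-mse}.
\end{itemize}

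\textbf{Comparison.} The quantity $\sigma^2\E[S/(S+\lambda)^2]$ is the limit of $\frac{\sigma^2}{p}\tr\bigl(X^TX(X^TX+\lambda I)^{-2}\bigr)$. This is a ridge-smoothed version of your $\frac{\sigma^2}{p}\tr(X^TX)^+$, so the paper's $\lambda\downarrow0$ plays the role of your spectral truncation $\epsilon\downarrow0$. Your route is more elementary. It uses only Gaussian quadratic forms and weak convergence of $\mu_p$, with no conditional-Haar state evolution and not even rotational invariance. The paper's route keeps the affine case inside the same fixed-point framework as every other case.

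\textbf{Two steps to tighten.} Your truncation device fixes both, provided you apply it to the error vector and not only to the trace.

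First, the cross term $-\frac{2u}{p}\langle (X^TX)^+X^T\varepsilon,(X^TX)^+\mathbf 1\rangle$ is not sign-definite. Its conditional variance $\frac{4u^2\sigma^2}{p^2}\mathbf 1^T\bigl((X^TX)^+\bigr)^3\mathbf 1$ need not vanish, because Assumption~\ref{ass1} gives no lower bound on the positive eigenvalues.

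Second, for the same reason $\frac1p\tr(X^TX)^+$ need not converge to $I_+$, nor concentrate. A single eigenvalue of size $e^{-p}$ already breaks this.

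The fix: let $P_\epsilon$ be the projection onto eigenvectors of $X^TX$ with eigenvalue $>\epsilon$, and use $\|\widehat\beta-\beta^\star\|_2\ge\|P_\epsilon(\widehat\beta-\beta^\star)\|_2$. Write $\xi=U^T\varepsilon$, and let $\phi_\epsilon$ be your smoothing of $\mathbf 1\{s>\epsilon\}/s$, chosen below it and increasing to $\mathbf 1\{s>0\}/s$. Then:
\begin{itemize}
\item the truncated noise term $\frac1p\sum_{s_{i,p}>\epsilon}\xi_i^2/s_{i,p}$ has conditional variance at most $2\sigma^4/(p\epsilon^2)$;
\item the truncated cross term has conditional variance at most $4u^2\sigma^2/(p\epsilon^3)$;
\item the truncated shift term is nonnegative;
\item the conditional mean of the noise term is at least $\sigma^2\int\phi_\epsilon\,d\mu_p\to\sigma^2\int\phi_\epsilon\,d\mu\uparrow\sigma^2 I_+$.
\end{itemize}
Together these give the required probability-liminf bound in each regime you treat.
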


\begin{proof}
For each \(\lambda>0\), choose an oracle fixed point supplied by
Lemma~\ref{thm:oracle-fixed-point}, and denote its scalar risk and
effective noise by \(r^{h,\lambda}\) and \(\nu_\lambda\) as defined in \cref{thm:oracle-mse}.
At this fixed point,
\begin{equation}\label{eq:lb-risk-nu}
 \prox_{\alpha_\lambda h}(y)=y-\alpha_\lambda u, r^{h, \lambda} = \nu_\lambda + \alpha_\lambda^2 u^2 .
\end{equation}
Because the proximal derivative is one, the
fixed-point equation \eqref{eq: fpt-oracle-a-copy} gives
\(\eta_\lambda=\bar\gamma_{1,\lambda}+\lambda\). Substitution into the
linear-precision equation gives $\eta_\lambda^{-1}
 =\E\!\left[\frac1{S+\lambda}\right]$.
Multiplying the linear-variance equation by
\((\bar\gamma_{1,\lambda}/\eta_\lambda)^2\), and recalling the definition
of \(\nu_\lambda\), gives
\begin{align*}
 \nu_\lambda
 &=
 \E\!\left[
 \frac{\sigma^2S+\lambda^2\tau_{2,\lambda}}
      {(S+\lambda)^2}
 \right]
 -\left(\frac{\lambda}{\eta_\lambda}\right)^2
  \tau_{2,\lambda}=
 \sigma^2\E\!\left[\frac{S}{(S+\lambda)^2}\right]
 +\lambda^2\tau_{2,\lambda}
 \left(
 \E\!\left[\frac1{(S+\lambda)^2}\right]
 -\left(\E\!\left[\frac1{S+\lambda}\right]\right)^2
 \right).
\end{align*}
Thus the exact identity is $\nu_\lambda
 =
 \sigma^2\E\!\left[\frac{S}{(S+\lambda)^2}\right]
 +\lambda^2\tau_{2,\lambda}
 \VAR\!\left(\frac1{S+\lambda}\right)$.
The second term is nonnegative, so we have
\begin{equation}
 \nu_\lambda
 \geq
 \sigma^2\E\!\left[\frac{S}{(S+\lambda)^2}\right]. \label{eq:afflower}
\end{equation}
If \(I_+=\infty\), Fatou's lemma makes the right side of \eqref{eq:afflower} diverge, so
\(\liminf_{\lambda\downarrow0} r^{h,\lambda} \geq \liminf_{\lambda\downarrow0} \nu_\lambda = \infty\), where the first inequality holds by \eqref{eq:lb-risk-nu}. If \(p_0=0\) and \(I_+<\infty\), dominated convergence and
Lemma~\ref{lem:linearcomparison} give $\liminf_{\lambda\downarrow0} r^{h,\lambda}
 \geq\sigma^2I_+
 =L_{0,0}(\cE_\pi(\tau_B^+))
 \geq L_\mu(\cE_\pi(\tau_B^+))
 =\tau_B^+
 \geq m(\tau_B^+)$.
More explicitly, for arbitrary \(\epsilon>0\), choose one small
\emph{fixed} \(\lambda\) for which
\(r^{h,\lambda}\geq m(\tau_B^+)-\epsilon/2\). Then we have
\begin{equation}\label{eq:chain-inequalities}
\begin{aligned}
\PP\left(\frac{1}{p}\|\widehat\beta_{\cvx}^{h}-\beta^\star\|_2^2 < m(\tau_B^+) - \epsilon\right) &\leq \PP\left(\frac{1}{p}\|\widehat\beta_{\cvx}^{h}-\beta^\star\|_2^2 < r^{h, \lambda} - \frac{\epsilon}{2}\right)\\
&\leq \PP\left(\frac1p\norm{\widehat\beta^{h,\lambda}-\beta^\star}_2^2
 <r^{h, \lambda} - \frac{\epsilon}{2}
 \right) \xlongrightarrow{p \rightarrow \infty} 0,
\end{aligned}
\end{equation}
where the last convergence statement holds by \cref{thm:oracle-mse}. Letting \(\epsilon\downarrow0\) concludes the affine case in the proof of Theorem~\ref{thm:Bayes VAMP-lower-bound}.

In the remaining regime where \(q<1\), however, an affine proximal map has
derivative one and hence \(A_h(\kappa,\nu)=1\). Thus
\eqref{eq:q-width-condition} fails.
\end{proof}

\begin{lemma}[Flat spectrum]\label{lem:flat}
If \(\mu=\delta_s\) for some \(s>0\), then
\(\tau_B^+=\sigma^2/s\), and the conclusion of
Theorem~\ref{thm:Bayes VAMP-lower-bound} holds without using a lower-bound pair.
\end{lemma}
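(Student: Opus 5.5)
Since $L_\mu$ is constant for a point-mass spectrum, the Bayes recursion collapses to a constant map. We then read off the oracle fixed point directly from the spectral equations, which for a point mass determine $\bar\gamma_{1,\lambda}$ and $\tau_{1,\lambda}$ without reference to $h$. No lower-bound pair and no width condition are needed; note $p_0=0$ here, so $\cC_{\rm LB}=\cC$ anyway.

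\emph{Step 1: the greatest fixed point.} With $\mu=\delta_s$, \eqref{eq:spectrum-mod} gives
\[
L_\mu(\omega)=\frac{\sigma^2\omega/(\sigma^2+\omega s)}{\omega s/(\sigma^2+\omega s)}=\frac{\sigma^2}{s}
\quad\text{for all }\omega\ge0 .
\]
Hence $T_{\mu,\pi}\equiv\sigma^2/s$. Its only fixed point is $\sigma^2/s$, so $\tau_B^+=\sigma^2/s$.

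\emph{Step 2: explicit oracle fixed point.} If $\operatorname{dom}h$ is a singleton, we conclude by \cref{lem:singleton}. Otherwise \cref{ass5} holds, and for each $\lambda>0$ Lemma~\ref{thm:oracle-fixed-point} supplies a solution of \eqref{eq: fpt-oracle-a}--\eqref{eq: fpt-oracle-d}. Write $c=\eta_\lambda-\bar\gamma_{1,\lambda}$.
\begin{itemize}
\item Here $G(c)=1/(s+c)$, so \eqref{eq: fpt-oracle-c} reads $1/\eta_\lambda=G(c)$, i.e.\ $\eta_\lambda=s+c$, which forces $\bar\gamma_{1,\lambda}=s$.
\item In \eqref{eq: fpt-oracle-d} the two $\tau_{2,\lambda}$ terms are $c^2\tau_{2,\lambda}/(s+c)^2$ and $(c/\eta_\lambda)^2\tau_{2,\lambda}$. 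Since $\eta_\lambda=s+c$, they cancel exactly.
\item What remains is $\tau_{1,\lambda}=(\eta_\lambda/s)^2\,\sigma^2 s/\eta_\lambda^2=\sigma^2/s$.
\end{itemize}
By \eqref{eq:alpha_and_nu}, the effective noise is therefore
\[
\nu_\lambda=\Bigl(\frac{s}{s+\lambda}\Bigr)^2\frac{\sigma^2}{s}\longrightarrow\frac{\sigma^2}{s}=\tau_B^+
\qquad(\lambda\downarrow0).
\]
This holds for every such fixed point, so nonuniqueness is irrelevant.

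\emph{Step 3: transfer to the unperturbed estimator.} The risk $r^{h,\lambda}$ from Theorem~\ref{thm:oracle-mse} is the MSE of a scalar estimator of $B^\star$ from $B^\star+\sqrt{\nu_\lambda}Z$. Scalar Bayes optimality (\cref{lem:scalarcomparison}) gives $r^{h,\lambda}\ge m_\pi(\nu_\lambda)$. Since $m_\pi$ is continuous (\cref{lem:mmsecalculus}), for any $\epsilon>0$ we can fix $\lambda$ small enough that $r^{h,\lambda}\ge m_\pi(\tau_B^+)-\epsilon/2$.

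We then repeat the chain \eqref{eq:chain-inequalities} from the affine case:
\begin{itemize}
\item \cref{lem:oraclemonotone} bounds $\frac1p\|\widehat\beta_{\cvx}^h-\beta^\star\|_2^2$ below by the same quantity for $\widehat\beta^{h,\lambda}_{\cvx}$;
\item Theorem~\ref{thm:oracle-mse} gives convergence in probability of the latter to $r^{h,\lambda}$.
\end{itemize}
Together these yield $\PP\bigl(\frac1p\|\widehat\beta_{\cvx}^h-\beta^\star\|_2^2<m_\pi(\tau_B^+)-\epsilon\bigr)\to0$. Letting $\epsilon\downarrow0$ gives the claim.

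The only real step is the exact cancellation in Step 2, since it makes $\tau_{1,\lambda}$ independent of $h$ and $\tau_{2,\lambda}$. Affine penalties need no separate treatment, because the computation never uses the proximal stage.
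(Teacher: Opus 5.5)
Your proof is correct and follows essentially the same route as the paper. You compute $\tau_B^+=\sigma^2/s$ and show that every oracle fixed point has $\nu_\lambda=\sigma^2 s/(s+\lambda)^2\to\tau_B^+$. You then combine scalar Bayes optimality, continuity of $m_\pi$, \cref{lem:oraclemonotone}, and Theorem~\ref{thm:oracle-mse} at one small fixed $\lambda$.

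The only difference is how $\nu_\lambda$ is obtained. The paper reads it off from the collapsed shifted map $L_{\lambda,c}\equiv\sigma^2 s/(s+\lambda)^2$ and only remarks that it also follows from the linear-stage equations. You carry out that direct computation from \eqref{eq: fpt-oracle-c}--\eqref{eq: fpt-oracle-d}, which makes visible that non-affineness of $h$ is never needed.
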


\begin{proof}
Here \(L_\mu(\omega)=\sigma^2/s\) for every \(\omega\), so the Bayes map has
the unique fixed point \(\tau_B^+=\sigma^2/s\). Moreover, the variance term
in \eqref{eq:Lshift} vanishes, while $1-\frac{c}{s+c+\lambda}
 =\frac{s+\lambda}{s+c+\lambda}.$ Consequently, the common factor \((s+c+\lambda)^{-2}\) cancels and gives 
 \begin{equation}\label{eq:flat-perturbed-spectrum-stage}
 L_{\lambda,c}(\omega)
 =
 \frac{\sigma^2s}{(s+\lambda)^2}
 \end{equation}
 for every \(c,\omega\). Fix a penalty $h$ whose domain is not a singleton. For each \(\lambda>0\), choose any oracle fixed point $(\bgamma_{1, \lambda}, \eta_\lambda, \tau_{1, \lambda}, \tau_{2,\lambda})$ supplied by Lemma~\ref{thm:oracle-fixed-point}. We then use the definitions of \(r^{h,\lambda}\) and $\nu_\lambda$ in \cref{thm:oracle-mse} and \eqref{eq:alpha_and_nu}, respectively. By \cref{eq:flat-perturbed-spectrum-stage}, we have
\begin{equation}\label{eq:flat-effective-noise}
\lim_{\lambda \downarrow 0} \nu_\lambda = \lim_{\lambda \downarrow 0} \frac{\sigma^2s}{(s+\lambda)^2} = \frac{\sigma^2}{s}=\tau_B^+.
 \end{equation}
 
The same identity follows directly from the affine fixed-point equations.
Therefore, Bayes optimality \cref{lem:bayes-optimality} and \eqref{eq:flat-effective-noise} give us
\(
 \liminf_{\lambda\downarrow0} r^{h, \lambda}
 \geq\lim_{\lambda\downarrow0} m(\nu_\lambda)
 =m(\tau_B^+).
\)
For arbitrary \(\epsilon>0\), choosing one small fixed \(\lambda\) with
\(r^{h,\lambda}\geq m(\tau_B^+)-\epsilon/2\) and applying an inequality argument similar to \eqref{eq:chain-inequalities} completes the proof.
\end{proof}

\subsubsection{Completion of the proof}

\begin{proof}[Proof of Theorem~\ref{thm:Bayes VAMP-lower-bound}]
If \(\VAR(B^\star)=0\), then \(m(\tau_B^+)=0\), and the result follows from
nonnegativity of squared error. Assume henceforth that
\(\VAR(B^\star)>0\).

Singleton-domain and affine penalties are covered by
Lemmas~\ref{lem:singleton} and \ref{lem:affine}. Thus suppose that \(h\)
is non-affine and its effective domain contains at least two points.
If the spectrum is flat, Lemma~\ref{lem:flat} completes the proof. We may
therefore assume that \(\mu\) is not a point mass.

There are two cases.

\medskip
\noindent\emph{Case 1: \(f_{\tau_B^+}\) is not proximal.}
Lemma~\ref{lem:lb-pair-existence}(i), together with the strict extrinsic gap in
Lemma~\ref{lem:pythag}, shows that
$(\tau_B^+,\cE_\pi(\tau_B^+)+\Delta_{\tau_B^+})$
is an \(h\)-admissible lower-bound pair. Its first coordinate lies in the
\([\tau_B^+ - \epsilon_0, \tau_B^+]\) whenever the remaining regime $p_0 > 0$ and $I_+ < \infty$ applies. Consequently,
Proposition~\ref{prop:perturbed-fp-var} and Lemma~\ref{lem:lb-pair-risk} give
\[
 \pliminf_{p\to\infty}
 \frac1p\norm{\widehat\beta_{\cvx}^h-\beta^\star}_2^2
 \geq m(\tau_B^+).
\]

\medskip
\noindent\emph{Case 2: \(f_{\tau_B^+}\) is proximal.}
By Lemma~\ref{lem:lb-pair-existence}(ii), choose a sequence $b_j\uparrow\tau_B^+$ where $T(b_j)>b_j$.
In the regime $p_0 > 0$ and $I_+ < \infty$, take the sequence sufficiently close to
\(\tau_B^+\) so that we have
\(b_j\in (\tau_B^+-\epsilon_0,\tau_B^+)\) for all $j$; outside that regime there is no restriction on the choice of the sequence $\{b_j\}$.
Each \((b_j,\cE_\pi(b_j))\) is an
\(h\)-admissible lower-bound pair. Lemma~\ref{lem:lb-pair-risk} gives $\pliminf_{p\to\infty}
 \frac1p\norm{\widehat\beta_{\cvx}^h-\beta^\star}_2^2
 \geq m(b_j)$ for every $j$.
The scalar MMSE is continuous and nondecreasing. More explicitly, for
any \(\epsilon>0\), choose \(j\) large enough that
\(m(b_j)>m(\tau_B^+)-\epsilon/2\), and invoke
Lemma~\ref{lem:lb-pair-risk}. This gives
\begin{equation*}
\PP\left(\frac{1}{p}\|\widehat\beta_{\cvx}^{h}-\beta^\star\|_2^2 < m(\tau_B^+) - \epsilon\right) \leq \PP\left(\frac{1}{p}\|\widehat\beta_{\cvx}^{h}-\beta^\star\|_2^2 < m(b_j) - \frac{\epsilon}{2}\right) \xlongrightarrow{p \rightarrow \infty} 0,
\end{equation*}
which is precisely the desired probability-\(\liminf\) bound.
\end{proof}

\section{Technical lemmas}

Let $Y_\tau = B^\star+\sqrt{\tau}Z,$ where $B^\star\sim\pi,$ and $Z\sim N(0,1),$
with $Z$ independent of $B^\star$. Define the scalar Bayes denoiser $ f_\tau(y)
    :=
    \mathbb E[B^\star\mid Y_\tau=y],$
and the scalar Bayes risk $m(\tau) = m_{\pi}(\tau)
    :=
    \operatorname{mmse}_\pi(\tau)
    :=
    \mathbb E[(f_\tau(Y_\tau)-B^\star)^2].$
Recall that the Bayes denoising extrinsic variance is $\mathcal E_\pi(\tau)
    :=
    \frac{m(\tau)}{1-m(\tau)/\tau}$.
Since the estimator \(Y_\tau\) has risk \(\tau\), and the best linear estimator has
risk strictly smaller than \(\tau\) whenever \(\operatorname{Var}(B^\star)<\infty\),
we have $0\le m(\tau)<\tau$ for all $\tau>0.$
Thus $\mathcal E_\pi(\tau)$ is well-defined.

For a weakly differentiable scalar estimator $g:\mathbb R\to\mathbb R$, define $r_g(\tau)
    :=
    \mathbb E[(g(Y_\tau)-B^\star)^2],$
and $a_g(\tau)
    :=
    \mathbb E[g'(Y_\tau)].$
Assuming $a_g(\tau)\ne1$, define its denoising extrinsic variance by
\[
    \omega_g(\tau)
    :=
    \frac{
        r_g(\tau)-a_g(\tau)^2\tau
    }{
        (1-a_g(\tau))^2
    }.
\]

\begin{lemma}[Bayes optimality]
\label{lem:bayes-optimality}
\[
r_g(\tau) := \E\left[\left(g(Y_{\tau}) - B^\star\right)^2\right] \geq \E\left[\left(\E[B^\star \mid Y_{\tau}] - B^\star\right)^2\right] = m(\tau).
\]
\end{lemma}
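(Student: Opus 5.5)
The inequality $r_g(\tau)\ge m(\tau)$ follows from the $L^2$-projection property of conditional expectation. We may assume $r_g(\tau)<\infty$; otherwise there is nothing to prove. Since $\E[(B^\star)^2]<\infty$ under \cref{ass3}, the posterior mean $f_\tau(Y_\tau)=\E[B^\star\mid Y_\tau]$ is square-integrable, and so is $g(Y_\tau)$, because $|g(Y_\tau)|\le |g(Y_\tau)-B^\star|+|B^\star|$. We then expand
\[
r_g(\tau)=\E[(g(Y_\tau)-f_\tau(Y_\tau))^2]+2\E[(g(Y_\tau)-f_\tau(Y_\tau))(f_\tau(Y_\tau)-B^\star)]+m(\tau).
\]

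The cross term vanishes by the tower property. Write $D:=g(Y_\tau)-f_\tau(Y_\tau)$. This variable is $\sigma(Y_\tau)$-measurable and square-integrable, and $f_\tau(Y_\tau)-B^\star$ is also square-integrable, so their product is integrable by Cauchy--Schwarz. Conditioning on $Y_\tau$ gives
\[
\E[D(f_\tau(Y_\tau)-B^\star)]=\E\bigl[D\,\E[f_\tau(Y_\tau)-B^\star\mid Y_\tau]\bigr]=0,
\]
so $r_g(\tau)=m(\tau)+\E[D^2]\ge m(\tau)$.

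Equality holds iff $g=f_\tau$ almost everywhere with respect to the law of $Y_\tau$, which has a positive density. Note that this is the same identity already used in the proof of \cref{lem:scalarcomparison}. The only point that needs care is the integrability justification, which is handled by the finite-second-moment assumption; there is no real obstacle.
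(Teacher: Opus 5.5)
Your proof is correct and follows the paper's argument essentially step for step. Both reduce to $r_g(\tau)<\infty$, deduce $g(Y_\tau)\in L^2$ from $B^\star\in L^2$, expand the squared error around the posterior mean, and remove the cross term by conditioning on $Y_\tau$; your equality characterization also matches the paper's closing remark.
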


\begin{proof}
Fix \(\tau>0\), and write $Y:=Y_\tau$ and $f(Y):=\E[B^\star\mid Y]$.
If \(r_g(\tau)=+\infty\), the conclusion is immediate. We may therefore
assume that \(r_g(\tau)<\infty\). Since \(B^\star\in L^2\), this also implies
that \(g(Y)\in L^2\).

Decompose the estimation error as $g(Y)-B^\star
    =
    \bigl(g(Y)-f(Y)\bigr)
    +
    \bigl(f(Y)-B^\star\bigr)$.
Taking squared expectations gives $r_g(\tau)
    =
    \E\left[\bigl(g(Y)-f(Y)\bigr)^2\right]
    +
    \E\left[\bigl(f(Y)-B^\star\bigr)^2\right] + 2\E\left[
        \bigl(g(Y)-f(Y)\bigr)
        \bigl(f(Y)-B^\star\bigr)
    \right]$.
The cross term vanishes. Indeed, \(g(Y)-f(Y)\) is measurable with respect
to \(\sigma(Y)\), and hence $\E\left[
        \bigl(g(Y)-f(Y)\bigr)
        \bigl(f(Y)-B^\star\bigr)
    \right] =
    \E\left[
        \bigl(g(Y)-f(Y)\bigr)
        \E\left[f(Y)-B^\star\mid Y\right]
    \right]
    =0$
because
\(
    \E\left[f(Y)-B^\star\mid Y\right]
    =
    f(Y)-\E[B^\star\mid Y]
    =0.
\)
Consequently,
\[
    r_g(\tau)
    =
    m(\tau)
    +
    \E\left[
        \bigl(g(Y_\tau)-f_\tau(Y_\tau)\bigr)^2
    \right]
    \ge m(\tau).
\]
Moreover, equality holds if and only if
$g(Y_\tau)=f_\tau(Y_\tau)$ almost surely.
\end{proof}

\begin{lemma}[Bayes denoising minimizes extrinsic variance]
\label{lem: omega-D}
For every Lipschitz scalar estimator $g$ with
$a_g(\tau)\ne1$,
\[
    \omega_g(\tau)
    \ge
    \mathcal E_\pi(\tau).
\]
\end{lemma}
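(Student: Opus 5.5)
The plan is to rerun the argument of Lemma~\ref{lem:scalarcomparison}, checking that it never used $a_g(\tau)<1$; only $a_g(\tau)\neq1$ is needed, to divide by $(1-a_g(\tau))^2>0$. Fix $\tau>0$ and write $Y=Y_\tau$, $f=f_\tau$, $m=m(\tau)$, $d=m/\tau$, $a=a_g(\tau)$, $r=r_g(\tau)$. If $V_\pi=0$, then $m\equiv0$ and $\mathcal E_\pi(\tau)=0$. Since $\omega_g(\tau)=\E[(e_g(Y)-B^\star)^2]\ge0$, with $e_g(y)=(g(y)-ay)/(1-a)$, by the Gaussian integration-by-parts expansion in the proof of Lemma~\ref{lem:pythag} (which holds for any $a\ne1$), the claim is trivial there. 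So assume $V_\pi>0$. Then $d\in(0,1)$ by Lemma~\ref{lem:mmsecalculus}.

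\textbf{Step 1: reduce to the deviation from the posterior mean.} Set $u=g-f$. Because $g$ is Lipschitz and $f$ is smooth with $\E[f'(Y)]=d$, Lemma~\ref{lem:posterioribp} applies to $u$: we have $u(Y)\in L^2$ and $\E|u'(Y)|\le \operatorname{Lip}(g)+d<\infty$. It gives
\[
a-d=\E[u'(Y)]=\tfrac1\tau\E[u(Y)(Y-f(Y))].
\]
The projection identity gives $\E[(Y-f(Y))^2]=\tau-m=\tau(1-d)$, and Lemma~\ref{lem:bayes-optimality} gives $r=m+\E[u(Y)^2]=\tau d+\E[u(Y)^2]$.

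\textbf{Step 2: apply Cauchy--Schwarz.} From Step 1,
\[
\tau^2(a-d)^2\le \E[u(Y)^2]\,\tau(1-d),
\qquad\text{so}\qquad
\E[u(Y)^2]\ge \frac{\tau(a-d)^2}{1-d}.
\]

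\textbf{Step 3: the algebra, valid for every real $a$.} Combining the two steps,
\[
r-a^2\tau\ \ge\ \tau d+\frac{\tau(a-d)^2}{1-d}-a^2\tau=\frac{\tau\bigl[d(1-d)+(a-d)^2-a^2(1-d)\bigr]}{1-d}=\frac{\tau d(1-a)^2}{1-d}.
\]
This identity holds regardless of the sign of $1-a$. Dividing by $(1-a)^2>0$ gives
\[
\omega_g(\tau)\ge \frac{\tau d}{1-d}=\frac{m}{1-m/\tau}=\mathcal E_\pi(\tau).
\]

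\textbf{Main obstacle.} The only delicate point is justifying the integration by parts for $u$ when $g$ is merely Lipschitz (an a.e.\ derivative) and $\pi$ is arbitrary. This is exactly what Lemma~\ref{lem:posterioribp} handles through the cutoff argument. The remaining observation is that the case $a>1$, excluded in Lemma~\ref{lem:scalarcomparison}, causes no difficulty, because the key inequality in Step 3 is a perfect square $(1-a)^2$ times a nonnegative factor.
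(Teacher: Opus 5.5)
Your argument is correct and is essentially the paper's proof. You write $g=f+u$, use Lemma~\ref{lem:posterioribp} to get $a-d=\tau^{-1}\E[u(Y)(Y-f(Y))]$, apply Cauchy--Schwarz with $\E[(Y-f(Y))^2]=\tau-m$, and reach $r-a^2\tau\ge \tau d(1-a)^2/(1-d)$; as you note, that step needs only $a\neq1$. Your separate treatment of $V_\pi=0$ is harmless but unnecessary, because the same chain goes through unchanged with $d=0$.
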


\begin{proof}
Fix $\tau>0$, and write
\[
    Y=Y_\tau,
    \qquad
    f=f_\tau,
    \qquad
    m=m(\tau),
    \qquad
    a=a_g(\tau),
    \qquad
    r=r_g(\tau).
\]
Define $ h(Y):=g(Y)-f(Y).$
Since $f(Y)=\mathbb E[B^\star\mid Y]$, the conditional expectation projection
identity gives
\[
\begin{aligned}
    r
    &=
    \mathbb E[(g(Y)-B^\star)^2] =
    \mathbb E[(f(Y)-B^\star)^2]
    +
    \mathbb E[(g(Y)-f(Y))^2] =
    m+\mathbb E[h(Y)^2].
\end{aligned}
\]

We next relate the average derivative of \(h\) to its correlation with
\(Y-f(Y)\). Let \(p_\tau\) denote the density of \(Y\). Tweedie's formula gives $f(y)=y+\tau\frac{d}{dy}\log p_\tau(y)$, or equivalently, $\frac{d}{dy}\log p_\tau(y)
    =
    \frac{f(y)-y}{\tau}$.
Therefore, by integration by parts and Lemma~\ref{lem:posterioribp}, we have $\mathbb E[h'(Y)]
    =
    -\mathbb E
    \left[
        h(Y)\frac{d}{dy}\log p_\tau(Y)
    \right] =
    \frac1{\tau}
    \mathbb E[h(Y)(Y-f(Y))].$
Moreover, $\mathbb E[f'(Y)]
    =
    \frac{m}{\tau}.$
Indeed, this follows from the posterior-variance identity \(f_\tau'(y) = \frac{\operatorname{Var}(B^\star\mid Y_\tau=y)}{\tau}\), whose expectation is \(m(\tau)/\tau\). Hence we have $\mathbb E[h'(Y)] = a-\frac{m}{\tau}.$

We also need $\mathbb E[(Y-f(Y))^2]=\tau-m$: 
to see this, note that $ Y-f(Y)=(Y-B^\star)+(B^\star-f(Y)).$ Since $\mathbb E[B^\star-f(Y)\mid Y]=0$, we have $\mathbb E[(Y-f(Y))(B^\star-f(Y))]=0$.
Thus $0
    =
    \mathbb E[(Y-B^\star)(B^\star-f(Y))]
    +
    \mathbb E[(B^\star-f(Y))^2]$,
and we have $\mathbb E[(Y-B^\star)(B^\star-f(Y))]=-m$.
Since $\mathbb E[(Y-B^\star)^2]=\tau$, we get $\E[(Y-f(Y))^2]
    =
    \mathbb E[(Y-B^\star)^2]
    +
    \mathbb E[(B^\star-f(Y))^2]
    +
    2\mathbb E[(Y-B^\star)(B^\star-f(Y))] =
    \tau+m-2m
    =
    \tau-m$.

By Cauchy--Schwarz,
\[
\begin{aligned}
    \left(a-\frac{m}{\tau}\right)^2
    =
    \left(\mathbb E[h'(Y)]\right)^2  
    =\frac1{\tau^2}
    \left(
        \mathbb E[h(Y)(Y-f(Y))]
    \right)^2 
    \le
    \frac1{\tau^2}
    \mathbb E[h(Y)^2]\,
    \mathbb E[(Y-f(Y))^2] =
    \frac{\tau-m}{\tau^2}
    \mathbb E[h(Y)^2],
\end{aligned}
\]
and therefore $\mathbb E[h(Y)^2]
    \ge
    \frac{\tau^2}{\tau-m}
    \left(a-\frac{m}{\tau}\right)^2$.
Writing
\(
    s:=\frac{m}{\tau}
\),
this becomes
\(
    \mathbb E[h(Y)^2]
    \ge
    \frac{\tau(a-s)^2}{1-s}
\). Now we have
\(
    r
    =
    m+\mathbb E[h(Y)^2]
    =
    \tau s+\mathbb E[h(Y)^2]
\),
and hence
\[
\begin{aligned}
    r-a^2\tau
    &\ge
    \tau s
    +
    \frac{\tau(a-s)^2}{1-s}
    -
    a^2\tau =
    \tau
    \left[
        s-a^2+\frac{(a-s)^2}{1-s}
    \right] =
    \tau
    \frac{s(1-a)^2}{1-s}.
\end{aligned}
\]
Dividing by \((1-a)^2\), we obtain $\omega_g(\tau)
    =
    \frac{r-a^2\tau}{(1-a)^2}
    \ge
    \frac{\tau s}{1-s}$.
Since \(s=m/\tau\), we have $\frac{\tau s}{1-s}
    =
    \frac{m}{1-m/\tau}
    =
    \mathcal E_\pi(\tau)$.
This proves the lemma.
\end{proof}

\begin{lemma}\label{lem: m-E-nondecreasing}
The functions $m(\tau)$ and $\mathcal E_{\pi}(\tau)$ are nondecreasing.
\end{lemma}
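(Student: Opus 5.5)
The plan is to read both monotonicity claims directly off the derivative formulas already established in \cref{lem:mmsecalculus}, and to supply an elementary argument in the one place where differentiability needs care.

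\textbf{Degenerate prior.} If \(V_\pi=0\), then \(m\equiv0\) and \(\cE_\pi\equiv0\). Both functions are constant, so there is nothing to prove. From now on assume \(0<V_\pi<\infty\).

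\textbf{Monotonicity of \(m\).} Under \(0<V_\pi<\infty\), \cref{lem:mmsecalculus} shows that \(m\) is differentiable on \((0,\infty)\) with
\[
m'(\tau)=\E[f_\tau'(Y_\tau)^2]\ge0 .
\]
The mean value theorem then gives that \(m\) is nondecreasing. If I wanted to avoid the Guo--Wu--Shamai--Verd\'u derivative formula, I would use a degradation argument instead. For \(\tau_1<\tau_2\), write
\[
Y_{\tau_2}\overset{d}{=}Y_{\tau_1}+\sqrt{\tau_2-\tau_1}\,Z',
\]
with \(Z'\) independent of \((B^\star,Y_{\tau_1})\). The estimator \(\E[B^\star\mid Y_{\tau_2}]\) is then a randomized function of \(Y_{\tau_1}\). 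Its risk is at least \(m(\tau_1)\), because conditioning on the extra independent noise does not beat the posterior mean given \(Y_{\tau_1}\) (\cref{lem:bayes-optimality}, applied conditionally on \(Z'\)). Hence \(m(\tau_2)\ge m(\tau_1)\).

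\textbf{Monotonicity of \(\cE_\pi\).} Here I would again invoke \eqref{eq:Eprimeglobal}:
\[
\cE_\pi'(\tau)=\frac{\VAR(f_\tau'(Y_\tau))}{(1-d_\tau)^2}\ge0 ,
\]
where \(d_\tau=m(\tau)/\tau\in(0,1)\) by \cref{lem:mmsecalculus}, so the denominator is positive. This identity comes from differentiating \(\cE_\pi=m/(1-m/\tau)\) and substituting \eqref{eq:mmseprime} together with \(\E f_\tau'(Y_\tau)=d_\tau\). The quotient rule gives
\[
\cE_\pi'=\frac{m'\tau^2-m^2}{(\tau-m)^2}=\frac{\E[f'^2]-d^2}{(1-d)^2},
\]
and the numerator is \(\VAR(f')\). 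A nonnegative derivative then gives monotonicity.

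\textbf{Main obstacle.} The only delicate point is justifying \(m'(\tau)=\E[f_\tau'^2]\) for a general \(\pi\in\cP_2\). This step was already handled in \cref{lem:mmsecalculus} via the positive-SNR MMSE differentiation result, so I would simply cite it.
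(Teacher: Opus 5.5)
Your proof is correct and is essentially the paper's argument. The paper also differentiates: it uses $m'(\tau)=\E[\VAR(B^\star\mid Y_\tau)^2]/\tau^2\ge m(\tau)^2/\tau^2\ge 0$ (Jensen) and then $\cE_\pi'(\tau)=\bigl(m'(\tau)-m(\tau)^2/\tau^2\bigr)/(1-m(\tau)/\tau)^2\ge 0$, which is exactly your $\E[f_\tau'(Y_\tau)^2]\ge 0$ and $\VAR(f_\tau'(Y_\tau))\ge 0$ after substituting $f_\tau'=\VAR(B^\star\mid Y_\tau=\cdot)/\tau$. Your separate handling of $V_\pi=0$ and your degradation argument for $m$ are valid but optional extras.
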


\begin{proof}[Proof of \cref{lem: m-E-nondecreasing}.]
We compute the derivatives of $m(\tau)$ and $\mathcal E_\pi(\tau)$.
\[
m'(\tau) = \frac{\E[\VAR(B^\star | Y_\tau)^2]}{\tau^2} \geq \frac{\E[\VAR(B^\star | Y_\tau)]^2}{\tau^2} = \frac{m(\tau)^2}{\tau^2} \geq 0,
\]
and by the chain rule
\[
\cE'_\pi(\tau) = \frac{m'(\tau) - m(\tau)^2/\tau^2}{(1 - m(\tau)/ \tau)^2} \geq 0.
\]
Thus both functions are nondecreasing.
\end{proof}

\begin{lemma}\label{lem: best-linear-risk}
Let $m(\tau) := \mmse_{\pi}(\tau)$ for the $\mmse_{\pi}$ function defined in \eqref{eq:mmse}. The Bayes risk is no larger than the risk of the best estimator linear in $Y_\tau$: for all $\tau \geq 0$,
\begin{equation}
\label{eq:variance_ineq}
m(\tau)
\leq
\frac{\VAR(B^\star)\tau}{\VAR(B^\star)+\tau}
\leq
\VAR(B^\star).
\end{equation}
The last inequality is strict when \(\VAR(B^\star)>0\) and
\(\tau<\infty\). At \(\VAR(B^\star)=\tau=0\), the middle expression is
understood as zero.
Moreover, $\lim_{\tau \rightarrow \infty} m(\tau) = \VAR(B^\star),$
and consequently, for $\mathcal E_{\pi}$ defined in \eqref{eq:extrinsic},
\begin{equation}
\lim_{\tau \rightarrow \infty} \mathcal E_{\pi}(\tau) = \lim_{\tau \rightarrow \infty} \frac{m(\tau)}{1 - m(\tau)/\tau} = \lim_{\tau \rightarrow \infty} m(\tau) = \VAR(B^\star).
\end{equation}
\end{lemma}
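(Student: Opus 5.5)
The statement to prove is \cref{lem: best-linear-risk}: the two-sided bound \eqref{eq:variance_ineq} together with the limits of $m$ and $\cE_\pi$ as $\tau\to\infty$. The plan is direct: compare the posterior mean with an explicit affine estimator, then use a sandwich to get the large-noise limit.

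\textbf{Step 1: the upper bound.} For $\tau>0$ and $V_\pi:=\VAR(B^\star)>0$, consider the affine estimator
\[
\widehat b(Y_\tau)=b_\pi+\frac{V_\pi}{V_\pi+\tau}(Y_\tau-b_\pi).
\]
Its error is
\[
\widehat b-B^\star=-\frac{\tau}{V_\pi+\tau}(B^\star-b_\pi)+\frac{V_\pi}{V_\pi+\tau}\sqrt\tau Z.
\]
Since $Z\perp B^\star$, the cross term vanishes, and the risk is
\[
\frac{\tau^2V_\pi+V_\pi^2\tau}{(V_\pi+\tau)^2}=\frac{V_\pi\tau}{V_\pi+\tau}.
\]
\cref{lem:bayes-optimality} (Bayes optimality) then gives the first inequality in \eqref{eq:variance_ineq}.

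The second inequality is $\tau/(V_\pi+\tau)\le1$, and it is strict when $V_\pi>0$ and $\tau<\infty$. The degenerate cases are handled separately:
\begin{itemize}
\item If $V_\pi=0$, then $m\equiv0$.
\item If $\tau=0$, then $Y_0=B^\star$, so $m(0)=0$, consistent with the stated convention.
\end{itemize}

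\textbf{Step 2: the limit of $m$.} The upper bound gives $\limsup_{\tau\to\infty} m(\tau)\le V_\pi$. For the lower bound, we show $m(\tau)\to V_\pi$ from below, which amounts to showing that $\E[f_\tau(Y_\tau)]$ carries vanishing information. By the projection identity,
\[
m(\tau)=V_\pi-\VAR\bigl(f_\tau(Y_\tau)\bigr),
\]
so it suffices to prove $\VAR(f_\tau(Y_\tau))\to0$.

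Equivalently, working at precision $\gamma=1/\tau\to0$, observing $Y_\tau$ is the same as observing $\sqrt\gamma B^\star+Z$. Mutual information arguments, or a direct argument, show that $\E[B^\star\mid Y]\to b_\pi$ in $L^2$. A concrete route is to bound
\[
\VAR(f_\tau)\le \E\bigl[(f_\tau(Y_\tau)-b_\pi)^2\bigr]
\]
by truncating $B^\star$ at level $K$. On the truncated part, the likelihood ratio $\exp\{(yb-b^2/2)/\tau\}$ converges to $1$ uniformly on compacts in $y$ with $|y|\le\tau^{3/4}$, so the posterior converges to the prior. The tail contributes at most $\E[(B^\star)^2\mathbf 1\{|B^\star|>K\}]$ plus a vanishing probability term. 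Letting $K\to\infty$ after $\tau\to\infty$ then concludes.

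This approximation step is the main obstacle. An alternative I would try first is the I-MMSE relation: $m(\tau)\ge V_\pi-{}$(information gain), where the gain is bounded by $\tfrac12\log(1+V_\pi/\tau)\to0$ via the Gaussian channel capacity bound. The obstacle there is translating a vanishing mutual information into vanishing $\VAR(f_\tau)$; I would do this through Pinsker's inequality applied to the posterior, combined with uniform integrability of $(B^\star)^2$.

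\textbf{Step 3: the limit of $\cE_\pi$.} Since $0\le m(\tau)\le V_\pi$, we have $m(\tau)/\tau\to0$. Hence
\[
\cE_\pi(\tau)=\frac{m(\tau)}{1-m(\tau)/\tau}\to V_\pi
\]
by continuity of the quotient.
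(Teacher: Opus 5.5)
Your Steps 1 and 3 are the paper's proof, and so is the reduction of Step 2 to $\operatorname{Var}(f_\tau(Y_\tau))\to0$ via the projection identity. The gap is in your primary route for Step 2.

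\textbf{The gap in the truncation sketch.} Truncating $B^\star$ at level $K$ controls only the numerator of the posterior. The normalizing constant $D(y)=\mathbb{E}\bigl[e^{(yB^\star-(B^\star)^2/2)/\tau}\bigr]$ still integrates over the untruncated prior, and on your window $|y|\le\tau^{3/4}$ it need not tend to $1$. Take $\pi$ with density proportional to $(1+|b|)^{-4}$, which has finite variance. At $y=\tau^{3/4}$, the prior mass in $[y/2,3y/2]$, where $yb-b^2/2\ge 3y^2/8$, already gives $D(y)\ge c\,\tau^{-9/4}e^{3\sqrt\tau/8}\to\infty$. The posterior mass of $\{|B^\star|\le K\}$ at that $y$ is then about $\mathbb{P}(|B^\star|\le K)/D(y)\to0$. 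So the posterior does not converge to the prior uniformly on that window. You would additionally need $D(Y_\tau)\to1$ in probability, and the sketch never addresses this.

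\textbf{How the paper handles it.} The paper works at the natural scale $W_\tau=Y_\tau/\sqrt\tau=Z+B^\star/\sqrt\tau$, i.e.\ on windows $|y|\le C\sqrt\tau$; compact $y$-sets, by contrast, carry vanishing probability. Let $\psi$ be the standard normal density and $B_K:=B^\star\mathbf 1\{|B^\star|\le K\}$. The bound $|\psi(w-b/\sqrt\tau)-\psi(w)|\le\|\psi'\|_\infty|b|/\sqrt\tau$ together with $B^\star\in L^1$ makes both $\mathbb{E}[B_K\psi(w-B^\star/\sqrt\tau)]$ and $\mathbb{E}[\psi(w-B^\star/\sqrt\tau)]$ converge uniformly on compact $w$-sets, where $\psi$ is bounded below. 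Tightness of $W_\tau$ and $|\mathbb{E}[B_K\mid W_\tau]|\le K$ then give $L^2$ convergence. Conditional Jensen bounds the remainder by $\|B^\star-B_K\|_{L^2}$.

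\textbf{Your fallback route.} It does close the argument, but not through the inequality $m(\tau)\ge V_\pi-I(B^\star;Y_\tau)$, which is false: for $\pi=N(0,1)$ and $\tau=1$ one has $m=1/2<1-\tfrac12\log2$. The correct chain is as follows.
\begin{enumerate}
\item The Gaussian maximum-entropy bound gives $I(B^\star;Y_\tau)\le\tfrac12\log(1+V_\pi/\tau)$.
\item Pinsker and Jensen give $\mathbb{E}[\mathrm{TV}(P_{B^\star\mid Y_\tau},\pi)]\le\sqrt{I(B^\star;Y_\tau)/2}$.
\item Since $|\mathbb{E}[B_K\mid Y_\tau]-\mathbb{E}[B_K]|\le 2K\,\mathrm{TV}(P_{B^\star\mid Y_\tau},\pi)$ and this difference is at most $2K$, you get $\mathbb{E}[(\mathbb{E}[B_K\mid Y_\tau]-\mathbb{E}[B_K])^2]\le 4K^2\sqrt{I(B^\star;Y_\tau)/2}\to0$.
\item The tail is handled by conditional Jensen, as above.
\end{enumerate}
Compared with the paper, this route needs no Bayes-formula ratios, windows, or tightness, and gives an explicit rate for each fixed $K$. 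The cost is importing the entropy bound and Pinsker's inequality, whereas the paper's argument is entirely elementary.
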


\begin{proof}[Proof of Lemma~\ref{lem: best-linear-risk}]
The second inequality in \eqref{eq:variance_ineq} is immediate. If
\(\VAR(B^\star)=\tau=0\), the first claim is trivial; otherwise we prove it
using the best linear estimator.
Let $Y_\tau = B^\star + \sqrt{\tau}Z$. The Bayes estimator $\E[B^\star | Y_\tau]$ minimizes MSE over all estimators $g(Y_\tau)$. Therefore, for any specific estimator $g$, 
\begin{equation}\label{eq: bayes-estimator-minrisk}
m(\tau) \leq \E\left[(B^\star- g(Y_\tau))^2\right].
\end{equation}
Now we choose $g$ to be the best linear estimator, i.e., $g(Y_\tau) = \E[B^\star] + \frac{\VAR(B^\star)}{\VAR(B^\star) + \tau} \left(Y_\tau - \E[B^\star]\right)
= \E[B^\star] + \frac{\VAR(B^\star)}{\VAR(B^\star) + \tau} \left(\sqrt{\tau} Z + B^\star - \E[B^\star]\right)$.
Observe that 
\begin{equation}\label{eq: B*-g}
B^\star - g(Y_\tau) = \left(1 - \frac{\VAR(B^\star) }{\VAR(B^\star) + \tau}\right)\left(B^\star - \E[B^\star]\right) - \frac{\VAR(B^\star)}{\VAR(B^\star) + \tau} \sqrt{\tau} Z.
\end{equation}
Using \eqref{eq: B*-g} and the fact that $B^\star$ and $Z$ are uncorrelated, we can expand the MSE to be
\begin{equation}\label{eq: expand-linear-estimator-MSE}
\begin{aligned}
\E\left[(B^\star - g(Y_\tau))^2\right] &= \left(\frac{\tau}{\VAR(B^\star) + \tau}\right)^2 \E\left[(B^\star - \E[B^\star])^2\right] + \left(\frac{\VAR(B^\star)}{\VAR(B^\star) + \tau}\right)^2 \tau \\&= \frac{\tau^2 \VAR(B^\star) + \VAR(B^\star)^2 \tau}{(\VAR(B^\star) + \tau)^2} = \frac{\tau\VAR(B^\star)(\VAR(B^\star) + \tau)}{(\VAR(B^\star) + \tau)^2}= \frac{\tau \VAR(B^\star)}{\VAR(B^\star) + \tau}.
\end{aligned}
\end{equation}
Combining \eqref{eq: expand-linear-estimator-MSE} with \eqref{eq: bayes-estimator-minrisk} yields the first claim.

Finally, we will prove that $\lim_{\tau \rightarrow \infty} m(\tau) = \VAR(B^\star).$
We expand $m(\tau) = \E\left[\left(B^\star - \E\left[B^\star | B^\star + \sqrt{\tau}Z\right]\right)^2\right]= \E\left[\left(B^\star - \E[B^\star] + \E[B^\star] - \E\left[B^\star | B^\star + \sqrt{\tau}Z\right]\right)^2\right]= \VAR(B^\star) - \E\left[\left(\E[B^\star] - \E\left[B^\star | B^\star + \sqrt{\tau}Z\right]\right)^2\right]$
and see that it therefore suffices to prove that $\lim_{\tau \rightarrow \infty} \E\left[\left(\E[B^\star] - \E\left[B^\star | B^\star + \sqrt{\tau}Z\right]\right)^2\right] = 0$.
We define the rescaled observation $W_\tau := \frac{ B^\star + \sqrt{\tau}Z}{\sqrt{\tau}} = Z + \frac{B^\star}{\sqrt{\tau}}$.
The $\sigma$-algebra generated by $W_\tau$ is equivalent to that generated by $B^\star + \sqrt{\tau}Z$, i.e.\ $\sigma(B^\star + \sqrt{\tau}Z) = \sigma(W_\tau).$
Therefore, we have $\E[B^\star | B^\star + \sqrt{\tau}Z] = \E[B^\star | W_\tau].$
Next, observe that $(B^\star, W_\tau) = (B^\star, Z + \frac{B^\star}{\sqrt{\tau}}) \overset{w}{\longrightarrow} (B^\star, Z)$,
where $Z$ is independent of $B^\star$. We will show that $\E[B^\star | W_\tau] \overset{\tau \rightarrow \infty}{\longrightarrow} \E[B^\star] \text{ in } L^2$.
We do this using a truncation argument. First suppose $B^\star$ is bounded: $|B^\star| \leq M$. Let $\psi$ be the standard Gaussian density. By Bayes formula,
\begin{equation}\label{eq: conditional-B-given-W}
\E\left[B^\star | W_\tau = w\right] = \frac{\E\left[B^\star \psi(w - \frac{B^\star}{\sqrt{\tau}})\right]}{\E\left[\psi(w - \frac{B^\star}{\sqrt{\tau}})\right]}.
\end{equation}
For each fixed $w$, 
\begin{equation}\label{eq: convergence-psi}
\psi(w - \frac{B^\star}{\sqrt{\tau}}) \overset{\rm a.s.}{\underset{\tau \rightarrow \infty}{\longrightarrow}} \psi(w).
\end{equation}
Since $|B^\star| \leq M$ and $\psi$ is bounded, the dominated convergence theorem gives
\begin{equation}\label{eq: convergence-Bpsi}
\lim_{\tau \rightarrow \infty}\E\left[B^\star \psi(w - \frac{B^\star}{\sqrt{\tau}})\right] = \E[B^\star] \psi(w).
\end{equation}
Substituting \eqref{eq: convergence-psi} and \eqref{eq: convergence-Bpsi} back into \eqref{eq: conditional-B-given-W} gives $\lim_{\tau \rightarrow \infty} \E[B^\star | W_\tau = w] = \E[B^\star]$ for every fixed $w \in \R$.
Moreover, since $|B^\star| \leq M$, we have $|\E[B^\star | W_\tau = w]| \leq M.$ 
The convergence above is uniform for \(w\) in a fixed compact set, because
\(B^\star\) is bounded and the Gaussian density is uniformly continuous.
Also, $W_\tau = Z + B^\star / \sqrt{\tau} \overset{w}{\rightarrow} Z$ weakly; hence $\{W_\tau\}_{\tau \geq 1}$ is tight. Using boundedness of the conditional mean, uniform convergence on compact sets, and tightness, we obtain $\lim_{\tau \rightarrow \infty} \E\left[\left(\E[B^\star] - \E\left[B^\star | W_\tau \right]\right)^2\right] = 0 \text{ for all bounded } B^\star$.
Now take a general $B^\star \in L^2$. Define $B_M^\star = B^\star \vecone\{|B^\star| \leq M\}, \mu_M = \E[B_M^\star], \mu = \E[B^\star]$.
Then
\begin{equation}\label{eq: triangular-inequality-BM-B}
\begin{aligned}
\|\E[B^\star| W_\tau] - \mu\|_{L^2} &\leq \|\E[B^\star - B_M^\star| W_\tau ] \|_{L^2} + \|\E[B_M^\star | W_\tau] - \mu_M \|_{L^2} + |\mu_M - \mu|\\
&\leq \|\E[B^\star - B_M^\star| W_\tau ] \|_{L^2} + \|\E[B_M^\star | W_\tau] - \mu_M \|_{L^2} + \E[|B_M^\star - B^\star|].\\
\end{aligned}
\end{equation}
We will analyze each term of the right-hand side of \eqref{eq: triangular-inequality-BM-B}. By conditional Jensen, the first term is upper bounded by
\begin{equation}\label{eq: first-term-0}
\|\E[B^\star - B_M^\star| W_\tau ] \|_{L^2} \leq \|B^\star - B_M^\star\|_{L^2} \overset{M \rightarrow \infty}{\longrightarrow} 0.
\end{equation}
For the last term, we have
\begin{equation}\label{eq: second-term-0}
\E[|B_M^\star - B^\star|] \overset{M \rightarrow \infty}{\longrightarrow} 0.
\end{equation}
For fixed \(M\), Bayes' formula gives $\E[B_M^\star\mid W_\tau=w]
    =
    \frac{
        \E\left[B_M^\star
        \psi(w-B^\star/\sqrt\tau)\right]
    }{
        \E\left[\psi(w-B^\star/\sqrt\tau)\right]
    }$.
Because the Gaussian density is globally Lipschitz and
\(B^\star\in L^1\), the numerator and denominator converge uniformly on
compact \(w\)-sets to \(\mu_M\psi(w)\) and \(\psi(w)\), respectively.
The conditional mean is bounded by \(M\), and \(W_\tau\) is tight. The same
compact-set argument as above therefore gives
\begin{equation}\label{eq: third-term-0}
\|\E[B_M^\star | W_\tau] - \mu_M \|_{L^2} \rightarrow 0 \text{ as } \tau \rightarrow \infty.
\end{equation}
Taking \(\limsup_{\tau\to\infty}\) in
\eqref{eq: triangular-inequality-BM-B}, using
\eqref{eq: third-term-0}, and then sending \(M\to\infty\) using
\eqref{eq: first-term-0} and \eqref{eq: second-term-0} gives
$\|\E[B^\star| W_\tau] - \mu\|_{L^2} \overset{\tau \rightarrow \infty}{\longrightarrow} 0,$ which completes the proof.
\end{proof}

\section{Proof of Proposition~\ref{prop: strict-equality}}
\label{sec: proof-strict-equality}

This section proves both directions of \cref{prop: strict-equality}. We first introduce a series of facts that will be helpful in the proofs.

\subsection{Scalar proximal facts}

For \(\tau>0\), once again write $Y_\tau=B^\star+\sqrt \tau Z$ and $f_\tau(y)=\E[B^\star\mid Y_\tau=y]$, and let \(p_\tau\) be the everywhere-positive density of \(Y_\tau\).

\begin{lemma}
\label{lem:posterior-prox-characterization}
Assume \(V_\pi>0\). For every \(\tau>0\), the posterior mean
\(f_\tau\) is a scalar convex proximal map if and only if \(p_\tau\),
equivalently \(\pi*N(0,\tau)\), is log-concave.
\end{lemma}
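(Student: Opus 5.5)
The plan is to reduce the statement to the scalar characterization of proximal maps already invoked in the proof of \cref{lem:pythag}, namely \cref{prop:scalar-prox-characterization-app}: a map $T:\R\to\R$ is the proximal map of a proper lsc convex function if and only if it is nondecreasing and one-Lipschitz. Then we translate both conditions on $f_\tau$ into statements about $p_\tau$ via Tweedie's formula.

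First we check that monotonicity is automatic. By \eqref{eq:postderiv}, $f_\tau'(y)=\VAR(B^\star\mid Y_\tau=y)/\tau\ge0$ everywhere, since $f_\tau$ is smooth by \eqref{eq:canonicalposterior}. Hence $f_\tau$ is nondecreasing, and $f_\tau$ is proximal if and only if $f_\tau'(y)\le1$ for every $y$. Because $f_\tau$ is smooth, the pointwise bound $f_\tau'\le1$ is equivalent to the one-Lipschitz property by the mean value theorem.

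Second we use Tweedie. The identity $f_\tau(y)=y+\tau(\log p_\tau)'(y)$ gives $f_\tau'(y)=1+\tau(\log p_\tau)''(y)$. Therefore $f_\tau'\le1$ on $\R$ if and only if $(\log p_\tau)''\le0$ on $\R$. Since $p_\tau$ is smooth and everywhere positive, this holds if and only if $\log p_\tau$ is concave, that is, $p_\tau$ is log-concave. Finally, log-concavity of the measure $\pi*N(0,\tau)$ is equivalent to log-concavity of its Lebesgue density $p_\tau$, because $p_\tau$ is positive everywhere and the measure has full-dimensional support on $\R$.

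The only point needing care is justifying the differentiation steps: smoothness of $p_\tau$ and $f_\tau$, and the exchange of derivative and integral in the Gaussian convolution. These follow from the finite-moment bounds given by the Gaussian likelihood, as already used in \cref{lem:mmsecalculus}. The hypothesis $V_\pi>0$ is not really needed for the equivalence itself. In the degenerate case $f_\tau$ is constant, which is trivially proximal, and $p_\tau$ is Gaussian, which is log-concave, so the equivalence still holds. I expect no step to be a genuine obstacle. The main thing to get right is stating precisely which version of the scalar prox characterization is used, so that "proximal" is taken to mean the prox of a proper lsc convex function with scale one.
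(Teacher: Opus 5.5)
Your proof is correct, but it takes a different route from the paper. The paper does not prove this lemma at all; it cites Gribonval and Nikolova (Proposition~3 and Remark~4). Their treatment is multivariate and, in essence, combines Tweedie's formula with Moreau's characterization: a map is the proximity operator of a proper lsc convex function iff it is the gradient of a convex potential $\psi$ with $\tfrac12\|y\|^2-\psi$ also convex. Applied to the Tweedie potential $\psi(y)=\tfrac12\|y\|^2+\tau\log p_\tau(y)$, the second convexity condition is exactly concavity of $\log p_\tau$.

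You instead work in the scalar case and use the Combettes--Pesquet criterion already recorded in \cref{prop:scalar-prox-characterization-app}. The identity $f_\tau'=1+\tau(\log p_\tau)''$ then splits that criterion cleanly. Monotonicity is automatic from the posterior-variance identity, and one-Lipschitzness is exactly $(\log p_\tau)''\le0$. What your route buys is a self-contained proof inside the paper, using only tools it already states, together with the correct observation that the hypothesis $V_\pi>0$ is superfluous. The cited route buys the statement in $\R^d$, which is what the block-separable extension in \cref{sec:future-directions} would need.

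Two minor points of rigor, neither a gap. First, \eqref{eq:postderiv} is stated only for a.e.\ $y$, but for the smooth version \eqref{eq:canonicalposterior} it holds pointwise. Second, passing from log-concavity of the measure $\pi*N(0,\tau)$ to log-concavity of $p_\tau$ uses the following fact. A log-concave density that agrees a.e.\ with an everywhere-positive function is positive on an interval of full measure, hence on all of $\R$. It is therefore continuous and coincides with the continuous version $p_\tau$ everywhere.
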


\begin{proof}
This characterization is standard; see
\textcite[Proposition~3 and Remark~4]{GribonvalNikolova2021}.
\end{proof}

\begin{lemma}[Scalar proximal map MSE gap]
\label{lem:local-scalar-prox-gap}
Fix \(\tau_0>0\). If \(\pi*N(0,\tau_0)\) is not log-concave, then there
are \(\epsilon\in(0,\tau_0)\) and \(\Delta_{\rm sc}>0\) such that, for
every
$\tau\in[\tau_0-\epsilon,\tau_0+\epsilon]$
and every scalar convex proximal map \(g\),
\begin{equation}
 \E[(g(Y_\tau)-B^\star)^2]
 \geq m_\pi(\tau)+\Delta_{\rm sc}.
 \label{eq:local-scalar-prox-gap}
\end{equation}
\end{lemma}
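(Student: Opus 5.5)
The plan is a compactness-and-contradiction argument in the Hilbert spaces \(L^2(P_{Y_\tau})\), uniform over \(\tau\) in a small window around \(\tau_0\). By Bayes optimality (\cref{lem:bayes-optimality}), for every \(\tau\) and every scalar convex proximal map \(g\),
\[
\E[(g(Y_\tau)-B^\star)^2]-m_\pi(\tau)=\E[(g(Y_\tau)-f_\tau(Y_\tau))^2].
\]
So it suffices to show that this squared distance from \(f_\tau\) to the set \(\mathcal P\) of scalar proximal maps is bounded below uniformly for \(\tau\) near \(\tau_0\).

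By \cref{lem:posterior-prox-characterization} and \cref{prop:scalar-prox-characterization-app}, non-log-concavity at \(\tau_0\) means \(f_{\tau_0}\) is not \(1\)-Lipschitz. Since \(f_{\tau_0}\) is smooth, there is a compact interval \(I=[y_1,y_2]\) with
\[
f_{\tau_0}(y_2)-f_{\tau_0}(y_1)\ge (1+3\eta)(y_2-y_1)
\]
for some \(\eta>0\). The canonical version \eqref{eq:canonicalposterior} is jointly continuous in \((\tau,y)\) on \((0,\infty)\times\R\), by dominated convergence using the Gaussian factor and \(\pi\in\cP_2\). Hence, for \(\epsilon\) small, \(f_\tau\) is uniformly close to \(f_{\tau_0}\) on a slightly enlarged interval \(I'\supset I\), and the increment inequality holds with \(2\eta\) for every \(\tau\in[\tau_0-\epsilon,\tau_0+\epsilon]\). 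Shrinking \(\epsilon\), we may also take every \(y\) within distance \(\delta\) of \(y_1\) or \(y_2\) to satisfy \(|f_\tau(y)-f_\tau(y_i)|\le \eta(y_2-y_1)/4\).

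Next, any \(g\in\mathcal P\) is \(1\)-Lipschitz. For \(u\in[y_1-\delta,y_1]\) and \(v\in[y_2,y_2+\delta]\), we have \(g(v)-g(u)\le (y_2-y_1)+2\delta\), while \(f_\tau(v)-f_\tau(u)\ge(1+\eta)(y_2-y_1)\) once \(\delta\) is chosen small relative to \(\eta(y_2-y_1)\). Hence, writing \(e=g-f_\tau\),
\[
|e(u)|+|e(v)|\ge c_0:=\eta(y_2-y_1)/2
\]
for all such \(u,v\). It follows that either \(|e|\ge c_0/2\) on all of \([y_1-\delta,y_1]\), or \(|e|\ge c_0/2\) on all of \([y_2,y_2+\delta]\); otherwise one could pick \(u,v\) with both \(|e(u)|,|e(v)|<c_0/2\). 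Either way,
\[
\E[e(Y_\tau)^2]\ge (c_0/2)^2\,\delta\,\inf_{\tau,\;y\in I'}p_\tau(y).
\]

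The last factor is positive. The density \(p_\tau(y)=\int \varphi_\tau(y-b)\,\pi(db)\) is jointly continuous and strictly positive, so it is bounded below on the compact set \([\tau_0-\epsilon,\tau_0+\epsilon]\times I'\). This gives \(\Delta_{\rm sc}>0\), independent of \(g\) and \(\tau\).

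The main obstacle is the uniform-in-\(\tau\) transfer of the non-Lipschitz witness from \(\tau_0\) to nearby \(\tau\). I will handle it through joint continuity of \(f_\tau\) in \((\tau,y)\) on compacts, checking via dominated convergence on the numerator and denominator of \eqref{eq:canonicalposterior}, with the denominator bounded away from zero on compacts. This pointwise-interval argument avoids needing closedness of the set \(\mathcal K\) from \cref{lem:pythag} across varying measures.
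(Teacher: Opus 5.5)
Your argument is correct and is essentially the paper's: you reduce via the projection identity to lower-bounding $\mathbb{E}[(g(Y_\tau)-f_\tau(Y_\tau))^2]$, transfer a ``slope exceeds one'' witness from $\tau_0$ to a $\tau$-window by joint continuity, use $1$-Lipschitzness of $g$ to force a fixed-size discrepancy on one of two short intervals, and finish with a uniform lower bound on $p_\tau$ over a compact set. The differences are cosmetic: the paper anchors at a point $y_0$ with $f_{\tau_0}'(y_0)>1$ and uses joint continuity of $f_\tau'$, while you use a secant witness and continuity of $f_\tau$ alone; your equicontinuity step near $y_1,y_2$ is unnecessary, since monotonicity already gives $f_\tau(v)-f_\tau(u)\ge f_\tau(y_2)-f_\tau(y_1)$; and your argument is direct, not by contradiction as your first sentence announces.
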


\begin{proof}
We immediately have that \(V_\pi>0\), since for \(V_\pi=0\) the convolution
is a Gaussian density and hence is log-concave. Thus
\cref{lem:posterior-prox-characterization} applies, and \(f_{\tau_0}\) is not
one-Lipschitz. Since it is smooth and nondecreasing, there is a
\(y_0\in\R\) such that \(f_{\tau_0}'(y_0)>1\). The map
\((\tau,y)\mapsto f_\tau'(y)\) is jointly continuous on
\((0,\infty)\times\R\). To verify this under only
\(\pi\in\mathcal P_2(\R)\), write the posterior moments as ratios of $\int x^j\exp\left\{-\frac{(y-x)^2}{2\tau}\right\}\pi(dx)$ for $j = 0, 1, 2$.
On a compact positive \(\tau\)-interval and a compact \(y\)-interval, the
integrands for \(j\leq2\) have a common bounded envelope after completing
the square. Dominated convergence therefore gives joint continuity of
the three integrals, and the denominator is strictly positive.

Choose \(\xi>0\), \(a>0\), and
\(\epsilon\in(0,\tau_0)\) so that $f_\tau'(y)\geq1+2\xi$ whenever $|\tau-\tau_0|\leq\epsilon$ and $|y-y_0|\leq2a$.
Let \(g\) be any scalar convex proximal map. It is one-Lipschitz.
If \(g(y_0)\leq f_\tau(y_0)\), then, for
\(y\in[y_0+a,y_0+2a]\), we have $f_\tau(y)-g(y)
 \geq(1+2\xi)(y-y_0)-(y-y_0)
 \geq2\xi a$.
If \(g(y_0)>f_\tau(y_0)\), the same calculation to the left gives $g(y)-f_\tau(y)\geq2\xi a$ for $y\in[y_0-2a,y_0-a]$.
The function \((\tau,y)\mapsto p_\tau(y)\) is continuous and strictly
positive. Consequently, $c:=
 \min_{\substack{|\tau-\tau_0|\leq\epsilon\\|y-y_0|\leq2a}}
 p_\tau(y)>0$.
In either of the preceding two cases,
\begin{equation}
 \E[(g(Y_\tau)-f_\tau(Y_\tau))^2]
 \geq4c\xi^2a^3
 =:\Delta_{\rm sc}>0. \label{eq:L2-prox-gap}
\end{equation}
Finally, conditional expectation is the \(L^2\)-projection onto
\(\sigma(Y_\tau)\), so $\E[(g(Y_\tau)-B^\star)^2]
 =
 m_\pi(\tau)+\E[(g(Y_\tau)-f_\tau(Y_\tau))^2]$.
Combining this identity with \eqref{eq:L2-prox-gap} proves the result.
\end{proof}

\subsection{Approximating Bayes VAMP fixed point with an \texorpdfstring{$\ell_2$}{l2}-perturbed convex problem}

\begin{lemma}
\label{lem:bayes-l2-continuation}
Assume \(\VAR(B^\star)>0\) and suppose that
\(f_{\tau_B^+}\) is a scalar convex proximal map. Let $(\gamma_B,\eta_B,\tau_B^+,\cE_\pi(\tau_B^+))$ be the corresponding fixed-point solution to the Bayes VAMP fixed-point equations \eqref{eq: fpt-bvamp-a}--\eqref{eq: fpt-bvamp-d}. Then, there exist a proper, lsc,
convex function \(h_B^\star\) and a number \(\epsilon_0>0\) such that for every
\(\epsilon\in(0,\epsilon_0)\), there exists a solution $(\gamma_\epsilon,\eta_\epsilon,
   \tau_{1,\epsilon},\tau_{2,\epsilon})$ to the canonical convex VAMP fixed-point equations \eqref{eq: fpt-equation-a}-- \eqref{eq: fpt-equation-d} corresponding to the $\epsilon$-perturbed $\ell_2$ penalty
\begin{equation*}
 h_{B,\epsilon}(x) = h_B^\star(x)+\frac\epsilon2x^2.         
\end{equation*}
Moreover, as $\epsilon \downarrow 0$, we have
\begin{equation}
 (\gamma_\epsilon,\eta_\epsilon,
   \tau_{1,\epsilon},\tau_{2,\epsilon})
 \xlongrightarrow{\epsilon \rightarrow 0} 
 (\gamma_B,\eta_B,\tau_B^+,\cE_\pi(\tau_B^+)), \label{eq:Bayes-ridge-parameters}
\end{equation}
and
\begin{equation}
\begin{aligned}
 \E\left[
  \left(
   \prox_{h_{B,\epsilon}/\gamma_\epsilon}
   (B^\star+\sqrt{\tau_{1,\epsilon}}Z)-B^\star
  \right)^2
 \right] &\xlongrightarrow{\epsilon \rightarrow 0} \E\left[\left(\prox_{h_B^\star/ \gamma_B}(B^\star + \sqrt{\tau_B^+}Z) - B^\star\right)^2\right]\\
 &= \E\left[\left(\E[B^\star | B^\star + \sqrt{\tau_B^+}Z] - B^\star\right)^2\right] = m_\pi(\tau_B^+). \label{eq:Bayes-ridge-risk}
 \end{aligned}
\end{equation}
\end{lemma}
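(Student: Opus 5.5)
We construct \(h_B^\star\) from the Bayes denoiser and then obtain the \(\epsilon\)-perturbed fixed points by a continuation argument in the reduced \((\rho,t)\) variables, in the same way as \cref{prop:perturbed-fp-var}.

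\textbf{Construction of \(h_B^\star\).} Since \(f_{\tau_B^+}\) is nondecreasing and one-Lipschitz, \cref{prop:scalar-prox-characterization-app} gives a proper, lsc, convex \(\psi\) with \(f_{\tau_B^+}=\prox_{\psi}\). Set \(h_B^\star:=\gamma_B\psi\), so that \(\prox_{h_B^\star/\gamma_B}=f_{\tau_B^+}\), where \(\gamma_B=1/\tau_B^+\) by matching. At \(\epsilon=0\) the tuple \((\gamma_B,\eta_B,\tau_B^+,\cE_\pi(\tau_B^+))\) then solves \eqref{eq: fpt-equation-a}--\eqref{eq: fpt-equation-d}, after one checks two things. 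First, \eqref{eq: fpt-equation-a}--\eqref{eq: fpt-equation-b} reduce to the Bayes identities \(\E f'=m/\tau\) and \(\tau_2=\cE_\pi(\tau_B^+)\) from \cref{lem:mmsecalculus}. Second, the convex-VAMP linear-stage equations \eqref{eq: fpt-equation-c}--\eqref{eq: fpt-equation-d}, which carry no \(\sigma^{-2}\) scaling, agree with the Bayes ones \eqref{eq: fpt-bvamp-c}--\eqref{eq: fpt-bvamp-d} after rescaling the precisions by \(\sigma^2\). Concretely, we rescale \(h_B^\star\) by \(\sigma^2\), so that the convex problem \(\tfrac12\|y-X\beta\|^2+\sum h\) corresponds to a posterior with noise \(\sigma^2\). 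This constant bookkeeping is fixed once, at the start.

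\textbf{Continuation for \(\epsilon>0\).} For \(h_{B,\epsilon}=h_B^\star+\tfrac\epsilon2x^2\), completing the square gives
\[
\prox_{h_{B,\epsilon}/\gamma}(r)=\prox_{h_B^\star/(\gamma+\epsilon)}\bigl(\gamma r/(\gamma+\epsilon)\bigr).
\]
We pass to the reduced functions \(F_{1,\epsilon},F_{2,\epsilon}\) in \((\rho,t)\) as in \eqref{eq:F1red}--\eqref{eq:F2red}, replacing the oracle centring \(B^\star\) by \(0\). Using the joint continuity of \((w,u)\mapsto\prox_{wh}(u)\) from the proof of \cref{lem:F-continuity}, these converge locally uniformly to \(F_{1,0},F_{2,0}\) as \(\epsilon\downarrow0\). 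At \(\epsilon=0\) the point \((\rho_B,t_B)=(1/\eta_B,\tau_B^+)\) is a common zero.

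To show that zeros persist we use a Poincaré--Miranda rectangle shrinking around \((\rho_B,t_B)\), with the modification \(\widetilde F_2=F_2+CH|F_1|\) used before.
\begin{itemize}
\item \(F_1\) has opposite signs on \(\rho=\rho_B\pm\delta\). This holds because \(\partial_\rho F_{1,0}\neq0\), by the strict monotonicity of \(w(\rho)\) and \(A\).
\item On the zero set of \(F_1\), \(F_2\) has opposite signs on \(t=\tau_B^+\pm\delta\). By \cref{lem:zero-set-sign-match}, the sign of \(F_2\) there is that of \(L_{0,c}(\omega)-t\). Along the \(F_1\)-zero curve this is \(T(t)-t\) to first order, since \(\omega=\cE_\pi(t)\) for the Bayes denoiser and \(c=\sigma^2/\omega\) gives equality in \cref{lem:linearcomparison}. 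Because \(T'(\tau_B^+)<1\) by \cref{lem:lb-pair-existence}(ii), \(T(t)-t\) changes sign strictly across \(\tau_B^+\).
\end{itemize}
These strict signs at \(\epsilon=0\) survive small \(\epsilon\) by uniform convergence. Poincaré--Miranda then yields a zero \((\rho_\epsilon,t_\epsilon)\) inside a rectangle of side \(\delta\). We reconstruct \((\gamma_\epsilon,\eta_\epsilon,\tau_{1,\epsilon},\tau_{2,\epsilon})\) exactly as in \cref{prop:perturbed-fp-var}. Taking \(\delta\downarrow0\) along \(\epsilon\downarrow0\) gives \eqref{eq:Bayes-ridge-parameters}, using the continuity of the reconstruction maps.

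\textbf{Risk convergence.} The limit \eqref{eq:Bayes-ridge-risk} follows by dominated convergence with the envelope \(|\prox(u)|\le|u|+C\) from \cref{lem:F-continuity}, combined with pointwise convergence of the proximal maps. The identity \(\prox_{h_B^\star/\gamma_B}=f_{\tau_B^+}\) then gives \(m_\pi(\tau_B^+)\).

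\textbf{Main obstacle.} The hardest step is the sign of \(F_2\) on the horizontal edges. For \(t\neq\tau_B^+\), the convex denoiser at noise \(t\) is not the Bayes denoiser at \(t\), so \(\omega\ge\cE_\pi(t)\) holds only as an inequality, and \(c\neq\sigma^2/\omega\). I would first try an implicit-function-style expansion: the first-order excess from both effects vanishes at \(\tau_B^+\), and the strict bound \(T'(\tau_B^+)<1\) leaves room to absorb \(O(\delta)\) errors. If that fails, I would fall back on a degree argument for \((F_{1,0},F_{2,0})\) at the nondegenerate zero, computing a nonzero Jacobian from \(T'<1\).
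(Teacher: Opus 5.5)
Your overall route is the paper's. You build $h_B^\star=\gamma_B\psi$ from the scalar prox characterization and pass to the reduced $(\rho,t)$ system. Use $\lambda=0$ with penalty $h_{B,\epsilon}$ here: re-centring the oracle construction at $0$ does not keep the input in the form $B^\star+\sqrt t Z$. You then get nondegeneracy of the Bayes zero from $T'(\tau_B^+)<1$, persistence under uniform convergence, reconstruction, and dominated convergence for the risk.

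Replacing the paper's Brouwer-degree/homotopy step by Poincar\'e--Miranda on a small rectangle with the $|F_1|$-modification is fine. The rectangle must not be a square: its $t$-half-width has to be small relative to its $\rho$-half-width. Otherwise $F_{1,0}$ need not keep its sign on the vertical edges, and the $F_{1,0}$-zero curve need not exit through the horizontal ones.

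Also, in the convex-VAMP scaling $\gamma_B=\eta_B-c_B=\sigma^2/\tau_B^+$, with $c_B=\sigma^2/\omega_B$ and $\eta_B=1/G(c_B)$. Set $h_B^\star=\gamma_B\psi$ with this $\gamma_B$ and no further rescaling.

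The genuine gap is that the nondegeneracy, which is the whole content of the lemma, is asserted rather than proved, and your reason for the vertical edges is wrong. Write $F_{1,0}(\rho,t)=\rho-\theta(\rho)\,a(\gamma_B\theta(\rho),t)$, where $a(s,t)$ is the mean derivative of $q_s=\operatorname{prox}_{s\psi}$ at $B^\star+\sqrt t Z$. At the Bayes point this gives $\partial_\rho F_{1,0}=1-\theta' a-\theta'\partial_s a$.
\begin{itemize}
\item Monotonicity of $w=\theta$ works \emph{against} you, since $-\theta' a\le 0$. Moreover, $a$ is not monotone in $s$ in general.
\item What does work: at the zero $a=\rho_B/\theta_B$, so $1-\theta' a=1+\rho_B R'(\rho_B)/R(\rho_B)>0$ by the spectral bound $-\rho R'/R<1$.
\item Next, combine $\partial_s q_s(y)=-q_s'(y)\,(y-q_s(y))/s$, the Stein form of $a$, and $\mathbb E[Z\mid Y]=(Y-f(Y))/\sqrt{\tau_B^+}$. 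This gives $\partial_s a(1,\tau_B^+)=-\mathbb E[f'(Y)(Y-f(Y))^2]/\tau_B^+\le 0$. That sign is available only because $q_1$ is the posterior mean.
\end{itemize}

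For the horizontal edges, the first-order excess vanishes because of a minimum argument. Along the curve $\rho_0(\tau)$ on which $F_{1,0}=0$:
\begin{itemize}
\item $\omega(\tau)-\mathcal E_\pi(\tau)\ge 0$, by the extrinsic Pythagorean identity;
\item $L_{0,c}(\omega)-L_\mu(\omega)\ge 0$, by the linear comparison lemma.
\end{itemize}
Both hold with equality at the interior Bayes point, so their derivatives vanish there. Hence $\frac{d}{d\tau}L_{0,c(\tau)}(\omega(\tau))=L_\mu'(\omega_B)\,\mathcal E_\pi'(\tau_B^+)=T'(\tau_B^+)<1$ at $\tau_B^+$.

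Every one of these steps, and the implicit-function curve $\rho_0$ itself, needs $F_{1,0}$ and $F_{2,0}$ to be $C^1$ near $(\rho_B,\tau_B^+)$. You never establish this. It can be done as follows. Since $f_{\tau_B^+}$ is smooth with $0<f'\le 1$, $\psi$ is $C^2$ on $f_{\tau_B^+}(\mathbb R)$. Then $(s,y)\mapsto q_s(y)$ is $C^1$ near $s=1$ with $|q_s|+|\partial_s q_s|\le C(1+|y|)$, and dominated differentiation makes $a(s,t)$ and $r(s,t)$ $C^1$.

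Your fallback, a degree argument at a nondegenerate zero, is not an independent route. The nonzero Jacobian it needs is exactly the computation above.
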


\begin{proof}
Abbreviate
\[
 m_B=m_\pi(\tau_B^+),\qquad
 d_B=\frac{m_B}{\tau_B^+}\in(0,1),\qquad
 \omega_B=\cE_\pi(\tau_B^+)=\frac{m_B}{1-d_B},
\]
and set
\[
 c_B=\frac{\sigma^2}{\omega_B},\qquad
 \rho_B=G(c_B),\qquad
 \eta_B=\rho_B^{-1},\qquad
 \gamma_B=\eta_B-c_B,\qquad
 \theta_B=\gamma_B^{-1}.
\]
The Bayes fixed-point identity
\(\tau_B^+=L_\mu(\omega_B)\) implies
\begin{equation}
 d_B=1-c_BG(c_B)=\frac{\gamma_B}{\eta_B}
 =\frac{\rho_B}{\theta_B} < 1. \label{eq:Bayes-precision-identities}
\end{equation}
Indeed, insert \(\sigma^2=c_B\omega_B\) in
\eqref{eq:stieltjes-L-preview} and use
\(\omega_B=\tau_B^+d_B/(1-d_B)\).
Moreover, $R(\rho_B)=c_B-\rho_B^{-1}=-\gamma_B$ and $\theta(\rho_B)=\theta_B$.

Because $f_{\tau_B^+}$ is assumed to be a proximal map, we can choose a proper lsc convex \(h^\star\) such that
\(
 \prox_{h^\star}=f_{\tau_B^+},
\)
and define \(h_B^\star=\gamma_Bh^\star\). For
\(\epsilon\geq0\), \(\rho\in(0,\rho_{\max})\), and \(\tau>0\), let
\[
 Q_{\epsilon,\rho}(y)
 =
 \prox_{\theta(\rho)h_{B,\epsilon}}(y), \quad
 A_\epsilon(\rho,\tau)
 =
 \E[Q_{\epsilon,\rho}'(B^\star+\sqrt \tau Z)],
 \quad
 r_\epsilon(\rho,\tau)
 =
 \E[
  \left(Q_{\epsilon,\rho}(B^\star+\sqrt \tau Z)-B^\star\right)^2
 ],
\]
and define the ordinary, unperturbed two-variable equations
\begin{align}
 F_{1,\epsilon}(\rho,\tau)
 &=
 \rho-\theta(\rho)A_\epsilon(\rho,\tau), \label{eq:ridge-F1}\\
 F_{2,\epsilon}(\rho,\tau)
 &=
 \frac{\theta(\rho)^2}{\tau}
 \left\{
  R'(\rho)r_\epsilon(\rho,\tau)+\sigma^2K(\rho)
 \right\}-1. \label{eq:ridge-F2}
\end{align}
These are \eqref{eq:F1red}--\eqref{eq:F2red} with
\(\lambda=0\) and \(h=h_{B,\epsilon}\). At \(\epsilon=0\),
\begin{equation}\label{eq:strict-gap-change-variable}
 Q_{0,\rho_B}=f_{\tau_B^+},\qquad
 A_0(\rho_B,\tau_B^+)=d_B,\qquad
 r_0(\rho_B,\tau_B^+)=m_B.
\end{equation}
Combining \eqref{eq:strict-gap-change-variable} with \eqref{eq:Bayes-precision-identities} shows that, at $\epsilon = 0$, we have $F_{1, 0}(\rho_B, \tau_B^+) = \rho_B -\theta(\rho_B)A_0(\rho_B,\tau_B^+) = \rho_B - \theta(\rho_B)d_B = 0$.
Our objective is now to show that $F_{2, 0}(\rho_B, \tau_B^+) = 0$ using \cref{lem:zero-set-sign-match}. We first check the assumptions. The effective domain of \(h^\star\) contains the nontrivial interval \(f_{\tau_B^+}(\R)\) because \(f_{\tau_B^+}\) is strictly increasing. In addition, \(h^\star\) is not affine: otherwise its proximal map would be a translation of the identity map (meaning $f(x) = x - a$ for some $a \in \mathbb R$) and would have derivative one everywhere, contradicting \(\E[f_{\tau_B^+}'(Y_{\tau_B^+})]=d_B<1\). Thus, \cref{lem:zero-set-sign-match} holds at \(\epsilon=0\) and $\lambda = 0$, and combining this with the identities \(c_B=\sigma^2/\omega_B\) and $L_{0,c_B}(\omega_B)=L_\mu(\omega_B)=\tau_B^+$ from \cref{lem:linearcomparison}, we obtain $F_{2, 0}(\rho_B, \tau_B^+) = \frac{\rho_B^2}{J(\rho_B)\tau_B^+} \left(L_{0, c_B}(\omega_B) - \tau_B^+\right) = 0$.

We next show that this zero is regular. Write $ q_s=\prox_{s h^\star}$ and $a(s,\tau)=\E[q_s'(B^\star+\sqrt \tau Z)]$.
Because the prior is nondegenerate, we have $f_{\tau_B^+}'(y)
 =
 \frac{\VAR(B^\star\mid Y_{\tau_B^+}=y)}{\tau_B^+}>0$ for $y \in \R$.
The map \(f_{\tau_B^+}\) is smooth, strictly increasing, and one-Lipschitz. On the interior of its range, the canonical
representative of \(h^\star\) satisfies
\[
 (h^\star)'(f_{\tau_B^+}(y))=y-f_{\tau_B^+}(y),\qquad
 (h^\star)''(f_{\tau_B^+}(y))
 =\frac1{f_{\tau_B^+}'(y)}-1.
\]
Implicit differentiation of
$y=q_s(y)+s(h^\star)'(q_s(y))$
at \(s=1\) gives
\begin{equation}
 \dot q_1(y)
 :=
 \left.\partial_sq_s(y)\right|_{s=1}
 =
 -f_{\tau_B^+}'(y)\left(y-f_{\tau_B^+}(y)\right).                   
\end{equation}
We record the regularity needed below. Let
\(I=f_{\tau_B^+}(\R)\). The inverse-function theorem and
\(0<f_{\tau_B^+}'\leq1\) show that the canonical representative of
\(h^\star\) is \(C^2\) on \(I\), with
\[
 (h^\star)'(x)=f_{\tau_B^+}^{-1}(x)-x,\qquad
 (h^\star)''(x)
 =\frac1{f_{\tau_B^+}'(f_{\tau_B^+}^{-1}(x))}-1\geq0.
\]
For \(s\) in a compact interval about one, the map
\(x\mapsto x+s(h^\star)'(x)\) maps \(I\) increasingly onto \(\R\).
Indeed, at \(x=f_{\tau_B^+}(y)\) its value is
$s y+(1-s)f_{\tau_B^+}(y),$
which tends to \(+\infty\) as \(y\to+\infty\) and to \(-\infty\) as
\(y\to-\infty\); this follows directly from monotonicity and
one-Lipschitzness of \(f_{\tau_B^+}\).
Consequently, \(q_s(y)\in I\), and the implicit-function theorem gives
the jointly continuous derivatives
\begin{equation}
 q_s'(y)
 =\frac1{1+s(h^\star)''(q_s(y))}\in(0,1],
 \qquad
 \partial_sq_s(y)
 =-q_s'(y)\frac{y-q_s(y)}s. \label{eq:prox-scale-derivative-general}
\end{equation}
Firm nonexpansiveness, compactness of the \(s\)-interval, and
\eqref{eq:prox-scale-derivative-general} imply $|q_s(y)|+|\partial_sq_s(y)|\leq C(1+|y|)$.
Thus all the following differentiations are dominated when \(\tau\)
ranges in a compact subset of \((0,\infty)\). In particular, if $a(s,\tau)=\frac1{\sqrt \tau}\E[Zq_s(B^\star+\sqrt \tau Z)]$ and $r(s,\tau)=\E[(q_s(B^\star+\sqrt \tau Z)-B^\star)^2]$, then
\begin{align*}
 \partial_sa(s,\tau)
 &=\frac1{\sqrt \tau}\E[Z\,\partial_sq_s(Y_\tau)],\\
 \partial_\tau a(s,\tau)
 &=-\frac1{2\tau^{3/2}}\E[Zq_s(Y_\tau)]
   +\frac1{2\tau}\E[Z^2q_s'(Y_\tau)],\\
 \partial_sr(s,\tau)
 &=2\E[(q_s(Y_\tau)-B^\star)\partial_sq_s(Y_\tau)],\\
 \partial_\tau r(s,\tau)
 &=\frac1{\sqrt \tau}
   \E[(q_s(Y_\tau)-B^\star)q_s'(Y_\tau)Z].
\end{align*}
The displayed derivatives are jointly continuous by dominated
convergence. Hence \(a\) and \(r\) are \(C^1\) near
\((1,\tau_B^+)\).

Using the Stein representation of \(a(s,\tau)\), conditioning on
\(Y_{\tau_B^+}\), and using $\E[Z\mid Y_{\tau_B^+}]
 =
 \frac{Y_{\tau_B^+}-f_{\tau_B^+}(Y_{\tau_B^+})}{\sqrt{\tau_B^+}}$,
we obtain
\begin{align}
 \partial_sa(1,\tau_B^+)
 &=
 \frac1{\sqrt{\tau_B^+}}\E[Z\dot q_1(Y_{\tau_B^+})]=
 -\frac1{\tau_B^+}
 \E[
  f_{\tau_B^+}'(Y_{\tau_B^+})\{Y_{\tau_B^+} - f_{\tau_B^+}(Y_{\tau_B^+})\}^2
 ]
 \leq0.                                                  
\end{align}
Since
\(
 A_0(\rho,\tau)=a(\gamma_B\theta(\rho),\tau),
\)
\(\gamma_B\theta_B=1\), and $\frac{\rho\theta'(\rho)}{\theta(\rho)}
 =
 -\rho\frac{R'(\rho)}{R(\rho)}\in[0,1)$, 
differentiating \eqref{eq:ridge-F1} gives
\begin{align}
 \partial_\rho F_{1,0}(\rho_B,\tau_B^+)
 &=
 1-\theta_B'd_B
 -\theta_B\partial_sa(1,\tau_B^+)\gamma_B\theta_B'=
 1-\frac{\rho_B\theta_B'}{\theta_B}
 -\theta_B'\partial_sa(1,\tau_B^+)>0. \label{eq:F1-rho-positive}
\end{align}
Here \(\theta'=R'/R^2\geq0\); the same conclusion holds for a point-mass
spectrum, when \(\theta'=0\).

The spectral functions \(R'\), \(K\), and \(\theta\) are smooth at the
interior point \(\rho_B\). The preceding calculation therefore makes
\((F_{1,0},F_{2,0})\) \(C^1\) near the Bayes point. The
implicit-function theorem applied to \eqref{eq:F1-rho-positive} gives
a \(C^1\) curve \(\tau\mapsto\rho_0(\tau)\), defined near \(\tau_B^+\),
such that
\begin{equation}
 \rho_0(\tau_B^+)=\rho_B,\qquad
 F_{1,0}(\rho_0(\tau),\tau)=0.                          
\end{equation}
Along this curve, let
\[
 g_\tau=Q_{0,\rho_0(\tau)},\quad
 a_\tau=A_0(\rho_0(\tau),\tau)
 =\frac{\rho_0(\tau)}{\theta(\rho_0(\tau))},
\quad
 \omega(\tau)
 =
 \frac{r_0(\rho_0(\tau),\tau)-a_\tau^2\tau}{(1-a_\tau)^2},
 \quad
 c(\tau)
 =
 \frac1{\rho_0(\tau)}-\frac1{\theta(\rho_0(\tau))}.
\]
By the implicit function theorem, there exists a small neighborhood of $\tau_B^+$ such that for all $\tau$ in this neighborhood, \(0<a_\tau<1\) and \(c(\tau)>0\).
The exact extrinsic Pythagorean identity \eqref{eq:pythag} gives $\omega(\tau)-\cE_\pi(\tau)\geq 0$.
At \(\tau=\tau_B^+\), \(g_\tau=f_{\tau_B^+}\), so equality holds. Both sides are differentiable at \(\tau_B^+\); hence their difference has derivative
zero there:
\(
    \omega'(\tau_B^+)=\cE_\pi'(\tau_B^+).
\)

Define $D(c,\omega)=L_{0,c}(\omega)-L_\mu(\omega)$, $ c,\omega>0$.
By \cref{lem:linearcomparison}, \(D\geq0\), and at
\((c_B,\omega_B)\), $D(c_B, \omega_B)= L_{0,c_B}(\omega_B)-L_\mu(\omega_B) = 0$. The point \((c_B,\omega_B)\) lies in the interior and \(D\)
is continuously differentiable there, so
\begin{equation}
 \partial_cD(c_B,\omega_B)
 =
 \partial_\omega D(c_B,\omega_B)=0. \label{eq:linear-gap-gradient-zero}
\end{equation}
Combining $\omega'(\tau_B^+)=\cE_\pi'(\tau_B^+)$ with \eqref{eq:linear-gap-gradient-zero}, we have $\left.
 \frac d{d\tau}L_{0,c(\tau)}(\omega(\tau))
 \right|_{\tau=\tau_B^+}
 =
 L_\mu'(\omega_B)\cE_\pi'(\tau_B^+)
 =
 T'(\tau_B^+)$.
Lemma~\ref{lem:lb-pair-existence}(ii) proves \(T'(\tau_B^+)<1\) when
\(f_{\tau_B^+}\) is proximal. On the zero set of \(F_{1,0}\),
\eqref{eq:sign-match} reads $F_{2,0}(\rho,\tau)
 =
 \frac{\rho^2}{J(\rho)\tau}
 \left(L_{0,c}(\omega)-\tau\right)$.

Therefore,
\begin{equation}
 \left.
 \frac d{d\tau}F_{2,0}(\rho_0(\tau),\tau)
 \right|_{\tau=\tau_B^+}
 =
 \frac{\rho_B^2}{J(\rho_B)\tau_B^+}
 \left(T'(\tau_B^+)-1\right)<0. \label{eq:F2-transverse}
\end{equation}
Since
\(\rho_0'=-F_{1,\tau}/F_{1,\rho}\), equations
\eqref{eq:F1-rho-positive} and \eqref{eq:F2-transverse} give
\begin{equation}
 \det D_{(\rho,\tau)}(F_{1,0},F_{2,0})(\rho_B,\tau_B^+) < 0.
 \label{eq:Bayes-reduced-Jacobian}
\end{equation}
Thus, the inverse function theorem implies that \((\rho_B,\tau_B^+)\) is an isolated zero of \((F_{1,0}, F_{2,0})\) (there exists a small open neighborhood around it containing no other zeros) and has nonzero local Brouwer degree.

It remains to show that this zero persists for sufficiently small $\epsilon$.
Completing the square gives the
exact identity
\begin{equation}
 Q_{\epsilon,\rho}(y)
 =
 \prox_{\frac{\theta(\rho)}
                  {1+\epsilon\theta(\rho)}h_B^\star}
 \left(\frac{y}{1+\epsilon\theta(\rho)}\right). \label{eq:ridge-prox-identity}
\end{equation}
On a compact neighborhood \(K\) of \((\rho_B,\tau_B^+)\),
\(\theta\) is bounded above and away from zero. Joint continuity of a
proximal map in its scale and input from \cref{lem:proxfacts},
together with
\eqref{eq:ridge-prox-identity} and one-Lipschitzness, gives joint
continuity of \(r_\epsilon\) on
\([0,\epsilon_1]\times K\), with a common integrable quadratic envelope.
For \(A_\epsilon\), use $A_\epsilon(\rho,\tau)
 =
 \frac1{\sqrt \tau}
 \E[
  ZQ_{\epsilon,\rho}(B^\star+\sqrt \tau Z)
 ]$.
This proves joint continuity without requiring pointwise convergence of
proximal derivatives. Hence
\begin{equation}
 \sup_{(\rho,\tau)\in K}
 \|
  (F_{1,\epsilon},F_{2,\epsilon})(\rho,\tau)
  -(F_{1,0},F_{2,0})(\rho,\tau)
 \|
 \longrightarrow0. \label{eq:ridge-F-uniform}
\end{equation}

Choose a bounded open neighborhood \(U\) such that
\(\overline U\subset K\) and the point \((\rho_B, \tau_B^+)\) is the only zero of
\(F_0:=(F_{1,0},F_{2,0})\) in \(\overline U\). Then $\delta_U:=\min_{x\in\partial U}\|F_0(x)\|>0$.
By \eqref{eq:Bayes-reduced-Jacobian}, the local degree of \(F_0\) on
\(U\) is nonzero. Uniform convergence
\eqref{eq:ridge-F-uniform} makes
\(\sup_{\overline U}\|F_\epsilon-F_0\|<\delta_U\) for all sufficiently
small \(\epsilon\). Hence, the straight-line homotopy from \(F_0\) to
\(F_\epsilon:=(F_{1,\epsilon},F_{2,\epsilon})\) has no zero on
\(\partial U\). Homotopy invariance of Brouwer degree therefore implies that there is a zero
\(
 (\rho_\epsilon,\tau_\epsilon)\in U
\)
of \((F_{1,\epsilon},F_{2,\epsilon})\) \parencite[Lemma 6.6 and Property 7.4]{Vandervorst2008}. For every sufficiently small \(\epsilon>0\), choose any
\((\rho_\epsilon,\tau_\epsilon)\in U\) satisfying
\(F_\epsilon(\rho_\epsilon,\tau_\epsilon) = (F_{1, \epsilon}, F_{2, \epsilon})(\rho_\epsilon,\tau_\epsilon)=0\). We claim that
\(
(\rho_\epsilon,\tau_\epsilon) \xlongrightarrow{\epsilon \rightarrow 0} (\rho_B,\tau_B^+).
\)
Indeed, if \(\epsilon_n\downarrow0\), compactness of \(\overline U\) gives a
subsequence \((\rho_{\epsilon_{n_k}},\tau_{\epsilon_{n_k}})\to (\rho_\star, \tau_\star)\in\overline U\). Since
\(F_{\epsilon_{n_k}}(x_{\epsilon_{n_k}})=0\) and
\(F_{\epsilon}\to F_0\) uniformly on \(\overline U\), we have $F_0(\rho_\star, \tau_\star) = (F_{1,0}, F_{2,0})(\rho_\star, \tau_\star) = 0$.
By the choice of \(U\), $(\rho_B, \tau_B^+)$ is the only zero of $F_0$ in $\overline U$, so \((\rho_\star, \tau_\star)= (\rho_B, \tau_B^+)\). Since every convergent subsequence has the same limit,
\begin{equation}
       (\rho_\epsilon,\tau_\epsilon)\longrightarrow
       (\rho_B,\tau_B^+). \label{eq:ridge-zero-convergence}
\end{equation}

Finally, we want to show that the solution $(\rho_\epsilon, \tau_\epsilon)$ found above can be translated back into a solution of the canonical convex VAMP fixed-point equations \eqref{eq: fpt-equation-a}--\eqref{eq: fpt-equation-d} corresponding to the strongly convex penalty $h_{B, \epsilon}$. Set
\[
 \theta_\epsilon=\theta(\rho_\epsilon),\quad
 a_\epsilon=A_\epsilon(\rho_\epsilon,\tau_\epsilon)
 =\frac{\rho_\epsilon}{\theta_\epsilon},
\quad
 \gamma_\epsilon=\theta_\epsilon^{-1},\quad
 \eta_\epsilon=\rho_\epsilon^{-1},\quad
 \tau_{1,\epsilon}=\tau_\epsilon,
\]
and
\begin{equation}
 \tau_{2,\epsilon}
 =
 \frac{
  r_\epsilon(\rho_\epsilon,\tau_\epsilon)
  -a_\epsilon^2\tau_\epsilon
 }{(1-a_\epsilon)^2}. \label{eq:ridge-reconstruction}
\end{equation}
For small \(\epsilon\), \(0<a_\epsilon<1\) and
\(\tau_{2,\epsilon}>0\), by continuity from
\((d_B,\omega_B)\). We verify the four
equations \eqref{eq: fpt-equation-a}--\eqref{eq: fpt-equation-d} explicitly. Suppress the subscript \(\epsilon\), and rewrite
\[
 r=r_\epsilon(\rho,\tau),\quad
 J=J(\rho),\quad M_S=M_S(\rho),\quad
 c=\eta-\gamma,
\]
where $J(\rho)$ and $M_S(\rho)$ are defined in \eqref{eq:definition-J-MS-K}. The equation \(F_{1,\epsilon}=0\) gives $A_\epsilon(\rho,\tau)=\frac{\rho}{\theta}
 =\frac{\gamma}{\eta}=a$,
which is \eqref{eq: fpt-equation-a}; the definition
\eqref{eq:ridge-reconstruction}, together with
\(\eta/c=(1-a)^{-1}\), is
\eqref{eq: fpt-equation-b}. Since
\(
 c=\frac1\rho-\frac1\theta=d(\rho)=G^{-1}(\rho),
\)
we have
\(
 \frac{\gamma}{\eta}
 =1-\rho d(\rho)
 =1-\frac1\eta G^{-1}\!\left(\frac1\eta\right),
\)
which is \eqref{eq: fpt-equation-c}.

For the remaining variance equation, multiply
\eqref{eq: fpt-equation-d} by
\(a^2=(\gamma/\eta)^2\). Its right-hand side becomes $\sigma^2M_S+c^2\tau_2J-(c/\eta)^2\tau_2
 =
 \sigma^2M_S+
 \frac{J-\rho^2}{\rho^2}\{r-a^2\tau\}$,
where we used
\(
 c\rho=1-a,
 (1-a)^2\tau_2=r-a^2\tau.
\)
Thus \eqref{eq: fpt-equation-d} is equivalent to
\begin{equation}\label{eq:F2-change-variable}
 a^2\tau J
 =
 \sigma^2\rho^2M_S+(J-\rho^2)r.
\end{equation}
Because \(a=\rho/\theta\), and \(
 R'(\rho)=\frac{J-\rho^2}{\rho^2J}\), \(K(\rho)=\frac{M_S}{J}\) by \eqref{eq:Rfacts} and \eqref{eq:definition-J-MS-K} respectively,
\eqref{eq:F2-change-variable} is exactly \(F_{2,\epsilon}(\rho,\tau)=0\).
Hence all four canonical fixed-point equations hold.
Finally, \eqref{eq:ridge-prox-identity},
\eqref{eq:ridge-zero-convergence}, and dominated convergence give
\eqref{eq:Bayes-ridge-risk}; then
\eqref{eq:ridge-reconstruction} gives
\eqref{eq:Bayes-ridge-parameters}.
\end{proof}

\subsection{Completion of the equality and strict-gap proof}

\begin{proof}[Proof of \cref{prop: strict-equality}]
Write
\[
 \mathcal R(h)
 :=
 \overset{\PP}{\liminf_{p\to\infty}}
 \frac1p
 \|\widehat\beta_{\cvx}^h-\beta^\star\|_2^2.
\]

\medskip
\noindent\emph{Attainability when the smoothed prior is log-concave.}
Suppose first that \(\pi*N(0,\tau_B^+)\) is log-concave. If
\(V_\pi=\VAR(B^\star)=0\), let \(b_\pi=\E[ B^\star]\) and take the
indicator penalty of the singleton \(\{b_\pi\}\). Its estimator is
\(b_\pi\mathbf1_p\), and Assumption~\ref{ass3} gives $\frac1p
 \|b_\pi\mathbf1_p-\beta^\star\|_2^2\xrightarrow{\mathrm{a.s.}}0
 =m_\pi(\tau_B^+) = R_B^+$.
The singleton penalty has global width by
\cref{prop:q-bounded-minimizer-set}, so it is admissible.

Assume now that \(V_\pi>0\). By
\cref{lem:posterior-prox-characterization}, \(f_{\tau_B^+}\) is proximal.
Let \(h_{B,\epsilon}\) and its fixed point be supplied by
\cref{lem:bayes-l2-continuation}. The penalty
\(h_{B,\epsilon}\) is proper, lsc, and \(\epsilon\)-strongly convex. It consequently has a unique minimizer, and so does the $p$-dimensional separable penalty $\sum_{j = 1}^p h_{B, \epsilon}(\beta_j)$.
By \cref{prop:q-bounded-minimizer-set},
\(h_{B,\epsilon} \in \cC_{\rm LB}(\mu,\pi,\sigma^2)\).

Set \(c_\epsilon=\eta_\epsilon-\gamma_\epsilon\),
\(T_\epsilon=\prox_{h_{B,\epsilon}/\gamma_\epsilon}\), and
\(
F_\epsilon(q,b)
=\frac{\eta_\epsilon}{c_\epsilon}
 \{T_\epsilon(b+q)-b\}
-\frac{\gamma_\epsilon}{c_\epsilon}q.
\)
Since \(h_{B,\epsilon}\) is \(\epsilon\)-strongly convex,
\(0\leq T_\epsilon'\leq
\gamma_\epsilon/(\gamma_\epsilon+\epsilon)<1\), while the canonical
fixed-point equations \eqref{eq: fpt-equation-a}--\eqref{eq: fpt-equation-d} give the same zero-average-Jacobian and second-moment identities for \(F_\epsilon\) as for \(F_\lambda\) in \cref{sec: oracle-convex-vamp-2} (the proof of Theorem~\ref{thm:oracle-mse}). Hence, the conditional-Haar and covariance-contraction arguments, along with the arguments showing convergence of the optimal convex estimator in \cref{sec: oracle-convex-vamp-2}, apply verbatim, with the oracle objective replaced by the ordinary
\(\epsilon\)-strongly convex objective. Hence, applying an argument similar to the proof of \cref{thm:oracle-mse}, we can show that for every sufficiently small fixed \(\epsilon>0\),
\[
 \frac1p
 \|\widehat\beta_{\cvx}^{h_{B,\epsilon}}-\beta^\star\|_2^2
 \xrightarrow{\PP}\E\left[
  \left(
   \prox_{h_{B,\epsilon}/\gamma_\epsilon}
   (B^\star+\sqrt{\tau_{1,\epsilon}}Z)-B^\star
  \right)^2
 \right] =: r_\epsilon,
\]
where $(\gamma_\epsilon, \tau_{1, \epsilon})$, as defined in \cref{lem:bayes-l2-continuation}, is the convex VAMP fixed-point solution corresponding to the strongly convex penalty $h_{B, \epsilon}.$ Moreover, as discussed in \cref{sec: examples-penalties}, since $h_{B, \epsilon}$ is strongly convex, $h_{B, \epsilon} \in\cC_{\rm LB}(\mu,\pi,\sigma^2)$. Therefore, taking the infimum over $h \in\cC_{\rm LB}(\mu,\pi,\sigma^2)$, we have $\inf_{h\in\cC_{\rm LB}(\mu,\pi,\sigma^2)}\mathcal R(h)
 \leq r_\epsilon$.
Sending \(\epsilon\downarrow0\) and using
\eqref{eq:Bayes-ridge-risk} gives an upper bound by \(R_B^+\).
\Cref{thm:Bayes VAMP-lower-bound} gives the reverse inequality, proving
\eqref{eq: strict-equality}. Notice the order of the limits: for a
prescribed accuracy we first choose one fixed
\(\epsilon>0\), and only then let \(p\to\infty\). No
\(p\)-dependent ridge and no exchange of limits is used.

\medskip
\noindent\emph{Strict gap when the smoothed prior is not
log-concave.}
Suppose now that \(\pi*N(0,\tau_B^+)\) is not log-concave. Then
\(V_\pi>0\). By applying \cref{lem:local-scalar-prox-gap} at \(\tau_B^+\) and using the fact that $m_\pi$ is continuous at $\tau_B^+$, there exists $\epsilon > 0$ such that for every $|\tau - \tau_B^+| \leq \epsilon$, we have $|m_{\pi}(\tau) - m_{\pi}(\tau_B^+)| = |m_{\pi}(\tau)- R_B^+| < \frac{\Delta_{\rm sc}}{2}$. Rearranging gives us
\begin{equation}
 m_\pi(\tau)\geq R_B^+-\frac{\Delta_{\rm sc}}2
 \text{ for all } \tau\in[\tau_B^+-\epsilon,\tau_B^++\epsilon].
 \label{eq:mmse-local-lower}
\end{equation}
For a nondegenerate prior, \cref{eq:mmseprime} gives $m_\pi'(\tau)=\E[f_\tau'(Y_\tau)^2]>0$, so $\Delta_{\rm far}
 :=
 m_\pi(\tau_B^++\epsilon)-R_B^+>0$.
Also, since \(V_\pi,\tau_B^+>0\), the best-linear bound gives $R_B^+=m_\pi(\tau_B^+)
 \leq\frac{V_\pi\tau_B^+}{V_\pi+\tau_B^+}
 <\min\{V_\pi,\tau_B^+\}$.
Define
\begin{equation}
 \Delta_{\rm cvx}
 :=
 \min\left\{
  \frac{\Delta_{\rm sc}}2,\,
  \Delta_{\rm far},\,
  \frac{\tau_B^+ -R_B^+}{2},\,
  V_\pi-R_B^+
 \right\}>0. \label{eq:uniform-convex-gap}
\end{equation}
We show that this one constant works for every admissible \(h \in \cC_{\rm LB}\). First assume that $h$ satisfies Assumption~\ref{ass5} and is non-affine so that we can use \cref{lem:lb-pair-risk}. If an oracle fixed point has scalar risk
\(r^{h,\lambda}\), then \cref{lem:oraclemonotone} and \cref{thm:oracle-mse} imply, for every \(\delta>0\),
\[
 \PP\left(
  \frac1p\|\widehat\beta_{\cvx}^h-\beta^\star\|_2^2
  <r^{h, \lambda}-\delta
 \right)
 \leq
 \PP\left(
  \frac1p\|\widehat\beta^{h,\lambda}-\beta^\star\|_2^2
  < r^{h,\lambda}-\delta
 \right)
 \longrightarrow0.
\]
Consequently \(\mathcal R(h)\geq r^{h,\lambda}\).

If \(\operatorname{dom}(h)=\{a\}\), then
\cref{lem:singleton} gives $\mathcal R(h)=\E[(a-B^\star)^2]\geq V_\pi
 \geq R_B^+ +\Delta_{\rm cvx}$.

Suppose next that \(\mu=\delta_s\), \(s>0\), and that the domain of
\(h\) is not a singleton. Equation
\eqref{eq:flat-effective-noise} shows that every selected oracle fixed
point has $\nu_\lambda
 =
 \frac{\sigma^2s}{(s+\lambda)^2}
 \longrightarrow\frac{\sigma^2}{s}=\tau_B^+$.
Choose one sufficiently small, fixed \(\lambda>0\) so that
\(\nu_\lambda\in[\tau_B^+-\epsilon,\tau_B^+]\). The scalar denoiser at
that fixed point is a convex proximal map. Hence
\cref{lem:local-scalar-prox-gap} and
\eqref{eq:mmse-local-lower} give $r^{h, \lambda}
 \geq m_\pi(\nu_\lambda)+\Delta_{\rm sc}
 \geq R_B^+ +\frac{\Delta_{\rm sc}}2
 \geq R_B^+ +\Delta_{\rm cvx}$.
The comparison with the unperturbed estimator at a fixed $\lambda$ above now gives the same bound for
\(\mathcal R(h)\).

We may henceforth suppose that \(\mu\) is not a point mass. If
\(h(x)=ux+v\) is affine, then \(u\ne0\) because \(h\) is nonconstant.
Such a penalty is excluded by the width condition in the regime $p_0 > 0$ and $I_+ < \infty$. Outside that regime, the calculation in
\cref{lem:affine} gives $r^{h, \lambda}\geq\nu_\lambda
 \geq
 \sigma^2
 \E\left[\frac{S}{(S+\lambda)^2}\right]$.
If \(I_+=\infty\), the right side diverges as
\(\lambda\downarrow0\). If \(p_0=0\) and \(I_+<\infty\), it converges
to
\(\sigma^2I_+\geq\tau_B^+\), where the inequality follows from
\cref{lem:linearcomparison} at \(c=0\) and the Bayes fixed-point
identity. Thus one may choose one small fixed \(\lambda\) such that $r^{h, \lambda}
 \geq R_B^+ +\frac{\tau_B^+ - R_B^+}{2}
 \geq R_B^++\Delta_{\rm cvx}$,
and the comparison with the unperturbed estimator at fixed $\lambda$ gives the same bound for
    \(\mathcal R(h)\).

It remains to consider a non-affine \(h\) whose effective domain
contains at least two points. The posterior mean \(f_{\tau_B^+}\) is not
proximal by \cref{lem:posterior-prox-characterization}. Therefore,
\cref{lem:pythag,lem:lb-pair-existence}(i) show that $\left( \tau_B^+,\cE_\pi(\tau_B^+)+\Delta_{\tau_B^+} \right)$ is an \(h\)-admissible lower-bound pair. Hence
\cref{prop:perturbed-fp-var} supplies one sufficiently small fixed
\(\lambda>0\) and an oracle fixed point satisfying
\(\nu_\lambda\geq\tau_B^+\).

If \(\nu_\lambda\leq\tau_B^+ +\epsilon\), then
\cref{lem:local-scalar-prox-gap} gives $r^{h, \lambda}
 \geq m_\pi(\nu_\lambda)+\Delta_{\rm sc}
 \geq R_B^+ +\Delta_{\rm sc}$, and if \(\nu_\lambda\geq\tau_B^+ +\epsilon\), scalar Bayes optimality and
monotonicity of \(m_\pi\) give $r^{h, \lambda}
 \geq m_\pi(\nu_\lambda)
 \geq m_\pi(\tau_B^+ +\epsilon)
 =R_B^+ +\Delta_{\rm far}$.
Thus \(r^{h, \lambda}\geq R_B^+ +\Delta_{\rm cvx}\) in either case, and the comparison at fixed $\lambda$ with the unperturbed estimator implies $\mathcal R(h)\geq R_B^+ +\Delta_{\rm cvx}$.
Although the fixed \(\lambda\) selected above may depend on \(h\), the
constant in \eqref{eq:uniform-convex-gap} does not. Taking the infimum
over all admissible penalties proves the asserted uniform strict gap.
Together with the first half of the proof, this also proves the
``only if'' direction of \eqref{eq: strict-equality}.
\end{proof}

\section{Spectral result proofs}
\label{sec:spectral_proofs}

\subsection{Interaction between the spectrum and the incoming uncertainty
\texorpdfstring{$\omega$}{omega}.}

In this section, we conduct a more detailed analysis of the spectral effect under different regimes of the incoming uncertainty $\omega$.

Specifically, our results show that, at small incoming uncertainty $\omega \rightarrow 0$, only the first few moments of the spectrum are
visible to the spectrum-stage map $L_\mu(\omega)$, while a large incoming uncertainty $\omega \rightarrow \infty$ means that the spectrum near zero is what most affects $L_\mu(\omega)$.

\begin{proposition}[Small-\(\omega\) expansion]
\label{prop:moment-expansion}
Assume \(m_1:=\E[S]>0\) and \(m_2:=\E[S^2]<\infty\). Then, as
\(\omega\downarrow0\),
\begin{equation}
    L_\mu(\omega)
    =
    \frac{\sigma^2}{m_1}
    +
    \frac{\VAR(S)}{m_1^2}\,\omega
    +o(\omega).
    \label{eq:small-omega-expansion}
\end{equation}
In particular,
\[
    L_\mu'(0+)
    =
    \frac{\VAR(S)}{\E[S]^2}.
\]
\end{proposition}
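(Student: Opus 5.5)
We expand numerator and denominator of \eqref{eq:spectrum-mod} directly in powers of \(\omega\). Writing \(x=\omega S/\sigma^2\), we have
\[
\frac{\sigma^2\omega}{\sigma^2+\omega S}=\omega\Bigl(1-\frac{\omega S}{\sigma^2}+\frac{\omega^2S^2}{\sigma^2(\sigma^2+\omega S)}\Bigr),\qquad
\frac{\omega S}{\sigma^2+\omega S}=\frac{\omega S}{\sigma^2}-\frac{\omega^2S^2}{\sigma^2(\sigma^2+\omega S)}.
\]
These are exact algebraic identities (from \(1/(1+x)=1-x+x^2/(1+x)\) and \(x/(1+x)=x-x^2/(1+x)\)). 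The remainders are nonnegative and bounded by \(\omega^2S^2/\sigma^4\). Since \(m_2<\infty\), taking expectations gives
\[
N(\omega)=\omega-\frac{\omega^2m_1}{\sigma^2}+O(\omega^3),\qquad
D(\omega)=\frac{\omega m_1}{\sigma^2}-\frac{\omega^2 m_2}{\sigma^4}+O(\omega^3).
\]
These bounds are uniform because no support assumption beyond the second moment is needed. In fact the \(O(\omega^3)\) in \(D\) can be replaced by \(o(\omega^2)\) via dominated convergence on \(\omega^2S^2/(\sigma^2(\sigma^2+\omega S))\cdot\omega^{-2}\to S^2/\sigma^4\), and this weaker form is all we need. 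Similarly, the remainder in \(N\) is \(\omega\cdot O(\omega^2 m_2)\).

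Next we divide. Factor \(\omega m_1/\sigma^2\) out of \(D\), so that
\[
D(\omega)=\frac{\omega m_1}{\sigma^2}\Bigl(1-\frac{\omega m_2}{\sigma^2m_1}+o(\omega)\Bigr).
\]
This requires \(m_1>0\). Then
\[
L_\mu(\omega)=\frac{N}{D}=\frac{\sigma^2}{m_1}\Bigl(1-\frac{\omega m_1}{\sigma^2}+o(\omega)\Bigr)\Bigl(1+\frac{\omega m_2}{\sigma^2 m_1}+o(\omega)\Bigr)
=\frac{\sigma^2}{m_1}+\omega\Bigl(\frac{m_2}{m_1^2}-1\Bigr)+o(\omega).
\]
Since \(m_2/m_1^2-1=\VAR(S)/m_1^2\), this is \eqref{eq:small-omega-expansion}.

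For the derivative statement, \(L_\mu(0)=\sigma^2/m_1\) is the continuous extension. The expansion therefore gives the right difference quotient \((L_\mu(\omega)-L_\mu(0))/\omega\to\VAR(S)/m_1^2\). As a cross-check, the same value follows from \eqref{eq:Lglobalderiv}: \(\VAR(x_\omega(S))/(1-c(\omega))^2\) behaves like \((\omega^2\VAR(S)/\sigma^4)/(\omega m_1/\sigma^2)^2\to\VAR(S)/m_1^2\) by dominated convergence.

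The only delicate point is keeping the remainders at order \(o(\omega)\) after division by \(D\sim\omega m_1/\sigma^2\). This is exactly why the \(o(\omega^2)\) control on \(D\) and on \(N\) is needed, and it is supplied by dominated convergence using only \(m_2<\infty\).
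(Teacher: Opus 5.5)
Your proof is correct and is essentially the paper's argument. Since $\E[\sigma^2\omega/(\sigma^2+\omega S)]=\sigma^2G_\mu(c)$ and $\E[\omega S/(\sigma^2+\omega S)]=1-cG_\mu(c)$ with $c=\sigma^2/\omega$, your expansions of $N$ and $D$ are exactly the paper's expansion $G_\mu(c)=c^{-1}-m_1c^{-2}+m_2c^{-3}+o(c^{-3})$ substituted into \eqref{eq:stieltjes-L-preview}, written in $\omega$ instead of $c$. You are more explicit than the paper about the remainders and about the derivative claim, which the paper does not treat separately; one small point is that the $O(\omega^3)$ you first write for $D$ actually needs $\E[S^3]<\infty$ (automatic under compact support), whereas the dominated-convergence $o(\omega^2)$ bound you actually use needs only $m_2<\infty$.
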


\begin{proof}
As \(c=\sigma^2/\omega\to\infty\), we have $G_\mu(c)
    =
    \frac1c-\frac{m_1}{c^2}+\frac{m_2}{c^3}+o(c^{-3})$.
Substitution into \eqref{eq:stieltjes-L-preview} and expansion of the
quotient give \eqref{eq:small-omega-expansion}.
\end{proof}

Thus the flat-spectrum value using the mean, \(\sigma^2/\E[S]\), is the zeroth-order term, and
spectral variance is the first correction. 

If \(m_3:=\E[S^3]<\infty\), one more term is available:
\[
    L_\mu(\omega)
    =
    \frac{\sigma^2}{m_1}
    +
    \frac{m_2-m_1^2}{m_1^2}\,\omega
    +
    \frac{m_2^2-m_1m_3}{m_1^3}
    \frac{\omega^2}{\sigma^2}
    +o(\omega^2).
\]

Hence, we see that for small uncertainty $\omega$, and therefore at large $c$, the first few moments of the spectrum are what $L_\mu(\omega)$ ``sees,'' and therefore are the features that matter.

For the opposite case \(\omega\to\infty\), or equivalently \(c\downarrow0\), the lower edge of the spectrum controls the lower bound. The next section examines multiple scenarios in detail.

\subsection{Spectral behavior at zero}

We have established that weak directions at and arbitrarily close to zero have the most effect in raising the Bayes-VAMP lower bound, particularly when $\omega$ is large. We now separately analyze the mass-at-zero and ``continuous-edge-at-zero'' cases, the latter of which we take to mean positive density in any neighborhood of zero but no mass. If there is mass at zero, the design matrix is asymptotically rank-deficient. If there is only an edge at zero, the spectral measurements can be arbitrarily weak.

\begin{proposition}[Mass and edge-at-zero regimes]
\label{prop:hard-edge}
We start with the mass-at-zero case, then discuss how density distributed around zero affects the spectrum-stage map $L_\mu(\omega)$.

\begin{enumerate}[label=\textup{(\roman*)},leftmargin=2.2em]

\item Suppose there exists mass at zero, meaning $\mathbb P(S = 0) = 1 - q$. We represent $\mu$ as a mixture of the Dirac measure $\delta_0$ at zero and the measure $\nu_+$ of the remaining, strictly positive spectral values:
\[
    \mu=(1-q)\delta_0+q\nu_+ \qquad \text{for } q\in(0,1],
    \quad \nu_+(\{0\})=0.
\]
Let \(G_+(c)=\E_{\nu_+}[1/(S+c)]\) be the Stieltjes transform for the strictly positive law $\nu_+$. Then
\[
    L_\mu(\omega)
    =
    \frac{(1-q)\omega+q\sigma^2G_+(c)}
         {q(1-cG_+(c))},
    \qquad c=\frac{\sigma^2}{\omega}.
\]
Specifically, having mass at zero means that \(q<1\), and we would have
\begin{equation}
    L_\mu(\omega)
    \sim
    \frac{1-q}{q}\,\omega \qquad \text{as } \omega \rightarrow \infty.
    \label{eq:zero-atom-linear}
\end{equation}

\item Now suppose that \(\PP(S=0)=0\), meaning there is no mass at zero. If \(\E[S^{-1}]<\infty\), then
\[
    L_\mu(\omega)
    \longrightarrow
    \sigma^2\E[S^{-1}] \qquad \text{as } \omega \rightarrow \infty.
\]

\item Suppose that \(\PP(S=0)=0\) and that \(\mu\) has density
\[
    f(s)=Cs^\alpha+o(s^\alpha) \qquad \text{as } s\downarrow 0, \qquad \text{for } C>0, \quad \alpha>-1.
\]
This rate representation allows us to separate the edge-at-0 into three subcases.

If \(\alpha=0\), then
\[
    L_\mu(\omega)
    \sim
    \sigma^2C\log(\omega/\sigma^2).
\]
If \(-1<\alpha<0\), we can use Karamata's theorem, presented in Theorem A.7 of \textcite{LangerWoracek2024Karamata}, to obtain
\[
    L_\mu(\omega)
    \sim
    \sigma^2C
    \left(\frac{\sigma^2}{\omega}\right)^\alpha
    \frac{\pi}{\sin(\pi(\alpha+1))}.
\]
For \(\alpha>0\), $\mathbb E[S^{-1}] < \infty$, which is covered by case $\text{(ii)}$.

\end{enumerate}
\end{proposition}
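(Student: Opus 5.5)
We will work throughout with the Stieltjes representation \eqref{eq:stieltjes-L-preview},
\[
L_\mu(\omega)=\frac{\sigma^2G_\mu(c)}{1-cG_\mu(c)},\qquad c=\sigma^2/\omega .
\]
In this form the regime $\omega\to\infty$ is exactly $c\downarrow0$. Every part of \Cref{prop:hard-edge} then reduces to two tasks. The first is to find the limit of the denominator $1-cG_\mu(c)=\E_\mu[S/(S+c)]$. The second is to find the leading small-$c$ behavior of $G_\mu(c)$.

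For (i), we decompose $G_\mu(c)=(1-q)/c+qG_+(c)$. This gives two identities:
\[
\sigma^2G_\mu(c)=(1-q)\omega+q\sigma^2G_+(c),\qquad 1-cG_\mu(c)=q\bigl(1-cG_+(c)\bigr).
\]
Together they yield the displayed exact formula. Next, on $\{S>0\}$ we have $c/(S+c)\to0$ pointwise, and the integrand is bounded by $1$. Dominated convergence therefore gives $cG_+(c)\to0$. Consequently the denominator tends to $q$, and $\sigma^2G_+(c)=\omega\cdot cG_+(c)=o(\omega)$. When $q<1$, the numerator is therefore $(1-q)\omega(1+o(1))$, which proves \eqref{eq:zero-atom-linear}.

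For (ii), the same dominated convergence argument, now valid because $S>0$ almost surely, gives $1-cG_\mu(c)\to1$. Monotone convergence gives $G_\mu(c)\uparrow\E[S^{-1}]<\infty$. Hence $L_\mu(\omega)\to\sigma^2\E[S^{-1}]$.

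For (iii), the denominator again tends to $1$ because $\PP(S=0)=0$. It therefore suffices to show $L_\mu(\omega)\sim\sigma^2G_\mu(c)$ and then to find the asymptotics of $G_\mu(c)=\int f(s)(s+c)^{-1}\,ds$. Fix $\varepsilon>0$ small enough that $|f(s)-Cs^\alpha|\le\eta s^\alpha$ on $(0,\varepsilon]$, where $\eta>0$ is arbitrary. We split the integral at $\varepsilon$. The piece over $[\varepsilon,s_+]$ is bounded by $1/\varepsilon$, uniformly in $c$. The piece over $(0,\varepsilon]$ is sandwiched between $(C\mp\eta)\int_0^\varepsilon s^\alpha(s+c)^{-1}\,ds$.
\begin{itemize}
\item If $\alpha=0$, this integral equals $\log((\varepsilon+c)/c)\sim\log(1/c)=\log(\omega/\sigma^2)$. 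This diverges, so the bounded pieces are negligible. Letting $\eta\downarrow0$ gives $G_\mu(c)\sim C\log(\omega/\sigma^2)$.
\item If $-1<\alpha<0$, we substitute $s=cu$. This gives $\int_0^\varepsilon s^\alpha(s+c)^{-1}\,ds=c^\alpha\int_0^{\varepsilon/c}u^\alpha(1+u)^{-1}\,du$. The last integral converges, as $c\downarrow0$, to the Beta integral $\int_0^\infty u^\alpha(1+u)^{-1}\,du=\pi/\sin(\pi(\alpha+1))$. Since $c^\alpha\to\infty$, the bounded pieces are again negligible, and letting $\eta\downarrow0$ yields the claim. This is consistent with Karamata's theorem \parencite[Theorem A.7]{LangerWoracek2024Karamata}, which we may cite instead.
\item If $\alpha>0$, then $s^{\alpha-1}$ is integrable near $0$, so $\E[S^{-1}]<\infty$ and case (ii) applies.
\end{itemize}

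The only delicate point is in (iii): the $o(s^\alpha)$ remainder and the contribution away from zero must be controlled uniformly in $c$. The $\varepsilon$–$\eta$ sandwich handles this, because in both subcases the main term diverges as $c\downarrow0$ while every error term is either bounded or a factor $\eta$ times the main term. The remaining steps are routine dominated-convergence arguments.
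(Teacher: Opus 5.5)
Your proof is correct and follows the same route as the paper: the Stieltjes representation \eqref{eq:stieltjes-L-preview} with the decomposition $G_\mu(c)=(1-q)/c+qG_+(c)$ for (i), dominated (or monotone) convergence for (ii), and splitting the integral near zero with the substitution $s=cu$ for (iii). Your $\varepsilon$--$\eta$ sandwich spells out a step the paper's short sketch leaves implicit, namely uniform control of the $o(s^\alpha)$ remainder and of the bounded contribution from $[\varepsilon,s_+]$.
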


\begin{proof}
The proof of part (i) uses $G_\mu(c)=\frac{1-q}{c}+qG_+(c)$
in \eqref{eq:stieltjes-L-preview}. Since \(cG_+(c)\to0\), the mass at
zero produces the linear term in \eqref{eq:zero-atom-linear}.
Part (ii) follows from dominated convergence in
\eqref{eq:stieltjes-L-preview}. For part (iii), split the integral
near zero and substitute \(s=cu\); the integral
\(\int_0^\infty u^\alpha/(1+u)\,\rd u\) equals
\(\pi/\sin(\pi(\alpha+1))\) for \(-1<\alpha<0\).  
\end{proof}

The mass-at-zero and continuous edge-at-zero cases are therefore qualitatively
different. Mass at zero means that $L_\mu$ is linear in the effective noise $\omega$, whereas the continuous edge can give $L_\mu$ a finite, logarithmic, or power-law order.

\subsection{Proof of Proposition~\ref{prop:spectral-monotonicity}}

\begin{proof}
Set \(R_\omega(S):=\frac{\sigma^2}{\sigma^2+\omega S}\). Then 
\(
    L_\mu(\omega) = \frac{\omega\E[R_\omega(S)]}{1-\E[R_\omega(S)]}.
\)
Differentiation gives
\(
    L_\mu'(\omega) = \frac{\VAR(R_\omega(S))}{\bigl(1-\E[R_\omega(S)]\bigr)^2} \ge0,
\)
which proves (i). For (ii), use \eqref{eq:stieltjes-L-preview}: for fixed
\(c>0\), the map \(x\mapsto\sigma^2x/(1-cx)\) is strictly increasing on
\((0,1/c)\). Part (iii) follows because \(s\mapsto1/(s+c)\) is convex.
Finally, for part (iv), part (ii) applied at the common prior-stage
variance \(V_\pi\) gives $\tau_{\mu_2,+}^{0}
=L_{\mu_2}(V_\pi)
\geq
L_{\mu_1}(V_\pi)
=\tau_{\mu_1,+}^{0}$.
Suppose inductively that
\(\tau_{\mu_2,+}^{k}\geq\tau_{\mu_1,+}^{k}\). Since
\(\mathcal E_\pi\) and \(L_{\mu_2}\) are nondecreasing, and
\(L_{\mu_2}(\omega)\geq L_{\mu_1}(\omega)\) by part (ii),
\(
\tau_{\mu_2,+}^{k+1}
=L_{\mu_2}\!\left(\mathcal E_\pi(\tau_{\mu_2,+}^{k})\right)
\geq
L_{\mu_1}\!\left(\mathcal E_\pi(\tau_{\mu_1,+}^{k})\right)
=\tau_{\mu_1,+}^{k+1}.
\)
The comparison therefore holds at every iterate.
Since \(T_{\mu,\pi}(\tau)\leq L_\mu(\VAR(B^\star))=\tau_{0,+}^B\),
the Bayes VAMP state evolution, $\tau_{t+1,+}^B
    :=
    T_{\mu,\pi}(\tau_{t,+}^B)$, initialized at $\tau_{0,+}^B$ is nonincreasing.
Monotonicity of \(T_{\mu,\pi}\) shows that it remains above every fixed
point, and continuity then gives
\begin{equation}
    \tau_{t,+}^B\downarrow\tau_B^+.
    \label{eq:Bayes-VAMP-largest-fp-limit}
\end{equation}
By
\eqref{eq:Bayes-VAMP-largest-fp-limit}, the two limits are respectively
the greatest fixed points \(\tau_B^+(\mu_2)\) and
\(\tau_B^+(\mu_1)\). The MSE comparison follows because \(m_\pi\) is
nondecreasing.
\end{proof}

\subsection{Proof of Proposition~\ref{prop: product-gaussian}}\label{sec: proof-gaussian-product}

\begin{proof}[Proof of \cref{prop: product-gaussian}]
For every \(j\geq1\), write $X_p^{(j)}
    :=
    G_jG_{j-1}\cdots G_1$ and $A_{p,j}
    :=
    (X_p^{(j)})^T X_p^{(j)}
    \in\R^{p\times p}$,
and define the empirical spectral measure $\widehat\mu_{p,j}
    :=
    \frac1p\sum_{i=1}^p
    \delta_{\lambda_i(A_{p,j})}$.

By \textcite[Theorem~8.5 and Equation~(8.15)]{GoetzeKoestersTikhomirov2015}, for every
fixed \(j\),
\(
    \widehat{\mu}_{p,j}
    \Longrightarrow_{\mathbb P}
    \mu_j,
\)
where \(\Longrightarrow_{\mathbb P}\) denotes weak convergence in
probability, and each \(\mu_j\) is deterministic.

Fix \(k\geq1\) and \(c>0\). Define the normalized trace $Z_{p,j}(c)
    :=
    \frac1p
    \tr(cI_p+A_{p,j})^{-1}$.
Let $\mathcal F_{p,k}
    :=
    \sigma(G_1,\ldots,G_k)$.
Then \(X_p^{(k)}\) and \(A_{p,k}\) are
\(\mathcal F_{p,k}\)-measurable, whereas \(G_{k+1}\) is independent of
\(\mathcal F_{p,k}\). Moreover, $A_{p,k+1}
    =
    (X_p^{(k+1)})^T X_p^{(k+1)}=
    (X_p^{(k)})^T
    G_{k+1}^T G_{k+1}
    X_p^{(k)}$.

Since the entries of
\(G_{k+1}\in\R^{n_{k+1}\times n_k}\) are independent
\(N(0,1/n_{k+1})\) random variables, $\E[G_{k+1}^T G_{k+1}]
    =
    I_{n_k}$.
Indeed, for \(a,b\in[n_k]\), we have
\(
\E\left[
        (G_{k+1}^T G_{k+1})_{ab}
    \right]
    =
    \sum_{i=1}^{n_{k+1}}
    \E[(G_{k+1})_{ia}(G_{k+1})_{ib}]=
    \delta_{ab}.
\) Moreover, the matrices \(G_1,G_2,\ldots\) are mutually independent.
Consequently,
\begin{equation}
\begin{aligned}
    \E[A_{p,k+1}\mid\mathcal F_{p,k}]
    &=
    (X_p^{(k)})^T
    \E[G_{k+1}^T G_{k+1}]
    X_p^{(k)}=
    (X_p^{(k)})^T X_p^{(k)}=
    A_{p,k}.
\end{aligned}
\label{eq:conditional-product-gram-mean}
\end{equation}

The map $T\longmapsto(cI_p+T)^{-1}$ is operator convex on the cone of positive-semidefinite matrices.
Therefore, conditional operator Jensen's inequality and
\eqref{eq:conditional-product-gram-mean} give
\begin{equation}
\begin{aligned}
    \E\left[
        (cI_p+A_{p,k+1})^{-1}
        \,\middle|\,
        \mathcal F_{p,k}
    \right]
    &\succeq
    \left(
        cI_p+
        \E[A_{p,k+1}\mid\mathcal F_{p,k}]
    \right)^{-1}=
    (cI_p+A_{p,k})^{-1}.
\end{aligned}
\label{eq:conditional-product-resolvent-order}
\end{equation}

Taking normalized traces in
\eqref{eq:conditional-product-resolvent-order} yields $\E[
        Z_{p,k+1}(c)
        \mid\mathcal F_{p,k}
    ]
    \geq
    Z_{p,k}(c)$.
Taking the full expectation gives the deterministic finite-\(p\) inequality
\begin{equation}
    \E[Z_{p,k+1}(c)]
    \geq
    \E[Z_{p,k}(c)].
    \label{eq:finite-product-resolvent-order}
\end{equation}

Since $\widehat{\mu}_{p,j}
    \Longrightarrow_{\mathbb P}
    \mu_j$, for \(j=k,k+1\),
\(
Z_{p,j}(c)
=
\int_{[0,\infty)}
\frac{1}{c+s}\,
\widehat{\mu}_{p,j}(\rd s)
\xrightarrow{\mathbb P}
G_{\mu_j}(c),
\)
because \(s\mapsto(c+s)^{-1}\) is bounded and continuous. Since
\(
0\leq Z_{p,j}(c)\leq 1/c,
\)
the sequence is uniformly integrable, so convergence in probability
also implies convergence in \(L^1\). Consequently,
\(
\E[Z_{p,j}(c)]
\longrightarrow
G_{\mu_j}(c),\) for \(j=k,k+1\).
Passing to the limit in
\eqref{eq:finite-product-resolvent-order}, we obtain $G_{\mu_{k+1}}(c)
    \geq
    G_{\mu_k}(c)$.
Because \(c>0\) was arbitrary, $\mu_{k+1}
    \succeq_{\mathrm{St}}
    \mu_k$.
Finally, part~\textup{(ii)} of
\cref{prop:spectral-monotonicity} gives $L_{\mu_{k+1}}(\omega)
    \geq
    L_{\mu_k}(\omega)$ for every $\omega>0$.
This proves the proposition.
\end{proof}

\section{Numerical experiments}
\label{sec:empirical-mse-setup}

We discuss the simulation setup for validating the state-evolution predictions with the empirical MSEs of the VAMP algorithms.

\subsection{Comparison between algorithmic and theoretical MSEs in Figure~\ref{fig:vamp-validation}}
\label{sec:vamp-empirical-theoretical-setup}

The experiment depicted in \cref{fig:vamp-validation} used the setup \(n=400\) and \(p=800\). We use the positive-flat spectrum design, where all positive eigenvalues are $s = 2$ to obtain a final spectral mean of $1$. We use four signal priors: a standard Gaussian; a sparse Bernoulli-Gaussian mix with $\mathbb P(\beta_j = 0) = 0.8$; a three-point prior, which we call a sparse Rademacher, with $\mathbb P(\beta_j = -1) = \mathbb P(\beta_j = 1) = 0.1$ and is zero otherwise; and a Laplace prior. We use the Bayes, ridge, Lasso, elastic net, $\ell_{3/2}$, and $\ell_{3}$ VAMP algorithms. Twenty replicates were attempted for each algorithm-prior combination.

We now describe numerical stability techniques and the run-exclusion procedure. We first attempted to run finite-dimensional VAMP without any damping, using at most 100 iterations. For Bayes VAMP, ridge, elastic net, \(\ell_{3/2}\), and \(\ell_3\), any nonconverged run was restarted with damping \(0.5\). We found that the Lasso algorithm was more unstable and required a more extensive sequence of damping values, \(1,0.75,0.5,0.35,0.2,\) and \(0.1\), and the first numerically valid converged result was retained. The convergence tolerance was \(2\times10^{-5}\) for all methods except Lasso, for which we raised it to \(10^{-3}\). Runs that raised an exception or failed the final fixed-point convergence test were excluded; their last-iterate estimates were not used in the plotted empirical MSE. Finite-dimensional effects at the relatively small $n$ and $p$ may have contributed to this instability. Results were also found to be generally more stable for higher aspect ratios $\delta$.

For each combination of prior, noise level, and estimator, the empirical MSE was calculated from the successful runs using a 10\% two-sided trimmed mean. Specifically, after sorting the successful MSEs, \(\lfloor0.1m\rfloor\) observations were removed from each tail, where \(m\) was the number of successful runs. The error bars were computed with the standard deviation of the successful runs divided by the square root of the number of successful runs. 

The theoretical curves were obtained from the state evolution. The Bayes state evolution was run without damping to a tolerance of \(2\times10^{-8}\), whereas the convex-estimator state evolutions were first run without damping and then restarted with damping \(0.5\) if necessary, using a tolerance of \(2\times10^{-6}\).

The plot should therefore not be viewed as an exact rigorous characterization of the theoretical quantities. Numerical stability methods were unavoidable to prevent frequent divergence; these proof-of-concept experiments illustrate when the behavior of the fully implementable VAMP algorithms matches the theoretical predictions.

The exception to the close matches is sparse Rademacher Bayes VAMP, where the MSE is strictly larger than predicted. Figure 1 of \textcite{Celentano2022barrier}, which uses a three-point prior, also exhibits this behavior for $\delta = 0.6$. In this case, it is possible that, due to finite-sample effects, the Bayes VAMP algorithm has converged to a higher-MSE fixed point distinct from the one predicted by the state evolution.
    
\subsection{Matrix generation for Figure~\ref{fig:quantiles_barriers}}
\label{sec:li-sur-matrices}

We use four design matrices defined in \textcite{Li2026debiasing}. They are generated as follows:

\begin{itemize}
    \item \textbf{MatrixNormal.} 
    Let \(G\in\mathbb{R}^{n\times p}\) have iid \(N(0,1)\) entries, and let 
    $(\Sigma_r)_{ij}:=0.5^{|i-j|}$ 
    and $\Sigma_c\sim\operatorname{IW}_p(1.1p, I_p)$
    be independent. The ``MatrixNormal'' design is then given by 
    ${X} = \Sigma_r^{1/2} G\Sigma_c^{1/2}$.
    
    \item \textbf{LNN.} 
    Generate 
    $G_1\in\mathbb{R}^{n\times p}$
    and 
    $G_2, G_3\in\mathbb{R}^{n\times n}$
    with iid \(N(0,1)\) entries, and set 
    $X=n^{-\frac{3}{2}}G_3G_2G_1$.

    \item \textbf{Multi-t.} Independently draw $g_i\sim N(0, I_p)$ and $u_i\sim\chi^2_3$ for $i \in [n]$, and define the rows by $X_{i\mathbin{:}} =\frac{g_i^T}{\sqrt{u_i/3}}$.
    Thus, before the common normalization, the rows are independent multivariate \(t\) vectors with identity scale and three degrees of freedom.

    \item \textbf{Spiked.} Let \(r=50\), let
    \(U\in\mathbb{R}^{n\times r}\) and
    \(V\in\mathbb{R}^{p\times r}\) be independent matrices with orthogonal columns, drawn from the rotationally invariant distributions on their respective spaces, and let
    \(G\in\mathbb{R}^{n\times p}\) have iid \(N(0,1)\) entries, independently of \(U\) and \(V\). Set $X
        =10U V^T+\frac{1}{n}G$.

\end{itemize}

Each realized design is rescaled so that all four designs have the same average measurement strength---mean Gram eigenvalue $\frac{1}{p} \Vert X \Vert_F^2 = 1$.

Since we plot the regime where the convex barrier coincides with the Bayes VAMP MSE curve, we further rescale to ensure that the convolved Rademacher prior is log-concave as defined in \cref{prop: strict-equality}. The Gram eigenvalues are therefore rescaled to have common mean $\bar{s}=0.019$ so that the Bayes VAMP MSE is equivalent to the convex barrier over all noise variances $\sigma^2$ plotted.

\section{Proximal operator identities}
\label{app:proximal-identities}

This appendix collects facts about proximal operators used throughout the paper. We state the basic results in finite-dimensional Euclidean space although most applications in the main text are one-dimensional. Let
\(d\geq 1\), let $h:\R^d\rightarrow \R\cup\{+\infty\}$ be proper, lower semicontinuous, and convex, and let \(a>0\). We write
\begin{equation}
    P_a(y)
    :=
    \prox_{a h}(y)
    :=
    \argmin_{x\in\R^d}
    \left\{
        \frac12\norm{x-y}_2^2+a h(x)
    \right\}.
    \label{eq:prox-definition-app}
\end{equation}
The minimizer in \eqref{eq:prox-definition-app} exists and is unique because
the objective is coercive and strongly convex. We adopt the convention
\(P_0=\Id\). When \(d=1\), the almost-everywhere derivative of the scalar proximal map is denoted $P'_a$. When \(d>1\), its Jacobian and divergence at points of differentiability are denoted \(D P_a\) and \(\dive P_a\), respectively.

The first-order optimality condition defining the minimization problem can be written in terms of the convex subdifferential as follows.

\begin{lemma}
\label{lem:prox-optimality-app}
For every \(y\in\R^d\), we have
\(
    y-P_a(y)\in a\,\partial h(P_a(y)).
\)
Equivalently, $P_a=(\Id+a\,\partial h)^{-1}$.
Moreover, the following are equivalent: (i) $P_a(y)=y$, (ii) $0\in\partial h(y)$, (iii) $y\in\argmin h$.
\end{lemma}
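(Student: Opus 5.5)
The plan is to read everything off the strongly convex minimization problem that defines $P_a(y)$ and from Fermat's rule in convex analysis. First, fix $y$ and set $\Phi_y(x):=\frac12\norm{x-y}_2^2+a h(x)$. This function is proper and lsc. It is also $1$-strongly convex, because it is a strongly convex quadratic plus a convex function.

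Since $\frac12\norm{\cdot-y}_2^2$ is finite and differentiable everywhere, the subdifferential sum rule applies with equality (Rockafellar 1970, Thm.~23.8, since the quadratic has full domain):
\[
\partial\Phi_y(x)=x-y+a\,\partial h(x).
\]
Fermat's rule states that $x$ minimizes the convex function $\Phi_y$ if and only if $0\in\partial\Phi_y(x)$. Taking $x=P_a(y)$, which exists and is unique as noted after \eqref{eq:prox-definition-app}, gives $y-P_a(y)\in a\,\partial h(P_a(y))$.

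For the operator identity, I would argue in both directions:
\begin{itemize}
\item $y\in(\Id+a\partial h)(x)$ means exactly $0\in\partial\Phi_y(x)$, which by Fermat's rule means $x$ minimizes $\Phi_y$.
\item By uniqueness of the minimizer, this holds if and only if $x=P_a(y)$.
\end{itemize}
Hence $(\Id+a\partial h)^{-1}(y)=\{P_a(y)\}$ for every $y$, so the inverse is single-valued, everywhere defined, and equal to $P_a$.

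For the equivalences, I would run the following chain:
\begin{itemize}
\item (i)$\Rightarrow$(ii): substitute $P_a(y)=y$ into the inclusion to get $0\in a\,\partial h(y)$. Since $a>0$, this gives $0\in\partial h(y)$.
\item (ii)$\Rightarrow$(i): if $0\in\partial h(y)$, then $y-y=0\in a\,\partial h(y)$. So $y\in(\Id+a\partial h)^{-1}(y)=\{P_a(y)\}$.
\item (ii)$\Leftrightarrow$(iii): this is Fermat's rule applied to $h$ itself, since $0\in\partial h(y)$ if and only if $h(z)\ge h(y)+\langle 0,z-y\rangle$ for all $z$. For a proper $h$ this means $y\in\argmin h$; note that $y\in\dom h$ is automatic when $\partial h(y)\neq\varnothing$.
\end{itemize}

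The only delicate point is justifying the subdifferential sum rule for the extended-valued $h$, which I will handle by citing the standard qualification condition (one summand continuous everywhere). I could also bypass the sum rule entirely with a direct variational argument. For the forward direction, compare $\Phi_y$ at $P_a(y)$ and at $P_a(y)+t(z-P_a(y))$, use convexity of $h$, and let $t\downarrow0$. This gives $a h(z)\ge a h(P_a(y))+\langle y-P_a(y),z-P_a(y)\rangle$ for every $z$, which is the inclusion. The converse follows by adding the quadratic inequality $\frac12\norm{z-y}^2\ge\frac12\norm{x-y}^2+\langle x-y,z-x\rangle$ to the subgradient inequality. I expect this elementary route to be the cleanest write-up, with nothing else posing an obstacle.
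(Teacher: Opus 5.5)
Your proposal is correct. The paper gives no argument of its own here and simply cites \textcite[Eq.~(O.1)--(O.2)]{Celentano2022barrier}. Your derivation is exactly the standard first-order-optimality argument behind those cited identities: you apply Fermat's rule to the strongly convex objective $\Phi_y$, with the sum rule justified by the full-domain quadratic (or bypassed entirely by your direct variational comparison). You then use uniqueness of the minimizer to identify $(\Id+a\,\partial h)^{-1}$ with $P_a$. This makes the lemma self-contained and shows that only properness, lower semicontinuity and convexity of $h$, together with $a>0$, are used.
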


\begin{proof}
See \textcite[Eq.\ (O.1)--(O.2)]{Celentano2022barrier}.
\end{proof}

\begin{lemma}\label{lem:finiteness-proximal-0}
Let \(h:\R^d\to\R\cup\{+\infty\}\) be proper, lower semicontinuous, and
convex. For every \(a>0\), the problem defining \(\prox_{a h}(0)\) has a
unique minimizer in \(\dom(h)\). In particular, $\|\prox_{a h}(0)\|_2<\infty$.
\end{lemma}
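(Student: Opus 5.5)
The plan is to apply the direct method of the calculus of variations to the strongly convex objective $\Phi(x):=\tfrac12\norm{x}_2^2+a h(x)$ on $\R^d$. I take existence, uniqueness, and membership in $\dom(h)$ in that order.

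\emph{Step 1 (affine minorant).} Since $h$ is proper, lower semicontinuous, and convex, it has an affine minorant. That is, there are $s\in\R^d$ and $t\in\R$ with $h(x)\ge \langle s,x\rangle+t$ for all $x$. This is the standard consequence of the Hahn--Banach separation theorem applied to the closed convex epigraph of $h$; it is the same fact used in the proof of \cref{lem:F-continuity}, and I would simply cite it, e.g.\ from \textcite{BauschkeCombettes2017}. It then follows that
\[
\Phi(x)\ge \tfrac12\norm{x}_2^2 - a\norm{s}_2\norm{x}_2 + a t,
\]
so $\Phi$ is coercive and bounded below. Moreover, $\Phi$ is proper: take any $x_0\in\dom(h)$, which exists because $h$ is proper, and then $\Phi(x_0)<\infty$.

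\emph{Step 2 (existence).} Let $\alpha:=\inf\Phi\le\Phi(x_0)<\infty$, and note that $\alpha>-\infty$ by Step 1. The sublevel set $\{\Phi\le\Phi(x_0)\}$ is nonempty, bounded by coercivity, and closed by lower semicontinuity of $\Phi$, which holds because it is a sum of a continuous function and an lsc function. It is therefore compact. A lower semicontinuous function attains its infimum on a nonempty compact set, so a minimizer $x^\star$ exists. Since $\Phi(x^\star)\le\Phi(x_0)<\infty$, we have $h(x^\star)<\infty$, that is, $x^\star\in\dom(h)$. In particular every component of $x^\star$ is a real number, so $\norm{\prox_{ah}(0)}_2=\norm{x^\star}_2<\infty$.

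\emph{Step 3 (uniqueness).} The function $\Phi$ is $1$-strongly convex, being the sum of $\tfrac12\norm{\cdot}_2^2$ and the convex function $ah$. Suppose there were two minimizers $x_1\neq x_2$. Then strong convexity gives
\[
\Phi\left(\tfrac{x_1+x_2}{2}\right)\le \alpha-\tfrac18\norm{x_1-x_2}_2^2<\alpha,
\]
which contradicts the definition of $\alpha$. Hence the minimizer is unique.

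I expect no real obstacle. The only nontrivial input is the existence of the affine minorant in Step 1, which rules out $h$ decreasing faster than linearly. Once that is in hand, the rest is routine compactness and strict convexity. The same argument works verbatim with $0$ replaced by any input $y$, which recovers the well-definedness of \eqref{eq:prox-definition-app}.
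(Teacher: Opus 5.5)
Your proof is correct. The paper does not argue directly. It rewrites $\prox_{ah}(0)$ as the minimizer of $h+\frac{1}{2a}\|\cdot\|_2^2$, cites Rockafellar--Wets (Theorems~1.25 and~2.26(a)) for existence and uniqueness, and then uses properness to place the minimizer in $\operatorname{dom}(h)$, which is the same reasoning you give at the end of Step~2.

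Your argument is a self-contained version of what those citations contain:
\begin{itemize}
\item the affine minorant in Step~1 is what makes a proper lsc convex $h$ prox-bounded with threshold $\infty$ in Rockafellar--Wets' sense;
\item your compact-sublevel-set step plays the role of Theorem~1.25;
\item strong convexity plays the role of the single-valuedness in Theorem~2.26(a).
\end{itemize}
What you gain is an elementary proof with no external machinery. What the citation gives the paper is that Theorem~2.26(a) also yields joint continuity of $(a,y)\mapsto\prox_{ah}(y)$. The paper reuses that continuity in Lemma~\ref{lem:proxfacts}, but it is not needed for this lemma.
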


\begin{proof}
By \eqref{eq:prox-definition-app},
\(
\prox_{ah}(0)
=
\argmin_x\left\{
h(x)+\frac{1}{2a}\|x\|_2^2
\right\}.
\)
By \textcite[Theorems~1.25 and~2.26(a)]{RockafellarWets1998},
this proximal point exists and is unique for every \(a>0\).
Properness of \(h\) implies that the minimizer belongs to \(\dom(h)\);
hence its Euclidean norm is finite.
\end{proof}

We say that \(h\) is \(\kappa\)-strongly convex, for \(\kappa\geq0\), when
\(x\mapsto h(x)-\frac\kappa2\norm{x}_2^2\) is convex. The case
\(\kappa=0\) includes every convex penalty.

\begin{proposition}[Firm nonexpansiveness]
\label{prop:prox-firm-app}
If \(h\) is \(\kappa\)-strongly convex, then for all
\(y,\widetilde y\in\R^d\),
\begin{equation}
    \inner{y-\widetilde y}
    {P_a(y)-P_a(\widetilde y)}
    \geq
    (1+a\kappa)
    \norm{P_a(y)-P_a(\widetilde y)}_2^2.
    \label{eq:prox-firm-app}
\end{equation}
Consequently, $\norm{P_a(y)-P_a(\widetilde y)}_2
    \leq
    \norm{y-\widetilde y}_2/(1+a\kappa).$
In particular, every proximal map of a convex function is nonexpansive.
\end{proposition}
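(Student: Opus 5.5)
The plan is to prove \eqref{eq:prox-firm-app} from the subdifferential characterization in \cref{lem:prox-optimality-app}, combined with strong monotonicity of \(\partial h\). Write \(x=P_a(y)\) and \(\widetilde x=P_a(\widetilde y)\). \Cref{lem:prox-optimality-app} gives
\[
u:=\frac{y-x}{a}\in\partial h(x),
\qquad
\widetilde u:=\frac{\widetilde y-\widetilde x}{a}\in\partial h(\widetilde x).
\]
The one preliminary fact needed is that, for a \(\kappa\)-strongly convex \(h\), the subdifferential is \(\kappa\)-strongly monotone:
\[
\inner{u-\widetilde u}{x-\widetilde x}\geq\kappa\norm{x-\widetilde x}_2^2 .
\]
To establish it, set \(g:=h-\frac\kappa2\norm{\cdot}_2^2\), which is convex, proper and lsc by assumption. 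Since the quadratic is finite and differentiable everywhere, the sum rule gives \(\partial h(x)=\partial g(x)+\kappa x\). Hence \(u-\kappa x\in\partial g(x)\) and \(\widetilde u-\kappa\widetilde x\in\partial g(\widetilde x)\). Adding the two subgradient inequalities for \(g\), one at \(x\) and one at \(\widetilde x\), shows that \(\partial g\) is monotone:
\[
\inner{(u-\kappa x)-(\widetilde u-\kappa\widetilde x)}{x-\widetilde x}\geq0 .
\]
Rearranging this yields the claimed strong monotonicity. The case \(\kappa=0\) is just ordinary monotonicity of \(\partial h\).

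Next substitute \(u-\widetilde u=\bigl((y-\widetilde y)-(x-\widetilde x)\bigr)/a\) into the strong monotonicity inequality and multiply by \(a>0\):
\[
\inner{y-\widetilde y}{x-\widetilde x}-\norm{x-\widetilde x}_2^2\geq a\kappa\norm{x-\widetilde x}_2^2 .
\]
This is exactly \eqref{eq:prox-firm-app}. For the Lipschitz bound, apply Cauchy--Schwarz to the left side of \eqref{eq:prox-firm-app}:
\[
(1+a\kappa)\norm{x-\widetilde x}_2^2\leq\norm{y-\widetilde y}_2\,\norm{x-\widetilde x}_2 .
\]
If \(x=\widetilde x\) there is nothing to prove; otherwise divide by \(\norm{x-\widetilde x}_2\). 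Taking \(\kappa=0\) then gives nonexpansiveness of every proximal map of a convex function.

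The only step with any subtlety is the subdifferential sum rule \(\partial(g+q)=\partial g+\nabla q\) for an extended-valued \(g\). It holds because \(q=\frac\kappa2\norm{\cdot}_2^2\) is finite and continuous on all of \(\R^d\) (the standard Moreau--Rockafellar rule, e.g. \textcite{Rockafellar1970}). Alternatively, the rule can be avoided entirely: apply the defining strong convexity inequality of \(h\) along the segment between \(x\) and \(\widetilde x\), divide by the step size, and let it tend to zero to obtain the strong subgradient inequality
\[
h(z)\geq h(x)+\inner{u}{z-x}+\frac\kappa2\norm{z-x}_2^2
\]
directly. Adding this at the two points gives the strong monotonicity. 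Everything else is algebra.
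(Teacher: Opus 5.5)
Your proof is correct: the optimality condition $u=(y-x)/a\in\partial h(x)$, the $\kappa$-strong monotonicity of $\partial h$ (via either the Moreau--Rockafellar sum rule or your segment-limit derivation of the strong subgradient inequality, both valid since $h(x),h(\widetilde x)<\infty$), substitution of $u-\widetilde u$, and Cauchy--Schwarz give exactly the stated inequalities. The paper gives no argument of its own here and simply cites Eqs.~(O.3)--(O.4) of Celentano and Montanari, which is this standard optimality-plus-strong-monotonicity reasoning, so your write-up is essentially the same approach, made self-contained.
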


\begin{proof}
See \textcite[Eq. (O.3)--(O.4)]{Celentano2022barrier}.
\end{proof}

\begin{corollary}
\label{cor:prox-jacobian-app}
The map \(P_a\) is differentiable almost everywhere. At every point of
differentiability,
\[
    0\preceq D P_a(y)
    \preceq
    \frac{1}{1+a\kappa}I_d.
\]
Therefore, for almost every $y$, 
\(
    0\leq \dive P_a(y)
    \leq
    \frac{d}{1+a\kappa}.
\) In the scalar case, for almost every $y$, \(0 \leq P_a'(y) \leq \frac{1}{1 + a\kappa}\).
\end{corollary}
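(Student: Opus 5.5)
The plan is to read everything off \cref{prop:prox-firm-app}, using Rademacher's theorem for differentiability and a difference-quotient argument to transfer the monotonicity and Lipschitz bounds to the Jacobian.

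First, \cref{prop:prox-firm-app} shows that \(P_a\) is \(1/(1+a\kappa)\)-Lipschitz on \(\R^d\). Rademacher's theorem then gives differentiability of \(P_a\) at Lebesgue-almost every \(y\). Fix such a point \(y\) and a direction \(v\in\R^d\). We apply \eqref{eq:prox-firm-app} with \(\widetilde y=y+tv\) for \(t>0\) and divide by \(t^2\). Writing \(\Delta_t:=(P_a(y+tv)-P_a(y))/t\), this gives
\[
\inner{v}{\Delta_t}\ \geq\ (1+a\kappa)\norm{\Delta_t}_2^2 .
\]
Letting \(t\downarrow0\), we have \(\Delta_t\to DP_a(y)v\), so \(\inner{v}{DP_a(y)v}\geq(1+a\kappa)\norm{DP_a(y)v}_2^2\geq0\).

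The main obstacle is that \(DP_a(y)\) need not a priori be symmetric, so the quadratic-form inequality above does not immediately yield a Loewner bound. To handle this, I would use the standard fact that \(P_a=\nabla e_a\), where \(e_a\) is the Moreau envelope, which is convex and \(C^{1,1}\). At points where \(\nabla e_a\) is differentiable (again almost everywhere), the derivative is the Hessian of \(e_a\), and hence symmetric (cf.\ \textcite{RockafellarWets1998}, or \textcite{Celentano2022barrier}).

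Given symmetry, \(DP_a(y)=:J\) is a symmetric matrix satisfying \(\inner{v}{Jv}\geq(1+a\kappa)\norm{Jv}^2\geq0\) for all \(v\). Taking \(v\) to be an eigenvector with eigenvalue \(\ell\) gives \(\ell\geq(1+a\kappa)\ell^2\), so \(0\leq\ell\leq1/(1+a\kappa)\). This proves \(0\preceq DP_a(y)\preceq(1+a\kappa)^{-1}I_d\).

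Taking the trace bounds the sum of the \(d\) eigenvalues, giving \(0\leq\dive P_a(y)\leq d/(1+a\kappa)\) almost everywhere. The scalar case \(d=1\) needs no symmetry argument, since the eigenvector step applies directly to the number \(P_a'(y)\); the two almost-everywhere sets from Rademacher and from the envelope differentiability coincide in that case.
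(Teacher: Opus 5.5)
Your proof is correct. The paper gives no argument of its own here; it defers to \textcite[Eqs.~(O.7)--(O.10)]{Celentano2022barrier}. Your chain of steps is the standard self-contained derivation of exactly those facts: the Lipschitz bound from firm nonexpansiveness, then Rademacher, then the difference-quotient limit $\langle v,Jv\rangle\ge(1+a\kappa)\|Jv\|_2^2$ with $J=DP_a(y)$, then symmetry.

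Two small corrections.

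First, $P_a$ is not the gradient of the Moreau envelope $e_{ah}(y)=\min_x\{\tfrac12\|x-y\|_2^2+ah(x)\}$. Rather, $\nabla e_{ah}=\mathrm{Id}-P_a$, so $P_a=\nabla\bigl(\tfrac12\|\cdot\|_2^2-e_{ah}\bigr)=\nabla\bigl(ah+\tfrac12\|\cdot\|_2^2\bigr)^*$.
\begin{itemize}
\item Symmetry of $J$ follows either way, but the justification is not that $J$ is ``a Hessian''. It is the classical fact that if $\phi$ is $C^1$ and $\nabla\phi$ is differentiable at $y$, then $D\nabla\phi(y)$ is symmetric; no $C^2$ assumption is needed.
\item This fact applies at \emph{every} point where $P_a$ is differentiable, in any dimension. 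So there are not two different almost-everywhere sets to reconcile, and the Loewner bound holds at every point of differentiability, as the statement asserts.
\end{itemize}

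Second, symmetry is needed only to make $J$ itself a symmetric matrix. Cauchy--Schwarz gives $\langle v,Jv\rangle\le\|v\|_2\|Jv\|_2$. Combined with your inequality, this yields $0\le\langle v,Jv\rangle\le\|v\|_2^2/(1+a\kappa)$ for all $v$ directly. The divergence bound and the scalar bound then follow without the envelope detour.
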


\begin{proof}
See \textcite[Eq.\ (O.7)--(O.10)]{Celentano2022barrier}.
\end{proof}

\begin{lemma}[Nonconstant proximal maps]
\label{lem:nonconstant-proximal}
Suppose that \(\dom h\) contains at least two distinct points. Then, for
every \(a>0\), the map \(P_a=\prox_{a h}\) is nonconstant. If \(d=1\), then $\mathcal L^1(\{y\in\R:P_a'(y)>0\})>0$,
where \(\mathcal L^1\) denotes Lebesgue measure.
\end{lemma}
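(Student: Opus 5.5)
We argue by contradiction in the first claim and read off the second from the scalar derivative bounds. Fix \(a>0\) and let \(x_1\neq x_2\) be two points of \(\dom h\). By \cref{lem:prox-optimality-app}, \(P_a=(\Id+a\,\partial h)^{-1}\). For the nonconstancy we show that the range of \(P_a\) meets every neighbourhood of each point of \(\dom\partial h\), and in particular contains two distinct points. The key observation is that \(\dom\partial h\) is dense in \(\dom h\) by the Brøndsted--Rockafellar theorem, which we may cite from \textcite{Rockafellar1970}. We can therefore pick \(z_1\neq z_2\) in \(\dom\partial h\), together with subgradients \(g_i\in\partial h(z_i)\). Setting \(y_i:=z_i+a g_i\), the optimality characterization gives \(P_a(y_i)=z_i\). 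Since \(z_1\neq z_2\), the map \(P_a\) is not constant.

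A more self-contained route avoids density. Suppose \(P_a\equiv x_0\). Then \(y-x_0\in a\,\partial h(x_0)\) for every \(y\in\R^d\), so \(\partial h(x_0)=\R^d\). By the subgradient inequality, \(h(x)\geq h(x_0)+\langle g,x-x_0\rangle\) for all \(g\in\R^d\). For \(x\neq x_0\), take \(g=t(x-x_0)\) and let \(t\to\infty\); this forces \(h(x)=+\infty\). Hence \(\dom h=\{x_0\}\), contradicting the assumption. I would present this second argument, since it uses only \cref{lem:prox-optimality-app}.

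For the scalar statement with \(d=1\), \(P_a\) is nondecreasing and \(1\)-Lipschitz by \cref{prop:prox-firm-app} and \cref{cor:prox-jacobian-app}. It is therefore absolutely continuous, and for \(u<v\) we have \(P_a(v)-P_a(u)=\int_u^v P_a'(y)\,\rd y\) with \(P_a'\geq0\) almost everywhere. Suppose \(P_a'>0\) only on a Lebesgue-null set. Then \(P_a'=0\) almost everywhere, so every such integral vanishes and \(P_a\) is constant, contradicting the first part. The only point needing care is the invocation of absolute continuity, and that follows immediately from the Lipschitz property. I do not expect a real obstacle; the mild subtlety is handling the extended-valued \(h\), which the subgradient-inequality argument does without any regularity assumption.
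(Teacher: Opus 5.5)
Your proof is correct. Your first route (density of $\operatorname{dom}(\partial h)$ in $\operatorname{dom} h$, then $P_a(z_i+ag_i)=z_i$) is essentially the paper's argument. The paper uses $\operatorname{range}(P_a)=\operatorname{dom}(\partial h)$ via the resolvent identity, together with $\operatorname{ri}(\operatorname{dom} h)\subseteq\operatorname{dom}(\partial h)$ (Rockafellar, Theorem~23.4), and observes that the relative interior contains two points.

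The argument you say you would present is genuinely different. Assuming $P_a\equiv x_0$, the optimality inclusion forces $\partial h(x_0)=\R^d$. The subgradient inequality with $g=t(x-x_0)$ then collapses $\operatorname{dom} h$ to $\{x_0\}$. This step needs $h(x_0)<\infty$, which holds because $\partial h(x_0)\neq\emptyset$ (equivalently, $x_0=P_a(y)\in\operatorname{dom} h$); state this explicitly.

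What your route buys is a fully self-contained proof. It uses only the first-order condition and avoids both the relative-interior theorem and the identification of the range of $P_a$. The paper's route gives the stronger structural fact that the range of $P_a$ contains $\operatorname{ri}(\operatorname{dom} h)$. Only nonconstancy is used downstream, however: in the positivity of $A(0,\xi)$ and of $a$ on the zero set of $F_{1,\lambda}$.

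Your scalar part is identical to the paper's. $P_a$ is nondecreasing and $1$-Lipschitz, hence absolutely continuous, so $P_a'=0$ almost everywhere would force $P_a$ to be constant.
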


\begin{proof}
By \textcite[Theorem~23.4]{Rockafellar1970},
\(
    \operatorname{ri}(\operatorname{dom} h)
    \subseteq \operatorname{dom}(\partial h),
\) where $\operatorname{ri}(C)$, for a convex set $C$, is its interior taken within its affine hull. Because \(\operatorname{dom}h\) is convex and contains at least two
distinct points, so does \(\operatorname{ri}(\operatorname{dom}h)\).
On the other hand, the resolvent characterization of the proximal map in \textcite[Theorem~12.15 and Proposition~12.19]{RockafellarWets1998} gives
\(
    P_a=(\Id+a\partial h)^{-1}\),
and
\(
    \operatorname{range}(P_a)=\operatorname{dom}(\partial h).
\)
Consequently, the range of \(P_a\) contains at least two points, so
\(P_a\) is nonconstant.

Now suppose \(d=1\). By \cref{prop:prox-firm-app}, \(P_a\) is nondecreasing and
one-Lipschitz, and hence absolutely continuous on every bounded
interval. Since \(P_a\) is nonconstant, there exist \(x<y\) such that
\(P_a(y)>P_a(x)\). Therefore,
\[
    0<P_a(y)-P_a(x)=\int_x^y P_a'(t)\,dt .
\]
Moreover, \(P_a'(t)\geq 0\) almost everywhere because \(P_a\) is
nondecreasing. It follows that
\(
    \mathcal L^1\bigl(\{t\in[x,y]:P_a'(t)>0\}\bigr)>0,
\)
and hence
\(
    \mathcal L^1\bigl(\{t\in\mathbb R:P_a'(t)>0\}\bigr)>0 .
\)
\end{proof}

\begin{lemma}\label{lem:proxfacts}
Let \(h:\R\to\R\cup\{+\infty\}\) be proper, lower semicontinuous, and
convex. The map
\(
 (w,y)\longmapsto P_{w}(y) := \prox_{wh}(y)
\)
is continuous on \((0,\infty)\times\R\).
\end{lemma}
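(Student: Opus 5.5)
The plan is to argue by sequences: take $(w_n,y_n)\to(w,y)$ with $w>0$, set $x_n:=P_{w_n}(y_n)$, and show $x_n\to P_w(y)$. This splits into three steps: boundedness of $\{x_n\}$, identification of every subsequential limit as a minimizer of the limiting prox objective, and uniqueness of that minimizer.

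\emph{Step 1 (boundedness).} Since $h$ is proper, lsc and convex, it admits an affine minorant $h(x)\ge sx+t$. I would also fix a point $x_0\in\dom h$. For all large $n$ we have $w_n\in[w/2,2w]$ and $|y_n|\le|y|+1$. Comparing objective values then gives
\[
\tfrac12(x_n-y_n)^2+w_n(sx_n+t)\;\le\;\tfrac12(x_n-y_n)^2+w_nh(x_n)\;\le\;\tfrac12(x_0-y_n)^2+w_nh(x_0).
\]
The right-hand side is bounded uniformly in $n$. The left-hand side is a quadratic in $x_n$ whose linear coefficient is bounded and whose leading coefficient is $\tfrac12$, so it tends to $+\infty$ as $|x_n|\to\infty$ uniformly in $n$. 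Hence $\{x_n\}$ is bounded.

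\emph{Step 2 (limits are minimizers).} Let $x_{n_k}\to\bar x$ along a subsequence, and fix any $z\in\dom h$. Minimality gives
\[
\tfrac12(x_{n_k}-y_{n_k})^2+w_{n_k}h(x_{n_k})\le\tfrac12(z-y_{n_k})^2+w_{n_k}h(z).
\]
Take $\liminf$ on both sides. On the left, lower semicontinuity of $h$ together with $w_{n_k}\to w>0$ gives $\liminf w_{n_k}h(x_{n_k})\ge w\,h(\bar x)$. This uses that $h(x_{n_k})$ is bounded below (by the affine minorant on a bounded set), so the product with a positive convergent sequence behaves correctly. On the right, the expression converges. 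We obtain $\tfrac12(\bar x-y)^2+wh(\bar x)\le\tfrac12(z-y)^2+wh(z)$ for every $z$. For $z\notin\dom h$ the inequality is trivial, so $\bar x$ minimizes the limiting objective.

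\emph{Step 3 (uniqueness).} The limiting objective is strongly convex, so its minimizer is unique and equals $P_w(y)$. Every subsequence of the bounded sequence $\{x_n\}$ therefore has a further subsequence converging to $P_w(y)$, and so $x_n\to P_w(y)$.

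The only delicate point is the $\liminf$ of the product $w_{n_k}h(x_{n_k})$ when $h(\bar x)$ might a priori be $+\infty$. The lower bound on $h$ over bounded sets handles this, and the right-hand side bound forces $h(\bar x)<\infty$. The argument mirrors the joint-continuity step already used in the proof of \cref{lem:F-continuity}.
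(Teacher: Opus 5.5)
Your proof is correct. It differs from the paper's only in being self-contained: the paper proves the lemma in one line by citing Rockafellar--Wets, Theorem~2.26(a), which gives continuity of $(\lambda,y)\mapsto\prox_{\lambda h}(y)$ at every point with $\lambda>0$ for proper lsc convex $h$ on $\R^n$.

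Your three steps are uniform coercivity from an affine minorant, identification of cluster points via lower semicontinuity, and uniqueness from strong convexity. Together they are essentially the level-boundedness/parametric-minimization argument that underlies that textbook theorem. They also coincide, as you note, with the inline argument the paper gives for joint continuity of $(w,u)\mapsto Q_w(u)$ in the proof of \cref{lem:F-continuity}.

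The citation buys brevity and the general $\R^d$ statement for free. Your version makes explicit exactly which hypotheses are used, namely properness, lower semicontinuity, and the existence of an affine minorant, and it carries over verbatim to $\R^d$.

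Your handling of $\liminf_k w_{n_k}h(x_{n_k})$ is fine, but there is a slightly cleaner route. The comparison with a fixed $z\in\dom h$ bounds $w_{n_k}h(x_{n_k})$ from above, and $w_{n_k}\ge w/2$, so $h(x_{n_k})$ is bounded on both sides. Hence $(w_{n_k}-w)h(x_{n_k})\to 0$, which gives $\liminf_k w_{n_k}h(x_{n_k})=w\liminf_k h(x_{n_k})\ge w\,h(\bar x)$ at once, with $h(\bar x)<\infty$ automatic.
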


\begin{proof}
This follows from
\cite[Theorem~2.26(a)]{RockafellarWets1998}, which states that
\(
    (\lambda,y)\longmapsto \prox_{\lambda h}(y)
\)
is continuous at every point with \(\lambda>0\).
\end{proof}

The next bound is useful when the effective regularization parameter varies
along a fixed-point sequence.

\begin{lemma}
\label{lem:prox-scale-continuity-app}
For \(a>0\), \(\widetilde a\geq0\), and \(y\in\R^d\),
\begin{equation}
    \norm{P_a(y)-P_{\widetilde a}(y)}_2
    \leq
    \norm{y-P_a(y)}_2
    \abs{\frac{\widetilde a}{a}-1}.
    \label{eq:prox-scale-continuity-app}
\end{equation}
In particular, on any compact interval
\([a_{\min},a_{\max}]\subset(0,\infty)\), the map
\(a\mapsto P_a(y)\) is locally Lipschitz whenever
\(\norm{y-P_a(y)}_2\) is locally bounded.
\end{lemma}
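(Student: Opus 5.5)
The plan is to compare the two proximal problems through their first-order optimality conditions, using the resolvent characterization in Lemma~\ref{lem:prox-optimality-app}. Set $x=P_a(y)$ and $\widetilde x=P_{\widetilde a}(y)$.

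\textbf{Trivial case $\widetilde a=0$.} By convention $P_0=\Id$, so $\widetilde x=y$. The left side of \eqref{eq:prox-scale-continuity-app} is then $\norm{x-y}_2$, and the right side is $\norm{y-x}_2\cdot|0-1|$. The bound therefore holds with equality.

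\textbf{Case $\widetilde a>0$.} Lemma~\ref{lem:prox-optimality-app} gives the inclusions
\[
u:=\frac{y-x}{a}\in\partial h(x),\qquad \widetilde u:=\frac{y-\widetilde x}{\widetilde a}\in\partial h(\widetilde x).
\]

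\textbf{Step 1: monotonicity.} Monotonicity of $\partial h$ yields $\inner{u-\widetilde u}{x-\widetilde x}\ge 0$.

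\textbf{Step 2: rewrite in terms of $D=\widetilde x-x$.} Note that $y-\widetilde x=(y-x)-D$, so
\[
u-\widetilde u=\frac{y-x}{a}-\frac{(y-x)-D}{\widetilde a}.
\]
Substituting into Step 1 gives
\[
\Big\langle (y-x)\Big(\frac1a-\frac1{\widetilde a}\Big)+\frac{D}{\widetilde a},\,-D\Big\rangle\ge 0 .
\]
Multiplying through by $\widetilde a>0$ and rearranging, this becomes
\[
\norm{D}_2^2\le \Big(\frac{\widetilde a}{a}-1\Big)\inner{y-x}{D}.
\]

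\textbf{Step 3: Cauchy--Schwarz.} Bound the right side by $\big|\frac{\widetilde a}{a}-1\big|\,\norm{y-x}_2\,\norm{D}_2$. If $D=0$ there is nothing to prove; otherwise divide by $\norm{D}_2$. This gives exactly \eqref{eq:prox-scale-continuity-app}.

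\textbf{Local Lipschitz claim.} Fix $a_0$ and apply \eqref{eq:prox-scale-continuity-app} at base point $a=a_0$ with $\widetilde a$ nearby. This gives
\[
\norm{P_{\widetilde a}(y)-P_{a_0}(y)}_2\le \frac{\norm{y-P_{a_0}(y)}_2}{a_0}\,|\widetilde a-a_0|.
\]
On $[a_{\min},a_{\max}]$ the factor $1/a_0$ is at most $1/a_{\min}$. Combined with local boundedness of $\norm{y-P_a(y)}_2$, and applying the inequality with either endpoint as the base, this yields a uniform Lipschitz constant on small subintervals.

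The only delicate point I expect is the sign bookkeeping in Step 2, since the factor $\frac1a-\frac1{\widetilde a}$ must be converted into $\frac{\widetilde a}{a}-1$ after multiplying by $\widetilde a$. Apart from that and the separate $\widetilde a=0$ convention, the argument is a routine resolvent comparison.
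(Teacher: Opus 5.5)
Your proof is correct. The paper gives no argument of its own here: it only cites Eq.~(O.5) of Celentano and Montanari. Your monotonicity computation from the subgradient inclusions of Lemma~\ref{lem:prox-optimality-app} is therefore a self-contained substitute.

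There is one slip to fix in Step~2. With $x-\widetilde x=-D$, the inequality $\langle u-\widetilde u,x-\widetilde x\rangle\ge0$ expands to $\frac1{\widetilde a}\|D\|_2^2\le\bigl(\frac1{\widetilde a}-\frac1a\bigr)\langle y-x,D\rangle$, that is,
\[
\|D\|_2^2\le\Bigl(1-\frac{\widetilde a}{a}\Bigr)\langle y-x,D\rangle .
\]
This has the opposite sign from your display. For $h(x)=x^2/2$, $a=1$, $\widetilde a=2$, $y=1$ one gets $\langle y-x,D\rangle=-1/12$, so your version would force $\|D\|_2^2<0$. Because Step~3 passes to $|\widetilde a/a-1|$, the conclusion is unaffected.

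An equivalent, slightly shorter route uses the same inclusion $y-x\in a\,\partial h(x)$. It gives $x=P_{\widetilde a}(z)$ with $z=\frac{\widetilde a}{a}y+\bigl(1-\frac{\widetilde a}{a}\bigr)x$. Nonexpansiveness of $P_{\widetilde a}$ (Proposition~\ref{prop:prox-firm-app}) then yields $\|x-\widetilde x\|_2\le\|z-y\|_2=|1-\widetilde a/a|\,\|y-x\|_2$ at once, and $\widetilde a=0$ is covered by $P_0=\mathrm{Id}$. Your inner-product inequality is precisely firm nonexpansiveness of $P_{\widetilde a}$ at the pair $(z,y)$.

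Finally, the ``whenever'' hypothesis in the last sentence holds automatically. Using base point $a_{\min}$, the inequality gives $\|y-P_{\widetilde a}(y)\|_2\le\frac{a_{\max}}{a_{\min}}\|y-P_{a_{\min}}(y)\|_2$ on all of $[a_{\min},a_{\max}]$. Hence $a\mapsto P_a(y)$ is Lipschitz on the whole interval with constant $a_{\max}\|y-P_{a_{\min}}(y)\|_2/a_{\min}^2$, not just on small subintervals.
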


\begin{proof}
See \textcite[Eq. (O.5)]{Celentano2022barrier}.
\end{proof}

The localized width condition concerns the behavior of
\(\prox_{a h}\) when \(a\) is large. The following result identifies the
limit exactly whenever the minimizer set is nonempty. We denote by $\Pi_C(y)$ the projection of $y$ onto $C$ for any set $C \subset \R^d$, and $\operatorname{zer}(\partial h)$ the zero set of the subdifferential of $h$.

\begin{proposition}[Convergence to the minimizer projection]
\label{prop:prox-large-scale-app}
Assume $M:=\argmin h\neq\emptyset$.
Then \(M\) is closed and convex, and for every \(y\in\R^d\), we have $\prox_{a h}(y) \longrightarrow \Pi_M(y)$ as $a\to\infty$.
\end{proposition}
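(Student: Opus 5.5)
Closedness and convexity of \(M\) are immediate: \(M\) is the sublevel set \(\{x:h(x)\le\min h\}\) of a proper lsc convex function, hence closed and convex, and \(\Pi_M\) is well defined. For the limit, I would write \(x_a:=\prox_{ah}(y)\) and work directly from the variational definition \eqref{eq:prox-definition-app}. Fix any \(m\in M\), and in particular \(m_\star:=\Pi_M(y)\). Comparing the objective at \(x_a\) with its value at \(m_\star\) gives
\[
\tfrac12\|x_a-y\|_2^2+a\,h(x_a)\le \tfrac12\|m_\star-y\|_2^2+a\,h(m_\star).
\]
Since \(h(x_a)\ge \min h=h(m_\star)\), this yields two facts at once: \(\|x_a-y\|_2\le\|m_\star-y\|_2\), so \(\{x_a\}\) is bounded uniformly in \(a\); and \(0\le h(x_a)-\min h\le \|m_\star-y\|_2^2/(2a)\to0\).

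Next I would pass to subsequential limits. Take \(a_k\to\infty\) with \(x_{a_k}\to\bar x\), which exists by boundedness. Lower semicontinuity gives \(h(\bar x)\le\liminf h(x_{a_k})=\min h\), so \(\bar x\in M\). The distance inequality passes to the limit to give \(\|\bar x-y\|_2\le\|\Pi_M(y)-y\|_2=\operatorname{dist}(y,M)\). Because the projection onto a nonempty closed convex set is the unique nearest point, \(\bar x=\Pi_M(y)\). Every subsequential limit is therefore the same point, and boundedness then gives \(x_a\to\Pi_M(y)\) along the full family \(a\to\infty\).

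I expect no step to be a real obstacle; the one point requiring care is uniqueness of the limit. The inequality \(\|x_a-y\|_2\le\operatorname{dist}(y,M)\) by itself does not place \(x_a\) in \(M\), so uniqueness of the nearest point in \(M\) is what pins down \(\bar x\). An alternative I would keep in reserve uses the resolvent form \(x_a=(\Id+a\,\partial h)^{-1}(y)\) from \cref{lem:prox-optimality-app}. Since \(\operatorname{zer}(\partial h)=M\) and resolvents of a maximal monotone operator with nonempty zero set converge to the projection onto that zero set as the scale grows, this gives the same conclusion. The direct variational argument above is self-contained, though, so I would use it.
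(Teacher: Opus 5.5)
Your proof is correct, and it takes a different route from the paper's. The paper does not argue variationally. It notes that \(\partial h\) is maximally monotone, identifies \(M=\operatorname{zer}(\partial h)\) via Fermat's rule, and writes \(\prox_{ah}=(\Id+a\,\partial h)^{-1}\) as a resolvent. It then cites the general fact (Bauschke--Combettes, Theorem~23.44(i)) that resolvents of a maximal monotone operator with nonempty zero set converge to the projection onto that zero set as the scale tends to infinity. This is exactly the alternative you held in reserve.

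You replace that black box with a single comparison of the prox objective at \(x_a\) and at \(\Pi_M(y)\). Your argument uses only lower semicontinuity, compactness in \(\R^d\), and uniqueness of the nearest point in a closed convex set. It is self-contained, and it also gives quantitative information that the cited theorem does not display: the uniform bound \(\|x_a-y\|_2\le\operatorname{dist}(y,M)\) and the rate \(h(x_a)-\min h\le\operatorname{dist}(y,M)^2/(2a)\).

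The paper's citation is shorter and more general, since it holds for arbitrary maximal monotone operators, not just subdifferentials, and in general Hilbert spaces. Your argument, by contrast, relies on the optimization structure of the proximal map and on finite-dimensional compactness. That is all the statement requires.
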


\begin{proof}
Since \(h\) is proper, lower semicontinuous, and convex,
\(\partial h\) is maximally monotone
\cite[Theorem~20.25]{BauschkeCombettes2017}. Moreover,
Fermat's rule and the proximal--resolvent identity give
\(
    \operatorname{zer}(\partial h)
      = \argmin h=M\), and
\(
    \prox_{ah}
      = J_{a\partial h}
      :=(\Id + a\partial h)^{-1};
\)
see \textcite[Theorem~16.3 and Example~23.3]{BauschkeCombettes2017}.
Therefore,
\(
    \prox_{ah}(y)
      =J_{a\partial h}(y)
      \rightarrow
      \Pi_{\operatorname{zer}(\partial h)}(y)
      =\Pi_M(y)
\)
as \(a\to\infty\), by
\textcite[Theorem~23.44(i)]{BauschkeCombettes2017}.
Finally, \(M\) is closed and convex because it is the nonempty
minimizer set of a lower-semicontinuous convex function.
\end{proof}

The scalar version gives a convenient characterization of the limiting
average derivative. Recall that the minimizer set of a convex function on
\(\R\), when nonempty, is a closed interval, possibly a singleton or an
unbounded interval.

\begin{corollary}
\label{cor:prox-large-scale-derivative-app}
Let \(d=1\), assume \(M=\argmin h\neq\emptyset\), let
\(B\in L^2\), and let \(Z\sim N(0,1)\) be independent of \(B\). For every
\(\nu>0\),
\[
    \lim_{a\to\infty}
    \E\left[
        \prox_{a h}'(B+\sqrt\nu Z)
    \right]
    =
    \PP\left(B+\sqrt\nu Z\in\operatorname{int}(M)\right).
\]
Because \(B+\sqrt\nu Z\) has a continuous density, the right-hand side is
unchanged if \(\operatorname{int}(M)\) is replaced by \(M\).
Moreover, for every nonempty compact \(K\subset(0,\infty)\),
\begin{equation}
    \sup_{\nu\in K}
    \left|
        \E\left[\prox_{a h}'(B+\sqrt\nu Z)\right]
        -\PP\left(B+\sqrt\nu Z\in\operatorname{int}(M)\right)
    \right|
    \longrightarrow0.
    \label{eq:prox-large-scale-derivative-uniform-app}
\end{equation}
\end{corollary}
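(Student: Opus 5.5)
The plan is to combine the pointwise limit from \cref{prop:prox-large-scale-app} with dominated convergence, passing through the Stein representation so that no pointwise convergence of derivatives is needed. Write $Y_\nu = B+\sqrt\nu Z$ and $P_a=\prox_{ah}$. By Gaussian integration by parts (conditioning on $B$, valid since $P_a$ is $1$-Lipschitz),
\[
\E[P_a'(Y_\nu)] = \frac{1}{\sqrt\nu}\,\E\bigl[Z\,P_a(Y_\nu)\bigr] = \frac{1}{\sqrt\nu}\,\E\bigl[Z\,(P_a(Y_\nu)-P_a(m_0))\bigr],
\]
where $m_0\in M$ is fixed, so that $P_a(m_0)=m_0$ for every $a$ by \cref{lem:prox-optimality-app}. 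The integrand is bounded by $|Z|\,|Y_\nu-m_0|$, which is integrable because $B\in L^2$ and is independent of $a$. By \cref{prop:prox-large-scale-app}, $P_a(Y_\nu)\to\Pi_M(Y_\nu)$ pointwise, so dominated convergence gives
\[
\lim_{a\to\infty}\E[P_a'(Y_\nu)] = \frac{1}{\sqrt\nu}\,\E\bigl[Z\,\Pi_M(Y_\nu)\bigr].
\]

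Next I identify this limit. The map $\Pi_M$ onto a closed interval is $1$-Lipschitz with derivative $\mathbf 1_{\operatorname{int}(M)}$ almost everywhere, so applying Stein's identity in reverse yields $\E[\mathbf 1_{\operatorname{int}(M)}(Y_\nu)]=\PP(Y_\nu\in\operatorname{int}(M))$. Since $Y_\nu$ has a continuous density and $\partial M$ has at most two points, replacing $\operatorname{int}(M)$ by $M$ does not change the probability. This settles the pointwise statement.

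For the uniform statement \eqref{eq:prox-large-scale-derivative-uniform-app}, I would use a monotone-in-$\nu$ equicontinuity argument. Define
\[
\psi_a(\nu)=\nu^{-1/2}\,\E\bigl[Z\,(P_a(B+\sqrt\nu Z)-m_0)\bigr], \qquad \psi_\infty(\nu)=\PP\bigl(Y_\nu\in\operatorname{int}(M)\bigr).
\]
The limit $\psi_\infty$ is continuous in $\nu$, as shown in the proof of \cref{prop:q-bounded-minimizer-set}. I then want equicontinuity of $\{\psi_a\}$ on $K$. Since every $P_a$ is $1$-Lipschitz,
\[
|\psi_a(\nu)-\psi_a(\nu')| \le \bigl|\nu^{-1/2}-\nu'^{-1/2}\bigr|\,\E\bigl[|Z|\,|Y_\nu-m_0|\bigr] + \nu'^{-1/2}\,\E\bigl[Z^2\bigr]\,\bigl|\sqrt\nu-\sqrt{\nu'}\bigr|,
\]
which is uniform in $a$ and tends to zero as $\nu'\to\nu$ within $K$. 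Pointwise convergence of an equicontinuous family on a compact set to a continuous limit is uniform (Arzelà--Ascoli, or a direct finite-net argument), which gives the result.

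The main obstacle is the uniformity step. Derivatives of proximal maps need not converge pointwise, so the argument has to run entirely through the Stein representation, and the bounds above must be checked to be genuinely uniform in $a$. They are, because $m_0$ is a common fixed point of all the $P_a$, which removes any dependence on $P_a(0)$ growing with $a$. If $M$ were unbounded this still works, since only a single point $m_0\in M$ is used.
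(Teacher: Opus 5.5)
Your proof is correct. The pointwise part is the paper's argument: the Stein representation $\E[P_a'(Y_\nu)]=\nu^{-1/2}\E[ZP_a(Y_\nu)]$, domination through a common fixed point $m_0\in M$ and nonexpansiveness, dominated convergence with Proposition~\ref{prop:prox-large-scale-app}, and Stein's identity applied in reverse to $\Pi_M$.

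The uniform statement is where you depart from the paper.

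\emph{The paper's route.} It takes arbitrary $a_k\to\infty$ and $\nu_k\in K$ and passes to a subsequence with $\nu_k\to\nu$. It couples $Y_k=B+\sqrt{\nu_k}Z$ with $Y=B+\sqrt{\nu}Z$ and bounds $|P_{a_k}(Y_k)-\Pi_M(Y)|\le|Y_k-Y|+|P_{a_k}(Y)-\Pi_M(Y)|$. Dominated convergence then gives $\E[P_{a_k}'(Y_k)]\to\PP(Y\in\operatorname{int}(M))$. The paper checks continuity of the limit in $\nu$ separately and finishes with the sequential characterization of uniform convergence on $K$.

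\emph{Your route.} You derive an explicit Lipschitz modulus for $\nu\mapsto\psi_a(\nu)$ on $K$, uniform in $a$, and conclude by a finite-net argument. Both proofs rest on the same estimate $|P_a(y)-P_a(y')|\le|y-y'|$ together with $P_a(m_0)=m_0$.

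\emph{What yours buys.} It is slightly more quantitative. Continuity of $\psi_\infty$ also comes for free, since a pointwise limit of a uniformly equicontinuous family is continuous. So you need not import that continuity from the proof of Proposition~\ref{prop:q-bounded-minimizer-set}, and it is better not to: that proposition's proof itself cites this corollary. The continuity argument there does not depend on the corollary, so nothing is actually circular, but the citation is unnecessary.
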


\begin{proof}
Write \(P_a:=\prox_{a h}\). By Stein's lemma, $\E\left[\prox_{a h}'(B+\sqrt\nu Z)\right]
    =
    \frac1{\sqrt\nu}
    \E\left[Z\prox_{a h}(B+\sqrt\nu Z)\right]$.
Fix \(m\in M\). Since \(P_a(m)=m\), nonexpansiveness gives $\abs{P_a(y)} \leq \abs{m}+\abs{y-m}$, which supplies an integrable dominating function after multiplication by
\(\abs Z\). Proposition~\ref{prop:prox-large-scale-app} and dominated
convergence therefore imply $\lim_{a\to\infty}
    \E\left[\prox_{a h}'(B+\sqrt\nu Z)\right]
    =
    \frac1{\sqrt\nu}
    \E\left[Z\Pi_M(B+\sqrt\nu Z)\right]$. A final application of Stein's identity identifies the last expression with
\(\E[\Pi_M'(B+\sqrt\nu Z)]\). The derivative of the projection onto an
interval is one on its interior and zero outside it, almost everywhere.

It remains to prove uniformity. Let $a_k\to\infty$ and let $\nu_k$ lie in a
fixed compact $K\subset(0,\infty)$. Along any subsequence for which $\nu_k\to\nu\in K$, couple $Y_k:=B+\sqrt{\nu_k}Z$, and $Y:=B+\sqrt\nu Z$ using the same pair $(B,Z)$. Nonexpansiveness gives $\left|P_{a_k}(Y_k)-\Pi_M(Y)\right| \leq |Y_k-Y|+\left|P_{a_k}(Y)-\Pi_M(Y)\right|\longrightarrow0$ almost surely.
For a fixed $m\in M$, $|P_{a_k}(Y_k)|
    \le |m|+|Y_k-m|
    \le 2|m|+|B|+\sqrt{\max K}\,|Z|$.
After multiplication by $|Z|$, the right-hand side is integrable because
$B\in L^2$. Dominated convergence and Stein's lemma therefore
show $\E\left[P_{a_k}'(Y_k)\right]
    \longrightarrow
    \frac1{\sqrt\nu}\E[Z\Pi_M(Y)]
    =\PP(Y\in\operatorname{int}(M))$.
The same representation and domination show that the last probability is a
continuous function of $\nu>0$. The sequential characterization of uniform
convergence on the compact set $K$ now proves
\eqref{eq:prox-large-scale-derivative-uniform-app}.
\end{proof}

The strict-gap argument uses the fact that the class of scalar proximal maps
has a simple intrinsic description.

\begin{proposition}[One-dimensional characterization]
\label{prop:scalar-prox-characterization-app}
Let \(g:\R\to\R\). The following are equivalent.

\begin{enumerate}[label=(\roman*)]
\item There exists a proper, lower semicontinuous, convex function
\(h:\R\to\R\cup\{+\infty\}\) such that \(g=\prox_h\).

\item The map \(g\) is nondecreasing and one-Lipschitz.
\end{enumerate}

Equivalently, if \(g\) is locally absolutely continuous, then it is a scalar proximal map if and only if $0\leq g'(y)\leq1$ for almost every $y$. 
\end{proposition}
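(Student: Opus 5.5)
The plan is to prove the two implications (i)$\Rightarrow$(ii) and (ii)$\Rightarrow$(i) separately, and then obtain the derivative reformulation from standard facts about absolutely continuous functions.

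\emph{The direction (i)$\Rightarrow$(ii).} This is immediate from \cref{prop:prox-firm-app} with $d=1$ and $\kappa=0$. Firm nonexpansiveness gives
\[
(y-\widetilde y)\bigl(g(y)-g(\widetilde y)\bigr)\ge \bigl(g(y)-g(\widetilde y)\bigr)^2\ge0 .
\]
The left-hand side being nonnegative shows that $g$ is nondecreasing, and dividing by $|g(y)-g(\widetilde y)|$ (when it is nonzero) shows that $g$ is one-Lipschitz.

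\emph{The direction (ii)$\Rightarrow$(i).} Here we construct $h$ explicitly. Since $g$ is nondecreasing, the graph $\{(g(y),\,y-g(y)):y\in\R\}$ is a monotone relation. We define the set-valued map $A:=g^{-1}-\Id$, that is, $x^*\in A(x)$ if and only if $x=g(y)$ and $x^*=y-x$ for some $y$.

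We first check that $A$ is monotone. Take $x=g(y)$ and $\widetilde x=g(\widetilde y)$. Then
\[
\bigl(x^*-\widetilde x^*\bigr)(x-\widetilde x)=(y-\widetilde y)\bigl(g(y)-g(\widetilde y)\bigr)-\bigl(g(y)-g(\widetilde y)\bigr)^2 .
\]
This is nonnegative: since $g$ is nondecreasing and one-Lipschitz, $|g(y)-g(\widetilde y)|\le|y-\widetilde y|$ with the same sign as $y-\widetilde y$.

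Next, $\Id+A=g^{-1}$ has full range $\R$, because $g$ is defined everywhere. By Minty's theorem, $A$ is therefore maximally monotone.

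In one dimension every maximally monotone operator is the subdifferential of a proper lsc convex function. This holds by Rockafellar's cyclic-monotonicity theorem, since monotone relations on $\R$ are automatically cyclically monotone. Concretely, one can take $h(x)=\int_{x_0}^x a(t)\,dt$ with $a$ a selection of $A$ on $\operatorname{dom}A$, extended by lsc closure and set to $+\infty$ outside the closure of the domain. Then $\prox_h=(\Id+\partial h)^{-1}=(\Id+A)^{-1}=g$, using \cref{lem:prox-optimality-app}.

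\emph{The derivative reformulation.} A locally absolutely continuous $g$ satisfies $g(y)-g(x)=\int_x^y g'$. Hence $0\le g'\le1$ almost everywhere if and only if $0\le g(y)-g(x)\le y-x$ for all $x<y$, which is exactly (ii). The converse uses that Lipschitz monotone functions are differentiable almost everywhere with derivative in $[0,1]$.

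The main obstacle is the construction in (ii)$\Rightarrow$(i), namely making the potential $h$ rigorous when $g$ has flat pieces, where $A$ is multivalued, or a bounded range, where $h$ is extended-valued. I would handle this by citing Rockafellar's theorem that maximal cyclically monotone operators are subdifferentials, rather than constructing $h$ by hand.
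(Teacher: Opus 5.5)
Your proof is correct, and it takes a different route from the paper, which does not argue the proposition at all and simply cites Combettes--Pesquet (Proposition~2.4).

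Your argument is the standard monotone-operator proof:
\begin{itemize}
\item For (i)$\Rightarrow$(ii), firm nonexpansiveness (Proposition~\ref{prop:prox-firm-app} with $d=1$, $\kappa=0$) suffices.
\item For (ii)$\Rightarrow$(i), the relation $A=g^{-1}-\mathrm{Id}$ is monotone precisely because $g$ is nondecreasing and one-Lipschitz. Since $\mathrm{Id}+A=g^{-1}$ has range $\mathbb{R}$, Minty's theorem makes $A$ maximally monotone.
\item On $\mathbb{R}$, monotone relations are cyclically monotone, so $A$ is maximal cyclically monotone. Rockafellar's theorem then gives $A=\partial h$, and $\operatorname{prox}_h=(\mathrm{Id}+\partial h)^{-1}=g$.
\end{itemize}

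Compared with the bare citation, your version shows exactly where dimension one is used: only through ``monotone implies cyclically monotone on $\mathbb{R}$.'' That is the ingredient that fails for $d>1$, and it underlies the paper's remark in Section~\ref{sec:future-directions} that nonexpansiveness alone no longer suffices there.

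You also verify the almost-everywhere derivative reformulation via the fundamental theorem of calculus for absolutely continuous functions. The citation leaves this implicit, yet the paper relies on it, for instance in the proof of Lemma~\ref{lem:pythag}.

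The citation, in turn, is shorter. Like you, it delegates the fiddly construction of $h$ when $g$ has flat pieces (so $A$ is multivalued) or bounded range (so $h$ is extended-valued). You cover that step by appealing to Rockafellar rather than completing the explicit integral construction, which is legitimate.
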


\begin{proof}
See \textcite[Proposition~2.4]{CombettesPesquet2007}.
\end{proof}

\section{Useful tools}

In this section, we present some useful tools that are used throughout the paper. All the results are well known, so we omit their proof and instead provide citations for each.

\begin{theorem}[Poincar\'e--Miranda Theorem in $\R^2$, Eq.\ (1) in \textcite{Frankowska2018PM}]
For a fixed $L$, let $I = [-L, L] \times [-L, L] \subset \R^2$ and let $f = (f_1, f_2): I \rightarrow \R^2$ be a continuous function such that for $i = 1, 2$, we have $f_i(x) \geq 0$ for all $ x \in \{(x_1, x_2) \in I: x_i = -L\}$
and $f_i(x) \leq 0$ for all $x \in \{(x_1, x_2) \in I: x_i = L\}.$
Then there exists some $\bar{x} \in I$ such that $f(\bar{x}) = 0$. We also call this point the equilibrium point of $f$ in $I$.
\end{theorem}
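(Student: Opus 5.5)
This statement is the classical Poincar\'e--Miranda theorem in two dimensions. I would prove it by reduction to Brouwer's fixed point theorem on the square \(I\), which is homeomorphic to a closed disk. The two ingredients are a continuous self-map of \(I\) whose fixed points are exactly the zeros of \(f\), and the boundary sign conditions, which are what make that map land in \(I\).

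\textbf{Step 1: the self-map.} For \(x\in I\) define
\[
g_i(x):=\Pi_{[-L,L]}\bigl(x_i+f_i(x)\bigr),\qquad i=1,2,
\]
where \(\Pi_{[-L,L]}\) is the clamp onto \([-L,L]\), and set \(g=(g_1,g_2)\). Each component is a composition of continuous maps, so \(g:I\to I\) is continuous. Brouwer's theorem then gives some \(\bar x\in I\) with \(g(\bar x)=\bar x\).

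\textbf{Step 2: fixed points are zeros.} Fix \(i\) and write \(t:=\bar x_i+f_i(\bar x)\). There are three cases.
\begin{itemize}
\item If \(t\in[-L,L]\), the clamp does nothing, so \(\bar x_i=t\) and hence \(f_i(\bar x)=0\).
\item If \(t>L\), then \(g_i(\bar x)=L\), so \(\bar x_i=L\) and \(\bar x\) lies on the face \(\{x_i=L\}\). There the hypothesis gives \(f_i(\bar x)\le 0\), so \(t\le L\), a contradiction.
\item If \(t<-L\), the symmetric argument applies: \(\bar x_i=-L\) and \(f_i(\bar x)\ge 0\), so \(t\ge -L\), again a contradiction.
\end{itemize}
Hence \(f_i(\bar x)=0\) for both \(i\), and \(f(\bar x)=0\).

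The only delicate point is Step 2. A naive map such as \(x\mapsto x+f(x)\) need not send \(I\) into itself, and a plain rescaling of \(f\) does not fix this. The clamping avoids the issue, and the face-wise sign conditions are used exactly to rule out the two clamped cases. Step 1 needs only the facts that the clamp is continuous and that \(I\) is a compact convex set, to which Brouwer's theorem applies.
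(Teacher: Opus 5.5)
Your argument is correct and complete. The clamped map $g$ is a continuous self-map of the compact convex set $I$, and your three-case analysis uses the face-wise sign conditions to exclude both clamped cases, so every Brouwer fixed point of $g$ is a zero of $f$. The statement tacitly needs $L\ge 0$ so that $I$ is nonempty, and the degenerate case $L=0$ is also covered by your argument.

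The paper gives no proof of this statement. It lists it among the ``well known'' tools and cites Frankowska, whose note, as its title indicates, relates Poincar\'e--Miranda to the existence of equilibria under a viability (tangency) condition. That equilibrium theorem says that on a compact convex $K$, a continuous $f$ with $f(x)$ in the tangent cone $T_K(x)$ for all $x\in K$ has a zero.

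Your proof is essentially the box case of the standard proof of that equilibrium theorem:
\begin{itemize}
\item a fixed point $\bar x$ of $x\mapsto \Pi_K(x+f(x))$ satisfies $f(\bar x)\in N_K(\bar x)$;
\item the hypothesis gives $f(\bar x)\in T_K(\bar x)=N_K(\bar x)^\circ$;
\item hence $\|f(\bar x)\|_2^2\le 0$.
\end{itemize}
For the box $I$, $\Pi_I$ is your coordinatewise clamp and the tangent-cone condition is exactly the face-wise sign condition. Your case analysis is this polarity argument carried out one coordinate at a time.

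Your route is self-contained: it needs only Brouwer and carries over verbatim to boxes in $\mathbb{R}^n$. The viability framework reaches general convex compact sets and inward-pointing fields, which the paper does not need. The paper only ever applies the theorem on rectangles, after an affine rescaling and a sign flip of components to match the stated convention.
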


\begin{proposition}[Tweedie's formula, Eq.\ (2.8) in \textcite{Efron2011Tweedie}]
Let $Y = B+\sqrt{\tau}Z$, where $Z\sim N(0,1)$. Assume that $\E[|B|] < \infty$. Then Tweedie's formula gives $\mathbb{E}[B\mid Y=y]
=
y+\tau\,\frac{\mathrm d}{\mathrm dy}\log p_Y(y),
$
where \(p_Y\) denotes the marginal density of \(Y\).
\end{proposition}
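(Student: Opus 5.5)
The plan is to write the marginal density of \(Y\) explicitly as a Gaussian mixture, differentiate it under the integral sign, and recognize the posterior mean in the resulting ratio. Let \(\pi\) denote the law of \(B\) and \(\varphi_\tau(u)=(2\pi\tau)^{-1/2}\exp\{-u^2/(2\tau)\}\). Independence of \(B\) and \(Z\) gives
\[
p_Y(y)=\int_{\R}\varphi_\tau(y-b)\,\pi(\rd b),
\]
which is strictly positive for every \(y\), since \(\varphi_\tau>0\) and \(\pi\) is a probability measure. In particular \(\log p_Y\) is well defined on all of \(\R\).

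The first step is to justify differentiating under the integral. Since \(\partial_y\varphi_\tau(y-b)=-\frac{y-b}{\tau}\varphi_\tau(y-b)\) and \(u\mapsto |u|\varphi_\tau(u)\) is bounded on \(\R\), the \(y\)-derivative of the integrand is bounded by a constant uniformly in \((y,b)\). A constant is \(\pi\)-integrable, so dominated convergence, applied to difference quotients via the mean value theorem, gives
\[
p_Y'(y)=\int_{\R}\frac{b-y}{\tau}\,\varphi_\tau(y-b)\,\pi(\rd b).
\]
The same domination shows \(p_Y\) is \(C^1\). Repeating the argument with the bounded functions \(|u|^k\varphi_\tau(u)\) gives smoothness, although only one derivative is needed here.

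The second step identifies the posterior mean. By Bayes' formula, the version of \(\E[B\mid Y=y]\) used in \eqref{eq:canonicalposterior} is
\[
\frac{\int b\,\varphi_\tau(y-b)\,\pi(\rd b)}{p_Y(y)}.
\]
Its numerator is finite because \(|b|\varphi_\tau(y-b)\le |y|\varphi_\tau(y-b)+|y-b|\varphi_\tau(y-b)\) is bounded; the assumption \(\E|B|<\infty\) additionally guarantees that this expression is a genuine version of the conditional expectation. Dividing the formula for \(p_Y'\) by \(p_Y\) gives
\[
\tau\frac{p_Y'(y)}{p_Y(y)}=\E[B\mid Y=y]-y,
\]
which rearranges to the claimed identity \(\E[B\mid Y=y]=y+\tau\frac{\rd}{\rd y}\log p_Y(y)\).

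The only step requiring care is the interchange of derivative and integral. It is handled by the uniform bound on \(|u|\varphi_\tau(u)\), so I do not expect a genuine obstacle there.
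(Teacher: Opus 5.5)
Your proof is correct. The uniform bound on $|u|\varphi_\tau(u)$ justifies differentiating the Gaussian mixture under the integral, and dividing by $p_Y>0$ gives exactly the Bayes-formula version of the posterior mean in \eqref{eq:canonicalposterior}. The paper does not prove this statement; it only cites Efron, and when it uses the identity in the proof of Lemma~\ref{lem:mmsecalculus} it says simply that it differentiates the Gaussian convolution under the integral. Your argument follows that same route and makes explicit the interchange of derivative and integral that the paper leaves implicit.
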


\begin{lemma}[Stein's lemma, \parencite{Stein1981}]
Let \(Z\sim N(0,1)\). For any absolutely continuous function
\(f:\R \to \R\) such that the expectations below are finite,
Stein's lemma gives $\E\!\left[Zf(Z)\right]
=
\E\!\left[f'(Z)\right]$.
\end{lemma}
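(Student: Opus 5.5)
We plan to prove Stein's lemma by the Fubini-based argument, which avoids boundary terms at infinity, rather than by a direct integration by parts. Let \(\varphi\) be the standard Gaussian density. We will use the identity \(\varphi'(z)=-z\varphi(z)\), which gives
\[
\varphi(z)=\int_z^\infty t\varphi(t)\,dt\quad(z\ge0),\qquad \varphi(z)=-\int_{-\infty}^z t\varphi(t)\,dt\quad(z\le0).
\]
We interpret the hypothesis ``the expectations are finite'' as \(\E|f'(Z)|<\infty\) and \(\E|Zf(Z)|<\infty\). By absolute continuity, \(f(t)-f(0)=\int_0^t f'(z)\,dz\) for every \(t\), so we may work with \(f'\) only.

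\textbf{Step 1 (positive half-line).} We write
\[
\int_0^\infty f'(z)\varphi(z)\,dz=\int_0^\infty f'(z)\int_z^\infty t\varphi(t)\,dt\,dz .
\]
We then exchange the order of integration over the region \(\{0\le z\le t\}\), which gives
\[
\int_0^\infty t\varphi(t)\Bigl(\int_0^t f'(z)\,dz\Bigr)dt=\int_0^\infty t\varphi(t)\bigl(f(t)-f(0)\bigr)\,dt .
\]

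\textbf{Step 2 (negative half-line).} The same computation on \((-\infty,0]\), using \(\varphi(z)=-\int_{-\infty}^z t\varphi(t)\,dt\) and \(f(t)-f(0)=-\int_t^0 f'(z)\,dz\) for \(t\le0\), gives
\[
\int_{-\infty}^0 f'\varphi=\int_{-\infty}^0 t\varphi(t)\bigl(f(t)-f(0)\bigr)\,dt .
\]

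\textbf{Step 3 (combine).} Adding the two steps yields \(\E[f'(Z)]=\E[Z(f(Z)-f(0))]\). Since \(\E[Z]=0\), the right-hand side equals \(\E[Zf(Z)]\), which is the claim.

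The main obstacle is justifying the interchange of integrals in Steps 1 and 2. We will first apply Tonelli to the nonnegative integrand \(|f'(z)|\,t\varphi(t)\mathbf 1_{\{0\le z\le t\}}\). By the same calculation with absolute values, its double integral equals \(\int_0^\infty|f'(z)|\varphi(z)\,dz\le\E|f'(Z)|<\infty\). Hence Fubini applies, and as a byproduct \(\E\bigl[|Z|\,|f(Z)-f(0)|\bigr]<\infty\), so every term in Step 3 is well defined. The negative half-line is handled identically.
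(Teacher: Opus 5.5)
Your proof is correct and is essentially the argument of the cited source: the paper gives no proof and defers to Stein (1981), whose Lemma~1 is proved by the same Fubini interchange. That argument uses $\varphi(z)=\int_z^\infty t\varphi(t)\,dt$ together with its mirror image on the negative half-line. As you note, the Tonelli bound shows that only $\mathbb{E}|f'(Z)|<\infty$ is needed, and integrability of $Zf(Z)$ then follows as a byproduct; this is exactly Stein's hypothesis.
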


\end{document}